\global\long\def\esp{\mathbb{E}}%
\global\long\def\R{\mathbb{R}}%
\global\long\def\P{\mathbb{P}}%
\theoremstyle{plain}
\newtheorem{theorem}{Theorem}[section]
\newtheorem{propo}[theorem]{Proposition}
\newtheorem{lemma}[theorem]{Lemma}
\newtheorem{corollary}[theorem]{Corollary}
\newtheorem{assumption}{Assumption}
\theoremstyle{remark}
\newtheorem{remark}[theorem]{Remark}
\newtheorem{example}[theorem]{Example}
\newcommand{\e}[1]{\mathbb{E}\left[#1\right]  } 
\newcommand{\ind}{\mathds{1}}
\newcommand{\M}{\mathcal{M}}
\newcommand{\obs}{\mathrm{obs}}
\newcommand{\mis}{\mathrm{mis}}
\newcommand{\prob}[1]{\mathbb{P}\left(#1\right)} 
\begin{document}

\twocolumn[

\aistatstitle{A primer on linear classification with missing data}

\aistatsauthor{ Angel Reyero Lobo \And Alexis Ayme \And Claire Boyer \And Erwan Scornet }

\aistatsaddress{Université Paul Sabatier \And  Sorbonne Université \And   Université Paris-Saclay \And Sorbonne Université }]

\begin{abstract}
Supervised learning with missing data aims at building the best prediction of a target output based on partially-observed inputs. Major approaches to address this problem  can be decomposed into $(i)$ impute-then-predict strategies, which first fill in the empty input components and then apply a unique predictor and $(ii)$ Pattern-by-Pattern (P-b-P) approaches, where a predictor is built on each missing pattern. In this paper, we theoretically analyze how three classical linear classifiers, namely perceptron, logistic regression and linear discriminant analysis (LDA), behave with Missing Completely At Random (MCAR) data, depending on the strategy (imputation or P-b-P) used to handle missing values. We prove that both imputation and P-b-P approaches are ill-specified in a logistic regression framework, thus questioning the relevance of such approaches to handle missing data. The most favorable auspices to perform classification with missing data concern P-b-P LDA methods. We provide finite-sample bounds for the excess risk in this framework, even for high-dimensional or MNAR settings. Experiments illustrate our theoretical findings. 
\end{abstract}

\bigskip

\textbf{Keywords:} Linear discriminant analysis, Logistic regression, Bayes risk, Missing completely at random (MCAR), Missing not at random (MNAR).

\bigskip


\section{Introduction}

Due to the large size of modern data sets, and the automatization of data collection, missing values are ubiquitous in real-world applications. Missing data can arise due to various reasons, such as sensor malfunctions, survey respondents skipping questions, or integration of data from diverse sources, collected using different methods. In his seminal work, \citet{RUBIN76} categorizes missing value scenarios into three types: Missing Completely At Random (MCAR), Missing At Random (MAR), and Missing Not At Random (MNAR), depending on relationships between observed variables, missing variables, and the missing data pattern.

\textbf{Estimation} Much of the focus in missing value literature is on parameter estimation. Regarding linear models, closed-form coefficient estimators have been derived \citep{little1992regression,Jones1996IndicatorAS,robins1994estimation}, including sparsity constraints \citep{Rosenbaum2009, LohWainwright12} or the study of the optimization procedure \citep{sportisse2020debiasing}.
Regarding logistic regression models, no closed-form solutions are available and one may resort to the Expectation-Maximization algorithm \citep{consentino2011missing}. Using the EM for parameter estimation in generalized linear models was introduced by \citet{ibrahim1990incomplete} for MAR data (with asymptotic theoretical guarantees) and later extended to some MNAR settings by modelling the missing indicators \citep{ibrahim1999missing}. Methods for estimating the parameters in high-dimensional LDA frameworks with MCAR data have also been proposed \citep[see, e.g.,][]{AdaLDA}.

\textbf{Regression} Prediction tasks with missing values differ from model estimation: estimated model parameters alone cannot directly predict outcomes on test samples containing missing values. Impute-then-predict strategies, often encountered in practice, consists in imputing the training dataset, before applying standard algorithms.  \citet{josse2019consistency, bertsimas2024simple} prove the consistency of constant imputation strategies preceding non-parametric learning methods, a result later extended for almost all imputation functions by \citet{le2021sa}.
While these results are asymptotic and strongly rely on non-parametric estimators, \citet{ayme2023naive,ayme2024random} provides a finite-sample analysis of imputation in linear models. 
%
%
%
An alternative approach involves decomposing the Bayes predictor on a pattern-by-pattern basis, training a specific predictor for each missing pattern and leveraging the information provided by them. \citet{agarwal2019robustness} examined the Principal Component Regression strategy for handling missing values in high-dimensional settings.  \citet{lemorvan:hal-02888867,le2020linear} and \citet{ayme2022near} analyze pattern-by-pattern linear predictors, in finite-sample settings. 

\textbf{Classification} Regarding classification, 
 in fact, few theoretical analyses exist on prediction with missing data. \citet{pelckmans2005handling} adapted Support Vector Machine (SVM) classifiers to accommodate missing values. \citet{sell2023nonparametric} establish minimax rate for non-parametric prediction with missing values and propose a minimax algorithm called HAM. 

From a practical perspective, many methods have been proposed to deal with missing values. For example, \citet{garcia2009k}  propose to use $K$ nearest neighbors to impute and predict with missing data, a work later refined by \citet{choudhury2020missing}. Besides, MissForest \citep{stekhoven2012missforest} is one of the most versatile supervised learning algorithm, able to deal with discrete and continuous features. \citet{jiang2020logistic} is one of the few methods able to estimate parameter and predict in presence of missing values. Recently, a strong interest has been put on comparing the empirical performances of various imputation strategies \citep{bertsimas2018predictive,poulos2018missing, jager2021benchmark}.  The interested reader may refer to \citet{emmanuel2021survey} for a review of methods able to perform classification with missing values. However, most of these methods are not theoretically grounded.



\textbf{Contributions} 
While predictions in linear regression with missing data has been extensively studied (see paragraph \textit{regression} above), to the best of our knowledge, there exists no theoretical work analyzing the benefits of Pattern-by-Pattern (P-b-P) and impute-then-predict strategies on linear \textit{classifiers}. 
In this paper, we theoretically study the validity of these two approaches with linear classifiers for MCAR missing data.
First, we analyze the perceptron model (Section \ref{sec:perceptron}) and prove that both strategies to handle missing values are likely to fail, due to the intrinsic difficulty of maintaining the linear separability on each missing pattern. 
Since linear separability is a strong assumption, we turn to the widely-used logistic regression model. We prove that this model is ill-specified to handle MCAR data (Section \ref{subsec:Logistic}), whether imputation or P-b-P strategies are used. 
As modelling the joint distribution of input-output seems to be unavoidable, we turn into the linear discriminant analysis (LDA) framework (Section \ref{subsec:LDA}). We prove that imputation is not optimal for LDA but P-b-P is. We establish finite-sample bounds on the excess risk of LDA, in high-dimensional regimes, also allowing missing data to be more complex (MNAR). Experiments (Section~\ref{sec:experiments}) illustrate our findings. 

\section{Preliminaries on supervised statistical learning with missing values}
\label{sect:predictMissingValues}


\paragraph{Supervised learning}
The main objective of binary classification tasks  is to predict a target $Y \in \{-1,1\}$ given some  observation $X\in\mathbb{R}^d$.  
A canonical way of quantifying the performance of a classifier $h: \mathbb{R}^d \to \{-1,1\}$  is given by the   probability of misclassification
\begin{align*}
    \mathcal{R}_{\mathrm{comp}}(h) =\P(Y\neq h(X)),
\end{align*}
where the index ``\textrm{comp}'' stands for complete data. The Bayes predictor, minimizing the risk $\mathcal{R}_\mathrm{comp}$, takes the form $h^{\star}_{\mathrm{comp}}(X)=\mathrm{sign}(\e{Y|X}).$ 
As the data distribution is unknown, learning consists in estimating $h^{\star}_{\mathrm{comp}}$ given a training sample $\mathcal{D}_n := \left\{ (X_i, Y_i), i=1,\hdots,n\right\}$. 

\paragraph{Missing data in learning}


In the context of supervised learning with missing values, we assume that the input observation $X\in \mathbb{R}^d$ is only partially observed, with $M \in \{0,1\}^d$ the associated missing pattern: each coordinate $M_j = 1$ indicates that the $j$th component of the input vector $X_j$ is missing (and $M_j=0$ if $X_j$ is observed). 
Given a specific missing pattern $m\in \{0,1\}^d$, we define $\mathrm{obs}(m)$ (resp.\ $\mis(m)$)  as the set of indices where $m$ is 0 (resp.\ 1), representing the observed (resp.\ missing) variables. 
Subsequently, $X_{\mathrm{obs}(M)}$ (resp.\ $X_{\mis(M)}$) refers to the subvector of $X$ containing the observed (resp.\ missing) entries of $X$. 
Our aim is to predict the output $Y$ from a pair consisting of the masked observation and the missing pattern, denoted as $Z := (X_{\mathrm{obs}(M)}, M)$ belonging to $\mathcal{Z}$. In presence of missing data, the performance of a classifier $h: \mathcal{Z} \to \{-1,1\}$ is evaluated via 
\begin{align*}
    \mathcal{R}_{\mathrm{mis}}(h) =\P(Y\neq h(Z)),
\end{align*}
and the Bayes predictor $h^{\star}_{\mathrm{mis}}: \mathcal{Z} \to \{-1,1\}$ that minimizes $\mathcal{R}_{\mathrm{mis}}$ is defined as $h^{\star}_{\mathrm{mis}}(Z)=\mathrm{sign}(\e{Y|Z}).$ A procedure $h$ is Bayes optimal if it achieves the Bayes risk, that is $\mathcal{R}_{\mathrm{mis}}(h) = \mathcal{R}_{\mathrm{mis}}(h^{\star}_{\mathrm{mis}})$.
Our analysis is based on the fact  that the Bayes predictor $h^{\star}_{\mathrm{mis}}$ can be decomposed with respect to the missing patterns (see Lemma~\ref{lem_decomp_pbp_generic}), that is 
\begin{align} 
\label{eq:pbyp_bayes_predictor}
    \hspace{-0.3cm}&h^{\star}_{\mathrm{mis}}(Z)=\!\! \sum_{m\in\M}h_m^{\star}(X_{\mathrm{obs}(m)})\ind_{M=m} ,
\end{align}
with $h_m^{\star}(X_{\mathrm{obs}(m)})=\mathrm{sign}(\e{Y|X_{\mathrm{obs}(m)},M=m})$, where $\M \subset \{1, \ldots, d\}$ is the set of admissible missing patterns.
Learning with missing values can be seen as estimating $h_m^{\star}$ for all $m\in \{0,1\}^d$, given an incomplete i.i.d.\ training sample $\mathcal{D}_n := \left\{ (X_{i,\mathrm{obs}(M_i)}, M_i, Y_i), i=1,\hdots,n\right\}$.




\section{Perceptron}\label{sec:perceptron}

We explore in this section how missing values impact geometry-based predictors such as the perceptron.
The principle of the perceptron algorithm \citep{rosenblatt1958perceptron} is to iteratively find a hyperplane separating the data.  
The convergence of the method is ensured under the separability of the observations \citep{novikoff1962convergence}. 
In order to capture the influence of missing data, the goal is therefore to quantify the probability of maintaining linear separability in the presence of missing values, thus ensuring the validity of the perceptron algorithm in the presence of missing values.


When dealing with complete observations,  we say that the points $(X_i,Y_i)_{i=1,...,n}\in \R^d \times \{-1,+1\}$ are linearly separable if there exists a hyperplane, parameterized by $(w^{\star},b^{\star})\in \R^d\times\R$, such that for all $i\in \{1,...,n\}, Y_i \left(X_i^\top w^\star+b^{\star}\right)>0.$




When dealing with missing inputs, the training data 
 $(X_i\odot (1-M_i),M_i,Y_i)_{i=1,...,n}\in(\R^d \times \{0,1\}^d \times \{-1,+1\})^n$ 
is said linearly separable 
if $\forall m \in \{0,1\}^d$, $\exists (w^\star_{(m)},b^\star_{(m)}) \in \R^d\times\R $ such that $\forall i \text{ s.t.~} M_i =m$,
\begin{align*}
  Y_i\left((w^\star_{(m)})^\top (1-M_i)\odot X_i+b^\star_{(m)}\right)>0.
\end{align*}

\begin{remark}[Related work: the rare eclipse problem] 
    In \cite{bandeira2014compressive}, the authors investigate the preservation of linear separability between two convex sets under random Gaussian projections. 
    This particular problem is referred to as the \textit{rare eclipse problem}.
    Unlike the Gaussian projections covered in \cite{bandeira2014compressive}, the case of missing values involves random projections aligned with canonical axes. 
\end{remark}

\begin{lemma}
\label{lem:separable_complete_does_not_imply_miss_separability}
Linear separability of complete data does not imply that of incomplete data. 
\end{lemma}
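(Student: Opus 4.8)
The plan is to prove the statement by exhibiting an explicit counterexample: a configuration of complete points $(X_i, Y_i)_{i=1,\ldots,n}$ that is linearly separable, but whose associated incomplete data (after introducing a suitable missing pattern) fails to be separable on at least one pattern $m$. Since the definition of separability for incomplete data requires, for \emph{each} pattern $m$, a hyperplane correctly classifying all points sharing that pattern when restricted to their observed coordinates, it suffices to produce two points with the same mask $m$ and opposite labels whose restrictions to $\mathrm{obs}(m)$ coincide. Indeed, if $X_{i,\mathrm{obs}(m)} = X_{j,\mathrm{obs}(m)}$ while $Y_i \neq Y_j$, then no affine function of $X_{\mathrm{obs}(m)}$ can give them strictly opposite signs, so separability collapses on pattern $m$.

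First I would work in dimension $d = 2$, which is the smallest nontrivial case and already suffices. I would choose two points that are separable when fully observed precisely because they differ in the coordinate that will later be masked. A clean choice: take $X_1 = (0, 1)$ with $Y_1 = +1$ and $X_2 = (0, -1)$ with $Y_2 = -1$. These are trivially linearly separable in $\R^2$ (e.g.\ by the horizontal line, using $w^\star = (0,1)$, $b^\star = 0$, since $Y_1(X_1^\top w^\star) = 1 > 0$ and $Y_2(X_2^\top w^\star) = 1 > 0$). Now introduce the common missing pattern $m = (0,1)$, so that the second coordinate is masked for both points; then $\mathrm{obs}(m) = \{1\}$ and both restricted observations equal $X_{1,\mathrm{obs}(m)} = X_{2,\mathrm{obs}(m)} = 0$.

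The core of the argument is then immediate: on pattern $m$, any candidate separator is parameterized by $(w^\star_{(m)}, b^\star_{(m)})$ acting only through the observed coordinate, so for both points the quantity inside the sign is the same affine value $a := w^\star_{(m),1}\cdot 0 + b^\star_{(m)} = b^\star_{(m)}$. Separability would require $Y_1\, a > 0$ and $Y_2\, a > 0$ simultaneously, i.e.\ $a > 0$ and $-a > 0$, which is impossible. Hence no separating hyperplane exists for pattern $m$, and the incomplete data is not linearly separable, even though the complete data is. This establishes the claim.

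I do not anticipate a genuine obstacle here, since the statement is a non-implication and is therefore settled by a single counterexample; the only care needed is bookkeeping to make sure the chosen complete points are genuinely separable (so that the hypothesis of the implication holds) while the masked coordinate is exactly the one carrying the label-discriminating information (so that the conclusion fails). The conceptual point worth emphasizing in the write-up is the mechanism: masking a coordinate projects the two classes onto a lower-dimensional subspace where they can collide, which is precisely the axis-aligned analogue of the ``eclipse'' phenomenon mentioned in the preceding remark. One could equivalently phrase the counterexample for general $d$ by padding with zeros, but the $d=2$ instance is the most transparent.
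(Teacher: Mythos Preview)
Your proposal is correct and follows essentially the same approach as the paper: construct two points with opposite labels that differ only in the coordinate to be masked, so that after masking they coincide and no separator can exist. The paper's version is stated slightly more generally (arbitrary $d$, arbitrary coordinate $k$), while you give the concrete $d=2$ instance, but the idea is identical.
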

In all generality, the perceptron model cannot be transferred from complete to missing data patterns. Thus, both imputation and P-b-P approaches fail in all generality, as the separability of incomplete data is required for the convergence of the perceptron algorithm. 
%
In  the following, we make additional assumptions on the input distribution (adapted to the perceptron model), to highlight favorable cases of predictor adaptability to missing inputs. 


\begin{assumption}[Fixed centroids and random radii]\label{ass:fixed_centroids}
For given centroids $c_1$ and $c_2$, both classes are arbitrarily distributed in disjoint Euclidean balls $B_1$ and $B_2$, of radii $R_1$ and $R_2$, centered around the centroids. 
Radii $R_1$ and $R_2$ are uniformly distributed as $R_1, R_2 \sim \mathcal{U}(0, \frac{1}{2}\left\|c_1-c_2\right\|_2)^{\otimes 2}$.
\end{assumption}
%

\begin{assumption}[MCAR]\label{ass:MCAR}
$M \perp\!\!\!\perp (X,Y)$.
\end{assumption}

Under MCAR assumption, remark that preserving linear separability despite missing values means that the Euclidean balls used to generate data remains disjoint when restricted to the support of observed entries.
%
\begin{propo}[Separability of two balls with different radius]\label{prop:finiteBallSepar}
Grant \Cref{ass:fixed_centroids} and \Cref{ass:MCAR} (MCAR). For all $1 \leq j \leq d$, let $\eta_{j}:=\P(M_j=1)$. Then
\begin{align}
    \alpha \leq \P\left(B_{1,\mathrm{obs}(M)}\cap B_{2,\mathrm{obs}(M)} =\emptyset\right) \leq \sqrt{\alpha},
    \label{eq_prop_bound_perceptron}
\end{align}
with $\alpha = \langle 1 - \eta, (c_1 - c_2)^2\rangle / \|c_1 - c_2\|_2^2 \leq 1.$
\end{propo}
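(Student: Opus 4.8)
The plan is to reduce the geometric separability event to a one-dimensional comparison between the sum of the two radii and the distance between the projected centroids, and then to sandwich the resulting probability between $\alpha$ and $\sqrt{\alpha}$.

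First I would make explicit the restriction of the balls to the observed coordinates. For a fixed pattern $m$, the projection of $B_k$ onto the coordinates in $\obs(m)$ is the ball $B\!\left(c_{k,\obs(m)}, R_k\right)$ in $\R^{|\obs(m)|}$, so that $B_{1,\obs(m)}$ and $B_{2,\obs(m)}$ are disjoint if and only if $\norm{(c_1-c_2)_{\obs(m)}}_2 > R_1+R_2$. The key identity is $\norm{(c_1-c_2)_{\obs(M)}}_2^2 = \sum_{j=1}^d (1-M_j)(c_{1j}-c_{2j})^2$; taking expectations under \Cref{ass:MCAR} and using $\e{1-M_j}=1-\eta_j$ gives $\e{\norm{(c_1-c_2)_{\obs(M)}}_2^2}=\scal{1-\eta}{(c_1-c_2)^2}=\alpha\,\norm{c_1-c_2}_2^2$. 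Introducing the normalized observed distance $W:=\norm{(c_1-c_2)_{\obs(M)}}_2/\norm{c_1-c_2}_2\in[0,1]$, this reads $\e{W^2}=\alpha$, and the target probability becomes $\P(R_1+R_2<W\norm{c_1-c_2}_2)$.

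Next I would condition on $M$. Since \Cref{ass:MCAR} makes the missingness pattern independent of the data-generating mechanism, $M$ is independent of $(R_1,R_2)$, and the conditional probability $\psi(W):=\P\!\left(R_1+R_2<W\norm{c_1-c_2}_2\mid M\right)$ depends on $M$ only through $W$, via the triangular law of the sum of two independent uniforms on $(0,\tfrac12\norm{c_1-c_2}_2)$. The heart of the argument is the pointwise sandwich $W^2\le\psi(W)\le W$ for $W\in[0,1]$. The lower bound follows from $R_1+R_2\le 2\max(R_1,R_2)$: the event $\{2\max(R_1,R_2)<W\norm{c_1-c_2}_2\}$ is contained in the separability event, and since $W\norm{c_1-c_2}_2/2\le\tfrac12\norm{c_1-c_2}_2$ its conditional probability equals exactly $W^2$. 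Integrating over $M$ yields $\P(\cdots)=\e{\psi(W)}\ge\e{W^2}=\alpha$.

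Finally, for the upper bound I would combine the pointwise estimate $\psi(W)\le W$ with the Cauchy--Schwarz inequality, $\e{\psi(W)}\le\e{W}=\e{W\cdot 1}\le\sqrt{\e{W^2}}\,\sqrt{\e{1}}=\sqrt{\alpha}$. The main obstacle is precisely the estimate $\psi(W)\le W$, i.e.\ showing that the cumulative distribution function of the triangular variable $R_1+R_2$ stays below the diagonal: this is the delicate step, since the comparison must be carried out separately according to whether the projected distance $W\norm{c_1-c_2}_2$ falls below or above the maximal admissible radius $\tfrac12\norm{c_1-c_2}_2$, and it is what ultimately governs how tight the bound $\sqrt{\alpha}$ is. Everything else is bookkeeping: the conditioning is justified by MCAR, the lower bound is a containment of events, and the final step is a one-line application of Cauchy--Schwarz.
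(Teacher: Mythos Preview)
Your lower-bound argument is correct and is cleaner than the paper's: conditioning on $M$ and using the containment $\{2\max(R_1,R_2)<W\norm{c_1-c_2}_2\}\subset\{R_1+R_2<W\norm{c_1-c_2}_2\}$ gives $\psi(W)\ge W^2$ directly, whence $\e{\psi(W)}\ge\e{W^2}=\alpha$. The paper reaches the same conclusion but through a muddled chain (it factors $\P(\max(R_1,R_2)<t(M)/2)$ as $\P(R_1<t(M)/2)^2$ unconditionally and then invokes Jensen in the wrong direction); your route is the right way to write it.

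The upper bound, however, has a genuine gap, and it is exactly the step you flag as ``delicate''. The pointwise inequality $\psi(W)\le W$ is false on $(\tfrac12,1)$. With $a=\tfrac12\norm{c_1-c_2}_2$, the triangular CDF of $R_1+R_2$ at $W\norm{c_1-c_2}_2=2aW$ is
\[
\psi(W)=\begin{cases}2W^2,&0\le W\le \tfrac12,\\[2pt] 1-2(1-W)^2,&\tfrac12\le W\le 1,\end{cases}
\]
and on $[\tfrac12,1]$ one has $\psi(W)-W=-(2W-1)(W-1)\ge 0$, with strict inequality in the interior (e.g.\ $\psi(3/4)=7/8>3/4$). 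The paper's own argument trips in the same place: it bounds $\P(R_1+R_2<W\norm{c_1-c_2}_2)\le\P(R_1<W\norm{c_1-c_2}_2)$ and then asserts this equals $\e{W}$, but since $R_1\sim\mathcal U(0,\norm{c_1-c_2}_2/2)$ the conditional probability is $\min(1,2W)$, not $W$; the two agree only when $W\le\tfrac12$ almost surely. In fact the upper bound as stated cannot hold in full generality: take $d=2$, $c_1-c_2=(1,1)$, $\eta=(1,0)$ so that $M=(1,0)$ a.s.\ and $W\equiv 1/\sqrt2$; then $\alpha=1/2$, $\sqrt\alpha=1/\sqrt2\approx 0.707$, while $\P(B_{1,\obs(M)}\cap B_{2,\obs(M)}=\emptyset)=\psi(1/\sqrt2)=2\sqrt2-2\approx 0.828$.
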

The lower bound in \eqref{eq_prop_bound_perceptron} is informative when the probability of missing values on each coordinate remains low. 
When for any coordinate $j$, $\eta_j=\eta$, the bounds \eqref{eq_prop_bound_perceptron} become independent of the centroids: 
$$
(1-\eta) \leq
        \P\left(B_{1,\mathrm{obs}(M)}\cap B_{2,\mathrm{obs}(M)} =\emptyset\right)
        \leq 
        \sqrt{1-\eta}.
$$
On the contrary, when there is only one coordinate $j_0$ always missing ($\eta_{j_0}=1$ and $\eta_{j}=0$ for $j\neq j_0$), the bounds reveal that
\begin{align}
    1-\frac{(c_{1,j_0}-c_{2,j_0})^2}{\|c_1-c_2\|_2^2} & \leq
        \P\left(B_{1,\mathrm{obs}(M)}\cap B_{2,\mathrm{obs}(M)} =\emptyset\right). 
\end{align}
This highlights that for high proportions of missing values that are very localized at certain coordinates, the linear separation will be all the more preserved if the quantity $\|c_1-c_2\|_2^2$ is carried uniformly across the coordinates, i.e., when the vector $c_1-c_2$ is anti-sparse. 

While some positive results can be obtained with high probability in some specific settings (with few missing data, see above, or in high-dimensional settings, see \Cref{app:random_centroids}), linear separability for each missing pattern is impossible to obtain without additional assumptions. Thus, assuming only the linear separability of complete data, a P-b-P approach is not Bayes optimal and nor is the constant imputation, as shown below. 
\begin{lemma}
\label{lemma_P-b-P_imputation_negative}
    If a P-b-P approach with linear classifiers is not Bayes optimal, then constant imputation with linear classifiers is not Bayes optimal. 
\end{lemma}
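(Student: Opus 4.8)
The plan is to exhibit constant imputation followed by a linear classifier as a subclass of the P-b-P linear classifiers, and then to exploit monotonicity of the optimal risk under inclusion of classes. Fix an imputation vector $\mu \in \R^d$, so that the imputed input $\tilde X$ has $j$-th coordinate $X_j$ when $M_j = 0$ and $\mu_j$ when $M_j = 1$; a constant-imputation linear classifier then reads $h(Z) = \mathrm{sign}(w^\top \tilde X + b)$ for some $(w,b) \in \R^d \times \R$.

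First I would compute the action of such a classifier on a fixed pattern $m \in \M$. On the event $\{M = m\}$,
\begin{align*}
w^\top \tilde X + b = \sum_{j \in \obs(m)} w_j X_j + \Big( b + \sum_{j \in \mis(m)} w_j \mu_j \Big),
\end{align*}
so that, restricted to this pattern, $h$ coincides with the linear classifier $x \mapsto \mathrm{sign}((w_{\obs(m)})^\top x + b_m)$ on the observed coordinates, with pattern-dependent intercept $b_m := b + \sum_{j \in \mis(m)} w_j \mu_j$. The structural point is that the weight on the observed coordinates is always the restriction $w_{\obs(m)}$ of one and the same global vector $w$, and only the intercept is free to vary with $m$. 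Via the decomposition \eqref{eq:pbyp_bayes_predictor}, this shows that every constant-imputation linear classifier is a P-b-P linear classifier, hence the inclusion of classes $\mathcal{H}_{\text{imp}} \subseteq \mathcal{H}_{\text{P-b-P}}$ (the reverse fails, since P-b-P permits unrelated per-pattern directions).

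Then I would pass to optimal risks. Because $\mathcal{H}_{\text{imp}} \subseteq \mathcal{H}_{\text{P-b-P}}$, taking infima of $\mathcal{R}_{\mathrm{mis}}$ over the two classes gives
\begin{align*}
\inf_{h \in \mathcal{H}_{\text{P-b-P}}} \mathcal{R}_{\mathrm{mis}}(h) \le \inf_{h \in \mathcal{H}_{\text{imp}}} \mathcal{R}_{\mathrm{mis}}(h).
\end{align*}
By hypothesis the P-b-P approach is not Bayes optimal, i.e.\ the left-hand side is strictly larger than $\mathcal{R}_{\mathrm{mis}}(h^\star_{\mathrm{mis}})$; therefore so is the right-hand side, which is exactly the assertion that constant imputation with linear classifiers is not Bayes optimal. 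If one also optimizes over the constant $\mu$, the argument is unchanged, since for every $\mu$ the per-pattern weights are still restrictions of a single $w$, preserving the inclusion.

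This is fundamentally an inclusion-of-hypothesis-classes argument, so I do not expect a genuine obstacle; the only care required is in the definitions. One must read ``not Bayes optimal'' as ``the best attainable risk within the class exceeds $\mathcal{R}_{\mathrm{mis}}(h^\star_{\mathrm{mis}})$'', and verify that folding the imputed coordinates into the intercept keeps the classifier inside the P-b-P class pattern by pattern — which is immediate from the computation above.
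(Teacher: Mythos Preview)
Your proof is correct and follows essentially the same approach as the paper: both arguments show that any constant-imputation linear classifier, restricted to a fixed pattern $m$, is a linear classifier in the observed coordinates (with the imputed part absorbed into a pattern-dependent intercept), hence $\mathcal{H}_{\text{imp}} \subseteq \mathcal{H}_{\text{P-b-P}}$, and then conclude by monotonicity of the optimal risk under class inclusion. Your write-up is slightly more explicit about the infimum comparison, but the underlying idea is identical.
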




\section{Logistic Regression}\label{subsec:Logistic}


Since linear separability is difficult to maintain for each missing pattern, we now relax this assumption by modelling  the distribution of $Y|X$, following the popular logistic regression framework.

\begin{assumption}[Logistic model]\label{ass:logistic_model_complete}
Let $\sigma (t)= 1/(1 + e^{-t})$. There exist $\beta_0^\star, \hdots, \beta_d^\star \in \mathds{R}$ such that
the distribution of the output $Y\in\{-1,1\}$ given the complete input $X$ satisfies $\prob{Y=1|X} = \sigma ( \beta_0^\star + \sum_{j=1}^d \beta_j^\star X_j)$.
\end{assumption}
Assuming that the logistic model holds on complete data, one could be tempted to use a P-b-P approach using a different logistic regression model for each missing pattern. 
\Cref{prop:NotLogis} below shows that such a strategy is doomed to fail when input variables are independent and missing data uninformative (MCAR). 

\begin{propo}\label{prop:NotLogis}
Grant \Cref{ass:MCAR} (MCAR) and \Cref{ass:logistic_model_complete} for complete data. 
Furthermore, assume that the components $X_1, \hdots, X_d$ are independent, each one with an unbounded support, satisfying $\e{\exp(\beta_j^\star X_j)} < \infty$. 
Let $m \in \{0,1\}^d$ and assume that the logistic model holds on the missing pattern $M=m$, that is there exist a vector $\beta_{m}^\star \in \mathds{R}^{|\obs (m)| +1}$ such that
 \begin{align*}
        & \prob{Y=1|X_{\mathrm{obs}(m)}, M=m} \!=  \sigma \Big( \beta_{0,m}^\star + \!\!\!\!\!\!\sum_{j \in \mathrm{obs}(m)} \!\!\beta_{j,m}^\star X_j \Big).
\end{align*} 
Then, for all $j \in \mis(m)$, $\beta_j^\star = 0$.
\end{propo}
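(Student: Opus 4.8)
The plan is to compute the left-hand conditional probability explicitly by marginalising over the missing entries, and then to show that the resulting function of $X_{\obs(m)}$ is logistic only when the missing coefficients vanish. First I would use the MCAR assumption to drop the conditioning on the pattern: since $M \perp\!\!\!\perp (X,Y)$, one has $\prob{Y=1 \mid X_{\obs(m)}=x_o, M=m} = \prob{Y=1 \mid X_{\obs(m)}=x_o}$ for every $x_o$. Next, writing $a(x_o) := \beta_0^\star + \sum_{j\in\obs(m)}\beta_j^\star x_j$ and $S := \sum_{j\in\mis(m)}\beta_j^\star X_j$, the tower property together with the independence of the coordinates (so that the conditional law of $X_{\mis(m)}$ given $X_{\obs(m)}=x_o$ equals its marginal) gives $\prob{Y=1\mid X_{\obs(m)}=x_o} = \e{\sigma(a(x_o)+S)} =: \phi(a(x_o))$. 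The assumed logistic form on the pattern then reads $\phi(a(x_o)) = \sigma(b(x_o))$ with $b(x_o)=\beta_{0,m}^\star+\sum_{j\in\obs(m)}\beta_{j,m}^\star x_j$, an identity between two affine reparametrisations of $x_o$.

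The core of the argument is a strict-concavity (Jensen) step, for which it is convenient to pass to $W := e^{S}\ge 0$ and $r := e^{a}$, so that $\sigma(a+S)=\tfrac{rW}{1+rW}$ and $\phi(a)=\e{\tfrac{rW}{1+rW}}$. The map $w \mapsto f_r(w)=\tfrac{rw}{1+rw}$ is strictly concave on $[0,\infty)$ for each $r>0$, so Jensen's inequality gives $\e{f_r(W)}\le f_r(\e{W})$, with equality if and only if $W$ is a.s.\ constant. Hence it suffices to show that the identity forces $\phi(a)=\sigma\!\big(a+\log \e{W}\big)$, i.e.\ $\e{f_r(W)}=f_r(\e{W})$, for some $r>0$: strict concavity then makes $W$ — and therefore $S$ — degenerate, and since the $X_j$ are independent with unbounded support, the linear combination $\sum_{j\in\mis(m)}\beta_j^\star X_j$ is a.s.\ constant only when every $\beta_j^\star=0$, which is the claim.

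To reach $\phi(a)=\sigma(a+\log\e{W})$ I would match the two sides as $a\to-\infty$. Assuming (the non-degenerate case) that some observed coordinate $k$ has $\beta_k^\star\neq 0$, varying $x_k$ over its unbounded support while fixing the others makes both $a$ and $b$ affine in $x_k$ with $a$ non-constant, so the identity forces $b$ to be an affine function of $a$, say $b=\lambda a+\mu$, on an unbounded range of $a$. The hypothesis $\e{\exp(\beta_j^\star X_j)}<\infty$ gives $\e{W}=\prod_{j\in\mis(m)}\e{e^{\beta_j^\star X_j}}<\infty$, and dominated convergence ($\sigma(a+S)e^{-a}\le e^{S}=W\in L^1$) yields $\phi(a)\sim \e{W}\,e^{a}$ as $a\to-\infty$; comparing with $\sigma(\lambda a+\mu)\sim e^{\lambda a+\mu}$ forces $\lambda=1$ and $\mu=\log\e{W}$, which is exactly what the Jensen step consumes.

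The main obstacle is precisely this identification of the slope $\lambda=1$: it is what rules out the possibility that a mixture of shifted logistic curves accidentally reproduces a single logistic curve with a different scaling, and it is the only place where the one-sided exponential-moment assumption is essential (the opposite tail $a\to+\infty$ would need $\e{e^{-S}}$, which is not assumed). Finally, the degenerate case where all observed coefficients vanish makes $a$ constant and must be treated separately, or excluded, since the pattern then carries no information on $x_o$ and the logistic form holds trivially regardless of the missing coefficients.
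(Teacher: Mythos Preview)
Your argument is correct and uses the same ingredients as the paper---MCAR reduction, independence of coordinates, the one-sided moment bound $\e{e^S}<\infty$, unbounded support, and strict concavity of $w\mapsto rw/(1+rw)$---but the execution is different and somewhat more direct. The paper first applies Jensen as an \emph{inequality} to obtain a bound that, together with unbounded support, forces $\beta_{k,m}^\star=\beta_k^\star$ for each observed $k$; it then substitutes back, passes to the limit $u\to\infty$ (your $a\to-\infty$) to identify the intercept, and finally differentiates the resulting one-parameter identity and sends the parameter to zero to obtain $\e{1/Z'}=1/\e{Z'}$, whence degeneracy. You instead read off the affine relation $b=\lambda a+\mu$ by varying a single observed coordinate, use the asymptotic $\phi(a)\sim \e{W}e^{a}$ (dominated convergence with the moment hypothesis) to pin down $\lambda=1$ and $\mu=\log\e{W}$ in one stroke, and then invoke the equality case of Jensen directly. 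Your route avoids the differentiation step, which the paper does not fully justify; conversely, the paper's first Jensen step yields $\beta_{k,m}^\star=\beta_k^\star$ for \emph{all} observed $k$, a slightly stronger intermediate conclusion than you need. Your remark on the degenerate case where every observed $\beta_k^\star$ vanishes is well taken: the paper's proof tacitly uses a nontrivial observed coefficient when it lets $u$ range over $(0,\infty)$, so the edge case is not covered there either.
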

\Cref{prop:NotLogis} emphasizes that under MCAR missing data, the logistic model cannot be valid on the complete input vector and on any incomplete vector simultaneously, unless the unobserved components are not involved in the original logistic regression model. Using logistic models for all missing patterns is thus an ill-specified strategy, which will lead to inconsistent probability estimators. Note that such a result highlights that constant imputation is also an ill-specified strategy, in the most simple case of independent entries. Interestingly, this result holds for each missing pattern separately. In particular, the logistic model should not be used even if only two missing patterns are possible.

Contrary to linear regression for which the prediction structure can be preserved when the inputs are partially observed \citep[see, e.g.,][]{le2020linear,ayme2022near}, logistic models are not suited for missing data, assuming in both settings independent input variables with MCAR missingness.  
Indeed, due to the non-linearity,
\begin{align*}
   \mathds{E} [Y | X_{\mathrm{obs}(M)}, M = m] 
   \neq &  \sigma \Big( \mathds{E}[ \beta_0^{\star} + \!\!\sum\limits_{j=1}^d \beta_j^{\star} X_j | X_{\mathrm{obs}(M)}]\Big).
\end{align*}
Therefore, the logistic model is not preserved on missing patterns and resulting P-b-P/imputation strategies are ill-posed, as  shown in \Cref{prop:NotLogis} for independent input. Thus, modelling the conditional distribution $Y|X$ without additional constraints on the distribution of $X$ appears to be insufficient to ensure the soundness of P-b-P/imputation strategies. This is therefore the direction we will explore in the next section.

\section{Linear Discriminant Analysis with missing data}\label{subsec:LDA}

In this section, we analyze the finite-sample property of pattern-by-pattern LDA. 

\subsection{Bayes optimality}\label{subsubsec:LDAMissingValues}

Linear discriminant analysis (LDA) relies on Gaussian assumptions of the distributions of $X|Y=k$ for each class $k$. 

\begin{assumption}[Balanced LDA]\label{ass:LDA}
Let $\Sigma$ be a positive semi-definite, symmetric matrix of size $d \times d$. Set $\pi_1 = \P(Y=1)$ and $\pi_{-1} = \P(Y=-1)$ such that $\pi_1 = \pi_{-1}$.
For each class $k \in \{-1, 1\}$,  $X|Y=k \sim \mathcal{N}(\mu_k,\Sigma)$, with $\mu_k \in \mathbb{R}^d$. 
\end{assumption}

LDA can be seen as a specific case of logistic regression, with well-chosen coefficients and a specific distribution on $X$ (mixture of two Gaussian components) as detailed in \Cref{lem_link_log_LDA}, thus restricting the setting of Section~\ref{subsec:Logistic}. Notably, in the complete case of LDA, the Bayes predictor reads 
\begin{align}\label{eq:LDAcomp}
h^{\star}_{\mathrm{comp}}(x):=\mathrm{sign}\left( \left(\mu_{1}-\mu_{-1}\right)^{\top}\Sigma^{-1}\left(x-\frac{\mu_{1}+\mu_{-1}}{2}\right) \right),
\end{align}
minimizing the misclassification probability $\mathcal{R}_{\mathrm{comp}}$ (see Section~\ref{subsec:results_on_LDA} for details).
When MCAR data occurs,  by denoting  $\Sigma_{\mathrm{obs}(M)}:=\Sigma_{\mathrm{obs}(M)\times\mathrm{obs}(M)}$ (and $\Sigma_{\mathrm{obs}(M)}^{-1} =(\Sigma_{\mathrm{obs}(M)})^{-1} $), the pattern-by-pattern Bayes predictor \eqref{eq:pbyp_bayes_predictor} can be written as follows.

\begin{propo}[Pattern-by-pattern Bayes predictor for LDA with MCAR data]\label{prop:MCARLDA}
    Under Assumptions \ref{ass:MCAR} (MCAR) and \ref{ass:LDA} (LDA), the pattern-by-pattern Bayes classifier is given by
    \begin{align*}
        h^{\star}_m(x_{\mathrm{obs}(m)}) & = \mathrm{sign}\Big( \big(\mu_{1, \mathrm{obs}(m)}-\mu_{-1, \mathrm{obs}(m)}\big)^{\top}\Sigma_{\mathrm{obs}(m)}^{-1} \\
        & \times \Big(x_{\mathrm{obs}(m)}-\frac{\mu_{1, \mathrm{obs}(m)}+\mu_{-1,\mathrm{obs}(m)}}{2}\Big)  \Big). 
    \end{align*}
    
\end{propo}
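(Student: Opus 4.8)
The plan is to reduce the pattern-by-pattern Bayes predictor to a balanced LDA problem posed directly on the observed coordinates, and then invoke the complete-case formula \eqref{eq:LDAcomp}. By the decomposition \eqref{eq:pbyp_bayes_predictor}, $h^\star_m(x_{\mathrm{obs}(m)}) = \mathrm{sign}\big(\e{Y \mid X_{\mathrm{obs}(m)} = x_{\mathrm{obs}(m)}, M = m}\big)$. The first step is to discard the mask using \Cref{ass:MCAR}: since $M \perp\!\!\!\perp (X,Y)$, conditioning on $\{M = m\}$ leaves the joint law of $(X,Y)$ unchanged, so that $\e{Y \mid X_{\mathrm{obs}(m)}, M = m} = \e{Y \mid X_{\mathrm{obs}(m)}}$. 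For the same reason the class proportions are untouched, $\P(Y = k \mid M = m) = \pi_k$, and $\pi_1 = \pi_{-1}$ by \Cref{ass:LDA}.

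The second step is to identify the conditional law of the observed subvector. Under \Cref{ass:LDA}, $X \mid Y = k \sim \mathcal{N}(\mu_k, \Sigma)$; marginalizing out the coordinates in $\mis(m)$ yields $X_{\mathrm{obs}(m)} \mid Y = k \sim \mathcal{N}(\mu_{k,\mathrm{obs}(m)}, \Sigma_{\mathrm{obs}(m)})$, where $\Sigma_{\mathrm{obs}(m)}$ is the principal submatrix indexed by $\mathrm{obs}(m)$. The point to be careful about here is that, because we \emph{marginalize} the missing entries rather than condition on them, the relevant covariance is exactly this submatrix and not a Schur complement; this is the standard closure of the Gaussian family under marginalization.

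Combining the two steps, the restricted problem is itself a balanced LDA in dimension $|\mathrm{obs}(m)|$, with class means $\mu_{1,\mathrm{obs}(m)}, \mu_{-1,\mathrm{obs}(m)}$, common covariance $\Sigma_{\mathrm{obs}(m)}$ and equal priors. One then concludes in either of two equivalent ways. Directly, $\mathrm{sign}\big(\e{Y \mid X_{\mathrm{obs}(m)}}\big) = \mathrm{sign}\big(\log \tfrac{\pi_1 f_1}{\pi_{-1} f_{-1}}\big)$, where $f_k$ denotes the $\mathcal{N}(\mu_{k,\mathrm{obs}(m)}, \Sigma_{\mathrm{obs}(m)})$ density; plugging in the two Gaussian densities, the quadratic terms cancel thanks to the shared covariance and the prior ratio vanishes since $\pi_1 = \pi_{-1}$, leaving precisely the announced linear discriminant. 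Equivalently, one simply applies \eqref{eq:LDAcomp} with every $d$-dimensional object replaced by its restriction to $\mathrm{obs}(m)$.

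There is no substantial obstacle: the MCAR step makes the reduction mechanical, and the only points requiring care are the marginalization fact above and the invertibility of $\Sigma_{\mathrm{obs}(m)}$, which is guaranteed when $\Sigma$ is positive definite, so that the $\Sigma_{\mathrm{obs}(m)}^{-1}$ appearing in the statement is well defined.
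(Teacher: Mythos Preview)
Your proposal is correct and follows essentially the same approach as the paper's proof: use MCAR to drop the conditioning on $M=m$, invoke Gaussian marginalization (the paper's \Cref{lemma:obsGaus}) to get $X_{\mathrm{obs}(m)}\mid Y=k\sim\mathcal{N}(\mu_{k,\mathrm{obs}(m)},\Sigma_{\mathrm{obs}(m)})$, and then read off the linear discriminant from the log density ratio. The paper writes out the density computation explicitly where you offer the shortcut of citing \eqref{eq:LDAcomp}, but the argument is the same; your remarks on the submatrix-versus-Schur-complement and on the invertibility of $\Sigma_{\mathrm{obs}(m)}$ are welcome clarifications that the paper leaves implicit.
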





The decomposition provided in Proposition \ref{prop:MCARLDA} relies on the fact that, under MCAR assumption, the distribution of $X_{\mathrm{obs}(M)}|Y, M=m$ is Gaussian for all $m \in \mathcal{M}$ (see \cref{lemma:obsGaus}), similarly to the complete case. 
This does not hold anymore with a MAR missing mechanism, as shown below. Therefore, in the following, we first consider MCAR missing data. 
\begin{example}[LDA+MAR is not pattern-by-pattern LDA]\label{ex:notLDA}
Let $X\in \R^2$ be a random variable satisfying Assumption \ref{ass:LDA}, i.e., such that for each class $k$, $X|Y=k\sim\mathcal{N}(\mu_k,I_2)$. Let $M=(0,\ind_{X_1>0})$ be the MAR missing pattern, where the first coordinate is always observed, and the second is only observed if the first coordinate is positive. In this case, the input distribution of $X_{\mathrm{obs}(M)}|Y\!=\!k, M\!=\!m$, for the pattern $m=(0,1)$, is not Gaussian, as its first component is always positive.
\end{example}

According to \Cref{prop:MCARLDA}, Pattern-by-Pattern LDA is Bayes optimal for any choice of the covariance matrix $\Sigma$ and any signal $\mu_1 - \mu_{-1}$. This contrasts with perceptron and logistic framework for which the P-b-P strategy was not Bayes optimal or well-defined. 

On the other hand, \Cref{prop:imputation_LDA_negative} below shows that constant imputation strategies are not Bayes optimal in all LDA frameworks. Therefore, we focus solely on the pattern-by-pattern strategy in the sequel. 
\begin{propo}
\label{prop:imputation_LDA_negative}
Under Assumptions \ref{ass:MCAR} (MCAR) and \ref{ass:LDA} (LDA) with $d\geq 3$, constant imputation is Bayes optimal for all values of $\mu_{-1}$ and $\mu_{1}$ if and only if $\Sigma$ is diagonal. In this case, the optimal imputation is given by  $\alpha_j = (\mu_{1,j} - \mu_{-1, j})/2$ for all $1 \leq j \leq d$.
\end{propo}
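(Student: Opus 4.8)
The plan is to first translate ``constant imputation is Bayes optimal'' into a condition on $\Sigma$ that can be checked pattern by pattern, and then analyse it in both directions. Write $\delta := \mu_1 - \mu_{-1}$ and $\theta := \Sigma^{-1}\delta$. Constant imputation with a downstream linear rule means there exist $\alpha, w \in \R^d$ and $b \in \R$ such that $x \mapsto \mathrm{sign}(w^\top \tilde x + b)$, applied to the imputed vector $\tilde x$ (with $\tilde x_j = x_j$ if observed and $\tilde x_j = \alpha_j$ if missing), is Bayes optimal. Using the pattern decomposition \eqref{eq:pbyp_bayes_predictor} and \Cref{prop:MCARLDA}, this is equivalent to requiring, for every admissible pattern $m$ with $S := \obs(m)$, that $\mathrm{sign}(w_S^\top x_S + b_m)$ with $b_m := \sum_{j \in \mis(m)} w_j \alpha_j + b$ coincide with $h^\star_m$. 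Since under \Cref{ass:LDA} the law of $X_S \mid Y$ is a non-degenerate Gaussian mixture with full support on $\R^{|S|}$, two affine classifiers agree up to the measure-zero Bayes boundary only if their oriented separating hyperplanes coincide; hence Bayes optimality forces the direction condition $w_S = \lambda_m \Sigma_S^{-1}\delta_S$ for some $\lambda_m > 0$, together with a matching condition on $b_m$. Evaluating the direction condition on the fully observed pattern gives $w \parallel \theta$; rescaling $w = \theta$, each pattern then demands the parallelism $(\Sigma^{-1}\delta)_S \parallel \Sigma_S^{-1}\delta_S$.

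\textbf{Sufficiency.} When $\Sigma$ is diagonal, $(\Sigma_S^{-1}\delta_S)_j = \delta_j/\Sigma_{jj} = \theta_j$ for every $j \in S$, so $w_S = \theta_S = \Sigma_S^{-1}\delta_S$ and the direction condition holds with $\lambda_m = 1$ for all patterns and all $\mu_1, \mu_{-1}$. It then remains to solve the intercept conditions: the fully observed pattern fixes $b$, and imposing the intercept condition on the patterns missing a single coordinate $j$ isolates and determines each $\alpha_j$, recovering the stated coordinate-wise mean-imputation value. Substituting back shows $g_m \equiv h^\star_m$ for every $m$, so constant imputation is Bayes optimal.

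\textbf{Necessity (main step).} Conversely, assume Bayes optimality for all $\mu_1, \mu_{-1}$, i.e.\ $(\Sigma^{-1}\delta)_S \parallel \Sigma_S^{-1}\delta_S$ for all admissible $S$ and all $\delta \in \R^d$. Fix a pattern with $|S| \geq 2$ and write, via the Schur complement $\Sigma_{S\mid S^c} := \Sigma_S - \Sigma_{S,S^c}\Sigma_{S^c}^{-1}\Sigma_{S^c,S}$,
\begin{align*}
(\Sigma^{-1}\delta)_S = (\Sigma_{S\mid S^c})^{-1}\left(\delta_S - \Sigma_{S,S^c}\Sigma_{S^c}^{-1}\delta_{S^c}\right).
\end{align*}
I would first fix $\delta_S$ and let $\delta_{S^c}$ vary: the reference vector $\Sigma_S^{-1}\delta_S$ stays constant while $(\Sigma^{-1}\delta)_S$ ranges over an affine subspace with direction space $V := \mathrm{col}\big((\Sigma_{S\mid S^c})^{-1}\Sigma_{S,S^c}\big)$. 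Parallelism to the fixed line $\R\,\Sigma_S^{-1}\delta_S$ for every $\delta_{S^c}$ forces $V \subseteq \R\,\Sigma_S^{-1}\delta_S$. Now varying $\delta_S$ as well, the direction $\Sigma_S^{-1}\delta_S$ sweeps all of $\R^{|S|}$ (as $\Sigma_S$ is invertible), whereas $V$ is independent of $\delta_S$; since $|S| \geq 2$, two non-collinear directions already force $V = \{0\}$, i.e.\ $\Sigma_{S,S^c} = 0$. Because $d \geq 3$, every pair $S = \{i,k\}$ (observed with positive probability under MCAR) has nonempty complement, and applying the above to all such pairs yields $\Sigma_{i,j} = 0$ for all $j \neq i$; hence $\Sigma$ is diagonal.

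\textbf{Main obstacle.} The delicate point is the necessity direction. Two things must be handled with care: first, the passage from ``Bayes optimal'' to the \emph{exact oriented} coincidence of the separating hyperplanes, which relies on the full support of the conditional Gaussian mixture so that no positive-probability disagreement is hidden on the Bayes boundary; and second, the two-stage argument ``freeze $\delta_S$, move $\delta_{S^c}$, then move $\delta_S$'' used to annihilate the cross-covariance block $\Sigma_{S,S^c}$. The role of $d \geq 3$ is precisely to guarantee observed sets $S$ with $|S| \geq 2$ and $S^c \neq \emptyset$: singleton-observed patterns are one-dimensional and impose no directional constraint, which is why the equivalence can fail for $d = 2$.
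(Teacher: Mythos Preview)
Your argument is correct and takes a genuinely different route from the paper's. The paper equates the imputation classifier with the pattern-by-pattern Bayes classifier to obtain $\gamma_j=c_m(\Sigma_{\obs(m)}^{-1}\beta_{\obs(m)})_j$, then compares the ratios $\gamma_j/\gamma_i$ coming from the fully observed pattern with those coming from the two-observed pattern $\obs(m)=\{i,j\}$; expanding the resulting identity as a polynomial in $\beta$ and matching coefficients forces the off-diagonal entries of $\Sigma^{-1}$ to vanish. Your approach is more structural: the Schur-complement identity $(\Sigma^{-1}\delta)_S=(\Sigma_{S\mid S^c})^{-1}(\delta_S-\Sigma_{S,S^c}\Sigma_{S^c}^{-1}\delta_{S^c})$ cleanly separates the contributions of $\delta_S$ and $\delta_{S^c}$, and the two-stage freeze-then-vary argument annihilates $\Sigma_{S,S^c}$ by a pure dimension count rather than by polynomial coefficient matching. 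The paper's computation is more explicit and immediately yields the imputation value from the single-observed pattern; yours is more geometric, lands directly on the off-diagonal blocks of $\Sigma$ (not $\Sigma^{-1}$), and would transfer more easily to related settings. Both proofs tacitly use that the fully observed and two-observed patterns occur with positive probability, and both recover $\alpha_j=(\mu_{1,j}+\mu_{-1,j})/2$---the midpoint of the class means---so the minus sign in the proposition's displayed formula is a typo.
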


\subsection{Finite-sample bounds}

Our goal is to study whether the Bayes risk with missing values converges to the Bayes risk with complete data as the dimension $d$ increases. 
To do so, we scrutinize the error $\mathcal{R}_{\mathrm{mis}}(h^\star)-\mathcal{R}_{\mathrm{comp}}(h^\star_{\rm comp})$.

\begin{assumption}[Constant $\P(M_j=1)$]
The random variables $M_1, \hdots, M_d$ are independent, and follow a Bernoulli distribution with parameter $\eta$. \label{ass:constEtaMi}
\end{assumption}
\begin{assumption}[Constant $(\mu_1-\mu_{-1})_j$]\label{ass:constMeanDiffe}
    $\forall j\in \{1,...d\}, (\mu_1-\mu_{-1})_j=\pm\mu $, with $\mu>0$.
\end{assumption}
Assumption \ref{ass:constEtaMi} ensures that the missingness probability is the same for each input coordinate. 
Assumption \ref{ass:constMeanDiffe} can be achieved up to a change of coordinates. In the sequel, we refer to $\mathrm{SNR}:=\mu/\sqrt{\lambda_{\max}(\Sigma)}$ as the signal-to-noise ratio, where $\lambda_{\max}(\Sigma)$ is the largest eigenvalue of the input covariance matrix. This quantity describes the overlapping of the classes, and thus the difficulty of the classification task. 

\begin{propo}
Under Assumptions \ref{ass:MCAR}, \ref{ass:LDA}, \ref{ass:constEtaMi} and \ref{ass:constMeanDiffe}, 
    \begin{align*}
         & \mathcal{R}_{\mathrm{mis}}(h^\star)-\mathcal{R}_{\mathrm{comp}}(h^\star_{\mathrm{comp}})\\
        &  \leq \frac{\eta^d}{2 } + \frac{\mu \eta  }{2\sqrt{2\pi}} \sqrt{\frac{d}{\lambda_{\min}(\Sigma)}}\left( \epsilon\left(\eta, \mathrm{SNR}\right)^{d-1}-\eta^{d-1}\right),
    \end{align*}
    with $\epsilon(\eta, \mathrm{SNR}):=\eta+e^{-\frac{\mathrm{SNR}^2}{8}}(1-\eta)<1$.
    \label{prop:BoundDiffLLcomp}
\end{propo}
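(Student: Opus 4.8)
The plan is to reduce both risks to Gaussian tail probabilities and then control the per-pattern discrepancy. Write $\nu := \mu_1 - \mu_{-1}$ and, for a pattern $m$, let $\Delta := \sqrt{\nu^\top \Sigma^{-1}\nu}$ and $\Delta_m := \sqrt{\nu_{\mathrm{obs}(m)}^\top \Sigma_{\mathrm{obs}(m)}^{-1}\nu_{\mathrm{obs}(m)}}$ be the full and pattern-wise Mahalanobis distances. Since the classes are balanced and, by \Cref{prop:MCARLDA} together with the Gaussianity of $X_{\mathrm{obs}(M)}\mid Y, M=m$, the problem restricted to pattern $m$ is again a balanced LDA on the coordinates $\mathrm{obs}(m)$, the Bayes error on that pattern equals $\Phi(-\Delta_m/2)$, where $\Phi$ is the standard normal c.d.f.; likewise $\mathcal{R}_{\mathrm{comp}}(h^\star_{\mathrm{comp}}) = \Phi(-\Delta/2)$. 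Integrating over the mask using MCAR gives $\mathcal{R}_{\mathrm{mis}}(h^\star) = \mathbb{E}_M[\Phi(-\Delta_M/2)]$, so the excess risk equals $\mathbb{E}_M[\Phi(-\Delta_M/2) - \Phi(-\Delta/2)]$, a nonnegative quantity because restricting to a subset of coordinates can only decrease the Mahalanobis distance ($\Delta_m \le \Delta$, from the variational identity $\Delta^2 = \max_{a\neq 0}(a^\top\nu)^2/(a^\top\Sigma a)$).

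Next I would isolate the fully-missing pattern $m=\mathbf{1}$: there $\Delta_{\mathbf{1}}=0$, so its contribution is $\eta^d(\tfrac12 - \Phi(-\Delta/2)) \le \eta^d/2$, producing the first term. For every other pattern I would bound the increment $\Phi(-\Delta_m/2)-\Phi(-\Delta/2) = \int_{\Delta_m/2}^{\Delta/2}\phi(t)\,\mathrm{d}t$ using that $\phi$ is decreasing on $\mathbb{R}_+$ together with a subgaussian estimate, namely $\Phi(-\Delta_m/2)-\Phi(-\Delta/2) \le \phi(\Delta_m/2)\,\tfrac{\Delta-\Delta_m}{2} \le \tfrac{1}{2\sqrt{2\pi}}\,e^{-\Delta_m^2/8}(\Delta-\Delta_m)$. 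Eigenvalue interlacing yields $\lambda_{\max}(\Sigma_{\mathrm{obs}(m)}) \le \lambda_{\max}(\Sigma)$, and $\|\nu_{\mathrm{obs}(m)}\|_2^2 = |\mathrm{obs}(m)|\mu^2$ by \Cref{ass:constMeanDiffe}, whence $\Delta_m^2 \ge |\mathrm{obs}(m)|\,\mathrm{SNR}^2$ and therefore $e^{-\Delta_m^2/8} \le q^{|\mathrm{obs}(m)|}$ with $q := e^{-\mathrm{SNR}^2/8}$, which is exactly the per-coordinate factor appearing in $\epsilon = \eta + (1-\eta)q$.

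The main obstacle is the last ingredient: bounding the Mahalanobis gap by $\Delta - \Delta_m \le \mu\,|\mis(m)|/\sqrt{d\,\lambda_{\min}(\Sigma)}$. I would write $\Delta - \Delta_m = (\Delta^2 - \Delta_m^2)/(\Delta+\Delta_m)$, control the numerator through the missing block (via the Schur-complement identity $\Delta^2-\Delta_m^2 = \tilde\nu^\top (\Sigma^{-1})_{\mis(m)}\tilde\nu$ with $\|(\Sigma^{-1})_{\mis(m)}\| \le 1/\lambda_{\min}(\Sigma)$), and lower-bound the denominator by $\Delta$. This is the delicate step, since it is where the spectral dependence is pinned down: it is clean and tight when $\Sigma$ is well-conditioned (in the isotropic case $\Delta-\Delta_m = \tfrac{\mu}{\sigma}(\sqrt d - \sqrt{d-|\mis(m)|}) \le \mu|\mis(m)|/(\sigma\sqrt d)$ holds exactly, by $\sqrt d - \sqrt{d-s} = s/(\sqrt d + \sqrt{d-s}) \le s/\sqrt d$), and I expect the eigenvalue bounds to absorb the anisotropy, which is the only point requiring genuine care.

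Finally I would take the expectation over the i.i.d.\ Bernoulli$(\eta)$ mask. Combining the two per-pattern estimates, the sum over $m\neq\mathbf{1}$ is at most $\tfrac{\mu}{2\sqrt{2\pi}\sqrt{d\,\lambda_{\min}(\Sigma)}}\,\mathbb{E}\big[|\mis(M)|\,q^{|\mathrm{obs}(M)|}\ind_{M\neq\mathbf{1}}\big]$. Writing $|\mis(M)| = \sum_j M_j$ and $q^{|\mathrm{obs}(M)|} = \prod_i q^{1-M_i}$ and using independence, a one-line computation gives $\mathbb{E}[|\mis(M)|\,q^{|\mathrm{obs}(M)|}] = d\eta\epsilon^{d-1}$; subtracting the $m=\mathbf{1}$ atom ($=d\eta^d$) leaves $d\eta(\epsilon^{d-1}-\eta^{d-1})$. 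Multiplying by the prefactor collapses $d/\sqrt d = \sqrt d$ and reproduces the second term $\tfrac{\mu\eta}{2\sqrt{2\pi}}\sqrt{d/\lambda_{\min}(\Sigma)}\,(\epsilon^{d-1}-\eta^{d-1})$, which together with the $\eta^d/2$ term completes the bound.
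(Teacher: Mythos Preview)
Your proof has a genuine gap at what you correctly flag as the delicate step: the inequality $\Delta-\Delta_m \le \mu\,|\mis(m)|/\sqrt{d\,\lambda_{\min}(\Sigma)}$ is false in general. The Schur-complement identity is correct --- $\Delta^2-\Delta_m^2 = \tilde\nu^\top(\Sigma^{-1})_{\mis(m)}\tilde\nu$ with $\tilde\nu=\nu_{\mis(m)}-\Sigma_{\mis(m),\obs(m)}\Sigma_{\obs(m)}^{-1}\nu_{\obs(m)}$ --- but $\|\tilde\nu\|$ is \emph{not} controlled by $\mu\sqrt{|\mis(m)|}$ once coordinates are correlated, and the denominator $\Delta+\Delta_m$ is only bounded below by $\mu\sqrt{d/\lambda_{\max}}$, which is the wrong eigenvalue for your target. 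A concrete counterexample: take $d=2$, $\nu=(\mu,\mu)$, and $\Sigma$ with unit diagonal and off-diagonal entry $\rho=-0.9$. Then $\lambda_{\min}=0.1$, $\Delta=\mu\sqrt{2/(1+\rho)}=\mu\sqrt{20}$, and $\Delta_m=\mu$ for either one-missing pattern, so $\Delta-\Delta_m\approx 3.47\mu$, whereas your bound gives $\mu/\sqrt{2\lambda_{\min}}=\mu\sqrt{5}\approx 2.24\mu$. For this family the comparison reduces to $\sqrt{2/(1+\rho)}-1\le 1/\sqrt{2(1+\rho)}$, which fails for every $\rho<-1/2$: the anisotropy is not absorbed by the eigenvalue bounds you hope for.

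The paper sidesteps this by applying the eigenvalue surrogates \emph{before} the mean-value step rather than after. It first uses monotonicity of $\Phi$ to replace both endpoints, $\Phi(-\Delta_m/2)-\Phi(-\Delta/2)\le\Phi(-T_m)-\Phi(-t)$ with $T_m=\tfrac{\mu}{2}\sqrt{(d-|\mis(m)|)/\lambda_{\max}}$ and $t=\tfrac{\mu}{2}\sqrt{d/\lambda_{\min}}$, and only then bounds $\Phi(-T_m)-\Phi(-t)\le\phi(T_m)(t-T_m)$. The per-pattern factor is thus $e^{-T_m^2/2}(t-T_m)$ with $t-T_m=\tfrac{\mu}{2}\big(\sqrt{d/\lambda_{\min}}-\sqrt{(d-|\mis(m)|)/\lambda_{\max}}\big)$, and the expectation over $M$ is computed directly from the binomial structure --- splitting $t-T_m$ into an eigenvalue-gap part $\sqrt d\,(\lambda_{\min}^{-1/2}-\lambda_{\max}^{-1/2})$ and a part where $\sqrt d-\sqrt{d-B}\le B/\sqrt d$ applies --- rather than through your clean product formula. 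No bound on $\Delta-\Delta_m$ itself is ever needed.
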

The bound provided in Proposition \ref{prop:BoundDiffLLcomp} outlines that the difference between the Bayes risk with missing and complete data decreases exponentially fast with the input dimension $d$, assuming that the minimum eigenvalue of the covariance matrix is lower bounded or decreases at most polynomially with $d$ \citep[an assumption already considered in 
e.g.,][]{AdaLDA, cai2011direct}.
This is the first analysis of the bias term due to learning with missing data in a classification context. 
When the signal-to-noise ratio $\mathrm{SNR}$ goes to infinity, one should expect the classification rate to be improved.



\begin{corollary}\label{cor:limitLLcomp}
    Under Assumptions \ref{ass:MCAR}, \ref{ass:LDA}, \ref{ass:constEtaMi}, \ref{ass:constMeanDiffe}, 
    $$
    \lim_{\mathrm{SNR}\to \infty}
    \sqrt{\frac{\lambda_{\max}(\Sigma)}{\lambda_{\min}(\Sigma)}} \frac{\mathrm{SNR}}{e^{\mathrm{SNR}^2/8}}
    =0
    $$
    implies $\lim\limits_{{\mathrm{SNR}}\to \infty} \mathcal{R}_{\mathrm{mis}}(h^\star)-\mathcal{R}_{\mathrm{comp}}(h^\star_{\mathrm{comp}})= \frac{\eta^d}{2}.$
\end{corollary}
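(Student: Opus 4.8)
The plan is to sandwich the excess risk between two bounds that both tend to $\eta^d/2$ as $\mathrm{SNR}\to\infty$: an upper bound supplied directly by \Cref{prop:BoundDiffLLcomp}, and a matching lower bound obtained by isolating the fully-missing pattern. For the upper bound, I would start from the inequality of \Cref{prop:BoundDiffLLcomp} and show that its second summand vanishes. Substituting $\mu=\mathrm{SNR}\cdot\sqrt{\lambda_{\max}(\Sigma)}$ rewrites that summand as
$$\frac{\eta\sqrt{d}}{2\sqrt{2\pi}}\,\mathrm{SNR}\,\sqrt{\frac{\lambda_{\max}(\Sigma)}{\lambda_{\min}(\Sigma)}}\bigl(\epsilon(\eta,\mathrm{SNR})^{d-1}-\eta^{d-1}\bigr).$$
Since $\epsilon(\eta,\mathrm{SNR})<1$, I would factor the difference of powers, $\epsilon^{d-1}-\eta^{d-1}=(\epsilon-\eta)\sum_{k=0}^{d-2}\epsilon^{k}\eta^{d-2-k}\le(d-1)(\epsilon-\eta)=(d-1)(1-\eta)e^{-\mathrm{SNR}^2/8}$, so the whole summand is bounded by a constant depending only on $d$ and $\eta$ times $\sqrt{\lambda_{\max}(\Sigma)/\lambda_{\min}(\Sigma)}\cdot\mathrm{SNR}\cdot e^{-\mathrm{SNR}^2/8}$. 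The hypothesis is exactly that this last product goes to $0$, yielding $\limsup_{\mathrm{SNR}\to\infty}\bigl(\mathcal{R}_{\mathrm{mis}}(h^\star)-\mathcal{R}_{\mathrm{comp}}(h^\star_{\mathrm{comp}})\bigr)\le\eta^d/2$.

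For the matching lower bound, I would decompose $\mathcal{R}_{\mathrm{mis}}(h^\star)$ over missing patterns as in \eqref{eq:pbyp_bayes_predictor} and retain only the all-missing pattern $m_0=(1,\dots,1)$, which under \Cref{ass:constEtaMi} has probability $\eta^d$. On $m_0$ nothing is observed, so $Z$ carries no information on $Y$; under \Cref{ass:MCAR} one has $\mathds{E}[Y\mid M=m_0]=\mathds{E}[Y]=0$ by the balanced assumption $\pi_1=\pi_{-1}=1/2$, hence the per-pattern Bayes classifier is constant and incurs misclassification probability exactly $1/2$ there. As all other pattern contributions are nonnegative, $\mathcal{R}_{\mathrm{mis}}(h^\star)\ge\eta^d/2$, and therefore $\mathcal{R}_{\mathrm{mis}}(h^\star)-\mathcal{R}_{\mathrm{comp}}(h^\star_{\mathrm{comp}})\ge\eta^d/2-\mathcal{R}_{\mathrm{comp}}(h^\star_{\mathrm{comp}})$. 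It then remains to check that $\mathcal{R}_{\mathrm{comp}}(h^\star_{\mathrm{comp}})\to0$: using the closed form $\mathcal{R}_{\mathrm{comp}}(h^\star_{\mathrm{comp}})=\Phi(-\tfrac12\Delta)$ with Mahalanobis distance $\Delta=\sqrt{(\mu_1-\mu_{-1})^{\top}\Sigma^{-1}(\mu_1-\mu_{-1})}$, \Cref{ass:constMeanDiffe} gives $\Delta^2\ge\|\mu_1-\mu_{-1}\|_2^2/\lambda_{\max}(\Sigma)=d\,\mathrm{SNR}^2\to\infty$, so $\mathcal{R}_{\mathrm{comp}}(h^\star_{\mathrm{comp}})\to0$ and $\liminf_{\mathrm{SNR}\to\infty}\bigl(\mathcal{R}_{\mathrm{mis}}(h^\star)-\mathcal{R}_{\mathrm{comp}}(h^\star_{\mathrm{comp}})\bigr)\ge\eta^d/2$. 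Combining the two bounds gives the claimed limit.

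I expect the genuine subtlety to lie in the lower bound rather than in the upper bound. The upper bound is an essentially mechanical manipulation of \Cref{prop:BoundDiffLLcomp} combined with the hypothesis, whereas identifying $\eta^d/2$ as an \emph{irreducible} floor requires the conceptual observation that the fully-missing pattern can never be predicted better than chance, no matter how large the signal-to-noise ratio grows. Care is also needed to confirm that the per-pattern decomposition together with the MCAR independence genuinely pins the conditional misclassification at $1/2$ on $m_0$, and that $\mathcal{R}_{\mathrm{comp}}(h^\star_{\mathrm{comp}})$ truly vanishes as $\mathrm{SNR}\to\infty$ rather than plateauing at a positive value.
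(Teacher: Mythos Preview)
Your proposal is correct and mirrors the paper's proof: the upper bound is obtained exactly as you do from \Cref{prop:BoundDiffLLcomp} by showing the second summand vanishes under the hypothesis, and the lower bound is obtained by isolating the fully-missing pattern $m=\mathbf{1}$ and using that $\mathcal{R}_{\mathrm{comp}}(h^\star_{\mathrm{comp}})=\Phi(-\Delta/2)\to 0$. The only cosmetic difference is that the paper works directly with the per-pattern difference formula \eqref{eq:firstDifferRisk} (so the $m=\mathbf{1}$ term already appears as $(\tfrac12-\Phi(-\Delta/2))\eta^d$), whereas you bound $\mathcal{R}_{\mathrm{mis}}(h^\star)\ge\eta^d/2$ and subtract $\mathcal{R}_{\mathrm{comp}}(h^\star_{\mathrm{comp}})$ separately; both yield the same limit.
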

The limit established in Corollary \ref{cor:limitLLcomp} matches that of the limit of the bound of \cref{prop:BoundDiffLLcomp} when the SNR tends to infinity. It is important to note that the assumption on the structure of $\Sigma$ is mild (as $\lambda_{\max}(\Sigma)/\lambda_{\min}(\Sigma)$ may increase  exponentially) and encompasses various scenarios, for example when $\Sigma=\sigma^2I_d$ or when $\Sigma$ is arbitrary but constant, with increasingly separated classes.



\label{subsubsec:MuEstimLDA}

Based on \Cref{prop:MCARLDA}, we consider the pattern-by-pattern plug-in predictor $\widehat{h}$, in which 
\begin{align}
     \widehat{\mu}_{k, j}=\frac{\sum_{i=1}^nX_{i,j}\ind_{Y_i=k}\ind_{M_{i,j}=0}}{\sum_{i=1}^n\ind_{Y_i=k}\ind_{M_{i,j}=0}}
     \label{eq:estMu}
\end{align}
estimates $\mu_{k,j}$, with the convention $0/0=0$, where the covariance matrix $\Sigma$ is assumed to be known. More precisely,  
\begin{align}
    \widehat{h}_m(x_{\mathrm{obs}(m)})& = \mathrm{sign}\left( \left(\widehat{\mu}_{1, \mathrm{obs}(m)}-\widehat{\mu}_{-1, \mathrm{obs}(m)}\right)^{\top} \Sigma^{-1} \right. \nonumber \\
    &\times \left.\left(x_{\mathrm{obs}(m)}-\frac{\widehat{\mu}_{1, \mathrm{obs}(m)}+\widehat{\mu}_{-1,\mathrm{obs}(m)}}{2}\right)\right). \label{eq:PbPLDAEstimMuAllPatterns}
\end{align}
Remark that under MCAR assumption, the estimates $\widehat{\mu}_{k, j}$ are built with all the observed inputs, independently of their missing patterns. This departs from a pattern-by-pattern estimation strategy where each mean is computed pattern-wise, using each observation once. 
We define $\kappa :=\max_{i\in [d]}(\Sigma_{i,i})/\lambda_{\mathrm{min}}(\Sigma)$ as the largest value of the diagonal of the covariance over its smallest eigenvalue, which can be regarded as a non-standard condition number of $\Sigma$.
%

\begin{theorem}[Bound on P-b-P LDA with known $\Sigma$]\label{th:BoundPbPMuEstimLDAGen}
 Grant Assumptions \ref{ass:MCAR}, \ref{ass:LDA} and \ref{ass:constEtaMi}. Then the excess risk of the classifier $\widehat{h}$, defined in \eqref{eq:PbPLDAEstimMuAllPatterns}, satisfies
    \begin{align*}
        & \mathcal{R}_{\mathrm{mis}}(\widehat{h})-\mathcal{R}_{\mathrm{mis}}(h^\star)\\
        & \leq \frac{2}{\sqrt{2\pi}}\left(\left(\frac{1+\eta}{2}\right)^n \frac{\left\|\mu\right\|_\infty^2 d(1-\eta)}{\lambda_{\min}\left(\Sigma\right)}+\frac{4\kappa d}{n}\right)^\frac{1}{2}.
    \end{align*}
    Then, for $n$ large enough, we have 
    \begin{align}
    \mathcal{R}_{\mathrm{mis}}(\widehat{h})-\mathcal{R}_{\mathrm{mis}}(h^\star) \lesssim \sqrt{\kappa d/n}.    \label{upper_bound_LDA_estim}
    \end{align}
\end{theorem}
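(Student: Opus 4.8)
\emph{Proof idea.} Since $\widehat h$ depends on $\mathcal D_n$, I read the claim as a bound on the expected excess risk $\e{\mathcal R_{\mathrm{mis}}(\widehat h)}-\mathcal R_{\mathrm{mis}}(h^\star)$, and the plan is to reduce it to the mean-squared error of the coordinate estimators $\widehat\mu_{k,j}$. First I would use the standard $0$–$1$-loss identity $\mathcal R_{\mathrm{mis}}(\widehat h)-\mathcal R_{\mathrm{mis}}(h^\star)=\e{|\ec{Y}{Z}|\,\ind_{\widehat h(Z)\neq h^\star(Z)}}$ together with the pattern decomposition \eqref{eq:pbyp_bayes_predictor}, so that under \Cref{ass:MCAR} the excess risk splits as $\sum_{m}\prob{M=m}$ times a per-pattern term, with $\prob{M=m}=\eta^{|\mis(m)|}(1-\eta)^{|\obs(m)|}$. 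For each pattern, \cref{lemma:obsGaus} gives $X_{\obs(m)}\mid Y=k,M=m\sim\mathcal N(\mu_{k,\obs(m)},\Sigma_{\obs(m)})$, hence the conditional law of $Y$ is logistic in the Bayes score $g_m^\star$ of \Cref{prop:MCARLDA} and $|\ec{Y}{X_{\obs(m)},M=m}|\le\tfrac12|g_m^\star(X_{\obs(m)})|$. On the disagreement event $\widehat g_m$ and $g_m^\star$ have opposite signs, so $|g_m^\star|\le|g_m^\star-\widehat g_m|$; combining this with Cauchy–Schwarz and Jensen (concavity of $t\mapsto\sqrt t$, first in $\mathcal D_n$, then against the weights $\prob{M=m}$) collapses everything into a single square root,
\[
\e{\mathcal R_{\mathrm{mis}}(\widehat h)}-\mathcal R_{\mathrm{mis}}(h^\star)\ \le\ \frac{C}{\sqrt{2\pi}}\Big(\sum_{m}\prob{M=m}\,\e{(g_m^\star-\widehat g_m)^2}\Big)^{1/2},
\]
the Gaussian constant $1/\sqrt{2\pi}$ appearing more sharply if one writes each per-class error as a value of $\Phi$ and uses that $\Phi$ is $1/\sqrt{2\pi}$-Lipschitz; this per-class route also retains only the within-class covariance $\Sigma_{\obs(m)}$, which is what keeps the final variance term free of any signal-to-noise factor and yields the stated prefactor $2/\sqrt{2\pi}$.

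Next I would evaluate $\e{(g_m^\star-\widehat g_m)^2}$. Because $\Sigma_{\obs(m)}^{-1}$ is known and the two scores differ only through the errors $u_k:=\widehat\mu_{k,\obs(m)}-\mu_{k,\obs(m)}$, their difference is affine in $x$ with mean-zero linear part, so its second moment is a quadratic form in $(u_1,u_{-1})$ whose matrix is governed by $\Sigma_{\obs(m)}^{-1}$ (a between-class piece weighted by the Mahalanobis distance is removed by the per-class conditioning mentioned above). Bounding the operator norm by $1/\lambda_{\min}(\Sigma_{\obs(m)})\le 1/\lambda_{\min}(\Sigma)$, via Cauchy interlacing for principal submatrices, gives $\e{(g_m^\star-\widehat g_m)^2}\lesssim \lambda_{\min}(\Sigma)^{-1}\sum_{j\in\obs(m)}\big(\e{u_{1,j}^2}+\e{u_{-1,j}^2}\big)$, the independence $u_1\perp u_{-1}$ killing the cross term in expectation.

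The crux is then the per-coordinate error $\e{(\widehat\mu_{k,j}-\mu_{k,j})^2}$ of the random-denominator estimator \eqref{eq:estMu}. I would condition on the count $N_{k,j}=\sum_i\ind_{Y_i=k}\ind_{M_{i,j}=0}\sim\mathrm{Bin}(n,\tfrac{1-\eta}{2})$: on $\{N_{k,j}\ge 1\}$ the estimator is an average of $N_{k,j}$ i.i.d.\ $\mathcal N(\mu_{k,j},\Sigma_{jj})$ variables, so its conditional variance is $\Sigma_{jj}/N_{k,j}$ and $\e{N_{k,j}^{-1}\ind_{N_{k,j}\ge 1}}\lesssim 2/(n(1-\eta))$ by a positive-binomial inverse-moment bound; on $\{N_{k,j}=0\}$, of probability $(1-\tfrac{1-\eta}{2})^n=(\tfrac{1+\eta}{2})^n$, the convention $0/0=0$ forces $\widehat\mu_{k,j}=0$ and a squared bias $\mu_{k,j}^2\le\|\mu\|_\infty^2$. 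This produces precisely the two terms $\mu_{k,j}^2(\tfrac{1+\eta}{2})^n$ and $2\Sigma_{jj}/(n(1-\eta))$. Aggregating over patterns and coordinates through $\sum_m\prob{M=m}\sum_{j\in\obs(m)}b_j=(1-\eta)\sum_{j=1}^d b_j$, the variance part becomes $4\kappa d/n$ (factor $2$ from the two classes, factor $2$ from the inverse-binomial bound, and $\kappa=\max_i\Sigma_{ii}/\lambda_{\min}(\Sigma)$) while the bias part becomes $(\tfrac{1+\eta}{2})^n\|\mu\|_\infty^2 d(1-\eta)/\lambda_{\min}(\Sigma)$, which is the announced bound; sending $n\to\infty$ kills the geometric term and leaves $\lesssim\sqrt{\kappa d/n}$.

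The main obstacle is this last step: controlling a mean estimate built on a random, binomial, and possibly zero number of observations. The $0/0=0$ convention is exactly what generates the non-standard exponential bias $(\tfrac{1+\eta}{2})^n$, and bounding $\e{N^{-1}\ind_{N\ge 1}}$ uniformly in the Bernoulli parameter requires the positive-binomial inverse-moment estimate rather than a naive $1/\e{N}$. A secondary difficulty is the bookkeeping: checking that the per-pattern quadratic forms aggregate with the correct $(1-\eta)$ and $d$ counts, and that the quadratic-in-$u$ and between-class cross terms are genuinely lower order (or eliminated by the per-class conditioning), so that no signal-to-noise ratio survives in the variance term $4\kappa d/n$.
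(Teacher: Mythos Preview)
Your proposal is correct and, at the coordinate level, actually streamlines the paper's argument. Both you and the paper take the same high-level route: the per-class decomposition of the excess risk, the $1/\sqrt{2\pi}$-Lipschitz bound on $\Phi$ (this is Lemmas~\ref{lemma:probMisMuEstim}--\ref{lemma:generalDecompLestim} in the paper, yielding exactly your $\sum_m p_m\,\e{\|\Sigma_{\obs(m)}^{-1/2}u_k\|}/\sqrt{2\pi}$ with prefactor $2$), Jensen to pass to $\e{\|\Sigma_{\obs(m)}^{-1/2}u_k\|^2}$, and a final Jensen over the missing patterns using $\e{d-\|M\|_0}=d(1-\eta)$. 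Where you differ is in bounding $\e{\|\Sigma_{\obs(m)}^{-1/2}u_k\|^2}$: the paper (Lemma~\ref{lemma:boundNormDifferMuSigmaGen}) computes the full matrix $\mathcal{C}(k,m)=\e{u_k u_k^\top}$ entry by entry, including the off-diagonal terms $\mathcal{C}(k,m)_{r,l}$ via a three-event case analysis, writes it as a Hadamard decomposition $F\odot\mu\mu^\top+\tfrac{n(1-\eta)}{2}G\odot\Sigma$, and then controls the trace through Lemma~\ref{lemma:traceIneq}. You instead bound $\e{\|\Sigma_{\obs(m)}^{-1/2}u_k\|^2}\le\lambda_{\min}(\Sigma)^{-1}\sum_{j\in\obs(m)}\e{u_{k,j}^2}$ via Cauchy interlacing and use only the diagonal, which your conditioning on $N_{k,j}\sim\mathrm{Bin}(n,(1-\eta)/2)$ handles cleanly (the same inverse-binomial bound $\e{N^{-1}\ind_{N\ge1}}\le 4/((n+1)(1-\eta))$ appears in both). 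Your shortcut lands on the identical constants $4\kappa(d-\|m\|_0)/((n+1)(1-\eta))$ and $(\tfrac{1+\eta}{2})^n\|\mu\|_\infty^2(d-\|m\|_0)/\lambda_{\min}(\Sigma)$, so nothing is lost; the paper's longer computation is tracking information it does not ultimately use.

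One presentational point: your first two paragraphs oscillate between the margin identity $\e{|\ec{Y}{Z}|\,\ind_{\widehat h\ne h^\star}}$ and the per-class $\Phi$-Lipschitz argument. The quantity you actually need after the $\Phi$-Lipschitz step is $\|\Sigma_{\obs(m)}^{-1/2}u_k\|$, not $\e{(g_m^\star-\widehat g_m)^2}$; the latter, even per class, contains a term $(u_1-u_{-1})^\top\Sigma_{\obs(m)}^{-1}\mu_k$ that does not vanish by conditioning alone and would reintroduce a $\mu$-dependent factor in the variance. Committing to the $\Phi$-Lipschitz route from the start (as the paper does in Lemma~\ref{lemma:boundMissProbMuEstim}, via Cauchy--Schwarz and the reverse triangle inequality on the whitened vectors) avoids this and delivers the clean $\|\Sigma_{\obs(m)}^{-1/2}u_k\|$ directly.
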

The convergence rate of the LDA classifier in presence of missing values (and with a known covariance) is of the order of $(d/n)^{1/2}$. Moreover, the dependence of the upper bound on the covariance matrix $\Sigma$ is mild, since the respective term decreases exponentially (corresponding to the case where all data are missing). 


The upper bound presented in \eqref{upper_bound_LDA_estim} is independent of the missingness probability $\eta$. If this could be surprising at first sight, it is important to note that the quantity of interest here is the difference between the misclassification probabilities of the estimated LDA predictor and the pattern-by-pattern LDA Bayes predictor given in Proposition \ref{prop:MCARLDA}. Both risks are integrated w.r.t.\ the distribution of missing inputs, so that both risks include the same missing data scenario. 
However, the influence of the probability of missingness should be expected when comparing predictors dealing with incomplete data on the one hand and the complete case on the other, as shown in the following corollary. 


\begin{corollary}\label{cor:final_LDA_upper_bound_Sigma_qcq}
Grant Assumptions \ref{ass:MCAR}, \ref{ass:LDA}, \ref{ass:constEtaMi}, \ref{ass:constMeanDiffe}. Then the classifier $\widehat{h}$, defined in \eqref{eq:PbPLDAEstimMuAllPatterns}  satisfies
   \begin{align*}
    &  \mathcal{R}_{\mathrm{mis}}(\widehat{h})-\mathcal{R}_{\mathrm{comp}}(h^\star_\mathrm{comp}) \\
 &\leq\frac{2}{\sqrt{2\pi}}\left(\left(\frac{1+\eta}{2}\right)^n \frac{\left\|\mu\right\|_\infty^2 d(1-\eta)}{\lambda_{\min}\left(\Sigma\right)}+\frac{4\kappa d}{n}\right)^\frac{1}{2}\\&\quad+ \frac{\eta^d}{2}+ \frac{\mu \eta  }{2\sqrt{2\pi}} \sqrt{\frac{d}{\lambda_{\min}(\Sigma)}}\left( \epsilon\left(\eta, \mathrm{SNR}\right)^{d -1}-\eta^{d -1 }\right)
\end{align*} 
with $\epsilon(\eta, \mathrm{SNR}):=\eta+e^{-\frac{\mathrm{SNR}^2}{8}}(1-\eta)<1$ and $\mathrm{SNR} := \mu / \sqrt{\lambda_{\max}(\Sigma)}$.
\end{corollary}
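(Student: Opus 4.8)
The plan is to obtain this bound by the standard add-and-subtract decomposition of the excess risk relative to the complete-data Bayes risk, splitting it into a \emph{statistical estimation} term and a \emph{missing-data approximation} term, each of which has already been controlled in the preceding results. Concretely, writing $h^\star = h^\star_{\mathrm{mis}}$ for the pattern-by-pattern LDA Bayes predictor of \Cref{prop:MCARLDA}, I would start from the exact identity
\begin{align*}
\mathcal{R}_{\mathrm{mis}}(\widehat{h})-\mathcal{R}_{\mathrm{comp}}(h^\star_{\mathrm{comp}}) = \big(\mathcal{R}_{\mathrm{mis}}(\widehat{h})-\mathcal{R}_{\mathrm{mis}}(h^\star)\big) + \big(\mathcal{R}_{\mathrm{mis}}(h^\star)-\mathcal{R}_{\mathrm{comp}}(h^\star_{\mathrm{comp}})\big),
\end{align*}
which holds because the intermediate quantity $\mathcal{R}_{\mathrm{mis}}(h^\star)$ cancels. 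No slack is introduced at this stage, since the decomposition is an equality.

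I would then identify the first bracket as the excess risk of the plug-in classifier $\widehat{h}$ over the pattern-by-pattern Bayes predictor, which is exactly the quantity bounded in \Cref{th:BoundPbPMuEstimLDAGen}; since the corollary grants Assumptions \ref{ass:MCAR}, \ref{ass:LDA} and \ref{ass:constEtaMi}, that theorem applies directly and supplies the first summand $\tfrac{2}{\sqrt{2\pi}}\big((\tfrac{1+\eta}{2})^n \tfrac{\|\mu\|_\infty^2 d(1-\eta)}{\lambda_{\min}(\Sigma)}+\tfrac{4\kappa d}{n}\big)^{1/2}$ of the claimed bound. Next I would recognize the second bracket as the missing-data bias analyzed in \Cref{prop:BoundDiffLLcomp}; here the additional Assumption \ref{ass:constMeanDiffe}, also granted by the corollary, lets that proposition apply and supply the remaining summand $\tfrac{\eta^d}{2}+\tfrac{\mu\eta}{2\sqrt{2\pi}}\sqrt{\tfrac{d}{\lambda_{\min}(\Sigma)}}(\epsilon(\eta,\mathrm{SNR})^{d-1}-\eta^{d-1})$. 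Summing the two bounds then reproduces the stated inequality verbatim.

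The only point requiring care---and the closest thing to an obstacle in an otherwise purely bookkeeping argument---is verifying that the symbol $h^\star$ appearing in \Cref{th:BoundPbPMuEstimLDAGen} and in \Cref{prop:BoundDiffLLcomp} denotes the \emph{same} object, namely the pattern-by-pattern MCAR-LDA Bayes predictor $h^\star_{\mathrm{mis}}$ of \Cref{prop:MCARLDA}, so that the middle term cancels exactly rather than merely approximately. This is guaranteed because both risks $\mathcal{R}_{\mathrm{mis}}$ are integrated against the same distribution of the incomplete input $Z=(X_{\mathrm{obs}(M)},M)$, and because the union of the hypothesis sets of the two cited results is precisely the four assumptions of the corollary, so nothing is lost and no extra hypothesis is silently invoked. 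Consequently the corollary follows immediately, with no further estimation: it is a clean concatenation of the variance/estimation bound of \Cref{th:BoundPbPMuEstimLDAGen} and the bias bound of \Cref{prop:BoundDiffLLcomp}.
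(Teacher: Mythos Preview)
Your proposal is correct and matches the paper's proof essentially verbatim: the paper decomposes $\mathcal{R}_{\mathrm{mis}}(\widehat{h})-\mathcal{R}_{\mathrm{comp}}(h^\star_\mathrm{comp})=\big(\mathcal{R}_{\mathrm{mis}}(\widehat{h})-\mathcal{R}_{\mathrm{mis}}(h^\star)\big)+\big(\mathcal{R}_{\mathrm{mis}}(h^\star)-\mathcal{R}_{\mathrm{comp}}(h^\star_\mathrm{comp})\big)$ and then invokes \Cref{th:BoundPbPMuEstimLDAGen} and \Cref{prop:BoundDiffLLcomp} respectively. Your observation about checking that $h^\star$ denotes the same object in both cited results is a valid sanity check, though the paper treats this as evident.
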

In the previous bound, the first term is the learning error $\mathcal{R}_{\mathrm{mis}}(\widehat{h})-\mathcal{R}_{\mathrm{mis}}(h^\star)$ and scales as $\sqrt{d/n}$; the second term is the bias $\mathcal{R}_{\mathrm{mis}}(h^\star)-\mathcal{R}_{\mathrm{comp}}(h^\star_\mathrm{comp})$ due to missing values. 
When
\begin{align}
n \ll \frac{1}{(\eta \cdot \mathrm{SNR})^2} \frac{1}{\epsilon(\eta, \mathrm{SNR})^d},
\end{align}
the learning error inherent to the estimation procedure prevails over the approximation error due to missing values. Then, the impact of missing values on the predictive performances is negligible, and, $\mathcal{R}_{\mathrm{mis}}(\widehat{h})-\mathcal{R}_{\mathrm{comp}}(h^\star_\mathrm{comp})=O( \sqrt{d/n})$,
which corresponds to classical rates \citep[see, e.g.][]{anderson2003}. Assuming that $d = o(n)$, the misclassification risk of the estimated LDA with missing values converges to the Bayes risk with complete data.


%

\begin{remark}[Related work on LDA with missing data] Previous work on LDA with missing values \citep[][]{cai2011direct, AdaLDA} focus on parameter estimation, which is not sufficient to design a procedure to predict with missing values. More precisely, \cite{cai2011direct} assume the $s$-sparsity of the so-called discriminant direction $\beta:=\Sigma^{-1}(\mu_1-\mu_{-1})$ and prove that, estimating this vector via linear programming discriminant (LPD) leads to a predictor $\widehat{h}_\mathrm{LPD}$ on complete data which satisfies $\mathcal{R}_{\mathrm{comp}}(\widehat{h}_\mathrm{LPD})-\mathcal{R}_{\mathrm{comp}}(h^\star_\mathrm{comp}) = O\left((s\log(d)/n)^{1/2}\right)$. Although \citet{AdaLDA} follow a completely different approach, their estimator applied on complete data reaches the same rate of convergence. 
\end{remark}

\subsection{Extension to MNAR settings}\label{subsec:LDAMNAR}

Extending LDA predictors to handle more general missing values is challenging. 
Indeed, as shown in \cref{ex:notLDA}, even under a MAR assumption, a pattern-wise approach for LDA is not valid. In this section, we exhibit a MNAR setting compatible with pattern-by-pattern LDA as follows.
\begin{assumption}[GPMM-LDA]  
	\label{ass:gpmmLDA} 
	For all $m\in\mathcal{M}$ and  $k\in\{-1,+1\}$, $ X_{\obs(m)}|(M\!\!=\!\!m,Y\!\!=\!\!k)\sim\mathcal{N}(\mu_{m,k},\Sigma_{m})$ with $\pi_{m,1} = \pi_{m,-1}$ where $\pi_{m,k}:=\P(Y=k, M=m)$.
\end{assumption}
Under \Cref{ass:gpmmLDA}, the Bayes predictor can be decomposed pattern by pattern as follows.
\begin{propo}[MNAR P-b-P LDA]\label{prop:MNARLDA}
    Under Assumption \ref{ass:gpmmLDA}, the pattern-by-pattern Bayes classifier is    \begin{align*}
        & h^{\star}_m(x_{\obs(m)})= 
        \mathrm{sign}\Bigg( \left(\mu_{m,1}-\mu_{m,-1}\right)^{\top}\Sigma_{m}^{-1}\\
        & \times \left(x_{\obs(m)}-\frac{\mu_{m,1}+\mu_{m,-1}}{2}\right) - \log\left(\frac{\pi_{m,-1}}{\pi_{m,1}}\right)\Bigg).
    \end{align*}
\end{propo}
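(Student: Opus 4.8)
The plan is to compute the pattern-wise Bayes classifier $h^\star_m(x_{\obs(m)}) = \mathrm{sign}\big(\e{Y \mid X_{\obs(m)}=x_{\obs(m)}, M=m}\big)$ directly from its definition, following the same route as the complete-data LDA derivation behind \eqref{eq:LDAcomp} and \Cref{prop:MCARLDA}, but now conditioning on the event $\{M=m\}$ throughout. Since $Y \in \{-1,+1\}$, the conditional expectation is positive exactly when $\P(Y=1 \mid X_{\obs(m)}, M=m) > \P(Y=-1 \mid X_{\obs(m)}, M=m)$, so $h^\star_m$ is the sign of the posterior log-odds
\[
\log \frac{\P(Y=1 \mid X_{\obs(m)}=x_{\obs(m)}, M=m)}{\P(Y=-1 \mid X_{\obs(m)}=x_{\obs(m)}, M=m)} .
\]
First I would apply Bayes' rule inside the conditional world $\{M=m\}$ to split this into a log-likelihood-ratio term and a log-prior-ratio term, the latter equal to $\log\big(\P(Y=1\mid M=m)/\P(Y=-1\mid M=m)\big) = \log(\pi_{m,1}/\pi_{m,-1})$.

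For the likelihood ratio I would use the two Gaussian densities supplied by \Cref{ass:gpmmLDA}: conditionally on $\{M=m, Y=k\}$ the observed block $X_{\obs(m)}$ is $\mathcal{N}(\mu_{m,k}, \Sigma_m)$. The decisive feature is that both classes share the same covariance $\Sigma_m$, so that on taking the log-ratio the normalizing constants and the quadratic term $x^\top \Sigma_m^{-1} x$ cancel, leaving an affine function of $x_{\obs(m)}$. Carrying out the expansion and using the symmetry of $\Sigma_m^{-1}$ to collapse the cross terms via $\mu_{m,1}^\top \Sigma_m^{-1}\mu_{m,1} - \mu_{m,-1}^\top \Sigma_m^{-1}\mu_{m,-1} = (\mu_{m,1}-\mu_{m,-1})^\top \Sigma_m^{-1}(\mu_{m,1}+\mu_{m,-1})$, the log-likelihood-ratio becomes $(\mu_{m,1}-\mu_{m,-1})^\top \Sigma_m^{-1}\big(x_{\obs(m)} - \tfrac{1}{2}(\mu_{m,1}+\mu_{m,-1})\big)$.

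Finally I would add back the log-prior term, rewrite $\log(\pi_{m,1}/\pi_{m,-1}) = -\log(\pi_{m,-1}/\pi_{m,1})$, and take the sign to recover the stated formula. I do not expect a genuine obstacle here: this is the textbook LDA computation transported pattern by pattern. The one point deserving care is that, unlike the balanced MCAR setting of \Cref{prop:MCARLDA} where the prior term vanishes, here it must be retained — under the MNAR mechanism of \Cref{ass:gpmmLDA} the missingness may depend on $Y$, so the within-pattern weights $\pi_{m,1}$ and $\pi_{m,-1}$ can differ and the offset $-\log(\pi_{m,-1}/\pi_{m,1})$ genuinely shifts each pattern's decision boundary.
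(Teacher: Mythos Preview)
Your proposal is correct and follows essentially the same route as the paper's own proof: both compute the pattern-wise Bayes classifier as the sign of the posterior log-odds, apply Bayes' rule to split it into a Gaussian log-likelihood-ratio plus the log-prior term $\log(\pi_{m,1}/\pi_{m,-1})$, and then exploit the shared covariance $\Sigma_m$ to reduce the log-likelihood-ratio to the affine form $(\mu_{m,1}-\mu_{m,-1})^\top \Sigma_m^{-1}\big(x_{\obs(m)}-\tfrac12(\mu_{m,1}+\mu_{m,-1})\big)$. Your remark that the prior offset need not vanish here (unlike in \Cref{prop:MCARLDA}) is apt and matches the paper's retention of the $-\log(\pi_{m,-1}/\pi_{m,1})$ term.
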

Given the expression of the Bayes predictor in \Cref{prop:MNARLDA}, we build a plug-in estimate based on the estimation $\widehat{\mu}_{m,k}$ of the mean ${\mu}_{m,k}$ on pattern $m \in \{0,1\}^d$ and class $k$, defined as 
\begin{align}\label{eq:estimMuGPMM1}
    \widehat{\mu}_{m,k}:=\frac{\sum_{i=1}^nX_{i}\ind_{Y_i=k}\ind_{M_i=m}}{\ind_{Y_i=k}\ind_{M_i=m}}.
\end{align}
Due to the potential exponential number of missing patterns, it may be difficult to estimate the $2^{d+1}$ estimates $\widehat{\mu}_{m,k}$. 
In line with \cite{ayme2022near}, we employ a thresholded estimate, which boils down to estimating only the mean over the most frequent missing patterns, that is 
\begin{align}\label{eq:estimMuGPMM2}
    \widetilde{\mu}_{m,k}:=\widehat{\mu}_{m,k}\ind_{\frac{N_{m,k}}{n}>\tau},
\end{align}
with $\tau:=\sqrt{d/n} $ and $N_{m,k}:=\sum_{i=1}^n\ind_{M_i=m}\ind_{Y_i=k}$ the number of observations of the class $k$ with $m$ as missing pattern. 
Note that this estimate is only useful when $d<n$. Assuming that the covariance matrix for each missing pattern is known, we construct the pattern-by-pattern predictor $\widetilde{h}$ defined as 
\begin{align}
    & \widetilde{h}_m(x_{\obs(m)})= \mathrm{sign}\Big( \big(\widetilde{\mu}_{1, \obs(m)}-\widetilde{\mu}_{-1, \obs(m)}\big)^{\top} \Sigma_m^{-1} \nonumber \\
    & \times \big(x_{\obs(m)}-\frac{\widetilde{\mu}_{1, \obs(m)}+\widetilde{\mu}_{-1,\obs(m)}}{2}\big)\Big). \label{eq:PbPLDAEstimMuAllPatterns_tildebis}
\end{align}

\begin{theorem}[MNAR P-b-P LDA estimation]\label{th:MNAREstim}
    Grant \Cref{ass:gpmmLDA} and assume that the classes are balanced on each missing pattern. Let $\tau \geq \sqrt{d/n}$. Then, the plug-in classifier based on \eqref{eq:PbPLDAEstimMuAllPatterns_tildebis} satisfies
    \begin{align}
       &\mathcal{R}_{\mathrm{mis}}(\widetilde{h}) -\mathcal{R}_{\mathrm{mis}}(h^\star) \nonumber \\ 
       & \leq \sum_{m\in \{0,1\}^d} \left(  \frac{4}{\sqrt{2\pi}}+\frac{8}{\sqrt{\pi}}\frac{\left\|\mu_{m}\right\|}{\sqrt{\lambda_{\min}(\Sigma_m)}} \right) \tau\wedge p_m \nonumber \\
       & \quad + \sum_{\substack{m\in \{0,1\}^d,\\ p_m\geq\tau}}  \frac{\sqrt{2}  \left\|\mu_{m}\right\|}{\sqrt{ \pi \lambda_{\min}(\Sigma_m)}} p_m   (1 - p_m)^{n/2}.
    \end{align}
\end{theorem}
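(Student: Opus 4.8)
The plan is to reduce the global excess risk to a weighted sum of per-pattern excess risks, to control each term by the estimation error of the corresponding class means, and then to absorb the thresholding through a concentration argument on the counts $N_{m,k}$. To this end, I would first exploit the pattern-by-pattern structure: since both the Bayes rule (\Cref{prop:MNARLDA}) and the plug-in rule $\widetilde{h}$ act separately on each pattern while $\mathcal{R}_{\mathrm{mis}}$ integrates over $M$, the (expected) excess risk splits as
\begin{equation*}
\mathcal{R}_{\mathrm{mis}}(\widetilde{h}) - \mathcal{R}_{\mathrm{mis}}(h^\star) = \sum_{m \in \{0,1\}^d} p_m \left(\mathcal{R}_m(\widetilde{h}_m) - \mathcal{R}_m(h^\star_m)\right),
\end{equation*}
where $p_m := \P(M=m)$ and $\mathcal{R}_m$ is the misclassification risk conditional on $\{M=m\}$. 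Conditionally on $\{M=m\}$, \Cref{ass:gpmmLDA} places us in a balanced Gaussian LDA model of dimension $|\obs(m)|$ with means $\mu_{m,\pm 1}$ and covariance $\Sigma_m$, so each summand is a genuine LDA excess risk.

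Next, I would bound each per-pattern term by the estimation error of the means. Writing the conditional risk of a linear rule through the Gaussian c.d.f.\ $\Phi$ and using that $\Phi$ is $\tfrac{1}{\sqrt{2\pi}}$-Lipschitz, the excess risk of $\widetilde{h}_m$ is controlled by the perturbation of the discriminant direction and offset, giving a bound of the form $\mathcal{R}_m(\widetilde{h}_m) - \mathcal{R}_m(h^\star_m) \leq \frac{C}{\sqrt{2\pi}}(\norm{\widetilde{\mu}_{m,1}-\mu_{m,1}} + \norm{\widetilde{\mu}_{m,-1}-\mu_{m,-1}})/\sqrt{\lambda_{\min}(\Sigma_m)}$, the per-pattern counterpart of the estimate underlying \Cref{th:BoundPbPMuEstimLDAGen}. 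Taking expectation over $\mathcal{D}_n$, I would then split the thresholded error
\begin{equation*}
\widetilde{\mu}_{m,k}-\mu_{m,k} = (\widehat{\mu}_{m,k}-\mu_{m,k})\,\ind_{N_{m,k}/n>\tau} - \mu_{m,k}\,\ind_{N_{m,k}/n\leq\tau}
\end{equation*}
into a variance part and a thresholding-bias part. Conditioning on $N_{m,k}$, the variance part obeys $\e{\norm{\widehat{\mu}_{m,k}-\mu_{m,k}}^2} \leq \mathrm{tr}(\Sigma_m)/N_{m,k}$, which on $\{N_{m,k}>n\tau\}$ is $O(\mathrm{tr}(\Sigma_m)/(n\tau))$, while the bias part contributes $\norm{\mu_{m,k}}\,\P(N_{m,k}/n\leq\tau)$.

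Finally, I would organize the sum by pattern frequency. For rare patterns ($p_m<\tau$) the prefactor $p_m=\tau\wedge p_m$ already makes the contribution small and the per-pattern excess risk is uniformly bounded by $\tfrac{4}{\sqrt{2\pi}}+\tfrac{8}{\sqrt{\pi}}\norm{\mu_m}/\sqrt{\lambda_{\min}(\Sigma_m)}$, which yields the first sum. For frequent patterns ($p_m\geq\tau$), on the active event $\{N_{m,k}>n\tau\}$ the variance term becomes $O(\tau)$ after multiplication by $p_m$ and a Jensen step $\e{\norm{\cdot}}\leq(\e{\norm{\cdot}^2})^{1/2}$, again feeding the first sum; the residual undersampling event $\{N_{m,k}/n\leq\tau\}$ contributes $p_m\,\norm{\mu_m}\,\P(N_{m,k}/n\leq\tau)/\sqrt{\lambda_{\min}(\Sigma_m)}$, and a Chernoff bound on the binomial lower tail of $N_{m,k}$ relaxes $\P(N_{m,k}/n\leq\tau)$ to $(1-p_m)^{n/2}$, producing the second sum. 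Adding the two regimes over all patterns gives the claimed inequality.

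The hard part will be this last step: the count $N_{m,k}$ appears simultaneously inside the threshold indicator and in the denominator of the variance, so the two contributions cannot be decoupled naively. The delicate point is to show that, after taking expectations, the active-event variance collapses to the clean $\tau\wedge p_m$ scaling uniformly in $p_m$, while the undersampling tail remains sharp enough to yield the geometric factor $(1-p_m)^{n/2}$; balancing these against the threshold choice $\tau\geq\sqrt{d/n}$ is where the careful concentration analysis lies.
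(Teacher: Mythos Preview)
Your decomposition and the first several steps match the paper's proof closely (pattern-by-pattern split, Lipschitz control via $\Phi$, the $L^2$ bias--variance split of $\widetilde{\mu}_{m,k}-\mu_{m,k}$, and the case distinction on $p_m\lessgtr\tau$). The gap is in the very step you flagged as ``the hard part''.

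Your claim that a Chernoff bound gives $\P(N_{m,k}/n\leq\tau)\leq(1-p_m)^{n/2}$ for every $m$ with $p_m\geq\tau$ is false. Recall $N_{m,k}\sim\mathcal{B}(n,p_m/2)$, so its mean is $np_m/2$. When $p_m\in[\tau,2\tau]$ the threshold $n\tau$ lies \emph{above} the mean, the event $\{N_{m,k}\leq n\tau\}$ is not a lower-tail event at all, and its probability is of order one, not exponentially small. No Chernoff bound can rescue this. Consequently the bias contribution for frequent patterns cannot be routed entirely into the second sum.

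The paper handles this by avoiding Chernoff and instead bounding the tail through a second-moment (Markov) argument on $1/N_{m,k}$:
\[
\P\!\left(\tfrac{N_{m,k}}{n}\leq\tau\right)\ \leq\ n^2\tau^2\,\e{\tfrac{\ind_{N_{m,k}>0}}{N_{m,k}^2}}+\P(N_{m,k}=0)\ \leq\ \frac{32\,\tau^2}{p_m^2}+(1-p_m)^n.
\]
After taking the square root (coming from the Jensen step) and multiplying by $p_m$, the polynomial piece $32\tau^2/p_m^2$ becomes an $O(\tau)$ term and is absorbed into the \emph{first} sum --- this is exactly the origin of the coefficient $\tfrac{8}{\sqrt{\pi}}\|\mu_m\|/\sqrt{\lambda_{\min}(\Sigma_m)}$. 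Only the remaining $(1-p_m)^n$ survives to produce the second sum as $p_m(1-p_m)^{n/2}$. So the undersampling bias feeds \emph{both} sums, not just the second.

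A secondary looseness: on the active event you bound $1/N_{m,k}\leq 1/(n\tau)$, which after the square root and multiplication by $p_m$ yields $p_m\sqrt{\tau}$, not $O(\tau)$. The paper instead uses the sharper moment bound $\e{\ind_{N_{m,k}>0}/N_{m,k}}\leq 4/(p_m(n+1))$, giving $\sqrt{d\,p_m/n}\leq\tau\sqrt{p_m}\leq\tau$. You need this refinement (or an equivalent one) to recover the stated $\tau\wedge p_m$ scaling for frequent patterns.
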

\Cref{th:MNAREstim} holds for various types of missingness. Indeed, \Cref{ass:gpmmLDA} is very generic and may correspond to very difficult MNAR settings in which there is no relation between any covariances matrices $\Sigma_m$ or any mean vector $\mu_{m,k}$. In this setting, building consistent predictions requires to build $2^d$ estimates of covariances matrices and $2^{d+1}$ mean estimates, an exponentially difficult task. On the other hand, assuming that there exists unique $\mu_{-1}, \mu_1, \Sigma$ such that $\mu_{\pm 1,m} = \mu_{\pm 1, \obs(m)}$ and $\Sigma_m = \Sigma_{\obs(m)}$ allows us to study a MCAR setting in which proportion of missing values are different across coordinates, a generalization of Section~\ref{subsubsec:MuEstimLDA}. 

The upper bound established in \Cref{th:MNAREstim} is low when few missing patterns are admissible, but it appears to be very large when all $2^d$ missing patterns may occur. However, when the missing distribution is concentrated enough, one can control this upper bound. To see this, let us introduce the missing pattern distribution complexity $\mathfrak{C}_p(\tau):=\sum_{m\in \{0,1\}^d}\tau\wedge p_m$ used in \cite{ayme2022near}, and 
assume that the missingness indicators $M_1, \hdots, M_d$ are independent, distributed as a Bernoulli variable with parameter $\eta \leq d/n$. In such a setting, even if each missing pattern is admissible,  
\begin{align}
        \mathcal{R}_{\mathrm{mis}}(\widetilde{h}) -\mathcal{R}_{\mathrm{mis}}(h^\star) 
       &  \lesssim  \frac{d^2}{n} + (1 - \min_{p_m >0} p_m)^{n/2}, 
    \end{align}
which is much better than the initial upper bound, scaling as $d2^d/n$. This upper bound benefits from the concentration of the missing patterns, as a high number of missing components is unlikely to occur for independent Bernoulli distribution, with a small parameter $\eta \leq d/n$. 

Contrary to \Cref{cor:final_LDA_upper_bound_Sigma_qcq}, we do not compare $ \mathcal{R}_{\mathrm{mis}}(\widehat{h})$ to $\mathcal{R}_{\mathrm{comp}}(h^\star_\mathrm{comp})$ as, in a MNAR setting, the distribution of the fully observed pattern may not be identifiable from the distribution of all missing patterns. Indeed, note that, in \Cref{ass:gpmmLDA}, the distribution of the complete pattern (corresponding to $m = 0$) may be chosen independently of the other distributions ($m \neq 0$). Thus, the difference $ \mathcal{R}_{\mathrm{mis}}(\widehat{h}) - \mathcal{R}_{\mathrm{comp}}(h^\star_\mathrm{comp})$ may be arbitrary large. This highlights the fact that all strategies that first estimate parameters from the complete distribution and then predict on each missing pattern by using these estimations are doomed to fail. 


\section{Experiments}
\label{sec:experiments}

\paragraph{Simulated data.}
We run experiments with an input dimension $d=5$. 
We simulate data according to the following models: 
\begin{enumerate}
    \item the LDA framework  (\Cref{ass:LDA}) with $\Sigma$ equal to $I_d$ or Toeplitz of the form $(0.6^{|i-j|})_{i j}$, and where $\mu_1$ and $\mu_{-1}$ are such that $\mu_1-\mu_{-1}$ is a vector with equal non-zero components. More precisely, we first sample $\mu_0\sim \mathcal{N}(0, \sigma^2 \mathrm{I}_d)$ with $\sigma^2 = 25$, and then $\mu_1|\mu_0\sim\mu_0 + 1.5 \epsilon$, with each coordinate of $\epsilon$ being an independent Rademacher.  

    \item a logistic setting in which $X \sim \mathcal{N}(0, \Sigma)$ where $\Sigma$ is as above and $Y$ follows a logistic model defined in \Cref{ass:logistic_model_complete} with $\beta^{\star}\sim\mathcal{N}(0,I_d)$.
\end{enumerate}
Missing values are either MCAR with the same probability of missingness $\eta=0.5$ on each component or self-masking MNAR (the missingness probability only depends on its value) that is $\mathds{P}(M_{j}=1|X)=\frac{1}{1+\mathrm{exp}(-X^j)}$, where coefficients have been chosen to ensure a probability of missingness close to $\eta$.



\begin{figure*}[t]
    \centering
    \begin{subfigure}{0.24\textwidth}
        \centering
        \includegraphics[width=1\textwidth]{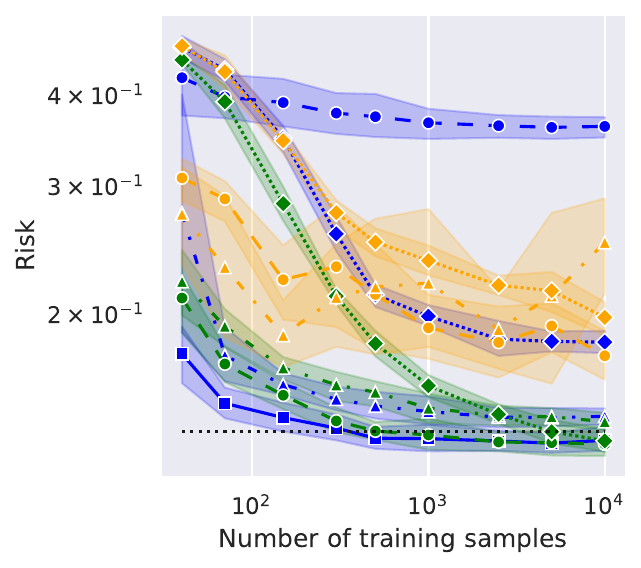}
        \caption{LDA+MCAR}
        \label{fig:LDA_MCAR}
    \end{subfigure}
    \begin{subfigure}{0.24\textwidth}
        \centering
        \includegraphics[width=0.95\textwidth]{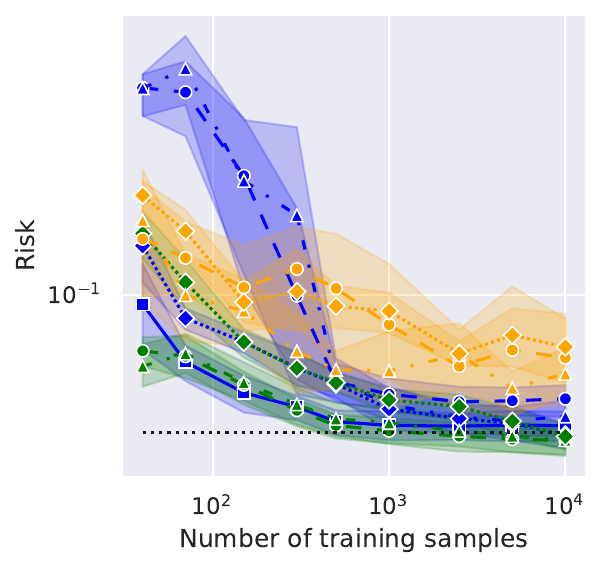}
        \caption{LDA+MNAR}
        \label{fig:LDA_MNAR}
    \end{subfigure}
    \begin{subfigure}{0.24\textwidth}
        \centering
        \includegraphics[width=1\textwidth]{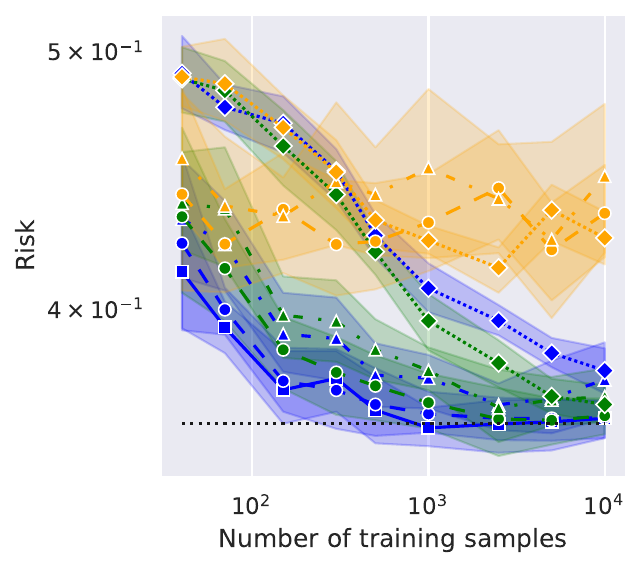}
        \caption{Logistic+MCAR}
        \label{fig:Logistic_MCAR}
    \end{subfigure}
    \begin{subfigure}{0.24\textwidth}
        \centering
        \includegraphics[width=1\textwidth]{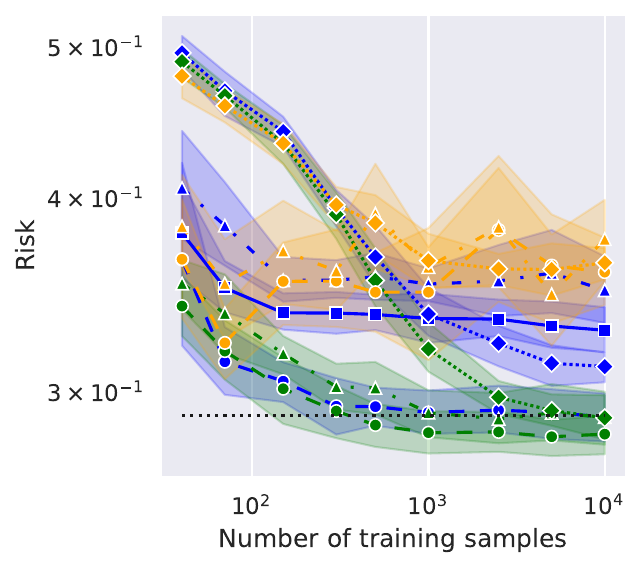}
        \caption{Logistic+MNAR}
        \label{fig:Logistic_MNAR}
    \end{subfigure}
    \begin{minipage}[c]{0.5\textwidth}
    \includegraphics[width=\textwidth]{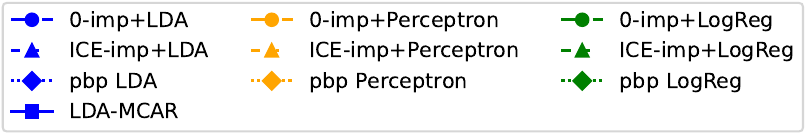}
  \end{minipage}\hfill
  \begin{minipage}[c]{0.48\textwidth}
    \caption{Excess risks of several classifiers on generated data (LDA or Logistic framework) with $\Sigma = I_d$ and MCAR or MNAR missing mechanisms. Dotted lines stand for the Bayes risk $\mathcal{R}_{\mathrm{mis}}(h^{\star}_{\mathrm{mis}})$ with missing data.}
    \label{fig:the_only_one}
    \end{minipage}
\end{figure*}

\paragraph{Algorithms.} 
For prediction purposes, we assess the performances of the following algorithms on simulated data: 
\begin{itemize}
    \item Logistic regression with Pattern-by-pattern strategy (\texttt{pbp LogReg}), imputed-by-0 data (\texttt{0-imp+LogReg}) or ICE-imputed data (\texttt{ICE-imp+LogReg}); ICE imputation refers to single imputation by
    chained equations \citep{van2011mice} implemented as \texttt{IterativeImputer} in scikit-learn \citep{pedregosa2011scikit}.
    \item Perceptron with Pattern-by-pattern strategy (\texttt{pbp Perceptron}),  imputed-by-0 data (\texttt{0-imp+Perceptron}) or ICE-imputed data (\texttt{ICE-imp+Perceptron});
    \item LDA with Pattern-by-pattern strategy (\texttt{pbp LDA}), imputed-by-0 data (\texttt{0-imp+LDA}) or (\texttt{ICE-imp+LDA});
    \item LDA described by Equation (7) and denoted by \texttt{LDA-MCAR}.
\end{itemize}


\subsection{Results}

 Results are displayed in \Cref{fig:the_only_one} for data generated via LDA or Logistic framework ($\Sigma = I_d$) and MCAR or MNAR missingness. Experiments with a Toeplitz covariance matrix can be found in \Cref{app:add_experiments}. 

In \Cref{fig:LDA_MCAR} and \Cref{fig:LDA_MNAR}, we observe that P-b-P strategies show bad performances for small sample sizes. Indeed, in this case, few samples are available in each missing pattern, which makes it difficult (or even impossible) to train each P-b-P classifier. Note also that Perceptron-based methods are not very accurate, since such methods do not converge for non-linearly separable data.

In \Cref{fig:LDA_MCAR}, we see that \texttt{LDA-MCAR} is among the best methods, in terms of predictive accuracy. Notably, P-b-P LDA requires a significantly larger training sample compared to \texttt{LDA-MCAR}, as it does not use information from other missing patterns. Surprisingly, imputation by zero followed by a logistic classifier is on par with \texttt{LDA-MCAR} with MCAR data and outperforms this method for MNAR data (\Cref{fig:LDA_MNAR}). 

In \Cref{fig:Logistic_MCAR} and \Cref{fig:Logistic_MNAR}, we first note that all methods have poor performances, with an excess risk around $0.3$ with $n=10.000$ samples. Such bad performances highlight that classification with logistic data and missing values is a challenging task, despite its simplistic nature. Trying to distinguish methods in this difficult scenario, we observe that zero-imputation with logistic regression and \texttt{LDA-MCAR} are among the best classifiers both in MCAR and MNAR scenario (even if data do not satisfy LDA assumptions).





\section{Conclusion}

In this paper, we analyze the two main classes of methods able to handle missing values for classification: Pattern-by-Pattern (P-b-P) and imputation. 
We prove that both imputation and P-b-P strategies are theoretically deficient for perceptron or logistic classifiers. In the LDA framework, P-b-P is Bayes optimal in all generality, whereas imputation is for diagonal covariance matrices only. We establish finite sample bounds for the excess risk of LDA classifier with MCAR data, which vanishes as $n$ and $d$ grows at some prescribed rates. We extend our analysis to specific MNAR scenarios. Experiments illustrate our theoretical findings, showing that P-b-P LDA outperforms other classifiers for LDA-generated data with MCAR missingness. Suprisingly, despite its ill-posedness, imputation with logistic regression exhibits good performances in this setting. A proper analysis of this phenomenon, by identifying favorable underlying behaviors during training, remains to be conducted.
In this regard, even if our result shows that the probability of classification cannot be properly estimated for any missing pattern, it may be possible that the decision frontier is close to the correct one, which should deserve further study. 
Regarding the discriminant analysis, the Gaussian assumption of the (conditional) distribution of the inputs helps to be theoretically conclusive when copping with missing data. Adapting this study framework to manage categorical inputs would confirm the applicability and relevance of LDA-type predictors in the presence of missing data.

\bibliographystyle{plainnat}
\bibliography{biblio_hal}

\clearpage
\appendix
\onecolumn

\paragraph{Notations.} For $n\in \mathbb N$, we denote $[n] = \lbrace 1, \dots, n\rbrace$. We use $\lesssim$ to denote inequality up to a universal constant. For any $x \in \R^d$ and for any set $J \subset [d]$ of indices, we let $x_J$ be the subvector of $x$ composed of the components indexed by $J$. The abbreviation \textit{P-b-P} refers to 
 \textit{pattern-by-pattern}.  The values $\lambda_{\max}(A)$ and $\lambda_{\min}(A)$ respectively  designate the largest and the smallest eigenvalues of any matrix A. We denote $a\wedge b= \min(a,b)$ and $a\vee b= \max(a,b)$.

\section{Proofs of Section \ref{sect:predictMissingValues}}

\begin{lemma}
\label{lem_decomp_pbp_generic}
Let $h^{\star}$ be a minimizer of $\mathcal{R}_{\mathrm{mis}}(h) :=\P(Y\neq h(Z))$, where $Z = (X_{\mathrm{obs}(M)},M)$. Then, 
\begin{align} 
\notag
    h^{\star}(Z)&= \sum_{m\in\M}h_m^{\star}(X_{\mathrm{obs}(m)})\ind_{M=m},
\end{align}
with $h_m^{\star}(X_{\mathrm{obs}(m)}):=\mathrm{sign}(\e{Y|X_{\mathrm{obs}(m)},M=m})$.
\end{lemma}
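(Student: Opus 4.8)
# Proof Plan for Lemma~\ref{lem_decomp_pbp_generic}

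The plan is to minimize the misclassification risk pointwise over the observed information $Z = (X_{\mathrm{obs}(M)}, M)$. The key observation is that for any classifier $h: \mathcal{Z} \to \{-1,1\}$, the risk decomposes as
\begin{align*}
\mathcal{R}_{\mathrm{mis}}(h) = \P(Y \neq h(Z)) = \e{\P(Y \neq h(Z) \mid Z)},
\end{align*}
so it suffices to minimize the conditional misclassification probability $\P(Y \neq h(Z) \mid Z)$ separately for each realization of $Z$. Since $h(Z) \in \{-1,1\}$ is a deterministic function of $Z$, this is a standard pointwise Bayes argument: for a fixed value $z$, the choice $h(z) = c$ incurs conditional error $\P(Y \neq c \mid Z = z)$, and among $c \in \{-1,1\}$ this is minimized by taking the sign of $\e{Y \mid Z = z}$. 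Concretely, $h^\star(z) = \mathrm{sign}(\e{Y \mid Z = z})$ is optimal because, writing $p = \P(Y=1 \mid Z=z)$, one has $\e{Y \mid Z=z} = 2p - 1$, and choosing $h(z) = +1$ beats $h(z) = -1$ exactly when $p > 1/2$, i.e.\ when $\e{Y \mid Z=z} > 0$.

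First I would establish this pointwise optimality rigorously, handling the boundary case $\e{Y \mid Z=z} = 0$ by noting that either label achieves the minimum there (so any tie-breaking convention for $\mathrm{sign}$ is admissible). This yields a Bayes predictor of the form $h^\star(Z) = \mathrm{sign}(\e{Y \mid Z})$. The second and essentially bookkeeping step is to rewrite this in pattern-by-pattern form. Because $Z = (X_{\mathrm{obs}(M)}, M)$ encodes the pattern $M$ explicitly, conditioning on $Z$ with $M = m$ is the same as conditioning on $(X_{\mathrm{obs}(m)}, M = m)$. I would therefore decompose over the disjoint events $\{M = m\}$ for $m \in \M$ using indicators:
\begin{align*}
h^\star(Z) = \sum_{m \in \M} \mathrm{sign}\big(\e{Y \mid X_{\mathrm{obs}(m)}, M = m}\big)\, \ind_{M=m},
\end{align*}
which is precisely the claimed formula with $h_m^\star(X_{\mathrm{obs}(m)}) := \mathrm{sign}(\e{Y \mid X_{\mathrm{obs}(m)}, M=m})$. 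The events $\{M=m\}$ partition the sample space, so exactly one indicator is nonzero for each realization, making the sum well-defined and equal to the Bayes predictor.

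I do not anticipate a serious obstacle, as this is the classical pointwise characterization of the Bayes classifier transported to the incomplete-data setting; the only subtlety worth stating carefully is measurability, namely that $X_{\mathrm{obs}(M)}$ together with $M$ generates the same $\sigma$-algebra as $Z$, so that conditioning on $Z$ and conditioning pattern-by-pattern on $(X_{\mathrm{obs}(m)}, M=m)$ coincide on $\{M=m\}$. The mild care needed in the tie case $\e{Y \mid Z} = 0$ and the reduction from the global risk to its conditional version are the two points I would make explicit, but neither is genuinely difficult.
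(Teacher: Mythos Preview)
Your proposal is correct and follows essentially the same structure as the paper's proof: first identify the Bayes predictor as $h^\star(Z)=\mathrm{sign}(\e{Y\mid Z})$, then decompose over the disjoint events $\{M=m\}$ using indicators. The only cosmetic difference is in the first step: the paper rewrites the $0$--$1$ loss as a squared loss via $|Y-h(Z)|\in\{0,2\}$ and applies the $L^2$ orthogonal decomposition $\e{(Y-h(Z))^2}=\e{(Y-\e{Y\mid Z})^2}+\e{(\e{Y\mid Z}-h(Z))^2}$, whereas you minimize the conditional misclassification probability directly; both are standard and equally short.
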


\begin{proof}[Proof of \Cref{lem_decomp_pbp_generic}]
    Recall that we quantify the accuracy of a classifier using the probability of misclassification given by 
\begin{align}\label{eq:defMisclass}
    \mathcal{R}_{\mathrm{mis}}(h) :=\P(Y\neq h(Z))
\end{align}
Therefore, we would like to find a classifier minimizing this probability of misclassification. As $|Y-h(Z)|\in\{0,2\}$, then, 
\begin{align}
    \mathcal{R}_{\mathrm{mis}}(h) &=\frac{1}{4}\e{(Y-h(Z))^2}=\frac{1}{4}\e{(Y-\e{Y|Z})^2}+\frac{1}{4}\e{(\e{Y|Z}-h(Z))^2}.
\end{align}
Thus, the Bayes predictor is
\begin{align}
    h^{\star}(Z):=\mathrm{sign}(\e{Y|Z})=\mathrm{sign}(\e{Y|X_{\mathrm{obs}(M)},M}) \text{ where sign}(x)=\ind_{x\geq 0}-\ind_{x< 0}. 
\end{align}
As we have that 
\begin{align}
    \e{Y|X_{\mathrm{obs}(M)},M}= \sum_{m\in\M}\e{Y|X_{\mathrm{obs}(m)},M=m}\ind_{M=m},
\end{align}
then, the Bayes predictor can be written as
\begin{align}\label{eq:BayesClassif}
\notag
    h^{\star}(Z)&=\mathrm{sign}(\e{Y|Z})\\
    \notag
    &=\mathrm{sign}\left(\sum_{m\in\M}\e{Y|X_{\mathrm{obs}(m)},M=m}\ind_{M=m}\right)\\
    \notag
    &=\sum_{m\in\M}\mathrm{sign}(\e{Y|X_{\mathrm{obs}(m)},M=m})\ind_{M=m}\\&=\sum_{m\in\M}h_m^{\star}(X_{\mathrm{obs}(m)})\ind_{M=m}
\end{align}
with 
\begin{align}\label{eq:BayesClassifPbP}
    h_m^{\star}(X_{\mathrm{obs}(m)}):=\mathrm{sign}(\e{Y|X_{\mathrm{obs}(m)},M=m}).
\end{align}
\end{proof}

\section{(Perceptron) Proofs of Section \ref{sec:perceptron}}

\subsection{Proof of \Cref{lem:separable_complete_does_not_imply_miss_separability}}

\begin{proof}
Suppose that we only have two points $X_1,X_2\in \R^d$ where $x_2=(x_{1,1},...,x_{1,(k-1)},x_{2,k},x_{1,{(k+1)}},...,x_{1,d})$ with $x_{1,k}\neq x_{2,k}$. We have $y_2=-y_1$. We also suppose that $m_{1,k}=m_{2,k}=1$ and $m_1=m_2$. Then, $\mathcal{W}$ is not empty, but $\mathcal{W}_{mis}$ is empty as $(1-m_1)\odot x_1=(1-m_2)\odot x_2$, thus for any $w\in\R^d$ if $y_1w^\top(1-m_1)\odot x_1>0$ then $y_2w^\top(1-m_2)\odot x_2=-y_1w^\top(1-m_1)\odot x_1<0$, or the symmetric case.     
\end{proof}

\subsection{Separability characterization}\label{subsec:sepChar}

\begin{lemma}[Separability characterization]\label{lem: sepChar}
Consider the $\ell^p$-balls $B_1$ and $B_2$ resp. centered at  $c_1, c_2$ and of respective radius  $R_1, R_2$. They are disjoint for the $p$-norm if and only if $R_1+R_2<\left\|(c_1-c_2)\right\|_p$.
\end{lemma}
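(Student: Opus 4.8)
The plan is to prove the two implications separately, relying only on the two defining properties of the $\ell^p$-norm—the triangle inequality and positive homogeneity $\|tv\|_p = |t|\,\|v\|_p$—so that the argument in fact holds verbatim for an arbitrary norm. Throughout I take $B_i = \{x : \|x - c_i\|_p \leq R_i\}$ to be closed balls, and read ``disjoint'' as $B_1 \cap B_2 = \emptyset$; this convention is what makes the inequality in the statement strict.

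First I would dispatch the easy direction: if $R_1 + R_2 < \|c_1 - c_2\|_p$, then $B_1 \cap B_2 = \emptyset$. Arguing by contradiction, suppose some $x$ lies in both balls. The triangle inequality then gives
$$\|c_1 - c_2\|_p \leq \|c_1 - x\|_p + \|x - c_2\|_p \leq R_1 + R_2,$$
contradicting the hypothesis, so the intersection must be empty.

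For the converse I would argue by contraposition: assuming $R_1 + R_2 \geq \|c_1 - c_2\|_p$, I exhibit an explicit point of $B_1 \cap B_2$. When $R_1 + R_2 > 0$, set $x := c_1 + \tfrac{R_1}{R_1 + R_2}(c_2 - c_1)$, the point dividing the segment $[c_1,c_2]$ in the ratio $R_1 : R_2$. Positive homogeneity gives $\|x - c_1\|_p = \tfrac{R_1}{R_1+R_2}\|c_2 - c_1\|_p \leq R_1$, and symmetrically $\|x - c_2\|_p \leq R_2$, so $x \in B_1 \cap B_2$ and the balls are not disjoint. Contraposition then yields exactly $R_1 + R_2 < \|c_1 - c_2\|_p$ whenever the balls are disjoint.

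The only point requiring genuine care is the degenerate case $R_1 = R_2 = 0$, where the convex-combination formula above is undefined. There the balls reduce to the singletons $\{c_1\}$ and $\{c_2\}$, which are disjoint precisely when $c_1 \neq c_2$, i.e. when $0 = R_1 + R_2 < \|c_1 - c_2\|_p$; hence the equivalence persists. I expect this edge case, together with pinning down the open-versus-closed convention that governs strictness, to be the only real subtlety—the two core computations are routine invocations of the triangle inequality and homogeneity.
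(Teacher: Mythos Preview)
Your proof is correct and follows essentially the same approach as the paper: both directions use the triangle inequality for the implication ``nonempty intersection $\Rightarrow R_1+R_2\ge\|c_1-c_2\|_p$'' and exhibit the same convex-combination point $x=c_1+\frac{R_1}{R_1+R_2}(c_2-c_1)$ for the converse. Your treatment is in fact slightly more careful than the paper's, since you explicitly address the degenerate case $R_1=R_2=0$ where the convex-combination formula is undefined.
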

\begin{proof}
    On the one hand, if $\left\|C_1-C_2 \right\|_p\leq R_1+R_2$, then $B_1\cap B_2\neq \emptyset$. For example, $x\in B_1\cap B_2$ for $x:= C_1+\frac{R_1}{R_1+R_2}(C_2-C_1)$  because 
    \begin{align*}
        \left\|x-C_1\right\|_p = \left\| \frac{R_1}{R_1+R_2}(C_2-C_1) \right\|_p\leq \frac{R_1}{R_1+R_2}(R_1+R_2) = R_1
    \end{align*}
    then $x\in B_1$ and 
    \begin{align*}
        \left\|x-C_2\right\|_p = \left\| \frac{R_2}{R_1+R_2}(C_2-C_1) \right\|_p\leq \frac{R_2}{R_1+R_2}(R_1+R_2) = R_2
    \end{align*}
    so $x\in B_2$. 
    
    On the other hand, if there exist an $x$ such that $x\in B_1\cap B_2\neq \emptyset$, then
    $\left\|x-C_1\right\|_p \leq R_1$ and $\left\|x-C_2\right\|_p \leq R_2$. Using the triangle inequality, 
    \begin{align*}
        \left\|(C_1-C_2)\right\|_p\leq \left\|(C_1-x)\right\|_p +\left\|(x-C_2)\right\|_p\leq R_1+R_2
    \end{align*}
\end{proof}

By utilizing this characterization, note that we can redefine the linear separability of two balls as the condition where the distance between their centers is greater than the sum of their individual radii. In the context of our projected balls, we observe that
\begin{align*}
    \P\left(B_{1,\mathrm{obs}(M)}\cap B_{2, \mathrm{obs}(M)} =\emptyset\right)&=\P\left(R_1+R_2<\left\|c_{1,\mathrm{obs}(M)}-c_{2, \mathrm{obs}(M)}\right\|_p \right)
    \\ 
    &=\P\left(R_1+R_2<\left\|\Pi_M(c_1)-\Pi_M(c_2)\right\|_p \right)
    \\ &=\P\left(R_1+R_2<\left\|(1-M)\odot (c_1-c_2)\right\|_p \right).
\end{align*}
In the remainder, we fix $p=2$ (the Euclidean norm).

\subsection{Proof of Proposition \ref{prop:finiteBallSepar}}\label{subsec:finiteBallSepar}

\begin{proof}
    In order to study the separability of the two balls after projection through the missing pattern, we need to study the probability that the sum of the radii is still smaller than the distance between the two centers after projection. Equivalently, 
    \begin{align*}
        \P\left(R_1+R_2<\left\|(1-M)\odot (c_1-c_2)\right\|_2 \right)
    \end{align*}
    as shown in Lemma \ref{lem: sepChar}.
    We have that 
    \begin{align*}
        \P\left(R_1+R_2<\left\|(1-M)\odot (c_1-c_2)\right\|_2 \right)&\geq
        \P\left(\max(R_1,R_2)<\frac{1}{2}\left\|(1-M)\odot (c_1-c_2)\right\|_2 \right)\\
        &=\prod_{i=1}^2\P\left(R_i<\frac{1}{2}\left\|(1-M)\odot (c_1-c_2)\right\|_2 \right)\tag{using that $R_1 \perp\!\!\!\perp R_2$} \\
        &=\P\left(R_1<\frac{1}{2}\left\|(1-M)\odot (c_1-c_2)\right\|_2 \right)^2 \tag{using that $R_1 \sim R_2$} .
    \end{align*}
    By Assumption \ref{ass:fixed_centroids}, $(R_1, R_2) \sim U(0, \frac{1}{2}\left\|c_1-c_2\right\|_2)^{\otimes 2}$ and assuming MCAR data ($R_1 \perp\!\!\!\perp M$),

    \begin{align*}
        \P\left(R_1<\frac{1}{2}\left\|(1-M)\odot (c_1-c_2)\right\|_2 \mid M\right) &=\frac{\left\|(1-M)\odot (c_1-c_2)\right\|_2}{\left\|(c_1-c_2)\right\|_2}.
    \end{align*}
    
    Moreover, note that
    
    \begin{align*}
        \e{\frac{\left\|(1-M)\odot (c_1-c_2)\right\|_2^2}{\left\|(c_1-c_2)\right\|_2^2}}&=
        \e{\frac{\sum_{j=1}^d(1-M_j) (c_{1j}-c_{2j})^2}{\sum_{j=1}^d (c_{1j}-c_{2j})^2}}\\&=
        \frac{\sum_{j=1}^d\e{(1-M_j)} (c_{1j}-c_{2j})^2}{\sum_{j=1}^d (c_{1j}-c_{2j})^2}
        \\&=
        \frac{\sum_{j=1}^d (1-\eta_j) (c_{1j}-c_{2j})^2}{\sum_{j=1}^d (c_{1j}-c_{2j})^2}
    \end{align*}
    Therefore, the lower bound is obtained using Jensen's inequality as follows
      \begin{align*}
        \P\left(R_1+R_2<\left\|(1-M)\odot (c_1-c_2)\right\|_2 \right)&\geq 
        \left( \esp\left[ \sqrt{\frac{\left\|(1-M)\odot (c_1-c_2)\right\|_2^2}{\left\|(c_1-c_2)\right\|_2^2}} \right] \right)^2
        \\
        &\geq \esp\left[ \frac{\left\|(1-M)\odot (c_1-c_2)\right\|_2^2}{\left\|(c_1-c_2)\right\|_2^2}
         \right]  \\
         &=\frac{\sum_{j=1}^d (1-\eta_j) (c_{1j}-c_{2j})^2}{\sum_{j=1}^d (c_{1j}-c_{2j})^2}.
    \end{align*}

    To obtain the upper bound, one can proceed similarly, by using Jensen's inequality,
    \begin{align*}
        \P\left(R_1+R_2<\left\|(1-M)\odot (c_1-c_2)\right\|_2 \right)
        &\leq \P\left(R_1<\left\|(1-M)\odot (c_1-c_2)\right\|_2 \right) \\
        &= \esp\left[ {\frac{\left\|(1-M)\odot (c_1-c_2)\right\|_2}{\left\|(c_1-c_2)\right\|_2}} \right] \\
        &= \sqrt{\left(\esp\left[ {\frac{\left\|(1-M)\odot (c_1-c_2)\right\|_2}{\left\|(c_1-c_2)\right\|_2}} \right] \right)^2} \\
        &\leq \sqrt{\esp\left[ {\frac{\left\|(1-M)\odot (c_1-c_2)\right\|_2^2}{\left\|(c_1-c_2)\right\|_2^2}} \right]}\\
        &= \sqrt{\frac{\sum_{j=1}^d (1-\eta_j) (c_{1j}-c_{2j})^2}{\sum_{j=1}^d (c_{1j}-c_{2j})^2}}.      
    \end{align*}
\end{proof}

\subsection{Random centroids}
\label{app:random_centroids}
The bounds derived in the previous section strongly depends on the geometry of the problem, via the centroid coordinates. To establish more general result, we consider  random  centroids $C_1$ and $C_2 \in \mathbb{R}^d$ and work with disjoint $\ell^p$-balls (of same radius for simplicity).
The former point is particularly suited to preserve the data geometry after random projections induced by missing entries.
\begin{assumption}\label{ass:random_centroids} We assume that $(i)$ the coordinates of  $C_1-C_2$ are i.i.d., $(ii)$ for all $j\in \{1,..., d\}$, $\e{ (C_1-C_2)_j^p}<\infty$ and $(iii)$ conditional to the centers $C_1$ and $C_2$, the radii $R_1$ is uniformly distributed as $R_1|(C_1,C_2)   \sim \mathcal{U}(0, \frac{1}{2}\left\|C_1-C_2\right\|_p)$, with $R_2 = R_1$.
\end{assumption}
\Cref{ass:random_centroids} trivially includes the cases where $(C_1,C_2) \sim \mathcal{N}(\mu_1,\lambda_1 I_d)\otimes\mathcal{N}(\mu_2,\lambda_2 I_d)$, or where $(C_1,C_2)\sim \mathcal{U}(a_1, b_1)^{\otimes d}\otimes\mathcal{U}(a_2, b_2)^{\otimes d}$.

%
\begin{assumption}[Uniform $s$-missing patterns]\label{ass:M-uniform} The missing pattern $M$ is sampled uniformly at random among missing patterns admitting $s$ missing values in total, i.e., 
$M \sim \mathcal{U}(\left\{m\in \{0,1\}^d,\left\|m\right\|_0=s \right\})$.
\end{assumption}
%
In the next proposition, we characterize the probability of preserving linear separability despite missing values, when the dimension $d$ tends to $\infty$.
\begin{propo}[Asymptotic separability of two balls with the same radius]\label{prop:asymptBallSepar}
    Under Assumption \ref{ass:random_centroids} and Assumption \ref{ass:M-uniform}, let $\rho := \lim_{d \to \infty}\frac{s}{d}$. Then,
    \begin{equation}
        \lim_{d\to +\infty}\P\left(B_{1,\mathrm{obs}(M)}\cap B_{2,\mathrm{obs}(M)} =\emptyset\right)=\sqrt[p]{1-\rho}.
    \end{equation}

\end{propo}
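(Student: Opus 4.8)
The plan is to reduce the separability probability to the expectation of a single scalar ratio, and then establish a weak law of large numbers for sampling without replacement.

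First I would invoke the separability characterization (\Cref{lem: sepChar}) together with the reformulation in \Cref{subsec:sepChar}: the projected balls are disjoint if and only if $R_1 + R_2 = 2R_1 < \|(1-M)\odot(C_1 - C_2)\|_p$. Conditionally on $(C_1,C_2)$, the variable $2R_1$ is uniform on $[0,\|C_1-C_2\|_p]$ by \Cref{ass:random_centroids}(iii), and deleting coordinates can only shrink the norm, so the threshold lies inside this interval. Conditioning also on $M$ (independent of the centroids, as in the MCAR setting), the conditional probability is exactly the ratio of the two $\ell^p$-norms. Writing $W_j := |(C_1 - C_2)_j|^p$, which are i.i.d.\ with $\e{W_1}<\infty$ by \Cref{ass:random_centroids}(i)--(ii), this yields
\[
\P\big(B_{1,\obs(M)}\cap B_{2,\obs(M)} = \emptyset\big) = \e{\left(\frac{\sum_{j\in\obs(M)} W_j}{\sum_{j=1}^d W_j}\right)^{1/p}}.
\]

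Next I would show that the ratio $S/T := \sum_{j\in\obs(M)} W_j \big/ \sum_{j=1}^d W_j$ converges to $1-\rho$ in probability. Conditioning on $W := (W_1,\dots,W_d)$, the set $\obs(M)$ is a uniform subset of size $d-s$, so the conditional mean of $S/T$ is exactly $(d-s)/d \to 1-\rho$, and the finite-population sampling-variance formula gives $\V{S/T \mid W} = \frac{(d-s)s}{d(d-1)}\cdot\frac{\sum_j (W_j - \bar W)^2}{(\sum_j W_j)^2}$. I would then bound the prefactor by $d/(d-1)$ and use $\sum_j (W_j - \bar W)^2 \le \sum_j W_j^2 \le (\max_j W_j)(\sum_j W_j)$ to obtain $\V{S/T\mid W} \lesssim \max_j W_j / \sum_j W_j$. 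Since $\tfrac1d\sum_j W_j \to \e{W_1}>0$ by the strong law and $\max_{j\le d} W_j / d \to 0$ in probability (because $\P(\max_j W_j > \epsilon d) \le d\,\P(W_1 > \epsilon d) \le \tfrac1\epsilon\e{W_1\ind_{W_1 > \epsilon d}} \to 0$ by dominated convergence), the conditional variance tends to $0$ in probability, hence in $L^1$ as it is bounded by $1$. Conditional Chebyshev then gives $S/T \to 1-\rho$ in probability. Applying the continuous mapping theorem to $x\mapsto x^{1/p}$ and bounded convergence (the integrand lies in $[0,1]$) lets me pass to the limit in the expectation, producing $\sqrt[p]{1-\rho}$.

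The main obstacle is the variance control in the second step: since only a first moment of $W_1$ is assumed, $W_1$ may have infinite variance and the direct variance of $S$ is unavailable. The remedy is precisely the bound $\V{S/T\mid W}\lesssim \max_j W_j/\sum_j W_j$ together with the tail estimate $d\,\P(W_1 > \epsilon d)\to 0$, which follows from $\e{W_1}<\infty$ alone and does not require any stronger integrability.
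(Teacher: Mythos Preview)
Your proof is correct. The reduction to $\e{(S/T)^{1/p}}$ with $S=\sum_{j\in\obs(M)}W_j$ and $T=\sum_j W_j$ matches the paper exactly. Where you diverge is in establishing $S/T\to 1-\rho$ in probability: the paper exploits exchangeability of the i.i.d.\ sequence $(W_j)$ to replace the random set $\obs(M)$ by the deterministic set $\{1,\ldots,d-s\}$ inside the expectation, after which the ratio becomes $\sum_{j=1}^{d-s}W_j\big/\sum_{j=1}^d W_j$ and the convergence follows from two direct applications of the weak law of large numbers combined via Slutsky. Your route instead keeps $M$ random, computes the conditional variance via the sampling-without-replacement formula, and controls it by $\max_j W_j/\sum_j W_j$, using only the first-moment assumption to show this ratio vanishes. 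Your argument is self-contained and makes the role of the first-moment hypothesis very explicit; the paper's exchangeability reduction is shorter and sidesteps any variance computation. Both approaches finish by continuous mapping and bounded convergence (the paper leaves the latter implicit).
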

    Therefore, in high-dimensional regimes, pattern-by-pattern perceptron is a valid procedure with a probability converging to $\sqrt[p]{1-\rho}$, where $\rho$ is the asymptotic ratio of missing values. 
    Note that when $s/d$ tends to zero, as $s$ and $d$ tend to infinity, the separability of the balls is ensured with probability 1. 
    Besides this asymptotic separability probability $\sqrt[p]{1-\rho}$ increases when $p$ increases. 
    This is due to the fact that when $p$ increases, the radius $R_1|(C_1,C_2)\sim \mathcal{U}(0, \frac{1}{2}\left\|C_1-C_2\right\|_p)$ is shrinked ($p \mapsto\left\|x\right\|_{p}$ is non-increasing) and the balls are more and more separated. 




\subsection{Proof of Proposition \ref{prop:asymptBallSepar}}\label{subsec:asymptBallSepar}

\begin{proof} 
    In order to study the separability of the two balls after projection through the missing pattern, we need to study the probability that the sum of radii is still smaller than the distance between the two centers after projection as shown in Lemma \ref{lem: sepChar}. 
    Since $R:=R_1=R_2$, this probability corresponds to
    \begin{align*}
        \P\left(R<\frac{1}{2}\left\|(1-M)\odot (C_1-C_2)\right\|_p \right).
    \end{align*}
    Using Assumption \ref{ass:random_centroids}, we have that 
    \begin{align*}
        \P\left(R<\frac{1}{2}\left\|(1-M)\odot (C_1-C_2)\right\|_p |M, C_1, C_2\right) &=\frac{\left\|(1-M)\odot (C_1-C_2)\right\|_p}{\left\|(C_1-C_2)\right\|_p}.
    \end{align*}
    Therefore, if we define $\mathcal{M}_s=\left\{ m\in\{0,1\}^d, \left\|m \right\|_0=s \right\}$,
    \begin{align*}
    \P\left(R<\frac{1}{2}\left\|(1-M)\odot (C_1-C_2)\right\|_p \right) &= \e{\frac{\left\|(1-M)\odot (C_1-C_2)\right\|_p}{\left\|(C_1-C_2)\right\|_p}}\\
        &=\e{\e{\frac{\left\|(1-M)\odot (C_1-C_2)\right\|_p}{\left\|(C_1-C_2)\right\|_p}\mid C_1, C_2}} \\
        \\
        &=\e{\e{\sqrt[p]{\frac{\sum_{j=1}^d(1-M_j) (C_{1j}-C_{2j})^p}{\sum_{j=1}^d (C_{1j}-C_{2j})^p}}\mid C_1, C_2}}\\
        &=\e{\sum_{m\in \mathcal{M}_s}\frac{1}{\binom{d}{s}}\sqrt[p]{\frac{\sum_{j, m_j=0}(C_{1j}-C_{2j})^p}{\sum_{j=1}^d (C_{1j}-C_{2j})^p}}}\tag{using $M\sim \mathcal{U}(\mathcal{M}_s)$}\\
        &=\e{\sqrt[p]{\frac{\sum_{j=1}^{d-s}(C_{1j}-C_{2j})^p}{\sum_{j=1}^d (C_{1j}-C_{2j})^p}}}\\
    \end{align*}
    after having reordered the terms using the exchangeability of the $(C_1-C_2)_j$ (Assumption \ref{ass:random_centroids}(i)). One has 
    \begin{align*}
        \frac{\sum_{j=1}^{d-s}(C_{1j}-C_{2j})^p}{\sum_{j=1}^d (C_{1j}-C_{2j})^p} &= \frac{\frac{1}{d-s}\sum_{j=1}^{d-s}(C_{1j}-C_{2j})^p}{\frac{1}{d-s}\sum_{j=1}^{d-s} (C_{1j}-C_{2j})^p+\frac{s}{d-s}\frac{1}{s}\sum_{j=d-s+1}^{d} (C_{1j}-C_{2j})^p}\\
        &=\frac{1}{1+\frac{\frac{s}{d-s}\frac{1}{s}\sum_{j=d-s+1}^{d} (C_{1j}-C_{2j})^p}{\frac{1}{d-s}\sum_{j=1}^{d-s}(C_{1j}-C_{2j})^p}}.
    \end{align*}
    As $d$ goes to infinity, we assume that the number of missing values $s$ goes to infinity.
    Otherwise, if $s$ is bounded, then $\rho = \lim_{d \to \infty}\frac{s}{d}=0$ and $\frac{s}{d-s}\frac{1}{s}\sum_{j=d-s+1}^{d} (C_{1j}-C_{2j})^p \xrightarrow{d\to\infty} 0$, so we would have the result using that $$\P\left(R<\frac{1}{2}\left\|(1-M)\odot (C_1-C_2)\right\|_p \right)\xrightarrow[d\to\infty]{} 1=\sqrt[p]{1-\rho}.$$

    Then, combining Assumption \ref{ass:random_centroids}  and the law of large numbers, we get
    \begin{align*}
        \frac{1}{d-s}\sum_{j=1}^{d-s}(C_{1j}-C_{2j})^p \xrightarrow[d\to\infty]{\P} \e{(C_{11}-C_{21})^p} \\
        \frac{1}{s}\sum_{j=d-s+1}^{d} (C_{1j}-C_{2j})^p \xrightarrow[d\to\infty]{\P} \e{(C_{11}-C_{21})^p}.
    \end{align*}
    Using Slutsky's theorem, 
    \begin{align*}
        \frac{s}{d-s}\frac{1}{s}\sum_{j=d-s+1}^{d} (C_{1j}-C_{2j})^p \xrightarrow[d\to\infty]{\P} \frac{\rho}{1-\rho}(\e{(C_{11}-C_{21})^p}).
    \end{align*}
    Re-using Slutsky's theorem, 
    \begin{align*}
        \frac{\frac{s}{d-s}\frac{1}{s}\sum_{j=d-s+1}^{d} (C_{1j}-C_{2j})^p}{\frac{1}{d-s}\sum_{j=1}^{d-s}(C_{1j}-C_{2j})^p} \xrightarrow[d\to\infty]{\P} \frac{\rho}{1-\rho}.
    \end{align*}
    Finally, using the continuous mapping theorem, we have that 
    \begin{align*}
        \P\left(R<\frac{1}{2}\left\|(1-M)\odot (C_1-C_2)\right\|_p \right)\underset{d\to\infty}{\longrightarrow} \sqrt[p]{1-\rho}.
    \end{align*}
\end{proof}

\subsection{Link between non-optimality of P-b-P and constant imputation approaches}

\begin{proof}[Proof of \Cref{lemma_P-b-P_imputation_negative}]
Recall that $Z := (X_{\mathrm{obs}(M)}, M)$. A Pattern-by-Pattern (P-b-P) approach with linear classifier $h_{\textrm{PbP}}$ is defined as,
\begin{align*}
 h_{\textrm{PbP}}(Z) & =  \sum_{m\in\M}h_m(X_{\mathrm{obs}(m)})\ind_{M=m},
\end{align*}
where,  for all $m \in \{0,1\}^d$, there exists $\beta_{j,m}$, such that
\begin{align}
    h_m(X_{\mathrm{obs}(m)}) & = \beta_{0,m} + \sum_{j \in \obs(m)} \beta_{j,m} X_j\\
    & = \beta_{0,m} + \sum_{j =1}^d \beta_{j,m} X_j \mathds{1}_{M_j=0}. \label{eq_proof_PbP_imputation1}
\end{align}
On the contrary, an imputation strategy that consists in replacing missing values in $X_j$ by $\alpha_j$ for all $1\leq j \leq d$ can be written as 
\begin{align}
    h_{\textrm{imp}}(Z) & = \beta_0 + \sum_{j =1}^d  \beta_{j} ( X_j \mathds{1}_{M_j=0} + \alpha_j \mathds{1}_{M_j=1})\\
    & = \beta_0 + \sum_{j =1}^d  \beta_{j} \alpha_j \mathds{1}_{M_j=1} + \sum_{j =1}^d \beta_j X_j \mathds{1}_{M_j=0}. \label{eq_proof_PbP_imputation2}
\end{align}
Thus, comparing \eqref{eq_proof_PbP_imputation1} and \eqref{eq_proof_PbP_imputation2}, we see that a constant imputation strategy with linear classifiers can always be written as a P-b-P approach with linear classifiers. Thus, assuming that P-b-P approach with linear classifiers are not Bayes optimal, we conclude that constant imputation strategies with linear classifiers are not Bayes optimal either. 
\end{proof}

\section{(Logistic Model) Proof of Proposition \ref{prop:NotLogis}}
\label{subsec:proofPropNotLogis} 
\begin{proof} 
Let $m\in\{0,1\}^d,$
\begin{align}
   \prob{Y=1|X_{\mathrm{obs}(m)},M=m}   &=\prob{Y=1|X_{\mathrm{obs}(m)}}\tag{using Assumption \ref{ass:MCAR}}\\
&=\e{\prob{Y=1|X}|X_{\mathrm{obs}(m)}}\\
  &= \e{\frac{1}{1+\exp(-\beta_{0}^{\star} - \sum_{j=1}^d \beta_j^{\star} X_j)}|X_{\mathrm{obs}(m)}}.
  \end{align}
  Now, assume that there exists $\beta_m^{\star}\in \R^{d-\left\|m\right\|_0} $ such that 
  \begin{align}
        \prob{Y=1|X_{\mathrm{obs}(m)},M=m} &=\frac{1}{1+\exp (- \beta_{0,m}^\star - \sum\limits_{j \in \mathrm{obs}(m)} \beta_{j,m}^\star X_j)}.
  \end{align}
  Combining the two previous equations leads to 
  \begin{align}
        \notag
		& \frac{1}{1+\exp (- \beta_{0,m}^\star - \sum\limits_{j \in \mathrm{obs}(m)} \beta_{j,m}^\star X_j)}\\
        &=\e{\frac{1}{1+\exp(-\beta_{0}^{\star} - \sum_{j=1}^d \beta_j^{\star} X_j)}|X_{\mathrm{obs}(m)}} \label{eq_proof1}\\ 
        &\geq \frac{1}{\e{1+\exp(-\beta_{0}^{\star} - \sum_{j=1}^d \beta_j^{\star} X_j) |X_{\mathrm{obs}(m)} } }\tag{using Jensen Inequality}\\
        &=\frac{1}{1+\e{\exp\left(-\beta_0^{\star} - \sum\limits_{j \in \mathrm{obs}(m)} \beta_{j}^\star X_j - \sum\limits_{j \in \mis(m)} \beta_{j}^\star X_j  \right)\mid X_{\mathrm{obs}(m)}}}\\
        &=\frac{1}{1+\exp\left(-\beta_0^{\star} - \sum\limits_{j \in \mathrm{obs}(m)} \beta_{j}^\star X_j \right) \e{\exp\left(- \sum\limits_{j \in \mis(m)} \beta_{j}^\star X_j\right)\mid X_{\mathrm{obs}(m)}}},\\
  \end{align}
  which is equivalent to
  \begin{align}
      \exp \left(- (\beta_{0,m}^\star - \beta_0^\star ) - \sum\limits_{j \in \mathrm{obs}(m)} (\beta_{j,m}^\star - \beta_j^\star) X_j \right) \leq 
      \e{\exp\left(- \sum\limits_{j \in \mis(m)} \beta_{j}^\star X_j\right)\mid X_{\mathrm{obs}(m)}}.\label{eq:logitMCAR}
  \end{align}
  
   Now, assuming that variables $X_1, \hdots, X_d$ are independent, we have
   \begin{align}
      \exp \left(- (\beta_{0,m}^\star - \beta_0^\star ) - \sum\limits_{j \in \mathrm{obs}(m)} (\beta_{j,m}^\star - \beta_j^\star) X_j \right) \leq 
      \e{\exp\left(- \sum\limits_{j \in \mis(m)} \beta_{j}^\star X_j\right) }.
  \end{align}
   Let $1 \leq k \leq d$. Letting $X_j = 0$ for all $j \in \mathrm{obs}(m) $ with $j \neq k$, we have
    \begin{align}
      \exp \left(- (\beta_{0,m}^\star - \beta_0^\star ) -  (\beta_{k,m}^\star - \beta_k^\star) X_k \right) \leq 
      \e{\exp\left(- \sum\limits_{j \in \mis(m)} \beta_{j}^\star X_j\right) }.
  \end{align}
 By assumption, the support of $X_k$ is $\mathds{R}$. Thus, letting $X_k$ tending to $\pm \infty$, we deduce that 
 \begin{align}
     \beta_{k,m}^\star = \beta_k^\star.
 \end{align}
Injecting this into \eqref{eq_proof1} leads to 
\begin{align}
    \frac{1}{1+\exp (- \beta_{0,m}^\star - \sum\limits_{j \in \mathrm{obs}(m)} \beta_{j}^\star X_j)}
        &=\e{\frac{1}{1+\exp(-\beta_{0}^{\star} - \sum_{j=1}^d \beta_j^{\star} X_j)}|X_{\mathrm{obs}(m)}},
\end{align}
  that is 
\begin{align}
    \e{\frac{1+\exp (- \beta_{0,m}^\star - \sum\limits_{j \in \mathrm{obs}(m)} \beta_{j}^\star X_j)}{1+\exp(-\beta_{0}^{\star} - \sum_{j=1}^d \beta_j^{\star} X_j)}|X_{\mathrm{obs}(m)}} = 1. \label{eq_proof2}
\end{align}
Let 
\begin{align}
u = \exp \left( - \sum\limits_{j \in \mathrm{obs}(m)} \beta_{j}^\star X_j \right) \quad \textrm{and} \quad   Z_{\mis(m)} = \exp \left( - \sum\limits_{j \in \mis(m)} \beta_{j}^\star X_j \right).
\end{align} 
According to \eqref{eq_proof2}, for all $u \in (0, \infty)$,
\begin{align}
    \e{\frac{1+u \exp (- \beta_{0,m}^\star)}{1+ u Z_{\mis(m)} \exp(-\beta_{0}^{\star})  }} = 1. \label{eq_proof3}
\end{align}
Assume that $\e{1/Z_{\mis(m)}}$ exists. Take the limit when $u$ tends to infinity. According to Lebesgue dominated convergence theorem, we have
\begin{align}
    \lim\limits_{u \to \infty} \e{\frac{1+u \exp (- \beta_{0,m}^\star)}{1+ u Z_{\mis(m)} \exp(-\beta_{0}^{\star})  }} 
    & =  \e{ \lim\limits_{u \to \infty} \frac{1+u \exp (- \beta_{0,m}^\star)}{1+ u Z_{\mis(m)} \exp(-\beta_{0}^{\star})  }} \\
    & = \e{   \frac{ \exp (- \beta_{0,m}^\star)}{ Z_{\mis(m)} \exp(-\beta_{0}^{\star})  }}.\end{align}
  Thus, 
\begin{align}
    \e{   \frac{ 1}{ Z_{\mis(m)}   }} = \exp ( \beta_{0,m}^\star - \beta_{0}^{\star}).
\end{align}
  By definition of $ Z_{\mis(m)}$, we have
\begin{align}
  \exp ( \beta_{0,m}^\star - \beta_{0}^{\star}) & =   \e{ \frac{ 1}{ \prod\limits_{j \in \mis(m)} \exp \left( - \beta_j^{\star} X_j \right) }} \\
  & =   \e{  \prod\limits_{j \in \mis(m)} \exp \left(  \beta_j^{\star} X_j \right) } \\
& =  \prod\limits_{j \in \mis(m)}  \e{   \exp \left(  \beta_j^{\star} X_j \right) }.
\end{align}
Thus, 
\begin{align}
    \exp ( - \beta_{0,m}^\star) & = \frac{\exp( - \beta_{0}^{\star}) }{ \prod\limits_{j \in \mis(m)}  \e{   \exp \left(  \beta_j^{\star} X_j \right) }}\\
    & = \frac{\exp( - \beta_{0}^{\star}) }{  \e{Z_{\mis(m)}' }},
\end{align}
where 
\begin{align}
Z_{\mis(m)}' = 1/Z_{\mis(m)} = \exp \left( \sum\limits_{j \in \mis(m)} \beta_{j}^\star X_j \right).    
\end{align} 
Injecting this equality into \eqref{eq_proof3} leads to, for all $u \in (0,\infty)$, 
\begin{align}
   &  \e{\frac{1+u \exp( - \beta_{0}^{\star})/ \e{Z_{\mis(m)}' }}{1+ u  \exp(-\beta_{0}^{\star})/Z_{\mis(m)}'  }} = 1 \\
    \Longleftrightarrow \quad & \e{\frac{\e{Z_{\mis(m)}' }+u \exp( - \beta_{0}^{\star}) }{\e{Z_{\mis(m)}' }+ u \e{Z_{\mis(m)}' } \exp(-\beta_{0}^{\star})/Z_{\mis(m)}'  }} = 1\\
     \Longleftrightarrow \quad & \e{\frac{\e{Z_{\mis(m)}' }+ v }{\e{Z_{\mis(m)}' }+ v \e{Z_{\mis(m)}' }/Z_{\mis(m)}'  }} = 1.
\end{align}
where $v = u \exp( - \beta_{0}^{\star})$. As this holds for all $v \in (0, \infty)$, taking the derivative of the expectation leads to, for all $v \in (0, \infty)$, 
\begin{align}
    \e{\frac{\e{Z_{\mis(m)}'} \left(1 - \frac{\e{Z_{\mis(m)}'}}{Z_{\mis(m)}'} \right)}{\left(\e{Z_{\mis(m)}'} + \frac{\e{Z_{\mis(m)}'}}{Z_{\mis(m)}'} v \right)^2}} = 0.
\end{align}
Letting $v$ tend to zero leads to
\begin{align}
    \e{\frac{1}{Z_{\mis(m)}'}} = \frac{1}{\e{Z_{\mis(m)}'}}, 
\end{align}
  which holds only if the random variable $Z_{\mis(m)}'$ is degenerated. By definition of $Z_{\mis(m)}'$, we deduce that for all $j \in \mis(m)$, $X_j$ is degenerated or $\beta_j^\star = 0$. Since the support of $X_j$ is $\mathds{R}$, we have that $\beta_j^\star = 0$.

\end{proof}

\section{(LDA + MCAR) Proofs of Section \ref{subsubsec:LDAMissingValues}}\label{subsec:proofsSettingMCARLDA}

\subsection{Preliminary}
\label{subsec:results_on_LDA}

\begin{lemma}
\label{lem_link_log_LDA}

\Cref{ass:LDA} (LDA) is equivalent to 
\begin{enumerate}

    \item[$(i)$] The following logistic regression model 
    \begin{align}
    \mathds{P}[Y=1|X] = \sigma (\log(\pi C) + (\mu_1 - \mu_{-1})^\top \Sigma^{-1} x),    
    \end{align}
    with $ C = \exp\left(\frac{1}{2} \mu_{-1}^\top \Sigma^{-1} \mu_{-1} - \frac{1}{2} \mu_1^\top \Sigma^{-1} \mu_1 \right)$ and $\pi = \mathds{P}[Y=1] / \mathds{P}[Y=-1]$, 

    \item[$(ii)$] and $X$ is distributed as a Gaussian mixture $X = \pi_{-1} Z_0 + (1 - \pi_0) Z_1,$ where $Z_k \sim \mathcal{N}(\mu_k, \Sigma)$ for all $k \in \{0,1\}$. 
\end{enumerate}
\end{lemma}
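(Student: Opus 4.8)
The plan is to prove the equivalence by a direct Bayes' theorem computation, treating the two implications separately. Throughout I write $\phi_k$ for the $\mathcal{N}(\mu_k,\Sigma)$ density, i.e. $\phi_k(x) = (2\pi)^{-d/2}|\Sigma|^{-1/2}\exp(-\tfrac12 (x-\mu_k)^\top \Sigma^{-1}(x-\mu_k))$ for $k\in\{-1,1\}$, and $\pi_k = \mathbb{P}[Y=k]$, so that $\pi = \pi_1/\pi_{-1}$.

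For the forward direction (Assumption \ref{ass:LDA} implies (i) and (ii)), statement (ii) is immediate: by the law of total probability the marginal of $X$ is $f_X = \pi_1\phi_1 + \pi_{-1}\phi_{-1}$, which is precisely the Gaussian mixture with component weights equal to the priors. For (i), I would write $\mathbb{P}[Y=1|X=x] = \pi_1\phi_1(x)/(\pi_1\phi_1(x) + \pi_{-1}\phi_{-1}(x)) = \sigma\!\left(\log\frac{\pi_1\phi_1(x)}{\pi_{-1}\phi_{-1}(x)}\right)$ and expand the log-ratio. The key step is that the quadratic terms $x^\top\Sigma^{-1}x$ cancel in $\log(\phi_1/\phi_{-1})$, leaving the affine part $(\mu_1-\mu_{-1})^\top\Sigma^{-1}x$ together with the constant $\tfrac12\mu_{-1}^\top\Sigma^{-1}\mu_{-1} - \tfrac12\mu_1^\top\Sigma^{-1}\mu_1 = \log C$; adding $\log(\pi_1/\pi_{-1}) = \log\pi$ yields exactly the argument $\log(\pi C) + (\mu_1-\mu_{-1})^\top\Sigma^{-1}x$ of the sigmoid in (i).

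For the converse ((i) and (ii) imply Assumption \ref{ass:LDA}), I would recover each class-conditional density through Bayes' theorem: $f(x|Y=1) = \mathbb{P}[Y=1|X=x]\,f_X(x)/\pi_1$, and symmetrically for $Y=-1$. Substituting the mixture marginal from (ii) and the sigmoid posterior from (i), and invoking the algebraic identity already established in the forward direction (namely that the prescribed sigmoid equals $\pi_1\phi_1/f_X$), the factors $f_X$ cancel and one is left with $f(x|Y=1) = \phi_1(x)$ and $f(x|Y=-1) = \phi_{-1}(x)$, i.e. $X|Y=k \sim \mathcal{N}(\mu_k,\Sigma)$, which is Assumption \ref{ass:LDA}.

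The main obstacle is not any single computation, since all of them are elementary, but the logical structure of the converse: statement (i) alone only constrains the ratio $\phi_1/\phi_{-1}$ and hence cannot identify the conditionals individually, so it is essential to use the marginal mixture (ii) to pin down each Gaussian component and to match it to the correct class label. Care must also be taken with the constants, verifying that the $\pi$ and $C$ appearing in (i) are consistent with the mixture weights in (ii) so that the cancellations in the converse are exact.
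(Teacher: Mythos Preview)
Your proposal is correct and follows essentially the same Bayes' theorem route as the paper: the forward direction is the standard log-odds computation in which the quadratic terms cancel, and the converse recovers each class-conditional density via $f(x\mid Y=k)=\mathbb{P}[Y=k\mid X=x]\,f_X(x)/\pi_k$. The only cosmetic difference is that the paper carries out the converse by explicitly multiplying the mixture density by the sigmoid and simplifying the resulting ratio, whereas you shortcut this step by reusing the algebraic identity from the forward direction; both amount to the same computation.
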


\begin{proof}[Proof of \Cref{lem_link_log_LDA}]
Grant \Cref{ass:LDA}. Let $f_k$ be the density of $X|Y=k$. Let $ \pi = \mathds{P}[Y=1] / \mathds{P}[Y=0]$. We have
\begin{align}
    \frac{\mathds{P}[Y=1|X=x]}{\mathds{P}[Y=0|X=x]} 
    & = \pi \frac{f_1(x)}{f_0(x)}  \\
    & = \pi \exp\left(- \frac{1}{2} (x - \mu_1)^\top \Sigma^{-1} (x-\mu_1) + \frac{1}{2} (x - \mu_0)^\top \Sigma^{-1} (x - \mu_0) \right)\\
     & = \pi C \exp \left( (\mu_1 - \mu_0)^\top \Sigma^{-1} x\right),
 \end{align}
with $ C = \exp\left(\frac{1}{2} \mu_0^\top \Sigma^{-1} \mu_0 - \frac{1}{2} \mu_1^\top \Sigma^{-1} \mu_1 \right)$. Consequently, 
\begin{align}
    \log \left( \frac{\mathds{P}[Y=1|X=x]}{\mathds{P}[Y=0|X=x]}\right) & = \log (\pi C ) + (\mu_1 - \mu_0)^\top \Sigma^{-1} x,
\end{align}
    which concludes the first part of the proof.     Now, assume that $(i)$ and $(ii)$ of \Cref{lem_link_log_LDA} hold. Recall that $f(x), f_0(x), f_1(x)$ are respectively the density of $X$, $X|Y=0$ and $X|Y=1$. We have
\begin{align}
    f_0(x) = \frac{\mathds{P}[Y=0 | X= x] f(x)}{\mathds{P}[Y=0]}. \label{eq_proof_lda_logistic_1}
\end{align}
Since $X$ is a mixture of Gaussian, we have
\begin{align}
    f(x) & = C' \exp\left( - \frac{1}{2} x^\top \Sigma^{-1} x \right) \left( \pi_0 c_0 \exp\left( x^\top \Sigma^{-1} \mu_0 \right) + (1 - \pi_0) c_1 \exp\left( x^\top \Sigma^{-1} \mu_1 \right) \right) \\
    & = C' \pi_0 c_0 \exp\left( - \frac{1}{2} x^\top \Sigma^{-1} x \right) \exp\left( x^\top \Sigma^{-1} \mu_0 \right) \left( 1 + \frac{(1 - \pi_0)c_1}{\pi_0 c_0} \exp\left( x^\top \Sigma^{-1} (\mu_1-\mu_0)\right) \right), \label{eq_proof_lda_logistic_2}
\end{align}
with $C' = (2 \pi)^{-d/2} (\textrm{det}(\Sigma))^{-1/2}$, $c_0 = \exp(-\frac{1}{2} \mu_0^\top \Sigma^{-1} \mu_0)$ and $c_1 = \exp(-\frac{1}{2} \mu_1^\top \Sigma^{-1} \mu_1)$.
Besides, by assumption, we have
\begin{align}
    \mathds{P}[Y=0|X=x]  &= \frac{1}{1 + \exp\left(\log(\pi C) + (\mu_1 - \mu_0)^\top \Sigma^{-1} x\right)} \\
    & = \frac{1}{1 + \pi C \exp\left((\mu_1 - \mu_0)^\top \Sigma^{-1} x\right)}. \label{eq_proof_lda_logistic_3}
\end{align}
Gathering \eqref{eq_proof_lda_logistic_2} and \eqref{eq_proof_lda_logistic_3} in \eqref{eq_proof_lda_logistic_1}, we obtain
\begin{align}
   f_0(x) & =  C'  c_0 \exp\left( - \frac{1}{2} x^\top \Sigma^{-1} x \right) \exp\left( x^\top \Sigma^{-1} \mu_0 \right)  
   \frac{1 + \frac{(1 - \pi_0)c_1}{\pi_0 c_0} \exp\left( x^\top \Sigma^{-1} (\mu_1-\mu_0)\right)}{1 + \pi C \exp\left((\mu_1 - \mu_0)^\top \Sigma^{-1} x\right)}  \\
   & = C'  c_0 \exp\left( - \frac{1}{2} x^\top \Sigma^{-1} x \right) \exp\left( x^\top \Sigma^{-1} \mu_0 \right)  \\
   & = (2 \pi)^{-d/2} (\textrm{det}(\Sigma))^{-1/2} \exp\left(- \frac{1}{2} (x - \mu_0)^\top \Sigma^{-1} (x-\mu_0) \right),
\end{align}
since $C = c_1/c_0$ and $\pi = (1 - \pi_0)/\pi_0$. Consequently, $X|Y=0$ follows a multivariate Gaussian distribution $\mathcal{N}(\mu_0, \Sigma)$. The same calculus can be carried out for the distribution of $X|Y=1$, which concludes the proof. 
\end{proof}

The Bayes predictor $h^\star_{\mathrm{comp}}$ satisfies 
\begin{align}\label{eq:BalancedComplBayesRisk}
    \mathcal{R}_{\mathrm{comp}}(h^\star_{\mathrm{comp}})=
    \Phi\left(-a_m - b_m \right)\pi_{-1}  +\Phi\left( a_m - b_m \right),\pi_1
\end{align}
where $\Phi(x) = \mathds{P}[\mathcal{N}(0,1) \leq x]$ is the c.d.f. of a standard Gaussian random variable, $a_m =  \log\left(\frac{\pi_{-1}}{\pi_1}\right)/\|\Sigma^{-\frac{1}{2}}(\mu_{1}-\mu_{-1})\|$ and $b_m = \|\Sigma^{-\frac{1}{2}}(\mu_{1}-\mu_{-1})\|/2$.

\begin{corollary}[Bayes Risk of P-b-P LDA]\label{cor:GenBayesRiskpbpLDA}
Under Assumptions \ref{ass:MCAR} and \ref{ass:LDA}, the Bayes risk is given by  
\begin{align}
    \mathcal{R}_{\mathrm{mis}}(h^\star)&=\sum_{m\in\{0,1\}^d}\Phi\left(-a_m - b_m \right)\pi_{-1}p_m +\Phi\left( a_m - b_m \right)\pi_1p_m, 
    \label{Bayes_risk_general_case_LDA_missing_MCAR}
\end{align}
where, for all $m \in \mathcal{M}$, 
\begin{align}
    a_m = \frac{\log\left(\frac{\pi_{-1}}{\pi_1}\right)}{\left\|\Sigma_{\mathrm{obs}(m)}^{-\frac{1}{2}}(\mu_{1, \mathrm{obs}(m)}-\mu_{-1, \mathrm{obs}(m)})\right\| } \quad \textrm{and} \quad b_m = \frac{\left\|\Sigma_{\mathrm{obs}(m)}^{-\frac{1}{2}}(\mu_{1, \mathrm{obs}(m)}-\mu_{-1, \mathrm{obs}(m)})\right\|}{2}
\end{align}
\end{corollary}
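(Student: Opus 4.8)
The plan is to combine the pattern-by-pattern decomposition of the Bayes risk with the MCAR independence, reducing the computation on each pattern to a complete-case LDA risk on the observed coordinates, where the expression \eqref{eq:BalancedComplBayesRisk} already applies.

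First I would use the decomposition of $h^\star$ from \Cref{lem_decomp_pbp_generic} to write
\[
\mathcal{R}_{\mathrm{mis}}(h^\star) = \P(Y \neq h^\star(Z)) = \sum_{m \in \{0,1\}^d} \P\big(Y \neq h^\star_m(X_{\mathrm{obs}(m)}),\, M = m\big),
\]
since the events $\{M=m\}$ partition the sample space and $h^\star$ coincides with $h^\star_m(X_{\mathrm{obs}(m)})$ on $\{M=m\}$. Under \Cref{ass:MCAR} (MCAR), $M \perp\!\!\!\perp (X,Y)$, so the event $\{Y \neq h^\star_m(X_{\mathrm{obs}(m)})\}$, being measurable with respect to $(X,Y)$, is independent of $\{M=m\}$. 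This gives the factorization
\[
\mathcal{R}_{\mathrm{mis}}(h^\star) = \sum_{m \in \{0,1\}^d} p_m \, \P\big(Y \neq h^\star_m(X_{\mathrm{obs}(m)})\big), \qquad p_m := \P(M=m),
\]
and it remains only to evaluate each per-pattern misclassification probability.

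Next I would identify the $m$-th term as the Bayes risk of a complete-case balanced LDA problem living on the observed coordinates. Indeed, \Cref{lemma:obsGaus} gives that under MCAR and \Cref{ass:LDA}, $X_{\mathrm{obs}(m)} \mid Y=k \sim \mathcal{N}(\mu_{k,\mathrm{obs}(m)}, \Sigma_{\mathrm{obs}(m)})$ for each class $k$, with unchanged priors $\pi_{\pm 1}$. This is exactly an LDA model in dimension $|\mathrm{obs}(m)|$ with means $\mu_{k,\mathrm{obs}(m)}$ and covariance $\Sigma_{\mathrm{obs}(m)}$, and by \Cref{prop:MCARLDA} the classifier $h^\star_m$ is precisely its Bayes-optimal LDA rule. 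Hence $\P(Y \neq h^\star_m(X_{\mathrm{obs}(m)}))$ is the Bayes risk of that sub-problem.

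Finally I would apply the complete-data risk expression \eqref{eq:BalancedComplBayesRisk} to this sub-problem, substituting $\mu_1 - \mu_{-1} \leftarrow \mu_{1,\mathrm{obs}(m)} - \mu_{-1,\mathrm{obs}(m)}$ and $\Sigma \leftarrow \Sigma_{\mathrm{obs}(m)}$. This turns $b = \|\Sigma^{-1/2}(\mu_1-\mu_{-1})\|/2$ into $b_m$ and the offset term into $a_m$, exactly as defined in the statement, so that
\[
\P\big(Y \neq h^\star_m(X_{\mathrm{obs}(m)})\big) = \Phi(-a_m - b_m)\pi_{-1} + \Phi(a_m - b_m)\pi_1,
\]
and plugging this into the factorized sum yields \eqref{Bayes_risk_general_case_LDA_missing_MCAR}. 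The only genuinely delicate step is the MCAR factorization: although $h^\star_m$ acts on $X_{\mathrm{obs}(m)}$ while the conditioning event is $\{M=m\}$, the misclassification event depends only on $(X,Y)$ and is therefore independent of $M$, which is where MCAR is essential and where the argument would break down under MAR/MNAR (cf.\ \Cref{ex:notLDA}). A secondary point, should one want the proof fully self-contained, is to re-establish \eqref{eq:BalancedComplBayesRisk} by computing the conditional mean and variance of the linear discriminant $g(X) = (\mu_1-\mu_{-1})^\top \Sigma^{-1}(X - (\mu_1+\mu_{-1})/2)$ under each class, which is a routine one-dimensional Gaussian tail computation.
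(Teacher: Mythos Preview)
Your proposal is correct and follows essentially the same route as the paper: decompose the risk over missing patterns, use MCAR to factor out $p_m$, and then evaluate each per-pattern term as the Bayes risk of an LDA problem on the observed coordinates. The only cosmetic difference is that you invoke \eqref{eq:BalancedComplBayesRisk} as a black box (with the substitution $\Sigma \leftarrow \Sigma_{\mathrm{obs}(m)}$, $\mu_k \leftarrow \mu_{k,\mathrm{obs}(m)}$), whereas the paper redoes the one-dimensional Gaussian computation directly on each pattern via the standardized variable $N=\Sigma_{\mathrm{obs}(m)}^{-1/2}(X_{\mathrm{obs}(m)}-\mu_{k,\mathrm{obs}(m)})$; this is precisely the ``secondary point'' you already anticipated.
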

The proof can be found in Appendix~\ref{subsubsec_cor:GenBayesRiskpbpLDA}. 
Note that, from \cref{cor:GenBayesRiskpbpLDA} (using that $\pi_1=\pi_{-1}$) and Equation  \eqref{eq:BalancedComplBayesRisk}, we have that 
\begin{align}
\notag
    L&(h^\star)-\mathcal{R}_{\mathrm{comp}}(h^\star_{\mathrm{comp}})\\
    &=\sum_{m\in\{0,1\}^d}\left(\Phi\left(-\frac{\left\|\Sigma^{-\frac{1}{2}}_{\mathrm{obs}(m)}(\mu_{1, \mathrm{obs}(m)}-\mu_{-1, \mathrm{obs}(m)})\right\|}{2}\right)-\Phi\left(-\frac{\left\|\Sigma^{-\frac{1}{2}}(\mu_1-\mu_{-1})\right\|}{2}\right)\right)p_m,
    \label{eq:firstDifferRisk}
\end{align}
with $\Phi$ the c.d.f. of a standard Gaussian variable. 

\subsection{Proof of Proposition \ref{prop:MCARLDA}}\label{subsubsec:MCARLDA}
\begin{proof}
    Expanding \eqref{eq:BayesClassifPbP},  
    \begin{align}
    \notag
        h_m^{\star}(X_{\mathrm{obs}(m)})&=\mathrm{sign}(\e{Y|X_{\mathrm{obs}(m)},M=m})\\
        &=\mathrm{sign}\left( \P\left(Y=1\mid  X_{\mathrm{obs}(m)}, M=m\right)-\P\left(Y=-1\mid  X_{\mathrm{obs}(m)}, M=m\right)\right).\label{eq:interLDAMCAR}
    \end{align}
    Note that, for any Borelian $B \subset \mathds{R}^{|\mathrm{obs}(m)|}$,
    \begin{align*}
        \P\left(Y=k\mid  X_{\mathrm{obs}(m)}\in B, M=m\right)&= \frac{\P\left(Y=k,  X_{\mathrm{obs}(m)}\in B\mid  M=m\right)}{\P\left( X_{\mathrm{obs}(m)}\in B\mid  M=m\right)}\\
        &= \frac{\P\left(Y=k,  X_{\mathrm{obs}(m)}\in B\right)}{\P\left( X_{\mathrm{obs}(m)}\in B\right)}\tag{using Assumption \ref{ass:MCAR}}\\
        &=\frac{\P\left(X_{\mathrm{obs}(m)}\in B\mid  Y=k\right)\pi_k}{\P\left( X_{\mathrm{obs}(m)}\in B\right)}.
    \end{align*}
    Thus, 
    \begin{align}
    & \P\left(Y=1\mid  X_{\mathrm{obs}(m)}\in B, M=m\right) > \P\left(Y=-1\mid  X_{\mathrm{obs}(m)}\in B, M=m\right) \\
    \Longleftrightarrow & \quad \P\left(X_{\mathrm{obs}(m)}\in B\mid  Y=1\right)\pi_1 > \P\left(X_{\mathrm{obs}(m)}\in B\mid  Y=-1\right)\pi_{-1}.
    \end{align}
    As this holds for any Borelian  $B \subset \mathds{R}^{|\mathrm{obs}(m)|}$, $h_m^{\star}$ can be rewritten as 
    \begin{align}
        h_m^{\star}(x) & =  \textrm{sign}\left( \pi_1 f_{X_{\mathrm{obs}(m)} | Y=1}(x) - \pi_{-1} f_{X_{\mathrm{obs}(m)} | Y=1}(x) \right)\\
        & = \textrm{sign}\left( \log\left(\frac{f_{X_{\mathrm{obs}(m)} | Y=1}(x)}{f_{X_{\mathrm{obs}(m)} | Y=-1}(x)}\right) - \log\left(\frac{\pi_{-1}}{\pi_1}\right) \right), 
    \end{align}
    where $f_{X_{\mathrm{obs}(m)}\mid  Y=k}$ is the density of 
    $X_{\mathrm{obs}(m)}\mid  Y=k$ for all $k \in \{-1,1\}$. 
    Under LDA model (Assumption \ref{ass:LDA}), the objective is to determine the distribution of $X_{\mathrm{obs}(m)}|Y=k$ for each $m\in \{0,1\}^d$. To this end, ~\cref{lemma:obsGaus} proves that the projection of a Gaussian vector onto a subset of coordinates preserves the Gaussianity with projected parameters. Hence, $X_{\mathrm{obs}(m)}|Y=k \sim \mathcal{N}(\mu_{k, \mathrm{obs}(m)},\Sigma_{\mathrm{obs}(m)})$ and therefore,
    \begin{align*}
         & \log  \left(\frac{f_{X_{\mathrm{obs}(m)} | Y=1}(x)}{f_{X_{\mathrm{obs}(m)} | Y=-1}(x)}\right) \\&= \log\left( \frac{(\sqrt{2\pi})^{-(d-\left\|m\right\|_0)}\sqrt{\det(\Sigma^{-1}_{\mathrm{obs}(m)})}\exp\left(-\frac{1}{2}(x-\mu_{1, \mathrm{obs}(m)})^{\top}\Sigma^{-1}_{\mathrm{obs}(m)}(x-\mu_{1,\mathrm{obs}(m)})\right)}{(\sqrt{2\pi})^{-(d-\left\|m\right\|_0)}\sqrt{\det(\Sigma^{-1}_{\mathrm{obs}(m)})}\exp\left(-\frac{1}{2}(x-\mu_{-1, \mathrm{obs}(m)})^{\top}\Sigma^{-1}_{\mathrm{obs}(m)}(x-\mu_{-1,\mathrm{obs}(m)})\right)} \right)\\
        &=-\frac{1}{2}(x-\mu_{1, \mathrm{obs}(m)})^{\top}\Sigma^{-1}_{\mathrm{obs}(m)}(x-\mu_{1,\mathrm{obs}(m)})+\frac{1}{2}(x-\mu_{-1, \mathrm{obs}(m)})^{\top}\Sigma^{-1}_{\mathrm{obs}(m)}(x-\mu_{-1,\mathrm{obs}(m)})\\
        &=(\mu_{1,\mathrm{obs}(m)}-\mu_{-1,\mathrm{obs}(m)})^{\top}\Sigma^{-1}_{\mathrm{obs}(m)}\left(x-\frac{\mu_{1,\mathrm{obs}(m)}+\mu_{-1,\mathrm{obs}(m)}}{2}\right). 
    \end{align*}
    Consequently, 
     \begin{align}
        h_m^{\star}(x) 
        & = \textrm{sign}\left( (\mu_{1,\mathrm{obs}(m)}-\mu_{-1,\mathrm{obs}(m)})^{\top}\Sigma^{-1}_{\mathrm{obs}(m)}\left(x-\frac{\mu_{1,\mathrm{obs}(m)}+\mu_{-1,\mathrm{obs}(m)}}{2}\right) - \log\left(\frac{\pi_{-1}}{\pi_1}\right) \right), 
    \end{align}
    which concludes the proof. 
\end{proof}

\subsection{Proof of \cref{cor:GenBayesRiskpbpLDA}}
\label{subsubsec_cor:GenBayesRiskpbpLDA}

\begin{proof}
Let $N=\Sigma_{\mathrm{obs}(m)}^{-\frac{1}{2}}(X_{\mathrm{obs}(m)}-\mu_{-1, \mathrm{obs}(m)})$. Using Proposition \ref{prop:MCARLDA}, we have 
   \begin{align*}
        & \P\left(h_m^{\star}(X_{\mathrm{obs}(m)})=1\mid Y=-1\right) \\
          = & \P\Big( \left(\mu_{1, \mathrm{obs}(m)}-\mu_{-1, \mathrm{obs}(m)}\right)^{\top}\Sigma_{\mathrm{obs}(m)}^{-1}\left(X_{\mathrm{obs}(m)}-\frac{\mu_{1, \mathrm{obs}(m)}+\mu_{-1,\mathrm{obs}(m)}}{2}\right) \\
          & \qquad - \log\left(\frac{\pi_{-1}}{\pi_1}\right)>0\mid Y=-1\Big) \\
         = &  \P\left(\gamma^{\top}N-\frac{1}{2}\left\|\gamma\right\|^2>\log\left(\frac{\pi_{-1}}{\pi_1}\right)\mid Y=-1\right),
    \end{align*}
    where $\gamma=\Sigma_{\mathrm{obs}(m)}^{-\frac{1}{2}}(\mu_{1,\mathrm{obs}(m)}-\mu_{-1,\mathrm{obs}(m)})$. By~\cref{lemma:obsGaus}, $N|Y=-1\sim\mathcal{N}(0, Id_{d-\left\|m\right\|_0})$. Thus, 
   \begin{align*}
       \P\left(h_m^{\star}(X_{\mathrm{obs}(m)})=1\mid Y=-1\right)
       &=\P\left(\frac{\gamma^{\top}N}{\left\|\gamma\right\|}>\frac{1}{2}\left\|\gamma\right\|+\frac{1}{\left\|\gamma\right\|}\log\left(\frac{\pi_{-1}}{\pi_1}\right)\mid Y=-1\right)\\
       &=\Phi\left(-\frac{1}{2}\left\|\gamma\right\|-\frac{1}{\left\|\gamma\right\|}\log\left(\frac{\pi_{-1}}{\pi_1}\right)\right).
   \end{align*}    
   Similarly, letting $N' =\Sigma_{\mathrm{obs}(m)}^{-\frac{1}{2}}(X_{\mathrm{obs}(m)}-\mu_{1, \mathrm{obs}(m)})$,
   \begin{align*}
        & \P\left(h_m^{\star}(X_{\mathrm{obs}(m)})=-1\mid Y=1\right) \\
          = & \P\Big( \left(\mu_{1, \mathrm{obs}(m)}-\mu_{-1, \mathrm{obs}(m)}\right)^{\top}\Sigma_{\mathrm{obs}(m)}^{-1}\left(X_{\mathrm{obs}(m)}-\frac{\mu_{1, \mathrm{obs}(m)}+\mu_{-1,\mathrm{obs}(m)}}{2}\right) \\
          & \qquad - \log\left(\frac{\pi_{-1}}{\pi_1}\right)<0\mid Y= 1\Big) \\
         = &  \P\left(\gamma^{\top}N + \frac{1}{2}\left\|\gamma\right\|^2<\log\left(\frac{\pi_{-1}}{\pi_1}\right)\mid Y=1\right)\\
         = & \P\left(\frac{\gamma^{\top}N}{\left\|\gamma\right\|} > \frac{1}{2}\left\|\gamma\right\|-\frac{1}{\left\|\gamma\right\|}\log\left(\frac{\pi_{-1}}{\pi_1}\right)\mid Y=-1\right)\\
       = & \Phi\left(-\frac{1}{2}\left\|\gamma\right\|+\frac{1}{\left\|\gamma\right\|}\log\left(\frac{\pi_{-1}}{\pi_1}\right)\right).
    \end{align*}
Finally, 
\begin{align*}
        & \mathcal{R}_{\mathrm{mis}}(h^\star)\\
        &=\P\left(h^\star(X_{\mathrm{obs}(M)},M)\neq Y\right)\\
        &=\sum_{m\in\{0,1\}^d}\P\left(h^\star(X_{\mathrm{obs}(m)},M)\neq Y\mid M=m\right)p_m\\
         &=\sum_{m\in\{0,1\}^d}\P\left(h^\star_m(X_{\mathrm{obs}(m)})\neq Y\right)p_m\tag{using Assumption \ref{ass:MCAR}}\\
         &=\sum_{m\in\{0,1\}^d}\P\left(h^\star_m(X_{\mathrm{obs}(m)})=-1\mid Y=1\right) \pi_1 p_m  + \P\left(h^\star_m(X_{\mathrm{obs}(m)})=1\mid Y=-1\right) \pi_{-1} p_m\\
         &=\sum_{m\in\{0,1\}^d}\Phi( a_m - b_m )\pi_1 p_m  + \Phi\left(- a_m - b_m \right) \pi_{-1} p_m,
\end{align*}
where, for all $m \in \mathcal{M}$, 
\begin{align}
    a_m = \frac{\log\left(\frac{\pi_{-1}}{\pi_1}\right)}{\left\|\Sigma_{\mathrm{obs}(m)}^{-\frac{1}{2}}(\mu_{1, \mathrm{obs}(m)}-\mu_{-1, \mathrm{obs}(m)})\right\| } \quad \textrm{and} \quad b_m = \frac{\left\|\Sigma_{\mathrm{obs}(m)}^{-\frac{1}{2}}(\mu_{1, \mathrm{obs}(m)}-\mu_{-1, \mathrm{obs}(m)})\right\|}{2}.
\end{align}
\end{proof}

\subsection{Proof of \Cref{prop:imputation_LDA_negative}}

\begin{proof}[Proof]
By \Cref{prop:MCARLDA}, we know that the Bayes classifier takes the form
\begin{align}
        h^{\star}_m(x_{\mathrm{obs}(m)}) & = \mathrm{sign}\Big( c_m \big(\mu_{1, \mathrm{obs}(m)}-\mu_{-1, \mathrm{obs}(m)}\big)^{\top}\Sigma_{\mathrm{obs}(m)}^{-1} \Big(x_{\mathrm{obs}(m)}-\frac{\mu_{1, \mathrm{obs}(m)}+\mu_{-1,\mathrm{obs}(m)}}{2}\Big)  \Big) \nonumber \\
        & = \mathrm{sign}\Big( c_m \sum_{j=1}^d  ( \Sigma_{\mathrm{obs}(m)}^{-1} \beta_{\mathrm{obs}(m)} )_j  x_{j} \mathds{1}_{m_j=0} - c_m \sum_{j=1}^d ( \Sigma_{\mathrm{obs}(m)}^{-1} \beta_{\mathrm{obs}(m)} )_j v_j \mathds{1}_{m_j=0}  \Big), \label{eq_proof_imputation_LDA4}
\end{align}
for some $c_m > 0$, where, for all $1 \leq j \leq d$, $\beta_j = \mu_{1, j}-\mu_{-1, j}$ and $v_j = (\mu_{1, j}+\mu_{-1,j})/2$. The predictor associated with constant imputation $\alpha\in \mathbb{R}^d$ takes the form
\begin{align}
    h_{\textrm{imp}}(z) 
    & = \mathrm{sign}\Big( \sum_{j =1}^d \gamma_j x_j \mathds{1}_{m_j=0} + \gamma_0 + \sum_{j =1}^d  \gamma_{j} \alpha_j \mathds{1}_{m_j=0}  \Big). \label{eq_proof_imputation_LDA5}
\end{align}
The Bayes predictor takes the form of an imputation strategy if and only if \eqref{eq_proof_imputation_LDA4} equals \eqref{eq_proof_imputation_LDA5}, that is $\gamma_0=0$ and 
\begin{align}
    \sum_{j=1}^d (c_m ( \Sigma_{\mathrm{obs}(m)}^{-1} \beta_{\mathrm{obs}(m)} )_j - \gamma_j ) x_j \mathds{1}_{m_j=0} & = \sum_{j=1}^d (c_m ( \Sigma_{\mathrm{obs}(m)}^{-1} \beta_{\mathrm{obs}(m)} )_j v_j - \gamma_j \alpha_j) \mathds{1}_{m_j=0}, \label{eq_proof_imputation_LDA8}
\end{align}
which is equivalent to, for all $m \in \{0,1\}^d$ and all $j \in \textrm{obs}(m)$, 
\begin{align}
    \left\lbrace
    \begin{array}{lll}
         &  c_m (\Sigma_{\mathrm{obs}(m)}^{-1} \beta_{\mathrm{obs}(m)} )_j & = \gamma_j\\
         &  c_m ( \Sigma_{\mathrm{obs}(m)}^{-1} \beta_{\mathrm{obs}(m)} )_j v_j & = \gamma_j \alpha_j 
    \end{array}
    \right. 
    \Longleftrightarrow
    \left\lbrace
    \begin{array}{lll}
         &  \gamma_j & = c_m (\Sigma_{\mathrm{obs}(m)}^{-1} \beta_{\mathrm{obs}(m)} )_j \\
         &  ( \Sigma_{\mathrm{obs}(m)}^{-1} \beta_{\mathrm{obs}(m)} )_j v_j & = \alpha_j  (\Sigma_{\mathrm{obs}(m)}^{-1} \beta_{\mathrm{obs}(m)} )_j
    \end{array}
    \right. 
\end{align}
Choosing $m=e_j$, that is the missing pattern in which only the $j$th component is observed, we have
\begin{align}
   \frac{\beta_j}{\sigma_{jj}}  v_j & = \alpha_j  \frac{\beta_j}{\sigma_{jj}}. 
\end{align}
Since this equality must holds for all $\beta$, choosing $\beta\neq 0$ leads to 
\begin{align}
    \alpha_j = v_j.
\end{align}
Besides, for all $m \in \{0,1\}^d$ and all $j \in \textrm{obs}(m)$, 
\begin{align}
   (\Sigma_{\mathrm{obs}(m)}^{-1} \beta_{\mathrm{obs}(m)} )_j = \frac{\gamma_j}{c_m},
\end{align}
since $c_m >0$ by assumption. 
Considering $m = e_j$, the missing pattern in which only the $j$th component is observed, we have
\begin{align}
    \frac{\beta_j}{\sigma_{jj}} = \frac{\gamma_j}{c_m}.
\end{align}
Since $\beta_j \neq 0$, we have $\gamma_j \neq 0$. Now, letting $m=\textbf{0}$ leads to, for all $j \in \{1, \hdots, d\}$, 
\begin{align}
\gamma_j = c_0 (\Sigma^{-1} \beta)_j.
\end{align}
Consequently, 
\begin{align}
\frac{\gamma_j}{\gamma_i} = \frac{(\Sigma^{-1} \beta)_j}{(\Sigma^{-1} \beta)_i} \label{eq_proof_neg_imputation_lda1}
\end{align}
Let $i \neq j \in \{1, \hdots, d\}$. Consider $m = 1 - e_j - e_i$, that is the missing pattern in which only the $j$th or the $i$th components are observed. Then, 
\begin{align}
   \gamma_j = c_m (\Sigma_{\{i,j\}}^{-1} \beta_{\{i,j\}} )_j,
\end{align}
and similarly for $\gamma_i$, where $\Sigma_{\{i,j\}}$ is the submatrix composed of the $i$th and $j$th rows and columns of $\Sigma$. Simple calculations show that 
\begin{align}
    \Sigma_{\{i,j\}}^{-1} = \frac{1}{\sigma_{ii} \sigma_{jj} - \sigma_{ij}^2}\begin{bmatrix}
        \sigma_{jj} & - \sigma_{ij}\\
        - \sigma_{ij} & \sigma_{ii}
    \end{bmatrix}.
\end{align}
Thus, 
\begin{align}
    \Sigma_{\{i,j\}}^{-1} \beta_{\{i,j\}} = \frac{1}{\sigma_{ii} \sigma_{jj} - \sigma_{ij}^2}\begin{bmatrix}
        \sigma_{jj} \beta_i - \sigma_{ij} \beta_j\\
        - \sigma_{ij} \beta_i + \sigma_{ii} \beta_j
    \end{bmatrix}.
\end{align}
Hence, 
\begin{align}
    \frac{\gamma_j}{\gamma_i} = \frac{- \sigma_{ij} \beta_i + \sigma_{ii} \beta_j}{\sigma_{jj} \beta_i - \sigma_{ij} \beta_j} \label{eq_proof_neg_imputation_lda2}
\end{align}
Gathering \eqref{eq_proof_neg_imputation_lda1} and \eqref{eq_proof_neg_imputation_lda2}, 
\begin{align}
    \frac{(\Sigma^{-1} \beta)_j}{(\Sigma^{-1} \beta)_i} & =  \frac{- \sigma_{ij} \beta_i + \sigma_{ii} \beta_j}{\sigma_{jj} \beta_i - \sigma_{ij} \beta_j}. 
\end{align}
Letting $\Sigma^{-1} = (s_{ij})_{1 \leq i,j\leq d}$, we obtain
\begin{align}
    & \left(\sum_{k=1}^d s_{jk}\beta_k \right) \left( \sigma_{jj} \beta_i - \sigma_{ij} \beta_j\right) = \left( \sum_{k=1}^d s_{ik}\beta_k\right) \left(- \sigma_{ij} \beta_i + \sigma_{ii} \beta_j \right)\\
    \Longleftrightarrow & 
    \sum_{k \neq i,j} (s_{jk} \sigma_{jj} + \sigma_{ij} s_{ik}) \beta_k \beta_i 
    + \sum_{k \neq i,j} (- s_{jk} \sigma_{ij} + \sigma_{ii} s_{ik}) \beta_k \beta_j
    + (s_{ji} \sigma_{jj} + s_{ii} \sigma_{ij}) \beta_i^2 \\
    & \quad + (-s_{jj} \sigma_{ij} - s_{ij} \sigma_{ii}) \beta_j^2 + (s_{jj} \sigma_{jj} - s_{ii} \sigma_{ii}) \beta_i \beta_j = 0
\end{align}
As this equality must hold for all $\beta \neq0$, we have
\begin{align}
    \left\lbrace
    \begin{array}{cc}
         &  s_{jk} \sigma_{jj} + \sigma_{ij} s_{ik} = 0 \\
         & - s_{jk} \sigma_{ij} + \sigma_{ii} s_{ik} = 0
    \end{array}
    \right. .
\end{align}
Multiplying the first line by $\sigma_{ij}/\sigma_{jj}$ and adding it to the second one leads to 
\begin{align}
   \sigma_{jj} s_{ik} \left( \sigma_{ij}^2
   + \sigma_{ii} \sigma_{jj} \right) = 0.
\end{align}
Since $\Sigma_{i,j}$ is invertible by assumption, $\sigma_{ij}^2 \neq - \sigma_{ii} \sigma_{jj}$ and $\sigma_{jj} \neq 0$. Thus,  for all $k \neq i,j$, $s_{jk} = s_{ik} = 0$.  As the matrix $\Sigma^{-1}$ is symmetric, we deduce that for $d\geq 3$, for all $k \neq k'$, $s_{kk'}=0$. Consequently, the matrix $\Sigma^{-1}$ is diagonal, and so is $\Sigma$. 
Thus, the matrix $\Sigma$ takes the form $\Sigma = \textrm{diag}(\sigma_1, \hdots, \sigma_d)$. Using \eqref{eq_proof_imputation_LDA8}, and taking $\obs(m)=\{j\}$, we have
\begin{align}
    \sum_{j=1}^d (c_m ( \Sigma_{\mathrm{obs}(m)}^{-1} \beta_{\mathrm{obs}(m)} )_j - \gamma_j ) x_j \mathds{1}_{m_j=0} & = \sum_{j=1}^d (c_m ( \Sigma_{\mathrm{obs}(m)}^{-1} \beta_{\mathrm{obs}(m)} )_j v_j - \gamma_j \alpha_j) \mathds{1}_{m_j=0}, 
\end{align}
\begin{align}
    \sum_{j=1}^d (c_m  \sigma_j^{-1} \beta_{j}  - \gamma_j ) x_j \mathds{1}_{M_j=0} & = \sum_{j=1}^d (c_m \beta_j v_j \sigma_j^{-1} - \gamma_j \alpha_j) \mathds{1}_{M_j=0},
\end{align}
which is equivalent to, for all $j \in \textrm{obs}(m)$, 
\begin{align}
    \left\lbrace
    \begin{array}{lll}
         &  c_m \sigma_j^{-1} \beta_j & = \gamma_j\\
         &  c_m \beta_j \sigma_j^{-1} v_j & = \gamma_j \alpha_j 
    \end{array}
    \right. 
    \Longleftrightarrow
    \left\lbrace
    \begin{array}{lll}
         &  \gamma_j & = c_m \sigma_j^{-1} \beta_j \\
         &  \alpha_j & =  v_j    
    \end{array}
    \right.,
\end{align}
which concludes the proof.

\end{proof}

\subsection{Proof of Proposition \ref{prop:BoundDiffLLcomp}}\label{subsec:proofBoundDiffLLcomp}
\begin{proof}
    Using Assumption \ref{ass:constMeanDiffe}, we have that 
    \begin{align*}
        \left\|\Sigma^{-\frac{1}{2}}(\mu_1-\mu_{-1})\right\|&\leq \frac{\left\|\mu_1-\mu_{-1}\right\|}{\sqrt{\lambda_{\min}(\Sigma)}}=\mu \sqrt{\frac{d}{\lambda_{\min}(\Sigma)}}
        \\\left\|\Sigma^{-\frac{1}{2}}_{\mathrm{obs}(m)}(\mu_{1, \mathrm{obs}(m)}-\mu_{-1, \mathrm{obs}(m)})\right\|&\geq \frac{\left\|\mu_{1, \mathrm{obs}(m)}-\mu_{-1, \mathrm{obs}(m)}\right\|}{\sqrt{\lambda_{\max}(\Sigma)}}=\mu\sqrt{\frac{d-\left\|m\right\|_0}{\lambda_{\max}(\Sigma)}}
    \end{align*} 
    Recall that $\Phi$ is the c.d.f. of a standard Gaussian random variable, according to Equation \eqref{eq:firstDifferRisk}, we have
    \begin{align}
    \notag
        & \mathcal{R}_{\mathrm{mis}}(h^\star)-\mathcal{R}_{\mathrm{comp}}(h^\star_{\mathrm{comp}}) \\&=\sum_{m\in\{0,1\}^d}\left(\Phi\left(-\frac{\left\|\Sigma_{\mathrm{obs}(m)}^{-\frac{1}{2}}(\mu_{1, \mathrm{obs}(m)}-\mu_{-1, \mathrm{obs}(m)})\right\|}{2}\right)-\Phi\left(-\frac{\left\|\Sigma^{-\frac{1}{2}}(\mu_1-\mu_{-1})\right\|}{2}\right)\right)p_m\\ \notag
        &\leq\sum_{m\in\{0,1\}^d}\left(\Phi\left(-\frac{\mu}{2}\sqrt{\frac{d-\left\|m\right\|_0}{\lambda_{\max}(\Sigma)}}\right)-\Phi\left(-\frac{\mu}{2}\sqrt{\frac{d}{\lambda_{\min}(\Sigma)}}\right)\right)p_m\\ \notag
        &=\sum_{i=0}^d\sum\limits_{\substack{m\in\{0,1\}^d\\\textrm{s.t.}~ \left\|m\right\|_0=i}}\left(\Phi\left(-\frac{\mu}{2}\sqrt{\frac{d-i}{\lambda_{\max}(\Sigma)}}\right)-\Phi\left(-\frac{\mu}{2}\sqrt{\frac{d}{\lambda_{\min}(\Sigma)}}\right)\right)p_m\\ \notag
        &=\sum_{i=0}^d\left(\Phi\left(-\frac{\mu}{2}\sqrt{\frac{d-i}{\lambda_{\max}(\Sigma)}}\right)-\Phi\left(-\frac{\mu}{2}\sqrt{\frac{d}{\lambda_{\min}(\Sigma)}}\right)\right)\binom{d}{i}\eta^i(1-\eta)^{d-i} \tag{using Assumption \ref{ass:constEtaMi}}\\
        &=\e{\Phi\left(-\frac{\mu}{2}\sqrt{\frac{d-B}{\lambda_{\max}(\Sigma)}}\right)-\Phi\left(-\frac{\mu}{2}\sqrt{\frac{d}{\lambda_{\min}(\Sigma)}}\right)}\label{eq:decompBin}
    \end{align}
    where $B\sim \mathcal{B}(d,\eta)$.
    The decomposition of this last expression gives us
    \begin{align}
    \notag
        L&(h^\star)-\mathcal{R}_{\mathrm{comp}}(h^\star_{\mathrm{comp}})\\
        \notag
        &\leq\e{\Phi\left(-\frac{\mu}{2}\sqrt{\frac{d-B}{\lambda_{\max}(\Sigma)}}\right)-\Phi\left(-\frac{\mu}{2}\sqrt{\frac{d}{\lambda_{\min}(\Sigma)}}\right)\mid B=d}\P(B=d)\\ \notag
        & \quad +\e{\Phi\left(-\frac{\mu}{2}\sqrt{\frac{d-B}{\lambda_{\max}(\Sigma)}}\right)-\Phi\left(-\frac{\mu}{2}\sqrt{\frac{d}{\lambda_{\min}(\Sigma)}}\right)\mid B\neq d}\P(B\neq d)\\ 
        &=\left(\frac{1}{2}-\Phi\left(-\frac{\mu}{2}\sqrt{\frac{d}{\lambda_{\min}(\Sigma)}}\right)\right)\eta^d\\
        & \quad +\e{\Phi\left(-\frac{\mu}{2}\sqrt{\frac{d-B}{\lambda_{\max}(\Sigma)}}\right)-\Phi\left(\frac{\mu}{2}\sqrt{\frac{d}{\lambda_{\min}(\Sigma)}}\right)\mid B\neq d}(1-\eta^d)\label{eq:firstDecompExpLcompVSLmiss}
    \end{align}
    Now, we study the second term in \eqref{eq:firstDecompExpLcompVSLmiss}. Letting $Q(x)=\int^\infty_{x}e^{-\frac{t^2}{2}}dt$, we have $        \Phi(x)=\frac{1}{\sqrt{2\pi}}Q(-x)$, which leads to 
    \begin{align*}
        \mathbb{E}&\left[\Phi\left(-\frac{\mu}{2}\sqrt{\frac{d-B}{\lambda_{\max}(\Sigma)}}\right)-\Phi\left(-\frac{\mu}{2}\sqrt{\frac{d}{\lambda_{\min}(\Sigma)}}\right)\mid B\neq d\right]\\   \notag &=\e{\frac{1}{\sqrt{2\pi}}\left(Q\left(T_B\right)-Q\left(t\right)\right)\mid B\neq d}, 
    \end{align*}
    where $T_B:=\frac{\mu}{2}\sqrt{\frac{d-B}{\lambda_{\max}(\Sigma)}}$ and $t=\frac{\mu}{2}\sqrt{\frac{d}{\lambda_{\min}(\Sigma)}}$. Applying the the mean-value inequality to the function $Q$ on the interval $[T_B, t]$ leads to 
    \begin{align}
        Q(T_B)-Q(t) \leq e^{-\frac{T_B^2}{2}}(t-T_B).\label{eq:LDATVI}
    \end{align}
    Thus, 
        \begin{align}
        \notag
            \mathbb{E}&\left[\Phi\left(-\frac{\mu}{2}\sqrt{\frac{d-B}{\lambda_{\max}(\Sigma)}}\right)-\Phi\left(-\frac{\mu}{2}\sqrt{\frac{d}{\lambda_{\min}(\Sigma)}}\right)\mid B\neq d\right]\\
            &\leq \frac{1}{\sqrt{2\pi}}\e{e^{-\frac{t_B^2}{2}}(t-T_B)\mid B\neq d} \nonumber \\
            &=\frac{\mu}{2\sqrt{2\pi}}\e{e^{-\frac{\mu^2(d-B)}{8\lambda_{\max}(\Sigma)}}\left(\sqrt{\frac{d}{\lambda_{\min}(\Sigma)}}-\sqrt{\frac{d-B}{\lambda_{\max}(\Sigma)}}\right)\mid B\neq d}.
            \label{eq:expectIneqLcompVSLmiss}
        \end{align}
Besides, since
\begin{align*}            \mathbb{E}&\left[e^{-\frac{\mu^2(d-B)}{8\lambda_{\max}(\Sigma)}}\left(\sqrt{\frac{d}{\lambda_{\min}(\Sigma)}}-\sqrt{\frac{d-B}{\lambda_{\max}(\Sigma)}}\right)\right]\\&=\e{e^{-\frac{\mu^2(d-B)}{8\lambda_{\max}(\Sigma)}}\left(\sqrt{\frac{d}{\lambda_{\min}(\Sigma)}}-\sqrt{\frac{d-B}{\lambda_{\max}(\Sigma)}}\right)\mid B\neq d}\P(B\neq d)+\sqrt{\frac{d}{\lambda_{\min}(\Sigma)}}\P(B=d),
        \end{align*}
we have
\begin{align} 
\mathbb{E}&\left[\Phi\left(-\frac{\mu}{2}\sqrt{\frac{d-B}{\lambda_{\max}(\Sigma)}}\right)-\Phi\left(-\frac{\mu}{2}\sqrt{\frac{d}{\lambda_{\min}(\Sigma)}}\right)\mid B\neq d\right] \notag\\
           &=\frac{\mu}{2\sqrt{2\pi}}\frac{1}{\P(B\neq d)}\e{e^{-\frac{\mu^2(d-B)}{8\lambda_{\max}(\Sigma)}}\left(\sqrt{\frac{d}{\lambda_{\min}(\Sigma)}}-\sqrt{\frac{d-B}{\lambda_{\max}(\Sigma)}}\right)}\\
           & \qquad - \frac{\mu}{2\sqrt{2\pi}} \frac{\P(B=d)}{\P(B\neq d)} \sqrt{\frac{d}{\lambda_{\min}(\Sigma)}}.\label{eq:espCondLDA}
        \end{align}
        Looking at the expectation in \eqref{eq:espCondLDA}, we obtain 
        \begin{align*}
        \mathbb{E}&\left[e^{-\frac{\mu^2(d-B)}{8\lambda_{\max}(\Sigma)}}\left(\sqrt{\frac{d}{\lambda_{\min}(\Sigma)}}-\sqrt{\frac{d-B}{\lambda_{\max}(\Sigma)}}\right)\right]\\
            &=\e{e^{-\frac{\mu^2(d-B)}{8\lambda_{\max}(\Sigma)}}\left(\sqrt{\frac{d}{\lambda_{\min}(\Sigma)}}-\sqrt{\frac{d}{\lambda_{\max}(\Sigma)}}+\sqrt{\frac{d}{\lambda_{\max}(\Sigma)}}-\sqrt{\frac{d-B}{\lambda_{\max}(\Sigma)}}\right)}\\
            &=\sqrt{d}\left(\frac{1}{\sqrt{\lambda_{\min}(\Sigma)}}-\frac{1}{\sqrt{\lambda_{\max}(\Sigma)}}\right)\e{e^{-\frac{\mu^2(d-B)}{8\lambda_{\max}(\Sigma)}}}\\
            & \qquad +\frac{1}{\sqrt{\lambda_{\max}(\Sigma)}}\e{e^{-\frac{\mu^2(d-B)}{8\lambda_{\max}(\Sigma)}}\left(\sqrt{d}-\sqrt{d-B}\right)}\\
            &\leq\sqrt{d}\left(\frac{1}{\sqrt{\lambda_{\min}(\Sigma)}}-\frac{1}{\sqrt{\lambda_{\max}(\Sigma)}}\right)\e{e^{-\frac{\mu^2(d-B)}{8\lambda_{\max}(\Sigma)}}}+\frac{1}{\sqrt{\lambda_{\max}(\Sigma)d}}\e{e^{-\frac{\mu^2(d-B)}{8\lambda_{\max}(\Sigma)}}B},
        \end{align*}
        since 
        \begin{align}
           \sqrt{d}-\sqrt{d-B} =  \frac{d-d+B}{\sqrt{d}+\sqrt{d-B}} \leq \frac{B}{\sqrt{d}}.
        \end{align}
        Simple calculation shows that 
        \begin{align*}
            \e{e^{-\frac{\mu^2(d-B)}{8\lambda_{\max}(\Sigma)}}}
            &=\left(\eta+e^{-\frac{\mu^2}{8\lambda_{\max}(\Sigma)}}(1-\eta)\right)^d.
        \end{align*}
    Besides,  
    \begin{align}
    \notag
        \e{e^{-\frac{\mu^2(d-B)}{8\lambda_{\max}(\Sigma)}}B}&=\sum_{i=0}^d\binom{d}{i}e^{-\frac{\mu^2(d-i)}{8\lambda_{\max}(\Sigma)}}i\eta^i(1-\eta)^{d-i}\\ 
        \notag &= \eta d\sum_{i=1}^d\frac{(d-1)!}{(i-1)!(d-1-(i-1))!}\eta^{i-1}\left(e^{-\frac{\mu^2}{8\lambda_{\max}(\Sigma)}}(1-\eta)\right)^{d-1-(i-1)} \\
        &=\eta d \left(\eta+e^{-\frac{\mu^2}{8\lambda_{\max}(\Sigma)}}(1-\eta)\right)^{d-1}.\label{eq:expectExpBinLcompvsLmis}
    \end{align}
    Therefore, letting $A = e^{-\frac{\mu^2}{8\lambda_{\max}(\Sigma)}}(1-\eta)$, we have that
    \begin{align}
    \notag
        \mathbb{E}&\left[e^{-\frac{\mu^2(d-B)}{8\lambda_{\max}(\Sigma)}}\left(\sqrt{\frac{d}{\lambda_{\min}(\Sigma)}}-\sqrt{\frac{d-B}{\lambda_{\max}(\Sigma)}}\right)\right]\\ \notag
        &\leq\sqrt{d}\left(\frac{1}{\sqrt{\lambda_{\min}(\Sigma)}}-\frac{1}{\sqrt{\lambda_{\max}(\Sigma)}}\right)\left(\eta+ A \right)^d +\frac{1}{\sqrt{\lambda_{\max}(\Sigma)d}}\eta d \left(\eta+ A \right)^{d-1} \\ 
        &=\frac{\sqrt{d}}{\sqrt{\lambda_{\min}(\Sigma)}}\left(\eta+ A \right)^d -\sqrt{\frac{d}{\lambda_{\max}(\Sigma)}}\left(\eta+A\right)^{d-1}A .\label{eq:decompLcompvsLEsp}
    \end{align}
Gathering equations \eqref{eq:firstDecompExpLcompVSLmiss}, \eqref{eq:espCondLDA} and \eqref{eq:decompLcompvsLEsp}, we obtain 
\begin{align*}
    L&(h^\star)-\mathcal{R}_{\mathrm{comp}}(h^\star_{\mathrm{comp}})\\
        &=\left(\frac{1}{2}-\Phi\left(-\frac{\mu}{2}\sqrt{\frac{d}{\lambda_{\min}(\Sigma)}}\right)\right)\eta^d \\
        & \quad +\e{\Phi\left(-\frac{\mu}{2}\sqrt{\frac{d-B}{\lambda_{\max}(\Sigma)}}\right)-\Phi\left(\frac{\mu}{2}\sqrt{\frac{d}{\lambda_{\min}(\Sigma)}}\right)\mid B\neq d}(1-\eta^d)\\
        & \leq \left(\frac{1}{2}-\Phi\left(-\frac{\mu}{2}\sqrt{\frac{d}{\lambda_{\min}(\Sigma)}}\right)\right)\eta^d + \frac{\mu}{2\sqrt{2\pi}} (1-\eta^d)  \\
        &\qquad \times\left(\frac{1}{\P(B\neq d)}\e{e^{-\frac{\mu^2(d-B)}{8\lambda_{\max}(\Sigma)}}\left(\sqrt{\frac{d}{\lambda_{\min}(\Sigma)}}-\sqrt{\frac{d-B}{\lambda_{\max}(\Sigma)}}\right)}-\sqrt{\frac{d}{\lambda_{\min}(\Sigma)}}\frac{\P(B=d)}{\P(B\neq d)}\right) \\
        &=\left(\frac{1}{2}-\Phi\left(-\frac{\mu}{2}\sqrt{\frac{d}{\lambda_{\min}(\Sigma)}}\right)\right)\eta^d\\
        &\qquad+\frac{\mu}{2\sqrt{2\pi}}\left(\e{e^{-\frac{\mu^2(d-B)}{8\lambda_{\max}(\Sigma)}}\left(\sqrt{\frac{d}{\lambda_{\min}(\Sigma)}}-\sqrt{\frac{d-B}{\lambda_{\max}(\Sigma)}}\right)}-\sqrt{\frac{d}{\lambda_{\min}(\Sigma)}}\eta^d\right)\\
        &\leq \left(\frac{1}{2}-\Phi\left(-\frac{\mu}{2}\sqrt{\frac{d}{\lambda_{\min}(\Sigma)}}\right)\right)\eta^d\\&\qquad +\frac{\mu}{2\sqrt{2\pi}}\left(\sqrt{\frac{d}{\lambda_{\min}(\Sigma)}}\left(\eta+A\right)^d-\sqrt{\frac{d}{\lambda_{\max}(\Sigma)}}\left(\eta+A\right)^{d-1}A-\sqrt{\frac{d}{\lambda_{\min}(\Sigma)}}\eta^d\right).
\end{align*}
An upper bound of this inequality is given by 
\begin{align}
L&(h^\star)-\mathcal{R}_{\mathrm{comp}}(h^\star_{\mathrm{comp}})  \leq \frac{\eta^d}{2} +\frac{\mu \eta }{2\sqrt{2\pi}} \sqrt{\frac{d}{\lambda_{\min}(\Sigma)}}\left( \left(\eta+A\right)^{d-1} - \eta^{d-1} \right).
\end{align}
\end{proof}

\subsection{Proof of Corollary \ref{cor:limitLLcomp}}\label{subsec:limitLLcomp}
\begin{proof}
    Recall that, by Equation \eqref{eq:firstDifferRisk}, 
    \begin{align*}
        L&(h^\star)-\mathcal{R}_{\mathrm{comp}}(h^\star_{\mathrm{comp}})\\
    &=\sum_{m\in\{0,1\}^d}\left(\Phi\left(-\frac{\left\|\Sigma^{-\frac{1}{2}}_{\mathrm{obs}(m)}(\mu_{1, \mathrm{obs}(m)}-\mu_{-1, \mathrm{obs}(m)})\right\|}{2}\right)-\Phi\left(-\frac{\left\|\Sigma^{-\frac{1}{2}}(\mu_1-\mu_{-1})\right\|}{2}\right)\right)p_m \\
    &\geq \left(\Phi\left(0\right)-\Phi\left(-\frac{\left\|\Sigma^{-\frac{1}{2}}(\mu_1-\mu_{-1})\right\|}{2}\right)\right)\eta^d,
    \end{align*}
    using only $m=$\textbf{1}, since all terms in the above sum are positive. By Assumption \ref{ass:constMeanDiffe}, $\|\Sigma^{-\frac{1}{2}}(\mu_1-\mu_{-1})\|\geq  d\mu / \sqrt{\lambda_{\max}(\Sigma)}$. Hence
    \begin{align*}
        \left(\Phi\left(0\right)-\Phi\left(-\frac{\left\|\Sigma^{-\frac{1}{2}}(\mu_1-\mu_{-1})\right\|}{2}\right)\right)\eta^d&\geq \left(\Phi\left(0\right)-\Phi\left(-\frac{d\mu}{2\sqrt{\lambda_{\max}(\Sigma)}}\right)\right)\eta^d\\&=
        \left(\frac{1}{2}-\Phi\left(-\frac{d\lambda}{2}\right)\right)\eta^d.
    \end{align*}
    Consequently, 
    \begin{align}
        L&(h^\star)-\mathcal{R}_{\mathrm{comp}}(h^\star_{\mathrm{comp}}) \geq  \left(\frac{1}{2}-\Phi\left(-\frac{d\lambda}{2}\right)\right)\eta^d\xrightarrow[\lambda\to\infty]{}\frac{\eta^d}{2}.
    \end{align}
On the other hand, by \cref{prop:BoundDiffLLcomp}, we have
\begin{align}
L&(h^\star)-\mathcal{R}_{\mathrm{comp}}(h^\star_{\mathrm{comp}})  \leq \frac{\eta^d}{2} +\frac{\mu}{2\sqrt{2\pi}} \sqrt{\frac{d}{\lambda_{\min}(\Sigma)}}\left( \left(\eta+A\right)^d - \eta^d \right).
\end{align}
Note that
    \begin{align*}
        & \left|\mu\sqrt{\frac{d}{\lambda_{\min}(\Sigma)}}\left(\left(\eta+e^{-\frac{\mu^2}{8\lambda_{\max}(\Sigma)}}(1-\eta)\right)^d-\eta^d\right)\right|\\
        &= \mu\sqrt{\frac{d}{\lambda_{\min}(\Sigma)}}\left(\sum_{i=0}^d\binom{d}{i}\eta^{d-i}e^{-\frac{i\lambda^2}{8}}(1-\eta)^i-\eta^d\right)\\&=\frac{\mu}{\sqrt{\lambda_{\max}(\Sigma)}}\sqrt{\frac{d\lambda_{\max}(\Sigma)}{\lambda_{\min}(\Sigma)}}\left(\sum_{i=1}^d\binom{d}{i}\eta^{d-i}e^{-\frac{i\lambda^2}{8}}(1-\eta)^i\right)\\&=\lambda\sqrt{\frac{d\lambda_{\max}(\Sigma)}{\lambda_{\min}(\Sigma)}}\left(\sum_{i=1}^d\binom{d}{i}\eta^{d-i}e^{-\frac{i\lambda^2}{8}}(1-\eta)^i\right),
    \end{align*}
    which tends to zero by assumption. This concludes the proof.  
\end{proof}

\subsection{Proofs of Section \ref{subsubsec:MuEstimLDA}}\label{sect:proofsMuEstimLDA}

\subsubsection{General lemmas for LDA misclassification control. }\label{subsec:generalLemmasBoundEstim}
\begin{lemma}[$\widehat{\mu}$ misclassification probability]\label{lemma:probMisMuEstim}
        Given a sample satisfying Assumptions \ref{ass:MCAR} and \ref{ass:LDA}, with balanced classes, then 
    \begin{align}
    \notag
        \P &\left(\widehat{h}_m(X_{\mathrm{obs}(m)})=1\mid Y=-1, \mathcal{D}_n\right) \\
        &= \Phi\left(\frac{\left(\Sigma_{\mathrm{obs}(m)}^{-\frac{1}{2}}(\widehat{\mu}_{1,\mathrm{obs}(m)}-\widehat{\mu}_{-1,\mathrm{obs}(m)})\right)^\top\Sigma_{\mathrm{obs}(m)}^{-\frac{1}{2}}\left(\mu_{-1,\mathrm{obs}(m)}-\frac{\widehat{\mu}_{1,\mathrm{obs}(m)}+\widehat{\mu}_{-1, \mathrm{obs}(m)}}{2}\right)}{\left\|\Sigma_{\mathrm{obs}(m)}^{-\frac{1}{2}}(\widehat{\mu}_{1,\mathrm{obs}(m)}-\widehat{\mu}_{-1,\mathrm{obs}(m)})\right\|}\right) \label{eq:probMissLDAEstim}
    \end{align}
    and symmetrically, 
    \begin{align}
        \notag 
        \P&\left(\widehat{h}_m(X_{\mathrm{obs}(m)})=-1\mid Y=1, \mathcal{D}_n\right) \\
        &= \Phi\left(-\frac{\left(\Sigma_{\mathrm{obs}(m)}^{-\frac{1}{2}}(\widehat{\mu}_{1,\mathrm{obs}(m)}-\widehat{\mu}_{-1,\mathrm{obs}(m)})\right)^\top\Sigma_{\mathrm{obs}(m)}^{-\frac{1}{2}}\left(\mu_{1,\mathrm{obs}(m)}-\frac{\widehat{\mu}_{1,\mathrm{obs}(m)}+\widehat{\mu}_{-1, \mathrm{obs}(m)}}{2}\right)}{\left\|\Sigma_{\mathrm{obs}(m)}^{-\frac{1}{2}}(\widehat{\mu}_{1,\mathrm{obs}(m)}-\widehat{\mu}_{-1,\mathrm{obs}(m)})\right\|}\right)
        \label{eq:probMissLDAEstim2}
    \end{align}
    with $\Phi$ the standard Gaussian cumulative function.
\end{lemma}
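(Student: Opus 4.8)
The plan is to condition on the training sample $\mathcal{D}_n$, so that the estimated means $\widehat{\mu}_{1,\mathrm{obs}(m)}$ and $\widehat{\mu}_{-1,\mathrm{obs}(m)}$ become deterministic, and to exploit that $\mathcal{D}_n$ is independent of the test observation $X_{\mathrm{obs}(m)}$. I would introduce the shorthands $\widehat{\beta} := \widehat{\mu}_{1,\mathrm{obs}(m)}-\widehat{\mu}_{-1,\mathrm{obs}(m)}$ and $\widehat{v}:=(\widehat{\mu}_{1,\mathrm{obs}(m)}+\widehat{\mu}_{-1,\mathrm{obs}(m)})/2$, so that the event $\{\widehat{h}_m(X_{\mathrm{obs}(m)})=1\}$ coincides with $\{T>0\}$, where $T:=\widehat{\beta}^{\top}\Sigma_{\mathrm{obs}(m)}^{-1}\big(X_{\mathrm{obs}(m)}-\widehat{v}\big)$ is an affine statistic of the test point.

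The key ingredient is that, under \Cref{ass:MCAR} (MCAR) and \Cref{ass:LDA} (LDA), the observed sub-vector stays Gaussian: conditionally on $Y=-1$ and $M=m$, \Cref{lemma:obsGaus} yields $X_{\mathrm{obs}(m)}\sim\mathcal{N}(\mu_{-1,\mathrm{obs}(m)},\Sigma_{\mathrm{obs}(m)})$. Since $\widehat{\beta}$ and $\widehat{v}$ are fixed given $\mathcal{D}_n$, the statistic $T$ is an affine image of a Gaussian vector, hence Gaussian. I would then compute its first two conditional moments: the mean is $\e{T}=\widehat{\beta}^{\top}\Sigma_{\mathrm{obs}(m)}^{-1}(\mu_{-1,\mathrm{obs}(m)}-\widehat{v})$, and the variance simplifies, using $\Sigma_{\mathrm{obs}(m)}^{-1}\Sigma_{\mathrm{obs}(m)}\Sigma_{\mathrm{obs}(m)}^{-1}=\Sigma_{\mathrm{obs}(m)}^{-1}$, to $\V{T}=\widehat{\beta}^{\top}\Sigma_{\mathrm{obs}(m)}^{-1}\widehat{\beta}=\norm{\Sigma_{\mathrm{obs}(m)}^{-1/2}\widehat{\beta}}^2$.

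Standardizing $T$ gives $\P(T>0\mid Y=-1,\mathcal{D}_n)=\Phi\big(\e{T}/\sqrt{\V{T}}\big)$ via $1-\Phi(-a)=\Phi(a)$. Rewriting the numerator as the inner product of $\Sigma_{\mathrm{obs}(m)}^{-1/2}\widehat{\beta}$ with $\Sigma_{\mathrm{obs}(m)}^{-1/2}(\mu_{-1,\mathrm{obs}(m)}-\widehat{v})$, using $\Sigma_{\mathrm{obs}(m)}^{-1}=\Sigma_{\mathrm{obs}(m)}^{-1/2}\Sigma_{\mathrm{obs}(m)}^{-1/2}$ and the symmetry of $\Sigma_{\mathrm{obs}(m)}^{-1/2}$, reproduces exactly the right-hand side of \eqref{eq:probMissLDAEstim}. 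The companion identity \eqref{eq:probMissLDAEstim2} follows by the same argument after conditioning on $Y=1$, in which case $X_{\mathrm{obs}(m)}\sim\mathcal{N}(\mu_{1,\mathrm{obs}(m)},\Sigma_{\mathrm{obs}(m)})$ and the relevant event becomes $\{T<0\}$, which produces the extra minus sign.

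I do not expect a serious obstacle: the computation is routine once the conditioning is set up. The only point demanding care is to treat $\widehat{\mu}$ as deterministic by conditioning on $\mathcal{D}_n$ and to invoke the independence of training and test data together with the observed-Gaussian lemma, ensuring that the sole randomness entering $T$ is that of the Gaussian test point and that the variance collapse $\Sigma_{\mathrm{obs}(m)}^{-1}\Sigma_{\mathrm{obs}(m)}\Sigma_{\mathrm{obs}(m)}^{-1}=\Sigma_{\mathrm{obs}(m)}^{-1}$ is correctly applied.
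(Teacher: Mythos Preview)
Your proposal is correct and follows essentially the same approach as the paper: condition on $\mathcal{D}_n$, use the independence of the test point from the training sample together with \Cref{lemma:obsGaus} to obtain Gaussianity of $X_{\mathrm{obs}(m)}\mid Y=k$, and then reduce the event to a one-dimensional Gaussian tail probability. The only cosmetic difference is that the paper first whitens via $N=\Sigma_{\mathrm{obs}(m)}^{-1/2}(X_{\mathrm{obs}(m)}-\mu_{k,\mathrm{obs}(m)})$ and works with $\widehat{\gamma}^\top N/\norm{\widehat{\gamma}}$, whereas you compute the mean and variance of $T$ directly; both routes yield the same $\Phi$ expression.
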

\begin{proof}
We follow the same strategy as in the proof of \cref{cor:GenBayesRiskpbpLDA}. We have
   \begin{align*}
   & \P\left(\widehat{h}_m(X_{\mathrm{obs}(m)})=1\mid Y=-1, \mathcal{D}_n\right) \\
    &= \P\left( \left(\widehat{\mu}_{1, \mathrm{obs}(m)}-\widehat{\mu}_{-1, \mathrm{obs}(m)}\right)^{\top}\Sigma_{\mathrm{obs}(m)}^{-1}\left(X_{\mathrm{obs}(m)}-\frac{\widehat{\mu}_{1, \mathrm{obs}(m)}+\widehat{\mu}_{-1,\mathrm{obs}(m)}}{2}\right)>0\mid  Y=-1, \mathcal{D}_n\right)
    \end{align*}
   Let $N=\Sigma_{\mathrm{obs}(m)}^{-\frac{1}{2}}(X_{\mathrm{obs}(m)}-\mu_{-1, \mathrm{obs}(m)})$. By \Cref{lemma:obsGaus},  $N|Y=-1\sim\mathcal{N}(0, Id_{d-\left\|m\right\|_0})$. Since $(X_{\mathrm{obs}(m)},Y)$ and  $\mathcal{D}_n$ are independent
   \begin{align}
    N|Y=-1, \mathcal{D}_n \sim\mathcal{N}(0, Id_{d-\left\|m\right\|_0}).
   \end{align}
   Letting $\widehat{\gamma}=\Sigma_{\mathrm{obs}(m)}^{-\frac{1}{2}}(\widehat{\mu}_{1,\mathrm{obs}(m)}-\widehat{\mu}_{-1,\mathrm{obs}(m)})$, we have
   \begin{align*}
       & \P\left(h_m^{\star}(X_{\mathrm{obs}(m)})=1\mid Y=-1, \mathcal{D}_n\right)\\
       &=\P\left(\widehat{\gamma}^{\top}N+\widehat{\gamma}^\top\Sigma_{\mathrm{obs}(m)}^{-\frac{1}{2}}\left(\mu_{-1,\mathrm{obs}(m)}-\frac{\widehat{\mu}_{1, \mathrm{obs}(m)}+\widehat{\mu}_{-1,\mathrm{obs}(m)}}{2}\right)>0\mid Y=-1, \mathcal{D}_n\right)\\
       &=\P\left(\frac{\widehat{\gamma}^{\top}N}{\left\|\widehat{\gamma}\right\|}>-\frac{\widehat{\gamma}^\top}{\left\|\widehat{\gamma}\right\|}\Sigma_{\mathrm{obs}(m)}^{-\frac{1}{2}}\left(\mu_{-1,\mathrm{obs}(m)}-\frac{\widehat{\mu}_{1, \mathrm{obs}(m)}+\widehat{\mu}_{-1,\mathrm{obs}(m)}}{2}\right)\mid Y=-1, \mathcal{D}_n\right)\\
       &=\Phi\left(\frac{\widehat{\gamma}^\top}{\left\|\widehat{\gamma}\right\|}\Sigma_{\mathrm{obs}(m)}^{-\frac{1}{2}}\left(\mu_{-1,\mathrm{obs}(m)}-\frac{\widehat{\mu}_{1, \mathrm{obs}(m)}+\widehat{\mu}_{-1,\mathrm{obs}(m)}}{2}\right)\right).
   \end{align*} 

   Now we prove the second statement. 
   According to the proof of \Cref{cor:GenBayesRiskpbpLDA}, we have
   \begin{align*}
        & \P\left(\widehat{h}_m(X_{\mathrm{obs}(m)})=-1\mid Y=1, \mathcal{D}_n\right) \\
         & = \P\left( \left(\widehat{\mu}_{1, \mathrm{obs}(m)}-\widehat{\mu}_{-1, \mathrm{obs}(m)}\right)^{\top}\Sigma_{\mathrm{obs}(m)}^{-1}\left(X_{\mathrm{obs}(m)}-\frac{\widehat{\mu}_{1, \mathrm{obs}(m)}+\widehat{\mu}_{-1,\mathrm{obs}(m)}}{2}\right)<0\mid  Y=1\right).
    \end{align*}
   Let $N=\Sigma_{\mathrm{obs}(m)}^{-\frac{1}{2}}(X_{\mathrm{obs}(m)}-\mu_{1, \mathrm{obs}(m)})$. By \Cref{lemma:obsGaus}, and since $(X_{\mathrm{obs}(m)},Y)$ and  $\mathcal{D}_n$ are independent, 
   \begin{align}
   N|Y=1, \mathcal{D}_n \sim\mathcal{N}(0, Id_{d-\left\|m\right\|_0}).    
   \end{align}
   Letting $\widehat{\gamma}=\Sigma_{\mathrm{obs}(m)}^{-\frac{1}{2}}(\widehat{\mu}_{1,\mathrm{obs}(m)}-\widehat{\mu}_{-1,\mathrm{obs}(m)})$, we have
   \begin{align*}
       & \P\left(\widehat{h}_m(X_{\mathrm{obs}(m)})=-1\mid Y=1, \mathcal{D}_n\right)\\
       &=\P\left(\widehat{\gamma}^{\top}N+\widehat{\gamma}^\top\Sigma_{\mathrm{obs}(m)}^{-\frac{1}{2}}\left(\mu_{1,\mathrm{obs}(m)}-\frac{\widehat{\mu}_{1, \mathrm{obs}(m)}+\widehat{\mu}_{-1,\mathrm{obs}(m)}}{2}\right)<0\mid Y=1, \mathcal{D}_n\right)\\
       &=\P\left(\frac{\widehat{\gamma}^{\top}N}{\left\|\widehat{\gamma}\right\|}<-\frac{\widehat{\gamma}^\top}{\left\|\widehat{\gamma}\right\|}\Sigma_{\mathrm{obs}(m)}^{-\frac{1}{2}}\left(\mu_{1,\mathrm{obs}(m)}-\frac{\widehat{\mu}_{1, \mathrm{obs}(m)}+\widehat{\mu}_{-1,\mathrm{obs}(m)}}{2}\right)\mid Y=1, \mathcal{D}_n\right)\\
       &=\Phi\left(-\frac{\widehat{\gamma}^\top}{\left\|\widehat{\gamma}\right\|}\Sigma_{\mathrm{obs}(m)}^{-\frac{1}{2}}\left(\mu_{1,\mathrm{obs}(m)}-\frac{\widehat{\mu}_{1, \mathrm{obs}(m)}+\widehat{\mu}_{-1,\mathrm{obs}(m)}}{2}\right)\right).
   \end{align*} 
\end{proof}

\begin{lemma}\label{lemma:boundMissProbMuEstim}
Grant Assumptions \ref{ass:MCAR} and \ref{ass:LDA}. Assume that we are given two estimators $\widehat{\mu}_1$ and $\widehat{\mu}_{-1}$ of $\mu_1$ and $\mu_{-1}$. Then, the classifier $\widehat{h}_m$ defined in Equation \eqref{eq:PbPLDAEstimMuAllPatterns} satisfies 
    \begin{align}
    \notag
        \bigl| 
    \P&\left(\widehat{h}_m(X_{\mathrm{obs}(m)})=1\mid  Y=-1, \mathcal{D}_n\right)-\P\left(h_m^\star(X_{\mathrm{obs}(m)})=1 \mid Y=-1\right) 
    \bigr|\\
    &\leq \frac{3}{2\sqrt{2\pi}}\left\|\Sigma_{\mathrm{obs}(m)}^{-\frac{1}{2}}(\mu_{-1,\mathrm{obs}(m)}-\widehat{\mu}_{-1,\mathrm{obs}(m)})\right\|+
    \frac{1}{2\sqrt{2\pi}}\left\|\Sigma_{\mathrm{obs}(m)}^{-\frac{1}{2}}(\mu_{1, \mathrm{obs}(m)}-\widehat{\mu}_{1,\mathrm{obs}(m)})\right\|\label{eq:boundMissProbMuEstim1}
    \end{align}
    and symmetrically, 
    \begin{align}\notag
        \bigl| 
    \P&\left(\widehat{h}_m(X_{\mathrm{obs}(m)})=-1\mid  Y=1, \mathcal{D}_n\right)-\P\left(h_m^\star(X_{\mathrm{obs}(m)})=-1 \mid Y=1\right) 
    \bigr|\\
    &\leq\frac{3}{2\sqrt{2\pi}}\left\|\Sigma_{\mathrm{obs}(m)}^{-\frac{1}{2}}(-\widehat{\mu}_{1,\mathrm{obs}(m)}
    +\mu_{1, \mathrm{obs}(m)})\right\|+\frac{1}{2\sqrt{2\pi}}\left\|\Sigma_{\mathrm{obs}(m)}^{-\frac{1}{2}}(\widehat{\mu}_{-1,\mathrm{obs}(m)}-\mu_{-1, \mathrm{obs}(m)})\right\|\label{eq:boundMissProbMuEstim2}
    \end{align}
\end{lemma}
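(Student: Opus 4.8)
The plan is to combine the exact formula for the misclassification probability of the plug-in classifier (\Cref{lemma:probMisMuEstim}) with the exact Bayes misclassification probability (\Cref{cor:GenBayesRiskpbpLDA}), and to control their difference via the Lipschitz continuity of $\Phi$. Since the classes are balanced ($\pi_1=\pi_{-1}$), \Cref{cor:GenBayesRiskpbpLDA} gives $a_m=0$, so that $\P(h_m^\star(X_{\mathrm{obs}(m)})=1\mid Y=-1)=\Phi(-\norm{\gamma}/2)$, where $\gamma:=\Sigma_{\mathrm{obs}(m)}^{-\frac12}(\mu_{1,\mathrm{obs}(m)}-\mu_{-1,\mathrm{obs}(m)})$. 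On the other hand, \Cref{lemma:probMisMuEstim} expresses $\P(\widehat{h}_m(X_{\mathrm{obs}(m)})=1\mid Y=-1,\mathcal{D}_n)=\Phi(a)$, where $a$ is the ratio in \eqref{eq:probMissLDAEstim} and $\widehat{\gamma}:=\Sigma_{\mathrm{obs}(m)}^{-\frac12}(\widehat{\mu}_{1,\mathrm{obs}(m)}-\widehat{\mu}_{-1,\mathrm{obs}(m)})$.

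The crucial algebraic step is to rewrite the numerator of $a$ so that the leading term $-\norm{\widehat{\gamma}}/2$ is isolated. Using the identity
\begin{align*}
\mu_{-1,\mathrm{obs}(m)}-\frac{\widehat{\mu}_{1,\mathrm{obs}(m)}+\widehat{\mu}_{-1,\mathrm{obs}(m)}}{2}=-\frac{\widehat{\mu}_{1,\mathrm{obs}(m)}-\widehat{\mu}_{-1,\mathrm{obs}(m)}}{2}+\big(\mu_{-1,\mathrm{obs}(m)}-\widehat{\mu}_{-1,\mathrm{obs}(m)}\big),
\end{align*}
and writing $u:=\widehat{\gamma}/\norm{\widehat{\gamma}}$, a direct computation (noting $\widehat{\gamma}^\top\widehat{\gamma}=\norm{\widehat{\gamma}}^2$) yields the clean expression
\begin{align*}
a=-\frac{\norm{\widehat{\gamma}}}{2}+u^\top\Sigma_{\mathrm{obs}(m)}^{-\frac12}\big(\mu_{-1,\mathrm{obs}(m)}-\widehat{\mu}_{-1,\mathrm{obs}(m)}\big).
\end{align*}
This particular decomposition (rather than a naive split of the three mean errors) is what produces the asymmetric constants $3/2$ and $1/2$ in the statement, and is the only genuinely delicate point of the argument.

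From here the bound is routine. I would write
\begin{align*}
\Big|a+\frac{\norm{\gamma}}{2}\Big|\leq \frac{1}{2}\big|\norm{\gamma}-\norm{\widehat{\gamma}}\big|+\big|u^\top\Sigma_{\mathrm{obs}(m)}^{-\frac12}(\mu_{-1,\mathrm{obs}(m)}-\widehat{\mu}_{-1,\mathrm{obs}(m)})\big|,
\end{align*}
then bound the second term by Cauchy--Schwarz (using $\norm{u}=1$), and the first via the reverse triangle inequality $|\norm{\gamma}-\norm{\widehat{\gamma}}|\leq\norm{\gamma-\widehat{\gamma}}$ together with $\gamma-\widehat{\gamma}=\Sigma_{\mathrm{obs}(m)}^{-\frac12}[(\mu_{1,\mathrm{obs}(m)}-\widehat{\mu}_{1,\mathrm{obs}(m)})-(\mu_{-1,\mathrm{obs}(m)}-\widehat{\mu}_{-1,\mathrm{obs}(m)})]$ and one further triangle inequality. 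Collecting terms gives $|a+\norm{\gamma}/2|\leq \tfrac32\norm{\Sigma_{\mathrm{obs}(m)}^{-\frac12}(\mu_{-1,\mathrm{obs}(m)}-\widehat{\mu}_{-1,\mathrm{obs}(m)})}+\tfrac12\norm{\Sigma_{\mathrm{obs}(m)}^{-\frac12}(\mu_{1,\mathrm{obs}(m)}-\widehat{\mu}_{1,\mathrm{obs}(m)})}$.

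Finally, since $\Phi$ is $1/\sqrt{2\pi}$-Lipschitz (its derivative, the standard Gaussian density, is bounded by $1/\sqrt{2\pi}$), we obtain $|\Phi(a)-\Phi(-\norm{\gamma}/2)|\leq \tfrac{1}{\sqrt{2\pi}}|a+\norm{\gamma}/2|$, which is exactly \eqref{eq:boundMissProbMuEstim1}. The symmetric bound \eqref{eq:boundMissProbMuEstim2} follows by the identical argument applied to \eqref{eq:probMissLDAEstim2}, exchanging the roles of the indices $1$ and $-1$, so that now the error in $\mu_{1,\mathrm{obs}(m)}$ carries the factor $3/2$. The main obstacle is purely the bookkeeping in the crucial decomposition above; once $a$ is written as $-\norm{\widehat{\gamma}}/2$ plus a single residual term, everything else is standard.
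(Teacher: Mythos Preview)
Your proof is correct and follows essentially the same approach as the paper: both invoke \Cref{lemma:probMisMuEstim} and \Cref{cor:GenBayesRiskpbpLDA}, perform the same algebraic decomposition $\mu_{-1,\mathrm{obs}(m)}-\tfrac{\widehat{\mu}_{1,\mathrm{obs}(m)}+\widehat{\mu}_{-1,\mathrm{obs}(m)}}{2}=-(\widehat{\mu}_{1,\mathrm{obs}(m)}-\widehat{\mu}_{-1,\mathrm{obs}(m)})/2+(\mu_{-1,\mathrm{obs}(m)}-\widehat{\mu}_{-1,\mathrm{obs}(m)})$ to isolate $-\norm{\widehat{\gamma}}/2$, and then combine the $1/\sqrt{2\pi}$-Lipschitz property of $\Phi$ with Cauchy--Schwarz and the (reverse) triangle inequality to obtain the constants $3/2$ and $1/2$. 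Your presentation is slightly more streamlined in that you first simplify $a$ completely before applying the Lipschitz bound, whereas the paper applies Lipschitz first and manipulates inside the absolute value, but the substance is identical.
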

\begin{proof}
We only prove the first inequality, the other one can be handled in the same manner. According to using \cref{cor:GenBayesRiskpbpLDA} and ~\cref{lemma:probMisMuEstim}, 

\begin{align*}
    \bigl| 
    \P&\left(\widehat{h}_m(X_{\mathrm{obs}(m)})=1\mid  Y=-1, \mathcal{D}_n\right)-\P\left(h_m^\star(X_{\mathrm{obs}(m)})=1 \mid Y=-1\right) 
    \bigr|\\
    &=\left|\Phi\left(\frac{\left(\Sigma_{\mathrm{obs}(m)}^{-\frac{1}{2}}(\widehat{\mu}_{1,\mathrm{obs}(m)}-\widehat{\mu}_{-1,\mathrm{obs}(m)})\right)^\top\Sigma_{\mathrm{obs}(m)}^{-\frac{1}{2}}\left(\mu_{-1,\mathrm{obs}(m)}-\frac{\widehat{\mu}_{1,\mathrm{obs}(m)}+\widehat{\mu}_{-1, \mathrm{obs}(m)}}{2}\right)}{\left\|\Sigma_{\mathrm{obs}(m)}^{-\frac{1}{2}}(\widehat{\mu}_{1,\mathrm{obs}(m)}-\widehat{\mu}_{-1,\mathrm{obs}(m)})\right\|}\right)\right.\\&\qquad\qquad\left.-\Phi\left(-\frac{\left\|\Sigma_{\mathrm{obs}(m)}^{-\frac{1}{2}}(\mu_{1, \mathrm{obs}(m)}-\mu_{-1, \mathrm{obs}(m)})\right\|}{2}\right)\right|\\
    &\leq \frac{1}{\sqrt{2\pi}}\left|\frac{\left(\Sigma_{\mathrm{obs}(m)}^{-\frac{1}{2}}(\widehat{\mu}_{1,\mathrm{obs}(m)}-\widehat{\mu}_{-1,\mathrm{obs}(m)})\right)^\top\Sigma_{\mathrm{obs}(m)}^{-\frac{1}{2}}\left(\mu_{-1,\mathrm{obs}(m)}-\frac{\widehat{\mu}_{1,\mathrm{obs}(m)}+\widehat{\mu}_{-1, \mathrm{obs}(m)}}{2}\right)}{\left\|\Sigma_{\mathrm{obs}(m)}^{-\frac{1}{2}}(\widehat{\mu}_{1,\mathrm{obs}(m)}-\widehat{\mu}_{-1,\mathrm{obs}(m)})\right\|}\right.\\&\qquad\qquad\left.+\frac{\left\|\Sigma_{\mathrm{obs}(m)}^{-\frac{1}{2}}(\mu_{1, \mathrm{obs}(m)}-\mu_{-1, \mathrm{obs}(m)})\right\|}{2}\right|,
    \end{align*}
    since $\Phi$ is $(1/\sqrt{2\pi})$-Lipschitz. Note that, by injecting $\pm \widehat{\mu}_{-1,\mathrm{obs}(m)}$, the numerator of the first term can be rewritten as 
    \begin{align}
        & \left(\Sigma_{\mathrm{obs}(m)}^{-\frac{1}{2}}(\widehat{\mu}_{1,\mathrm{obs}(m)}-\widehat{\mu}_{-1,\mathrm{obs}(m)})\right)^\top\Sigma_{\mathrm{obs}(m)}^{-\frac{1}{2}}\left(\mu_{-1,\mathrm{obs}(m)}-\frac{\widehat{\mu}_{1,\mathrm{obs}(m)}+\widehat{\mu}_{-1, \mathrm{obs}(m)}}{2}\right)\\
        & = \left(\Sigma_{\mathrm{obs}(m)}^{-\frac{1}{2}}(\widehat{\mu}_{1,\mathrm{obs}(m)}-\widehat{\mu}_{-1,\mathrm{obs}(m)})\right)^\top\Sigma_{\mathrm{obs}(m)}^{-\frac{1}{2}}\left(\mu_{-1,\mathrm{obs}(m)}-\widehat{\mu}_{-1,\mathrm{obs}(m)}\right) \\
        & \qquad + \frac{1}{2}\left(\Sigma_{\mathrm{obs}(m)}^{-\frac{1}{2}}(\widehat{\mu}_{1,\mathrm{obs}(m)}-\widehat{\mu}_{-1,\mathrm{obs}(m)})\right)^\top\Sigma_{\mathrm{obs}(m)}^{-\frac{1}{2}}\left(\widehat{\mu}_{-1,\mathrm{obs}(m)}-\widehat{\mu}_{1,\mathrm{obs}(m)}\right)\\
        & \leq \left\|\Sigma_{\mathrm{obs}(m)}^{-\frac{1}{2}}(\widehat{\mu}_{1,\mathrm{obs}(m)}-\widehat{\mu}_{-1,\mathrm{obs}(m)})\right\|\left\|\Sigma_{\mathrm{obs}(m)}^{-\frac{1}{2}}(\mu_{-1,\mathrm{obs}(m)}-\widehat{\mu}_{-1,\mathrm{obs}(m)})\right\|  \\
        & \qquad - \frac{1}{2} \left\|\Sigma_{\mathrm{obs}(m)}^{-\frac{1}{2}}(\widehat{\mu}_{1,\mathrm{obs}(m)}-\widehat{\mu}_{-1,\mathrm{obs}(m)})\right\|^2,
    \end{align}
    where the last line results from Cauchy-Schwarz inequality. Thus, by the Triangle inequality, followed by the reverse triangle inequality, we obtain
\begin{align}
    \bigl| 
    \P&\left(\widehat{h}_m(X_{\mathrm{obs}(m)})=1\mid  Y=-1, \mathcal{D}_n\right)-\P\left(h_m^\star(X_{\mathrm{obs}(m)})=1 \mid Y=-1\right) 
    \bigr|\\
    &\leq \frac{1}{\sqrt{2\pi}}\frac{\left\|\Sigma_{\mathrm{obs}(m)}^{-\frac{1}{2}}(\widehat{\mu}_{1,\mathrm{obs}(m)}-\widehat{\mu}_{-1,\mathrm{obs}(m)})\right\|\left\|\Sigma_{\mathrm{obs}(m)}^{-\frac{1}{2}}(\mu_{-1,\mathrm{obs}(m)}-\widehat{\mu}_{-1,\mathrm{obs}(m)})\right\|}{\left\|\Sigma_{\mathrm{obs}(m)}^{-\frac{1}{2}}(\widehat{\mu}_{1,\mathrm{obs}(m)}-\widehat{\mu}_{-1,\mathrm{obs}(m)})\right\|} \\
    &\qquad+
    \frac{1}{\sqrt{2\pi}}\left|-\frac{\left\|\Sigma_{\mathrm{obs}(m)}^{-\frac{1}{2}}(\widehat{\mu}_{1,\mathrm{obs}(m)}-\widehat{\mu}_{-1,\mathrm{obs}(m)})\right\|}{2}
    +\frac{\left\|\Sigma_{\mathrm{obs}(m)}^{-\frac{1}{2}}(\mu_{1, \mathrm{obs}(m)}-\mu_{-1, \mathrm{obs}(m)})\right\|}{2}\right|  \\
    & \leq  \frac{1}{\sqrt{2\pi}}\left\|\Sigma_{\mathrm{obs}(m)}^{-\frac{1}{2}}(\mu_{-1,\mathrm{obs}(m)}-\widehat{\mu}_{-1,\mathrm{obs}(m)})\right\| \\
    & \qquad +
    \frac{1}{2\sqrt{2\pi}}\left\|\Sigma_{\mathrm{obs}(m)}^{-\frac{1}{2}}(-\widehat{\mu}_{1,\mathrm{obs}(m)}+\widehat{\mu}_{-1,\mathrm{obs}(m)}+\mu_{1, \mathrm{obs}(m)}-\mu_{-1, \mathrm{obs}(m)})\right\|
    \\&\leq \frac{1}{\sqrt{2\pi}} \left\|\Sigma_{\mathrm{obs}(m)}^{-\frac{1}{2}}(\mu_{-1,\mathrm{obs}(m)}-\widehat{\mu}_{-1,\mathrm{obs}(m)})\right\|+
    \frac{1}{2\sqrt{2\pi}}\left\|\Sigma_{\mathrm{obs}(m)}^{-\frac{1}{2}}(\mu_{1, \mathrm{obs}(m)}-\widehat{\mu}_{1,\mathrm{obs}(m)})\right\|\\&\qquad+\frac{1}{2\sqrt{2\pi}}\left\|\Sigma_{\mathrm{obs}(m)}^{-\frac{1}{2}}(\widehat{\mu}_{-1,\mathrm{obs}(m)}-\mu_{-1, \mathrm{obs}(m)})\right\|\\
    & \leq \frac{3}{2\sqrt{2\pi}} \left\|\Sigma_{\mathrm{obs}(m)}^{-\frac{1}{2}}(\mu_{-1,\mathrm{obs}(m)}-\widehat{\mu}_{-1,\mathrm{obs}(m)})\right\|+
    \frac{1}{2\sqrt{2\pi}}\left\|\Sigma_{\mathrm{obs}(m)}^{-\frac{1}{2}}(\mu_{1, \mathrm{obs}(m)}-\widehat{\mu}_{1,\mathrm{obs}(m)})\right\|.
\end{align}
The second statement of the Lemma can be proven in the same way. 
\end{proof}

\begin{lemma}\label{lemma:generalDecompLestim}
Grant Assumptions \ref{ass:MCAR},  \ref{ass:LDA} and assume the classes are balanced. Assume that we are given two estimators $\widehat{\mu}_1$ and $\widehat{\mu}_{-1}$ of $\mu_1$ and $\mu_{-1}$. Then, the classifier $\widehat{h}$ defined in Equation  \eqref{eq:PbPLDAEstimMuAllPatterns} satisfies 
    \begin{align*}
         & \mathcal{R}_{\mathrm{mis}}(\widehat{h}) -\mathcal{R}_{\mathrm{mis}}(h^\star)\\
    &\leq  \sum_{m\in\M}\left(\e{\left\|\Sigma_{\mathrm{obs}(m)}^{-\frac{1}{2}}(-\widehat{\mu}_{1,\mathrm{obs}(m)}
    +\mu_{1, \mathrm{obs}(m)})\right\|+\left\|\Sigma_{\mathrm{obs}(m)}^{-\frac{1}{2}}(\widehat{\mu}_{-1,\mathrm{obs}(m)}-\mu_{-1, \mathrm{obs}(m)})\right\|}\right) \frac{p_m}{\sqrt{2\pi}}.
    \end{align*}
\end{lemma}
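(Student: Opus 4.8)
The plan is to decompose the excess risk pattern by pattern, reduce each pattern's contribution to its two class-conditional misclassification probabilities, and then invoke the bound of Lemma~\ref{lemma:boundMissProbMuEstim} on each. Since the plug-in predictor $\widehat h$ is itself a function of the training sample $\mathcal D_n$, all intermediate quantities are understood conditionally on $\mathcal D_n$, with a final expectation over $\mathcal D_n$ producing the expected norms appearing on the right-hand side.

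First I would use that both $\widehat h$ and $h^\star$ are pattern-wise classifiers, so that the masks partition the probability space, together with the fact that the test pattern $M$ is independent of $(X,Y)$ (MCAR, Assumption~\ref{ass:MCAR}) and of $\mathcal D_n$, to write the exact decomposition
\begin{align*}
\mathcal R_{\mathrm{mis}}(\widehat h)-\mathcal R_{\mathrm{mis}}(h^\star)
= \sum_{m\in\M}\Big(\P(\widehat h_m(X_{\mathrm{obs}(m)})\neq Y\mid M=m)-\P(h^\star_m(X_{\mathrm{obs}(m)})\neq Y\mid M=m)\Big)p_m.
\end{align*}
Conditioning on $\mathcal D_n$ freezes $\widehat\mu_1,\widehat\mu_{-1}$, and MCAR lets me drop the event $\{M=m\}$ from the conditioning of the pair $(X_{\mathrm{obs}(m)},Y)$. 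Using the balanced-class assumption $\pi_1=\pi_{-1}=\tfrac12$, each conditional misclassification probability splits as
\begin{align*}
\P(\widehat h_m\neq Y\mid M=m,\mathcal D_n)=\tfrac12\P(\widehat h_m=-1\mid Y=1,\mathcal D_n)+\tfrac12\P(\widehat h_m=1\mid Y=-1,\mathcal D_n),
\end{align*}
and identically for $h^\star_m$ (without the conditioning on $\mathcal D_n$).

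Next I would take absolute values, apply the triangle inequality to pair each plug-in term with its Bayes counterpart, and control the two resulting differences by \eqref{eq:boundMissProbMuEstim1} and \eqref{eq:boundMissProbMuEstim2} of Lemma~\ref{lemma:boundMissProbMuEstim}. Writing $A_k:=\norm{\Sigma_{\mathrm{obs}(m)}^{-1/2}(\mu_{k,\mathrm{obs}(m)}-\widehat\mu_{k,\mathrm{obs}(m)})}$, the class-$(-1)$ difference is bounded by $\tfrac{3}{2\sqrt{2\pi}}A_{-1}+\tfrac{1}{2\sqrt{2\pi}}A_1$ and the class-$1$ difference by $\tfrac{3}{2\sqrt{2\pi}}A_1+\tfrac{1}{2\sqrt{2\pi}}A_{-1}$. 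The key arithmetic is that the $\tfrac12$ weights collapse the asymmetric $3/2,1/2$ coefficients into a single clean prefactor, yielding $\tfrac{1}{\sqrt{2\pi}}(A_1+A_{-1})$ per pattern, conditionally on $\mathcal D_n$. Taking the expectation over $\mathcal D_n$ and summing against $p_m$ then produces exactly the claimed bound.

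The computations here are light; the only points requiring care are the measure-theoretic justifications — namely that under MCAR the test mask $M$ is independent both of the test pair $(X,Y)$ and of $\mathcal D_n$, so that $\P(\widehat h_m\neq Y\mid M=m,\mathcal D_n)=\P(\widehat h_m\neq Y\mid \mathcal D_n)$, and that Tonelli permits interchanging the non-negative conditional bounds with the expectation over $\mathcal D_n$. The substantive content has already been absorbed into Lemmas~\ref{lemma:probMisMuEstim} and~\ref{lemma:boundMissProbMuEstim}, so this step is essentially a bookkeeping assembly; I expect the main subtlety, if any, to be verifying that the balanced-class weighting produces precisely the stated constant $\tfrac{1}{\sqrt{2\pi}}$ rather than a larger one.
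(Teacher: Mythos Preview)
Your proposal is correct and follows essentially the same approach as the paper's proof: pattern-by-pattern decomposition using MCAR, splitting each pattern into the two class-conditional error differences with the balanced weights $\tfrac12$, applying Lemma~\ref{lemma:boundMissProbMuEstim} to each, and observing that $\tfrac12\cdot\tfrac{3}{2\sqrt{2\pi}}+\tfrac12\cdot\tfrac{1}{2\sqrt{2\pi}}=\tfrac{1}{\sqrt{2\pi}}$ for each of $A_1$ and $A_{-1}$. The only cosmetic difference is that the paper writes the intermediate four-term sum explicitly before collapsing it, whereas you describe the collapse verbally.
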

\begin{proof}
We have
    \begin{align*}
         & \mathcal{R}_{\mathrm{mis}}(\widehat{h}) -\mathcal{R}_{\mathrm{mis}}(h^\star)\\
     = &\P\left(\widehat{h}(X_{\mathrm{obs}(M)},M)\neq Y\right)-\P\left(h^\star(X_{\mathrm{obs}(M)},M)\neq Y\right)\\ 
    =&\sum_{m\in\M}\left(\P\left(\widehat{h}(X_{\mathrm{obs}(M)},M)\neq Y\mid  M=m\right)-\P\left(h^\star(X_{\mathrm{obs}(M)},M)\neq Y\mid M=m\right)\right)p_m\\
    =&\sum_{m\in\M}\left(\P\left(\widehat{h}_m(X_{\mathrm{obs}(m)})\neq Y\right)-\P\left(h_m^\star(X_{\mathrm{obs}(m)})\neq Y\right)\right)p_m\tag{using Assumption \ref{ass:MCAR}}\\ 
    =&\sum_{m\in\M}\frac{1}{2}\left(\e{\P\left(\widehat{h}_m(X_{\mathrm{obs}(m)})=1\mid  Y=-1, \mathcal{D}_n\right)-\P\left(h_m^\star(X_{\mathrm{obs}(m)})=1 \mid Y=-1\right)}\right)p_m\\ 
    +&\sum_{m\in\M}\frac{1}{2}\left(\e{\P\left(\widehat{h}_m(X_{\mathrm{obs}(m)})=-1\mid  Y=1, \mathcal{D}_n\right)-\P\left(h_m^\star(X_{\mathrm{obs}(m)})=-1 \mid Y=1\right)}\right)p_m\\
    \leq& \sum_{m\in\M}\frac{p_m}{4\sqrt{2\pi}}\left(\e{3\left\|\Sigma_{\mathrm{obs}(m)}^{-\frac{1}{2}}(\mu_{-1,\mathrm{obs}(m)}-\widehat{\mu}_{-1,\mathrm{obs}(m)})\right\|+
    \left\|\Sigma_{\mathrm{obs}(m)}^{-\frac{1}{2}}(\mu_{1, \mathrm{obs}(m)}-\widehat{\mu}_{1,\mathrm{obs}(m)})\right\|}\right)\\ 
    +&\sum_{m\in\M}\frac{p_m}{4\sqrt{2\pi}}\left(\e{3\left\|\Sigma_{\mathrm{obs}(m)}^{-\frac{1}{2}}(-\widehat{\mu}_{1,\mathrm{obs}(m)}
    +\mu_{1, \mathrm{obs}(m)})\right\| 
    +\left\|\Sigma_{\mathrm{obs}(m)}^{-\frac{1}{2}}(\widehat{\mu}_{-1,\mathrm{obs}(m)}-\mu_{-1, \mathrm{obs}(m)})\right\|}\right),
    \end{align*}
    by Lemma \ref{lemma:boundMissProbMuEstim}. Thus, 
    \begin{align*}
& \mathcal{R}_{\mathrm{mis}}(\widehat{h}) -\mathcal{R}_{\mathrm{mis}}(h^\star)\\
    &=\sum_{m\in\M}\frac{p_m}{\sqrt{2\pi}}\left(\e{\left\|\Sigma_{\mathrm{obs}(m)}^{-\frac{1}{2}}(-\widehat{\mu}_{1,\mathrm{obs}(m)}
    +\mu_{1, \mathrm{obs}(m)})\right\|+\left\|\Sigma_{\mathrm{obs}(m)}^{-\frac{1}{2}}(\widehat{\mu}_{-1,\mathrm{obs}(m)}-\mu_{-1, \mathrm{obs}(m)})\right\|}\right).
    \end{align*}
\end{proof}
It is worth noting that, at this juncture, neither the structure of the estimate nor the structure of the covariance matrix have been incorporated. 

\subsubsection{Lemma for Theorem \ref{th:BoundPbPMuEstimLDAGen}}
\begin{lemma}\label{lemma:boundNormDifferMuSigmaGen}
    For all $m\in \M$ and all $k\in\{-1,1\}$, 
    \begin{align*}
        \e{\left\|\Sigma^{-\frac{1}{2}}_{\mathrm{obs}(m)}(\widehat{\mu}_{k,\mathrm{obs}(m)}
    -\mu_{k, \mathrm{obs}(m)})\right\|}& 
    \leq \left(\left(\frac{1+\eta}{2}\right)^n \frac{\left\|\mu\right\|_\infty^2(d-\left\|m\right\|_0)}{\lambda_{\min}\left(\Sigma\right)}+\frac{4\kappa(d-\left\|m\right\|_0)}{(n+1)\left(1-\eta\right)}\right)^\frac{1}{2},
    \end{align*}
    with $\widehat{\mu}_{k,\mathrm{obs}(m)}$ defined in \eqref{eq:estMu} and $\kappa:=\max_{i\in [n]}\Sigma_{i,i}/\lambda_{\min}(\Sigma)$ the greatest value of the diagonal of the covariance matrix divided by its smallest eigenvalue. 
\end{lemma}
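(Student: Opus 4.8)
The plan is to pass from the $L^1$ control to an $L^2$ control via Jensen, to decouple the weighting matrix $\Sigma_{\mathrm{obs}(m)}^{-1/2}$ from the estimation error by eigenvalue interlacing, and then to reduce everything to a single coordinatewise second-moment computation governed by a binomial counting variable. Concretely, since the square root is concave it suffices to bound $\e{\norm{\Sigma_{\mathrm{obs}(m)}^{-1/2}(\widehat\mu_{k,\mathrm{obs}(m)}-\mu_{k,\mathrm{obs}(m)})}^2}$. Writing this as $v^\top \Sigma_{\mathrm{obs}(m)}^{-1} v$ with $v=\widehat\mu_{k,\mathrm{obs}(m)}-\mu_{k,\mathrm{obs}(m)}$, I bound it by $\norm{v}^2/\lambda_{\min}(\Sigma_{\mathrm{obs}(m)})$ and invoke Cauchy interlacing for principal submatrices, which gives $\lambda_{\min}(\Sigma_{\mathrm{obs}(m)})\geq\lambda_{\min}(\Sigma)$. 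This eliminates the matrix and leaves $\tfrac{1}{\lambda_{\min}(\Sigma)}\sum_{j\in\mathrm{obs}(m)}\e{(\widehat\mu_{k,j}-\mu_{k,j})^2}$.

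Next I would fix a coordinate $j$ and set $N_{k,j}=\sum_{i=1}^n\ind_{Y_i=k}\ind_{M_{i,j}=0}$, the number of observations actually used to form $\widehat\mu_{k,j}$. Under MCAR (\Cref{ass:MCAR}) the event $\{M_{i,j}=0\}$ is independent of $(X_i,Y_i)$, so conditionally on $\{Y_i=k,M_{i,j}=0\}$ each $X_{i,j}$ is $\mathcal N(\mu_{k,j},\Sigma_{jj})$; hence, given $N_{k,j}=\ell>0$, the estimator is an average of $\ell$ i.i.d.\ such variables and $\e{(\widehat\mu_{k,j}-\mu_{k,j})^2\mid N_{k,j}=\ell}=\Sigma_{jj}/\ell$, whereas on $\{N_{k,j}=0\}$ the convention $0/0=0$ yields $(\widehat\mu_{k,j}-\mu_{k,j})^2=\mu_{k,j}^2$. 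With balanced classes (\Cref{ass:LDA}) and \Cref{ass:constEtaMi} one has $N_{k,j}\sim\mathrm{Bin}(n,p)$ with $p=\tfrac{1-\eta}{2}$, so $\P(N_{k,j}=0)=(1-p)^n=(\tfrac{1+\eta}{2})^n$; bounding $\mu_{k,j}^2\leq\norm{\mu}_\infty^2$ and summing the $d-\norm{m}_0$ observed coordinates produces exactly the first term.

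The crux is the reciprocal-moment estimate $\e{N_{k,j}^{-1}\ind_{N_{k,j}>0}}\leq\tfrac{4}{(n+1)(1-\eta)}$. I would derive it from the elementary bound $\tfrac1N\leq\tfrac{2}{N+1}$ valid for $N\geq1$, combined with the exact binomial identity $\e{(N+1)^{-1}}=\tfrac{1-(1-p)^{n+1}}{(n+1)p}$, itself obtained from the absorption $\tfrac{1}{j+1}\binom nj=\tfrac{1}{n+1}\binom{n+1}{j+1}$ and reindexing; this gives $\Sigma_{jj}\,\e{N_{k,j}^{-1}\ind_{N_{k,j}>0}}\leq(\max_i\Sigma_{ii})\tfrac{4}{(n+1)(1-\eta)}$. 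Summing over the $d-\norm{m}_0$ observed coordinates, dividing by $\lambda_{\min}(\Sigma)$, and recognizing $\kappa=\max_i\Sigma_{ii}/\lambda_{\min}(\Sigma)$ reconstructs precisely the expression under the square root, after which Jensen closes the proof. The only genuinely delicate point is this binomial reciprocal moment together with the clean handling of the empty-count event; everything else is bookkeeping. The subtlety that must be checked carefully is that MCAR truly decouples the missingness indicator from the Gaussian value, so that the conditional law of $\widehat\mu_{k,j}$ given $N_{k,j}=\ell$ is exactly that of an $\ell$-sample Gaussian mean and no selection bias is introduced by conditioning on observation.
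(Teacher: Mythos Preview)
Your proposal is correct and reaches the same bound, but by a considerably shorter path than the paper's proof. The paper keeps the full weighting matrix $\Sigma_{\mathrm{obs}(m)}^{-1/2}$ inside the trace and therefore computes the entire error covariance $\mathcal{C}(k,m)=\e{(\widehat\mu_{k,\mathrm{obs}(m)}-\mu_{k,\mathrm{obs}(m)})(\widehat\mu_{k,\mathrm{obs}(m)}-\mu_{k,\mathrm{obs}(m)})^\top}$, including its off-diagonal entries; this forces a three-event case analysis ($\mathcal{A}_{k,l,r}$, $\mathcal{B}_{k,l,r}$, $\mathcal{C}_{k,l,r}$), a Hadamard-product decomposition $\mathcal{C}(k,m)=F\odot\mu\mu^\top + n\tfrac{1-\eta}{2}G\odot\Sigma$, and an auxiliary inequality $A(n,\eta)\geq(1-\eta)B(n,\eta)$ before the trace can be bounded. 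You instead discard $\Sigma_{\mathrm{obs}(m)}^{-1}$ immediately via its operator norm and Cauchy interlacing, which reduces everything to $\sum_j \mathcal{C}(k,m)_{jj}$; only the diagonal second moments $\mu_{k,j}^2((1+\eta)/2)^n+\Sigma_{jj}\,\e{\ind_{N>0}/N}$ are then needed, and the binomial inverse-moment bound you cite is exactly the paper's Lemma~\ref{lem:inverse_bernoulli}. The paper's route could in principle retain more structure, but after its trace inequalities it collapses to precisely your estimate, so your argument is a strict simplification with no loss in the final constant.
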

\begin{proof}
First, by Jensen's inequality, 
\begin{align}
        \mathbb{E}&\left[\left\|\Sigma^{-\frac{1}{2}}_{\mathrm{obs}(m)}(\widehat{\mu}_{k,\mathrm{obs}(m)}
    -\mu_{k, \mathrm{obs}(m)})\right\|\right]\leq \e{\left\|\Sigma^{-\frac{1}{2}}_{\mathrm{obs}(m)}(\widehat{\mu}_{k,\mathrm{obs}(m)}-\mu_{k, \mathrm{obs}(m)})\right\|^2}^\frac{1}{2}\\
    &=\e{\left(\Sigma^{-\frac{1}{2}}_{\mathrm{obs}(m)}(\widehat{\mu}_{k,\mathrm{obs}(m)}-\mu_{k, \mathrm{obs}(m)})\right)^\top\left(\Sigma^{-\frac{1}{2}}_{\mathrm{obs}(m)}(\widehat{\mu}_{k,\mathrm{obs}(m)}-\mu_{k, \mathrm{obs}(m)})\right)}^\frac{1}{2}\\
    &=\e{\mathrm{tr}\left(\left(\Sigma^{-\frac{1}{2}}_{\mathrm{obs}(m)}(\widehat{\mu}_{k,\mathrm{obs}(m)}-\mu_{k, \mathrm{obs}(m)})\right)^\top\left(\Sigma^{-\frac{1}{2}}_{\mathrm{obs}(m)}(\widehat{\mu}_{k,\mathrm{obs}(m)}-\mu_{k, \mathrm{obs}(m)})\right)\right)}^\frac{1}{2}\\
    &=\e{\mathrm{tr}\left(\left(\Sigma^{-\frac{1}{2}}_{\mathrm{obs}(m)}(\widehat{\mu}_{k,\mathrm{obs}(m)}-\mu_{k, \mathrm{obs}(m)})\right)\left(\Sigma^{-\frac{1}{2}}_{\mathrm{obs}(m)}(\widehat{\mu}_{k,\mathrm{obs}(m)}-\mu_{k, \mathrm{obs}(m)})\right)^\top\right)}^\frac{1}{2}\\
     &=\mathrm{tr}\left(\e{\left(\Sigma^{-\frac{1}{2}}_{\mathrm{obs}(m)}(\widehat{\mu}_{k,\mathrm{obs}(m)}-\mu_{k, \mathrm{obs}(m)})\right)\left(\Sigma^{-\frac{1}{2}}_{\mathrm{obs}(m)}(\widehat{\mu}_{k,\mathrm{obs}(m)}-\mu_{k, \mathrm{obs}(m)})\right)^\top}\right)^\frac{1}{2}\\
     &=\mathrm{tr}\left(\Sigma^{-\frac{1}{2}}_{\mathrm{obs}(m)}\e{\left(\widehat{\mu}_{k,\mathrm{obs}(m)}-\mu_{k, \mathrm{obs}(m)}\right)\left(\widehat{\mu}_{k,\mathrm{obs}(m)}-\mu_{k, \mathrm{obs}(m)}\right)^\top}\Sigma^{-\frac{1}{2}}_{\mathrm{obs}(m)}\right)^\frac{1}{2}\\
     & = \mathrm{tr}\left(\Sigma^{-\frac{1}{2}}_{\mathrm{obs}(m)} \mathcal{C}(k,m) \Sigma^{-\frac{1}{2}}_{\mathrm{obs}(m)}\right)^\frac{1}{2}, \label{proof_136_first_ineq}
    \end{align}
    where 
    \begin{align}
    \mathcal{C}(k,m):=\e{\left(\widehat{\mu}_{k,\mathrm{obs}(m)}-\mu_{k, \mathrm{obs}(m)}\right)\left(\widehat{\mu}_{k,\mathrm{obs}(m)}-\mu_{k, \mathrm{obs}(m)}\right)^\top}.    
    \end{align}
    Now, we compute the elements $\mathcal{C}(k,m)_{r,l} =$, for all $r,l\in\mathrm{obs}(m)$. 

    \medskip 
    
    \textbf{First case.} We start by computing  $\mathcal{C}(k,m)_{l,l}$ for all $l$. Note that 
    \begin{align}
      \mathcal{C}(k,m)_{l,l} =  \e{\left(\widehat{\mu}_{k,l}-\mu_{k,l}\right)^2}.
    \end{align}
    The estimator $\widehat{\mu}_{k,l}$ equals zero if all samples of class $k$ have a missing $l$-th coordinate, which corresponds to the event 
    \begin{align}\label{eq:Akl}
        \mathcal{A}_{k,l}:=\{\forall i\in\{1,...n\}, \quad  Y_i=-k \text{ or }  M_{i,l}=1 \},
    \end{align}
    where 
    \begin{align}
    \P(\mathcal{A}_{k,l})=\prod_{i=1}^nP(Y_i=-k \text{ or } M_{i,l}=1)=\left(\frac{1+\eta}{2}\right)^n.    
    \end{align}
    Thus, 
    \begin{align*}
        \mathbb{E}&\left[\left(\widehat{\mu}_{k,l}-\mu_{k,l}\right)^2\right]\\
        & =\e{\left(\widehat{\mu}_{k,l}-\mu_{k,l}\right)^2\mid \mathcal{A}_{k,l}}\P\left(\mathcal{A}_{k,l}\right)+\e{\left(\widehat{\mu}_{k,l}-\mu_{k,l}\right)^2\mid \mathcal{A}_{k,l}^c}\P\left(\mathcal{A}_{k,l}^c\right)\\
        &=\mu_{k,l}^2\left(\frac{1+\eta}{2}\right)^n+\e{\left(\frac{\sum_{i=1}^n(X_{i,l}-\mu_{k,l})\ind_{Y_i=k}\ind_{M_{i,l}=0}}{\sum_{i=1}^n\ind_{Y_i=k}\ind_{M_{i,l}=0}}\right)^2\mid \mathcal{A}_{k,l}^c}\left(1-\left(\frac{1+\eta}{2}\right)^n\right)\\
        \end{align*}
       The second term can be rewritten as       
        \begin{align*}
        &= \sum_{i=1}^n\e{\frac{(X_{i,l}-\mu_{k,l})^2\ind_{Y_i=k}\ind_{M_{i,l}=0}}{\left(\sum_{i=1}^n\ind_{Y_i=k}\ind_{M_{i,l}=0}\right)^2}\mid \mathcal{A}_{k,l}^c}\left(1-\left(\frac{1+\eta}{2}\right)^n\right)\\
        &= \sum_{i=1}^n\e{\frac{(X_{i,l}-\mu_{k,l})^2}{\left(1+\sum_{j\neq i}^n\ind_{Y_j=k}\ind_{M_{j,l}=0}\right)^2}\mid \mathcal{A}_{k,l}^c, Y_i=k, M_{i,l}=0} \\
        & \qquad \times \P\left( Y_i=k, M_{i,l}=0\mid \mathcal{A}_{k,l}^c\right)\left(1-\left(\frac{1+\eta}{2}\right)^n\right)\\
        &= \left( \frac{1-\eta}{2} \right) \sum_{i=1}^n\e{\frac{(X_{i,l}-\mu_{k,l})^2}{\left(1+\sum_{j\neq i}^n\ind_{Y_j=k}\ind_{M_{j,l}=0}\right)^2}\mid  Y_i=k, M_{i,l}=0} \\
        &= n \left( \frac{1-\eta}{2} \right) \e{\frac{(X_{1,l}-\mu_{k,l})^2}{\left(1+\sum\limits_{j\neq 1}\ind_{Y_j=k}\ind_{M_{j,l}=0}\right)^2}\mid  Y_1=k} \tag{using ~\cref{ass:MCAR}}\\
        &= \left( \frac{1-\eta}{2} \right) n\e{(X_{1,l}-\mu_{k,l})^2\mid  Y_1=k}\e{\frac{1}{\left(1+\sum_{j\neq 1}^n\ind_{Y_j=k}\ind_{M_{j,l}=0}\right)^2}} \tag{using the independence}\\
        &= \left( \frac{1-\eta}{2} \right) n\Sigma_{l,l}\e{\frac{1}{\left(1+\sum_{j\neq 1}^n\ind_{Y_j=k}\ind_{M_{j,l}=0}\right)^2}}.\\
    \end{align*}
    In the sequel, we denote $A(n,\eta):=\e{\frac{1}{(1+B)^2}},$ where $B\sim \mathcal{B}(n-1,(1-\eta)/2).$
    Then, we have that 
    \begin{align}
        \mathcal{C}(k,m)_{l,l}=\mu_{k,l}^2\left(\frac{1+\eta}{2}\right)^n+n\left(\frac{1-\eta}{2}\right) \Sigma_{l,l}A(n,\eta) .\label{eq:cllProofLestim}
    \end{align}

    \textbf{Second case. }
    Now, we want to compute, for all $r\neq l$, 
    \begin{align}
    \mathcal{C}(k,m)_{r,l}=\e{\left(\widehat{\mu}_{k,r}-\mu_{k,r}\right)\left(\widehat{\mu}_{k,l}-\mu_{k,l}\right)}.    
    \end{align}
    To this aim, we distinguish three cases, depending on the presence of available samples to compute $\widehat{\mu}_{k,r}$ and $\widehat{\mu}_{k,r}$. First, let us denote by 
    \begin{align}
    \mathcal{A}_{k,l,r}:=\{\forall i \in \{1,...n\},  \left(Y_i=-k \textrm{ or } (M_{i,r}=1 \textrm{ and } M_{i,l}=1)\right) \},    
    \end{align}
    the event in which there is no available samples to estimate any of the means $\widehat{\mu}_{k,r}$ and $\widehat{\mu}_{k,r}$, that is each sample either belongs to the other class or is missing at both coordinates. We have
     \begin{align}
        \notag
            & \P(\mathcal{A}_{k,l,r})\\
            &=\P\left( \forall i \in \{1,...n\},   Y_i=-k \textrm{ or } (M_{i,r}=1 \textrm{ and } M_{i,l}=1)\right)\\ \notag
            &=\left(\P(Y_i=-k)+\P(M_{i,r}=1 \textrm{ and } M_{i,l}=1)-\P(Y_i=-k \textrm{ and } M_{i,r}=1 \textrm{ and } M_{i,l}=1)\right)^n\\
            &=\left(\frac{\eta^2+1}{2}\right)^n. \label{eq:probAklr}
        \end{align}
        Besides, on the event $\mathcal{A}_{k,l,r}$, we have
        \begin{align}
            \e{\left(\widehat{\mu}_{k,r}-\mu_{k,r}\right)\left(\widehat{\mu}_{k,l}-\mu_{k,l}\right) |\mathcal{A}_{k,l,r}}=\mu_{k,r}\mu_{k,l}.
            \label{eq:espAklr}
        \end{align}  

    We now consider the second case and denote by 
    \begin{align}
     \mathcal{B}_{k,l,r}:=\{\exists i\in \{1,...,n\},  (Y_i=k\land M_{i,l}=0 )=1\}\cap \{\exists i\in \{1,...n\}, (Y_i=k\land M_{i,r}=0)=1\},   
    \end{align}
    the event in which the there is at least one available sample to estimate both means $\widehat{\mu}_{k,r}$ and $\widehat{\mu}_{k,r}$.
    Observe that 
        \begin{align*}
            \P\left(\mathcal{B}_{k,l,r}\right)& =1-\P\left(\{\forall i\in \{1,...,n\}, \quad (Y_i=-k \textrm{ or } M_{i,l}=1 )\}\right.\\&\left. \qquad \qquad\textrm{ or } \{\forall i\in \{1,...n\},\quad (Y_i=-k \textrm{ or } M_{i,r}=1)\}\right)\\
            &=1-\P\left(\{\forall i\in \{1,...,n\}, \quad (Y_i=-k \textrm{ or } M_{i,l}=1 )\}\right)\\&\quad-\P\left(\{\forall i\in \{1,...n\},\quad (Y_i=-k \textrm{ or } M_{i,r}=1)\}\right)\\&\quad+\P\left(\{\forall i\in \{1,...,n\}, \quad (Y_i=-k \textrm{ or } (M_{i,l}=1 \textrm{ and } M_{i,r}=1) )\}\right),
        \end{align*}
        where the last probability was already computed for $\mathcal{A}_{k,l,r}$. On the other hand, remark that
        \begin{align*}
            \P&\left(\{\forall i\in \{1,...n\},\quad (Y_i=-k \textrm{ or } M_{i,r}=1)\}\right)
            =\left(\frac{1+\eta}{2}\right)^n.
        \end{align*}
    Then, we have that 
    \begin{align}
        \P&\left(\mathcal{B}_{k,l,r}\right)=1-2\left(\frac{1+\eta}{2}\right)^n+\left(\frac{\eta^2+1}{2}\right)^n.\label{eq:probBklr}
    \end{align}
Besides,  
     \begin{align*}
            \mathbb{E}&\left[\left(\widehat{\mu}_{k,r}-\mu_{k,r}\right)\left(\widehat{\mu}_{k,l}-\mu_{k,l}\right)\mid \mathcal{B}_{k,l,r}\right]
            \\&=\e{\left(\frac{\sum_{i=1}^n(X_{i,r}-\mu_{k,r})\ind_{Y_i=k}\ind_{M_{i,r}=0}}{\sum_{i=1}^n\ind_{Y_i=k}\ind_{M_{i,r}=0}}\right)\left(\frac{\sum_{i=1}^n(X_{i,l}-\mu_{k,l})\ind_{Y_i=k}\ind_{M_{i,l}=0}}{\sum_{i=1}^n\ind_{Y_i=k}\ind_{M_{i,l}=0}}\right)\mid \mathcal{B}_{k,l,r}}
            \\&=\sum_{i=1}^n\sum_{j=1}^n\e{\left(\frac{(X_{i,r}-\mu_{k,r})\ind_{Y_i=k}\ind_{M_{i,r}=0}}{\sum_{i=1}^n\ind_{Y_i=k}\ind_{M_{i,r}=0}}\right)\left(\frac{(X_{j,l}-\mu_{k,l})\ind_{Y_j=k}\ind_{M_{j,l}=0}}{\sum_{i=1}^n\ind_{Y_i=k}\ind_{M_{i,l}=0}}\right)\mid \mathcal{B}_{k,l,r}}\\&=\sum_{i=1}^n\e{\left(\frac{(X_{i,r}-\mu_{k,r})\ind_{Y_i=k}\ind_{M_{i,r}=0}}{\sum_{i=1}^n\ind_{Y_i=k}\ind_{M_{i,r}=0}}\right)\left(\frac{(X_{i,l}-\mu_{k,l})\ind_{Y_i=k}\ind_{M_{i,l}=0}}{\sum_{i=1}^n\ind_{Y_i=k}\ind_{M_{i,l}=0}}\right)\mid \mathcal{B}_{k,l,r}}
            \\&\qquad+\sum_{i=1}^n\sum_{j\neq i}^n\e{\left(\frac{(X_{i,r}-\mu_{k,r})\ind_{Y_i=k}\ind_{M_{i,r}=0}}{\sum_{i=1}^n\ind_{Y_i=k}\ind_{M_{i,r}=0}}\right)\left(\frac{(X_{j,l}-\mu_{k,l})\ind_{Y_j=k}\ind_{M_{j,l}=0}}{\sum_{i=1}^n\ind_{Y_i=k}\ind_{M_{i,l}=0}}\right)\mid \mathcal{B}_{k,l,r}}.
        \end{align*}   
        Observe that this second sum is null. Indeed,
        \begin{align*}
            \mathbb{E}&\left[\left(\frac{(X_{i,r}-\mu_{k,r})\ind_{Y_i=k}\ind_{M_{i,r}=0}}{\sum_{i=1}^n\ind_{Y_i=k}\ind_{M_{i,r}=0}}\right)\left(\frac{(X_{j,l}-\mu_{k,l})\ind_{Y_j=k}\ind_{M_{j,l}=0}}{\sum_{i=1}^n\ind_{Y_i=k}\ind_{M_{i,l}=0}}\right)\mid \mathcal{B}_{k,l,r}\right]\\
            &=\mathbb{E}\left[\left(\frac{(X_{i,r}-\mu_{k,r})}{1+\ind_{M_{j,r}=0}+\sum_{s\neq i, j}^n\ind_{Y_s=k}\ind_{M_{s,r}=0}}\right)\left(\frac{(X_{j,l}-\mu_{k,l})}{1+\ind_{M_{i,l}=0}+\sum_{s\neq i, j}^n\ind_{Y_s=k}\ind_{M_{s,l}=0}}\right)\right.\\&\qquad\left.\mid Y_i=k, M_{i,r}=0, Y_j=k, M_{j,l}=0\right]\P\left(Y_i=k, M_{i,r}=0, Y_j=k, M_{j,l}=0\mid \mathcal{B}_{k,l,r}\right)\\
            &=\mathbb{E}\left[\frac{1}{\left(1+\ind_{M_{j,r}=0}+\sum_{s\neq i, j}^n\ind_{Y_s=k}\ind_{M_{s,r}=0}\right)\left(1+\ind_{M_{i,l}=0}+\sum_{s\neq i, j}^n\ind_{Y_s=k}\ind_{M_{s,l}=0}\right)}\right]\\&\qquad \times \e{(X_{i,r}-\mu_{k,r})\mid Y_i=k}\mathbb{E}\left[(X_{j,l}-\mu_{k,l})\mid Y_j=k\right]\\
            & \qquad \times \P\left(Y_i=k, M_{i,r}=0, Y_j=k, M_{j,l}=0\mid \mathcal{B}_{k,l,r}\right)\tag{using ~\cref{ass:MCAR} and independence}
            \\&=0.
        \end{align*}
        Then, 
        \begin{align}
        \notag
            \mathbb{E}&\left[\left(\widehat{\mu}_{k,r}-\mu_{k,r}\right)\left(\widehat{\mu}_{k,l}-\mu_{k,l}\right)\mid \mathcal{B}_{k,l,r}\right]
            \\ \notag &=\sum_{i=1}^n\e{\left(\frac{(X_{i,r}-\mu_{k,r})\ind_{Y_i=k}\ind_{M_{i,r}=0}}{\sum_{i=1}^n\ind_{Y_i=k}\ind_{M_{i,r}=0}}\right)\left(\frac{(X_{i,l}-\mu_{k,l})\ind_{Y_i=k}\ind_{M_{i,l}=0}}{\sum_{i=1}^n\ind_{Y_i=k}\ind_{M_{i,l}=0}}\right)\mid \mathcal{B}_{k,l,r}}
            \\ \notag &=\sum_{i=1}^n\e{\frac{(X_{i,r}-\mu_{k,r})}{1+\sum_{j\neq i}^n\ind_{Y_j=k}\ind_{M_{j,r}=0}}\frac{(X_{i,l}-\mu_{k,l})}{1+\sum_{j\neq i}^n\ind_{Y_j=k}\ind_{M_{j,l}=0}}\mid Y_i=k, M_{i,r}=0, M_{i,l}=0}\\ \notag &\qquad\P\left(Y_i=k, M_{i,r}=0, M_{i,l}=0\mid \mathcal{B}_{k,l,r}\right)
            \\ \notag &=\sum_{i=1}^n\e{(X_{i,r}-\mu_{k,r})(X_{i,l}-\mu_{k,l})\mid Y_i=k}\\
            & \qquad \times \e{\frac{1}{1+\sum_{j\neq i}^n\ind_{Y_j=k}\ind_{M_{j,r}=0}}\frac{1}{1+\sum_{j\neq i}^n\ind_{Y_j=k}\ind_{M_{j,l}=0}}} \notag \\ 
            \notag&\qquad \times \P\left(Y_i=k, M_{i,r}=0, M_{i,l}=0\mid \mathcal{B}_{k,l,r}\right)
            \\&=n\Sigma_{r,l}B(n,\eta)\frac{(1-\eta)^2}{2\P\left(\mathcal{B}_{k,l,r}\right)}, \label{eq:espBklr}
        \end{align}
        where $B(n,\eta):=\e{\frac{1}{1+\sum_{j=2}^n\ind_{Y_j=k}\ind_{M_{j,r}=0}}\frac{1}{1+\sum_{j=2}^n\ind_{Y_j=k}\ind_{M_{j,l}=0}}}$.

       Now, we consider the last case, and denote by 
       \begin{align}
           \mathcal{C}_{k,l,r} = \left(\mathcal{B}_{k,l,r}\cup\mathcal{A}_{k,l,r}\right)^c
       \end{align}
       the event in which only one mean can be estimated. We have
\begin{align*}
    \P(\mathcal{C}_{k,l,r})=\P&\left(\left(\mathcal{B}_{k,l,r}\cup\mathcal{A}_{k,l,r}\right)^c\right)=2\left(\frac{1+\eta}{2}\right)^n-2
    \left(\frac{\eta^2+1}{2}\right)^n.
\end{align*}
Let $\mathcal{C}_{k,l,r}=\mathcal{C}_{1,k,l,r} \cup \mathcal{C}_{2,k,l,r}$, where $\mathcal{C}_{1,k,l,r}$ is the event where the one that can be estimated is $\widehat{\mu}_{k,r}$. Then, 
\begin{align}
\notag
    \mathbb{E}&\left[\left(\widehat{\mu}_{k,r}-\mu_{k,r}\right)\left(\widehat{\mu}_{k,l}-\mu_{k,l}\right)\mid \mathcal{C}_{1,k,l,r}\right]\\ \notag
    &=-\mu_{k,l}\e{\frac{\sum^n_{i=1}(X_{i,r}-\mu_{k,r})\ind_{M_{i,r}=0}\ind_{Y_i=k}}{\sum^n_{i=1}\ind_{M_{i,r}=0}\ind_{Y_i=k}}\mid \mathcal{C}_{1,k,l,r}}\\ \notag
    &=-\mu_{k,l}n\e{\frac{(X_{1,r}-\mu_{k,r})}{1+\sum^n_{i=2}\ind_{M_{i,r}=0}\ind_{Y_i=k}}\mid M_{1,r}=0,Y_1=k}\P(M_{1,r}=0,Y_1=k|\mathcal{C}_{1,k,l,r})\\
    &=-\mu_{k,l}n\e{(X_{1,r}-\mu_{k,r})\mid Y_1=k}\e{\frac{1}{1+\sum^n_{i=2}\ind_{M_{i,r}=0}\ind_{Y_i=k}}}\P(M_{1,r}=0,Y_1=k|\mathcal{C}_{1,k,l,r})\tag{using MCAR and independence}
    \\&=0.\label{eq:espCklr}
\end{align}
By symmetry, we also have 
\begin{align}
    \mathbb{E}\left[\left(\widehat{\mu}_{k,r}-\mu_{k,r}\right)\left(\widehat{\mu}_{k,l}-\mu_{k,l}\right)\mid \mathcal{C}_{2,k,l,r}\right] =0.
\end{align}
Thus, 
\begin{align}
    \mathbb{E}\left[\left(\widehat{\mu}_{k,r}-\mu_{k,r}\right)\left(\widehat{\mu}_{k,l}-\mu_{k,l}\right)\mid \mathcal{C}_{k,l,r}\right] =0. \label{computation_matrix3}
\end{align}
Gathering \eqref{eq:espAklr}, \eqref{eq:espBklr} and \eqref{computation_matrix3}, we are able to compute $\mathcal{C}(k,m)_{r,l}$ as follows
    \begin{align}
     \notag
        \mathcal{C}(k,m)_{r,l} 
        &=\e{\left(\widehat{\mu}_{k,r}-\mu_{k,r}\right)\left(\widehat{\mu}_{k,l}-\mu_{k,l}\mid \mathcal{A}_{k,l,r}\right)}\P\left(\mathcal{A}_{k,l,r}\right)\\  \notag
        &\qquad+\e{\left(\widehat{\mu}_{k,r}-\mu_{k,r}\right)\left(\widehat{\mu}_{k,l}-\mu_{k,l}\mid \mathcal{B}_{k,l,r}\right)}\P\left(\mathcal{B}_{k,l,r}\right)\\ \notag
        &\qquad +\e{\left(\widehat{\mu}_{k,r}-\mu_{k,r}\right)\left(\widehat{\mu}_{k,l}-\mu_{k,l}\mid \mathcal{C}(k,m)_{k,l,r}\right)}\P\left(\mathcal{C}(k,m)_{k,l,r}\right)\\ \notag
        &=\mu_{k,r}\mu_{k,l}\left(\frac{\eta^2+1}{2}\right)^n+n\Sigma_{r,l}B(n,\eta)\frac{(1-\eta)^2}{2}, \label{eq:crlProofLestim}
    \end{align}
 using \eqref{eq:probAklr} and \eqref{eq:probBklr}. From \eqref{eq:cllProofLestim}, recall that 
\begin{align}
        \mathcal{C}(k,m)_{l,l}=\mu_{k,l}^2\left(\frac{1+\eta}{2}\right)^n+n\left(\frac{1-\eta}{2}\right) \Sigma_{l,l}A(n,\eta) .
    \end{align}
 Let $J$ be the matrix composed of $1$ in each entry, and let 
 \begin{align}
     F & = \left( \left(\frac{1+\eta}{2}\right)^n - \left(\frac{1+\eta^2}{2}\right)^n \right) I + \left(\frac{1+\eta^2}{2}\right)^n J\\
     G & = \left( A(n, \eta) - (1-\eta)B(n,\eta) \right) I + (1-\eta)B(n,\eta) J.
 \end{align}
   Thus,
    \begin{align*}
        \mathcal{C}(k,m)=F\odot \mu_{k,\mathrm{obs}(m)}\mu_{k,\mathrm{obs}(m)}^\top+n\frac{1-\eta}{2}G\odot \Sigma_{\mathrm{obs}(m)}.
    \end{align*}
    
    Then, according to inequality \eqref{proof_136_first_ineq},  we have that 
    \begin{align}
    \notag
         \mathbb{E}&\left[\left\|\Sigma^{-\frac{1}{2}}_{\mathrm{obs}(m)}(\widehat{\mu}_{k,\mathrm{obs}(m)}
    -\mu_{k, \mathrm{obs}(m)})\right\|\right] \\
    & \leq \mathrm{tr}\left(\Sigma^{-\frac{1}{2}}_{\mathrm{obs}(m)}\mathcal{C}(k,m)\Sigma^{-\frac{1}{2}}_{\mathrm{obs}(m)}\right)^\frac{1}{2}\\
    &=\left(\mathrm{tr}\left(\Sigma^{-\frac{1}{2}}_{\mathrm{obs}(m)}\left(F\odot \mu_{k,\mathrm{obs}(m)}\mu_{k,\mathrm{obs}(m)}^\top\right)\Sigma^{-\frac{1}{2}}_{\mathrm{obs}(m)}\right)+n\frac{1-\eta}{2}\mathrm{tr}\left(\Sigma^{-\frac{1}{2}}_{\mathrm{obs}(m)}\left(G\odot \Sigma_{\mathrm{obs}(m)}\right)\Sigma^{-\frac{1}{2}}_{\mathrm{obs}(m)}\right)\right)^\frac{1}{2}\label{eq:endProofSigmaGen}
    \end{align}
   The first term equals
\begin{align}
    \mathrm{tr}&\left(\Sigma^{-\frac{1}{2}}_{\mathrm{obs}(m)}\left(F\odot \mu_{k,\mathrm{obs}(m)}\mu_{k,\mathrm{obs}(m)}^\top\right)\Sigma^{-\frac{1}{2}}_{\mathrm{obs}(m)}\right)
    \\&= \left(\frac{1+\eta^2}{2}\right)^n\mathrm{tr}\left(\Sigma^{-\frac{1}{2}}_{\mathrm{obs}(m)}\left( J \odot \mu_{k,\mathrm{obs}(m)}\mu_{k,\mathrm{obs}(m)}^\top\right)\Sigma^{-\frac{1}{2}}_{\mathrm{obs}(m)}\right)\\ &\qquad+
    \left(\left(\frac{1+\eta}{2}\right)^n -\left(\frac{1+\eta^2}{2}\right)^n \right)\mathrm{tr}\left(\Sigma^{-\frac{1}{2}}_{\mathrm{obs}(m)}\left(I_{d-\left\|m\right\|_0}\odot \mu_{k,\mathrm{obs}(m)}\mu_{k,\mathrm{obs}(m)}^\top\right)\Sigma^{-\frac{1}{2}}_{\mathrm{obs}(m)}\right)\\
    &= \left(\frac{1+\eta^2}{2}\right)^n\mathrm{tr}\left(\Sigma^{-\frac{1}{2}}_{\mathrm{obs}(m)}\mu_{k,\mathrm{obs}(m)}\mu_{k,\mathrm{obs}(m)}^\top\Sigma^{-\frac{1}{2}}_{\mathrm{obs}(m)}\right)\\ &\qquad+
    \left(\left(\frac{1+\eta}{2}\right)^n -\left(\frac{1+\eta^2}{2}\right)^n \right)\mathrm{tr}\left(\Sigma^{-\frac{1}{2}}_{\mathrm{obs}(m)}\mathrm{diag}\left(\mu_{k,\mathrm{obs}(m)}\mu_{k,\mathrm{obs}(m)}^\top\right)\Sigma^{-\frac{1}{2}}_{\mathrm{obs}(m)}\right).
    \end{align}
    Then, by \cref{lemma:traceIneq}, 
    \begin{align}
     \mathrm{tr}&\left(\Sigma^{-\frac{1}{2}}_{\mathrm{obs}(m)}\left(F\odot \mu_{k,\mathrm{obs}(m)}\mu_{k,\mathrm{obs}(m)}^\top\right)\Sigma^{-\frac{1}{2}}_{\mathrm{obs}(m)}\right) \\
     &\leq \left(\frac{1+\eta^2}{2}\right)^n\mathrm{tr}\left(\left(\Sigma^{-\frac{1}{2}}_{\mathrm{obs}(m)}\mu_{k,\mathrm{obs}(m)}\right)\left(\Sigma^{-\frac{1}{2}}_{\mathrm{obs}(m)}\mu_{k,\mathrm{obs}(m)}\right)^\top\right)\\ &\qquad+
    \left(\left(\frac{1+\eta}{2}\right)^n -\left(\frac{1+\eta^2}{2}\right)^n \right)\left\|\mu\right\|_\infty^2\mathrm{tr}\left(\Sigma^{-1}_{\mathrm{obs}(m)}\right)\\
    &=\left(\frac{1+\eta^2}{2}\right)^n\left\|\Sigma^{-\frac{1}{2}}_{\mathrm{obs}(m)}\mu_{k,\mathrm{obs}(m)}\right\|^2+
    \left(\left(\frac{1+\eta}{2}\right)^n -\left(\frac{1+\eta^2}{2}\right)^n \right)\frac{\left\|\mu\right\|_\infty^2(d-\left\|m\right\|_0)}{\lambda_{\min}\left(\Sigma\right)}\\
    &\leq\left(\frac{1+\eta^2}{2}\right)^n\frac{\left\|\mu_{k,\mathrm{obs}(m)}\right\|^2}{\lambda_{\min}\left(\Sigma\right)}+
    \left(\left(\frac{1+\eta}{2}\right)^n -\left(\frac{1+\eta^2}{2}\right)^n \right)\frac{\left\|\mu\right\|_\infty^2(d-\left\|m\right\|_0)}{\lambda_{\min}\left(\Sigma\right)}\\
    &\leq\left(\frac{1+\eta^2}{2}\right)^n\frac{\left\|\mu\right\|_\infty^2(d-\left\|m\right\|_0)}{\lambda_{\min}\left(\Sigma\right)}+
    \left(\left(\frac{1+\eta}{2}\right)^n -\left(\frac{1+\eta^2}{2}\right)^n \right)\frac{\left\|\mu\right\|_\infty^2(d-\left\|m\right\|_0)}{\lambda_{\min}\left(\Sigma\right)}\\
    & \leq\left(\frac{1+\eta}{2}\right)^n \frac{\left\|\mu\right\|_\infty^2(d-\left\|m\right\|_0)}{\lambda_{\min}\left(\Sigma\right)}. \label{eq:ineqTraceF}
    \end{align}

    Regarding the second term in \eqref{eq:endProofSigmaGen}, note that  $A(n,\eta)-(1-\eta)B(n,\eta)\geq 0$. Indeed, letting $Z:=\sum_{i=1}^{n-1}\ind_{Y_i=k}\sim \mathcal{B}(n-1, 1/2)$,
\begin{align*}
    B(n,\eta)&:=\e{\frac{1}{1+\sum_{j=1}^{n-1}\ind_{Y_j=k}\ind_{M_{j,r}=0}}\frac{1}{1+\sum_{j=1}^{n-1}\ind_{Y_j=k}\ind_{M_{j,l}=0}}}\\
    &=\e{\e{\frac{1}{1+\sum_{j=1}^{Z}\ind_{M_{j,r}=0}}\frac{1}{1+\sum_{j=1}^{Z}\ind_{M_{j,l}=0}}\mid Z}},
\end{align*}
using the exchangeability as the samples are i.i.d. By leveraging the independence between the missingness at coordinate $r$ and coordinate $l$, as well as the independence of each sample from the rest, we can conclude that
\begin{align*}
    \mathbb{E}&\left[\e{\frac{1}{1+\sum_{j=1}^{Z}\ind_{M_{j,r}=0}}\frac{1}{1+\sum_{j=1}^{Z}\ind_{M_{j,l}=0}}\mid Z}\right]\\
    &=\e{\e{\frac{1}{1+\sum_{j=1}^{Z}\ind_{M_{j,r}=0}}\mid Z}\e{\frac{1}{1+\sum_{j=1}^{Z}\ind_{M_{j,l}=0}}\mid Z}}\\
    &=\e{\e{\frac{1}{1+\sum_{j=1}^{Z}\ind_{M_{j,r}=0}}\mid Z}^2}\tag{using that $M_{j,r}\sim M_{j,l}$}\\
    &\leq \e{\e{\frac{1}{\left(1+\sum_{j=1}^{Z}\ind_{M_{j,r}=0}\right)^2}\mid Z}}\tag{using Jensen Inequality}\\
    &=A(n,\eta).
\end{align*}
        Thus, we have that 
        \begin{align}
            \mathrm{tr}&\left(\Sigma^{-\frac{1}{2}}_{\mathrm{obs}(m)}\left(G\odot \Sigma_{\mathrm{obs}(m)}\right)\Sigma^{-\frac{1}{2}}_{\mathrm{obs}(m)}\right)\\
            &=(1-\eta)B(n,\eta)\mathrm{tr}\left(\Sigma^{-\frac{1}{2}}_{\mathrm{obs}(m)}\left(\textbf{1}\odot \Sigma_{\mathrm{obs}(m)}\right)\Sigma^{-\frac{1}{2}}_{\mathrm{obs}(m)}\right)
            \\&\qquad+(A(n,\eta)-(1-\eta)B(n,\eta))\mathrm{tr}\left(\Sigma^{-\frac{1}{2}}_{\mathrm{obs}(m)}\left(I_{d-\left\|m\right\|_0}\odot \Sigma_{\mathrm{obs}(m)}\right)\Sigma^{-\frac{1}{2}}_{\mathrm{obs}(m)}\right)\\
            &=(1-\eta)B(n,\eta)\mathrm{tr}\left(\Sigma^{-\frac{1}{2}}_{\mathrm{obs}(m)} \Sigma_{\mathrm{obs}(m)}\Sigma^{-\frac{1}{2}}_{\mathrm{obs}(m)}\right)
            \\&\qquad+(A(n,\eta)-(1-\eta)B(n,\eta))\mathrm{tr}\left(\Sigma^{-\frac{1}{2}}_{\mathrm{obs}(m)}\mathrm{diag}\left(\Sigma_{\mathrm{obs}(m)}\right)\Sigma^{-\frac{1}{2}}_{\mathrm{obs}(m)}\right).
            \end{align}
            Using ~\cref{lemma:traceIneq} and $A(n,\eta)-(1-\eta)B(n,\eta)\geq 0$, we have
            \begin{align}
            \mathrm{tr}&\left(\Sigma^{-\frac{1}{2}}_{\mathrm{obs}(m)}\left(G\odot \Sigma_{\mathrm{obs}(m)}\right)\Sigma^{-\frac{1}{2}}_{\mathrm{obs}(m)}\right)\\
            &\leq (1-\eta)B(n,\eta)(d-\left\|m\right\|_0)
            \\&\qquad+(A(n,\eta)-(1-\eta)B(n,\eta))\max_{i\in [d]}\left(\Sigma_{i,i}\right)\mathrm{tr}\left(\Sigma^{-1}_{\mathrm{obs}(m)}\right)\\
            &\leq (1-\eta)B(n,\eta)(d-\left\|m\right\|_0)
            \\&\qquad+(A(n,\eta)-(1-\eta)B(n,\eta))\frac{\max_{i\in [d]}\left(\Sigma_{i,i}\right)}{\lambda_{\min}\left(\Sigma\right)}(d-\left\|m\right\|_0)\\
            &\leq \kappa(1-\eta)B(n,\eta)(d-\left\|m\right\|_0)
            +(A(n,\eta)-(1-\eta)B(n,\eta))\kappa(d-\left\|m\right\|_0) \\
            &=A(n,\eta)\kappa(d-\left\|m\right\|_0)\\
            & \leq \frac{2\kappa(d-\left\|m\right\|_0)}{n(n+1)\left(\frac{1-\eta}{2}\right)^2},\label{eq:ineqTraceG}
        \end{align}
        where $\kappa:=\frac{\max_{i\in [d]}\left(\Sigma_{i,i}\right)}{\lambda_{\min}\left(\Sigma\right)}\geq 1$.     Finally, combining \eqref{eq:ineqTraceF} and \eqref{eq:ineqTraceG} in  \eqref{eq:endProofSigmaGen}, we have
    \begin{align*}
        \mathbb{E}&\left[\left\|\Sigma^{-\frac{1}{2}}_{\mathrm{obs}(m)}(\widehat{\mu}_{k,\mathrm{obs}(m)}
    -\mu_{k, \mathrm{obs}(m)})\right\|\right] \\
    &\leq \Bigg(\mathrm{tr}\left(\Sigma^{-\frac{1}{2}}_{\mathrm{obs}(m)}\left(F\odot \mu_{k,\mathrm{obs}(m)}\mu_{k,\mathrm{obs}(m)}^\top\right)\Sigma^{-\frac{1}{2}}_{\mathrm{obs}(m)}\right)\\
    & \quad +n\frac{1-\eta}{2}\mathrm{tr}\left(\Sigma^{-\frac{1}{2}}_{\mathrm{obs}(m)}\left(G\odot \Sigma_{\mathrm{obs}(m)}\right)\Sigma^{-\frac{1}{2}}_{\mathrm{obs}(m)}\right)\Bigg)^\frac{1}{2}\\
    &\leq \left(\left(\frac{1+\eta}{2}\right)^n \frac{\left\|\mu\right\|_\infty^2(d-\left\|m\right\|_0)}{\lambda_{\min}\left(\Sigma\right)}+n\frac{1-\eta}{2}\frac{2\kappa(d-\left\|m\right\|_0)}{n(n+1)\left(\frac{1-\eta}{2}\right)^2}\right)^\frac{1}{2} \\
    &\leq \left(\left(\frac{1+\eta}{2}\right)^n \frac{\left\|\mu\right\|_\infty^2(d-\left\|m\right\|_0)}{\lambda_{\min}\left(\Sigma\right)}+\frac{4\kappa(d-\left\|m\right\|_0)}{(n+1)\left(1-\eta\right)}\right)^\frac{1}{2}.
    \end{align*}
\end{proof}
\subsubsection{Proof of Theorem \ref{th:BoundPbPMuEstimLDAGen}}\label{subsec:BoundPbPMuEstimLDAGen}

\begin{proof}
By ~\cref{lemma:generalDecompLestim},

    \begin{align*}
    \mathcal{R}_{\mathrm{mis}}(\widehat{h}) &-\mathcal{R}_{\mathrm{mis}}(h^\star)\\
    &\leq \sum_{m\in\M}\frac{1}{\sqrt{2\pi}}\Big(\mathds{E} \Big[\left\|\Sigma_{\mathrm{obs}(m)}^{-\frac{1}{2}}(-\widehat{\mu}_{1,\mathrm{obs}(m)}
    +\mu_{1, \mathrm{obs}(m)})\right\|\\
    & \quad +\left\|\Sigma_{\mathrm{obs}(m)}^{-\frac{1}{2}}(\widehat{\mu}_{-1,\mathrm{obs}(m)}-\mu_{-1, \mathrm{obs}(m)})\right\| \Big]\Big)p_m\\
    &\leq \frac{2}{\sqrt{2\pi}}\sum_{m\in\M}\left(\left(\frac{1+\eta}{2}\right)^n \frac{\left\|\mu\right\|_\infty^2(d-\left\|m\right\|_0)}{\lambda_{\min}\left(\Sigma\right)}+\frac{4\kappa(d-\left\|m\right\|_0)}{(n+1)\left(1-\eta\right)}\right)^\frac{1}{2}p_m.\tag{using ~\cref{lemma:boundNormDifferMuSigmaGen}}
    \end{align*}
    Now, using ~\cref{ass:constEtaMi}, we have that $\left\|M\right\|_0\sim \mathcal{B}(d, \eta)$, so that
    \begin{align*}
        \frac{2}{\sqrt{2\pi}}&\sum_{m\in\M}\left(\left(\frac{1+\eta}{2}\right)^n \frac{\left\|\mu\right\|_\infty^2(d-\left\|m\right\|_0)}{\lambda_{\min}\left(\Sigma\right)}+\frac{4\kappa(d-\left\|m\right\|_0)}{(n+1)\left(1-\eta\right)}\right)^\frac{1}{2}p_m\\
        &=\frac{2}{\sqrt{2\pi}}\e{\left(\left(\frac{1+\eta}{2}\right)^n \frac{\left\|\mu\right\|_\infty^2(d-B)}{\lambda_{\min}\left(\Sigma\right)}+\frac{4\kappa(d-B)}{(n+1)\left(1-\eta\right)}\right)^\frac{1}{2}}\tag{where $B\sim \mathcal{B}(d, \eta)$}\\
        &\leq \frac{2}{\sqrt{2\pi}}\e{\left(\left(\frac{1+\eta}{2}\right)^n \frac{\left\|\mu\right\|_\infty^2(d-B)}{\lambda_{\min}\left(\Sigma\right)}+\frac{4\kappa(d-B)}{(n+1)\left(1-\eta\right)}\right)}^\frac{1}{2}\tag{using Jensen Inequality}\\
        &\leq \frac{2}{\sqrt{2\pi}}\left(\left(\frac{1+\eta}{2}\right)^n \frac{\left\|\mu\right\|_\infty^2d(1-\eta)}{\lambda_{\min}\left(\Sigma\right)}+\frac{4\kappa d}{n}\right)^\frac{1}{2}.
    \end{align*}
\end{proof}

\subsubsection{Proof of Corollary \ref{cor:final_LDA_upper_bound_Sigma_qcq}}\label{subsec:final_LDA_upper_bound}

\begin{proof} 
From \cref{prop:BoundDiffLLcomp} and ~\cref{th:BoundPbPMuEstimLDAGen} we have that 
\begin{align*}
    L&(\widehat{h})-\mathcal{R}_{\mathrm{comp}}(h^\star_\mathrm{comp})=\mathcal{R}_{\mathrm{mis}}(\widehat{h})-\mathcal{R}_{\mathrm{mis}}(h^\star)+\mathcal{R}_{\mathrm{mis}}(h^\star)-\mathcal{R}_{\mathrm{comp}}(h^\star_\mathrm{comp})\\
 &\leq \frac{2}{\sqrt{2\pi}}\left(\left(\frac{1+\eta}{2}\right)^n \frac{\left\|\mu\right\|_\infty^2 d(1-\eta)}{\lambda_{\min}\left(\Sigma\right)}+\frac{4\kappa d}{n}\right)^\frac{1}{2} +\left(\frac{1}{2}-\Phi\left(-\frac{\mu}{2}\sqrt{\frac{d}{\lambda_{\min}(\Sigma)}}\right)\right)\eta^d\\&\qquad+\frac{\mu}{2\sqrt{2\pi}}\left(\sqrt{\frac{d}{\lambda_{\min}(\Sigma)}}\left(\left(\eta+e^{-\frac{\mu^2}{8\lambda_{\max}(\Sigma)}}(1-\eta)\right)^d-\eta^d\right)\right.\\&\qquad\left.-\sqrt{\frac{d}{\lambda_{\max}(\Sigma)}}\left(\eta+e^{-\frac{\mu^2}{8\lambda_{\max}(\Sigma)}}(1-\eta)\right)^{d-1}e^{-\frac{\mu^2}{8\lambda_{\max}(\Sigma)}}(1-\eta)\right)\\
 &= \frac{2}{\sqrt{2\pi}}\left(\left(\frac{1+\eta}{2}\right)^n \frac{\left\|\mu\right\|_\infty^2 d(1-\eta)}{\lambda_{\min}\left(\Sigma\right)}+\frac{4\kappa d}{n}\right)^\frac{1}{2} +\left(\frac{1}{2}-\Phi\left(-\frac{\mu}{2\sigma}\sqrt{d}\right)\right)\eta^d\\&\qquad+\frac{\mu\sqrt{d}}{2\sigma\sqrt{2\pi}}\left(\left(\eta+e^{-\frac{\mu^2}{8\sigma^2}}(1-\eta)\right)^d-\eta^d-\left(\eta+e^{-\frac{\mu^2}{8\sigma^2}}(1-\eta)\right)^{d-1}e^{-\frac{\mu^2}{8\sigma^2}}(1-\eta)\right)\\
 &=\frac{2}{\sqrt{2\pi}}\left(\left(\frac{1+\eta}{2}\right)^n \frac{\left\|\mu\right\|_\infty^2 d(1-\eta)}{\lambda_{\min}\left(\Sigma\right)}+\frac{4\kappa d}{n}\right)^\frac{1}{2} +\left(\frac{1}{2}-\Phi\left(-\frac{\mu}{2\sigma}\sqrt{d}\right)\right)\eta^d\\&\qquad+\frac{\eta\mu\sqrt{d}}{2\sigma\sqrt{2\pi}}\left(\left(\eta+e^{-\frac{\mu^2}{8\sigma^2}}(1-\eta)\right)^{d-1}-\eta^{d-1}\right).
\end{align*}    
\end{proof}

\section{(LDA + MNAR) Proofs of Section \ref{subsec:LDAMNAR}}\label{subsec:proofsLDAMNAR}
\subsection{Proof of Proposition \ref{prop:MNARLDA}}\label{subsubsec:MNARLDA}
\begin{proof}
    By definition of the Bayes classifier (see \eqref{eq:BayesClassifPbP}), 
    \begin{align*}
        h_m^{\star}&(X_{\mathrm{obs}(m)})\\&=\mathrm{sign}(\e{Y|X_{\mathrm{obs}(m)},M=m})\\
        &=\mathrm{sign}\left( \P\left(Y=1\mid  X_{\mathrm{obs}(m)}, M=m\right)-\P\left(Y=-1\mid  X_{\mathrm{obs}(m)}, M=m\right)\right)\\
        &=\mathrm{sign}\left( \frac{\P\left(Y=1,X_{\mathrm{obs}(m)}, M=m\right)}{\P(X_{\mathrm{obs}(m)}, M=m)}-\frac{\P\left(Y=-1, X_{\mathrm{obs}(m)}, M=m\right)}{\P(X_{\mathrm{obs}(m)}, M=m)}\right)\\
        &=\mathrm{sign}\left( \P\left(X_{\mathrm{obs}(m)}\mid  M=m,Y=1\right)\pi_{m,1}-\P\left( X_{\mathrm{obs}(m)}\mid  M=m, Y=-1\right)\pi_{m,-1}\right),
    \end{align*}
    with $\pi_{m,k}=\P\left( M=m, Y=k\right).$ Thus, our objective is to study when
    \begin{align*}
        \log\left(\frac{\P\left(X_{\mathrm{obs}(m)}\mid  M=m,Y=1\right)}{\P\left( X_{\mathrm{obs}(m)}\mid  M=m, Y=-1\right)}\right)>\log\left(\frac{\pi_{m,-1}}{\pi_{m,1}}\right).
    \end{align*}
    Note that by using \cref{ass:gpmmLDA}, we have $X_{\mathrm{obs}(m)}|M=m,Y=k \sim \mathcal{N}(\mu_{m,k},\Sigma_{m})$. Therefore,
    \begin{align*}
         \log & \left(\frac{f_{X_{\mathrm{obs}(m)} | M=m, Y=1}(x)}{f_{X_{\mathrm{obs}(m)} | M=m, Y=-1}(x)}\right) \\&= \log\left( \frac{(\sqrt{2\pi})^{-(d-\left\|m\right\|_0)}\sqrt{\det(\Sigma^{-1}_{m})}\exp\left(-\frac{1}{2}(x-\mu_{1, m})^{\top}\Sigma^{-1}_{m}(x-\mu_{1,m})\right)}{(\sqrt{2\pi})^{-(d-\left\|m\right\|_0)}\sqrt{\det(\Sigma^{-1}_{m})}\exp\left(-\frac{1}{2}(x-\mu_{-1, m})^{\top}\Sigma^{-1}_{m}(x-\mu_{-1,m})\right)} \right)\\
        &=-\frac{1}{2}(x-\mu_{1, m})^{\top}\Sigma^{-1}_{m}(x-\mu_{1,m})+\frac{1}{2}(x-\mu_{-1, m})^{\top}\Sigma^{-1}_{m}(x-\mu_{-1,m})\\
        &=(\mu_{1,m}-\mu_{-1,m})^{\top}\Sigma^{-1}_{m}\left(x-\frac{\mu_{1,m}+\mu_{-1,m}}{2}\right). 
    \end{align*}
    Consequently, 
     \begin{align}
        h_m^{\star}(x) 
        & = \textrm{sign}\left( (\mu_{1,m}-\mu_{-1,m})^{\top}\Sigma^{-1}_{m}\left(x-\frac{\mu_{1,m}+\mu_{-1,m}}{2}\right) - \log\left(\frac{\pi_{m,-1}}{\pi_{m,1}}\right) \right), 
    \end{align}
    which concludes the proof. 
    
\end{proof}

\subsection{General lemmas for LDA misclassification control under Assumption \ref{ass:gpmmLDA}. }\label{subsec:generalLemmasBoundEstimGPMM}
\begin{lemma}[$\widehat{\mu}_m$ misclassification probability]\label{lemma:probMisMuEstimGPMM}
        Grant \Cref{ass:gpmmLDA}. Then, 
    \begin{align}
     & \P\left(h_m^{\star}(X_{\mathrm{obs}(m)})=1\mid Y=-1, M=m\right) = \Phi\left(-\frac{1}{2}\left\|\Sigma_{m}^{-\frac{1}{2}}(\mu_{m,1}-\mu_{m,-1})\right\| \right),
        \end{align}
        and
        \begin{align}
        \P\left(h_m^{\star}(X_{\mathrm{obs}(m)})=1\mid Y=-1, \mathcal{D}_n\right) &= \Phi\left(\frac{\left(\Sigma_{m}^{-\frac{1}{2}}(\widehat{\mu}_{m,1}-\widehat{\mu}_{m,-1})\right)^\top\Sigma_{m}^{-\frac{1}{2}}\left(\mu_{m,-1}-\frac{\widehat{\mu}_{m,1}+\widehat{\mu}_{m,-1}}{2}\right)}{\left\|\Sigma_{m}^{-\frac{1}{2}}(\widehat{\mu}_{m,1}-\widehat{\mu}_{m,-1})\right\|}\right) \label{eq:probMissLDAEstimGPMM}
    \end{align}
    Symmetrically, 
    \begin{align}
        \P\left(h_m^{\star}(X_{\mathrm{obs}(m)})=-1\mid Y= 1, M=m\right)
        & = \Phi\left(-\frac{1}{2}\left\|\Sigma_{m}^{-\frac{1}{2}}(\mu_{m,1}-\mu_{m,-1})\right\| \right),
    \end{align}
    and
    \begin{align}
        \notag 
        \P&\left(\widehat{h}_m(X_{\mathrm{obs}(m)})=-1\mid Y=1, M=m, \mathcal{D}_n\right) \\
        &= \Phi\left(-\frac{\left(\Sigma_{m}^{-\frac{1}{2}}(\widehat{\mu}_{m,1}-\widehat{\mu}_{m,-1})\right)^\top\Sigma_{m}^{-\frac{1}{2}}\left(\mu_{m,1}-\frac{\widehat{\mu}_{m,1}+\widehat{\mu}_{m,-1}}{2}\right)}{\left\|\Sigma_{m}^{-\frac{1}{2}}(\widehat{\mu}_{m,1}-\widehat{\mu}_{m,-1})\right\|}\right)
        \label{eq:probMissLDAEstim2GPMM}
    \end{align}
    with $\Phi$ the c.d.f. of a standard Gaussian distribution.
\end{lemma}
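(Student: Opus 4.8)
The plan is to mirror the arguments already used for the MCAR case in the proofs of \Cref{cor:GenBayesRiskpbpLDA} and \Cref{lemma:probMisMuEstim}, the only structural difference being that here the conditional Gaussianity of the observed subvector is \emph{postulated} by \Cref{ass:gpmmLDA} rather than derived from a projection lemma. Concretely, \Cref{ass:gpmmLDA} gives directly that, conditionally on $\{M=m, Y=k\}$, one has $X_{\mathrm{obs}(m)} \sim \mathcal{N}(\mu_{m,k}, \Sigma_m)$, and the balancing condition $\pi_{m,1}=\pi_{m,-1}$ built into the assumption makes the log-prior term in the Bayes rule of \Cref{prop:MNARLDA} vanish. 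I would treat the four identities by a single whitening computation applied twice (once for the true means, once for the plug-in means), each time reducing an event that is affine in $X_{\mathrm{obs}(m)}$ to a one-dimensional standard Gaussian tail.

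For the first (Bayes) identity I would introduce $N := \Sigma_m^{-1/2}(X_{\mathrm{obs}(m)} - \mu_{m,-1})$, which is $\mathcal{N}(0, I)$ under $\{M=m, Y=-1\}$, together with $\gamma := \Sigma_m^{-1/2}(\mu_{m,1}-\mu_{m,-1})$. Expanding the discriminant around $\mu_{m,-1}$ turns the event $\{h_m^\star(X_{\mathrm{obs}(m)})=1\}$ into $\{\gamma^\top N - \tfrac12\|\gamma\|^2 > 0\}$; since $\gamma^\top N/\|\gamma\|$ is standard Gaussian, this probability equals $\Phi(-\tfrac12\|\gamma\|)$, exactly the claimed expression. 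The case $Y=1$ is identical after recentering with $N' := \Sigma_m^{-1/2}(X_{\mathrm{obs}(m)} - \mu_{m,1})$, and the same value $\Phi(-\tfrac12\|\gamma\|)$ reappears because the discriminant then picks up $+\tfrac12\|\gamma\|^2$ instead of $-\tfrac12\|\gamma\|^2$ and one asks for it to be negative.

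For the plug-in identities the key point I would stress is that the test pair $(X_{\mathrm{obs}(m)}, Y)$ is independent of the training sample $\mathcal{D}_n$, so conditionally on $\mathcal{D}_n$ (and on the pattern $m$) the whitened vector $N$ is still $\mathcal{N}(0,I)$ while $\widehat\gamma := \Sigma_m^{-1/2}(\widehat\mu_{m,1}-\widehat\mu_{m,-1})$ may be treated as deterministic. Writing the estimated discriminant as $\widehat\gamma^\top N + \widehat\gamma^\top \Sigma_m^{-1/2}(\mu_{m,-1} - \tfrac12(\widehat\mu_{m,1}+\widehat\mu_{m,-1}))$ and normalizing by $\|\widehat\gamma\|$ again yields a standard Gaussian, so the misclassification probability is $\Phi$ evaluated at the signed offset $\tfrac{\widehat\gamma^\top}{\|\widehat\gamma\|}\Sigma_m^{-1/2}(\mu_{m,-1} - \tfrac12(\widehat\mu_{m,1}+\widehat\mu_{m,-1}))$, matching \eqref{eq:probMissLDAEstimGPMM}; the $Y=1$ version follows by the same recentering, producing the leading minus sign in \eqref{eq:probMissLDAEstim2GPMM}.

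There is no genuine analytic obstacle: the computation is routine once the whitening is set up. The two points requiring care are bookkeeping rather than difficulty — first, justifying that conditioning on $\mathcal{D}_n$ leaves the test point's conditional law equal to $\mathcal{N}(0,I)$ after whitening, which is precisely where the independence of test and training data enters; and second, keeping track of signs so that the $Y=-1$ and $Y=1$ expansions deliver the correctly oriented arguments of $\Phi$.
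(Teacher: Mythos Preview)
Your proposal is correct and follows essentially the same route as the paper's own proof: whitening via $N=\Sigma_m^{-1/2}(X_{\mathrm{obs}(m)}-\mu_{m,\pm1})$ under \Cref{ass:gpmmLDA}, reducing each event to a one-dimensional standard Gaussian tail through $\gamma$ (resp.\ $\widehat\gamma$), and invoking the independence of the test point from $\mathcal{D}_n$ for the plug-in statements. Your remarks about the balancing condition removing the log-prior term and about Gaussianity being postulated rather than derived are exactly the points the paper relies on as well.
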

\begin{proof}
Using Proposition \ref{prop:MNARLDA}, and recalling that the classes are balanced on each missing patterns ($\pi_{m,1} = \pi_{m,-1}$),
   \begin{align*}
   & \P\left(h_m^{\star}(X_{\mathrm{obs}(m)})=1\mid Y=-1, M=m\right)\\
   & = \P\left( \left(\mu_{m,1}-\mu_{m,-1}\right)^{\top}\Sigma_{m}^{-1}\left(X_{\mathrm{obs}(m)}-\frac{\mu_{m,1}+\mu_{m,-1}}{2}\right) >0\mid Y=-1, M=m\right).
    \end{align*}
   Let $N=\Sigma_{m}^{-\frac{1}{2}}(X_{\mathrm{obs}(m)}-\mu_{m,-1})$. By \Cref{ass:gpmmLDA}, 
   \begin{align}
   N|Y=-1, M=m\sim\mathcal{N}(0, Id_{d-\left\|m\right\|_0}).    
   \end{align}
   Letting $\gamma=\Sigma_{m}^{-\frac{1}{2}}(\mu_{m,1}-\mu_{m,-1})$,  we have
   \begin{align*}
        \P\left(h_m^{\star}(X_{\mathrm{obs}(m)})=1\mid Y=-1, M=m\right)
       &=\P\left(\gamma^{\top}N-\frac{1}{2}\left\|\gamma\right\|^2> 0\mid Y=-1, M=m\right)\\
       &=\P\left(\frac{\gamma^{\top}N}{\left\|\gamma\right\|}>\frac{1}{2}\left\|\gamma\right\| \mid Y=-1, M=m\right)\\
       &=\Phi\left(-\frac{1}{2}\left\|\gamma\right\| \right).
   \end{align*} 
Similarly, using Proposition \ref{prop:MNARLDA},  
   \begin{align*}
   & \P\left(h_m^{\star}(X_{\mathrm{obs}(m)})=-1\mid Y=1, M=m\right)\\
   & = \P\left( \left(\mu_{m,1}-\mu_{m,-1}\right)^{\top}\Sigma_{m}^{-1}\left(X_{\mathrm{obs}(m)}-\frac{\mu_{m,1}+\mu_{m,-1}}{2}\right) < 0\mid Y=1, M=m\right).
    \end{align*}
   Let $N=\Sigma_{m}^{-\frac{1}{2}}(X_{\mathrm{obs}(m)}-\mu_{m,1})$. By \Cref{ass:gpmmLDA}, 
   \begin{align}
    N|Y=1, M=m\sim\mathcal{N}(0, Id_{d-\left\|m\right\|_0}).
   \end{align}
   Letting $\gamma=\Sigma_{m}^{-\frac{1}{2}}(\mu_{m,1}-\mu_{m,-1})$, we have
   \begin{align*}
        \P\left(h_m^{\star}(X_{\mathrm{obs}(m)})=-1\mid Y=1, M=m\right)
       &=\P\left(\gamma^{\top}N + \frac{1}{2}\left\|\gamma\right\|^2 < 0\mid Y=1, M=m\right)\\
       &=\P\left(\frac{\gamma^{\top}N}{\left\|\gamma\right\|} < - \frac{1}{2}\left\|\gamma\right\| \mid Y=1, M=m\right)\\
       &=\Phi\left(-\frac{1}{2}\left\|\gamma\right\| \right).
   \end{align*} 
   This proves the first and third statements. Regarding the second and fourth statements, following the same strategy as in the proof of \cref{cor:GenBayesRiskpbpLDA}, we have
   \begin{align*}
   & \P\left(\widetilde{h}_m(X_{\mathrm{obs}(m)})=1\mid Y=-1, \mathcal{D}_n\right) \\
    &= \P\left( \left(\widetilde{\mu}_{1, m}-\widetilde{\mu}_{-1, m}\right)^{\top}\Sigma_{m}^{-1}\left(X_{\mathrm{obs}(m)}-\frac{\widetilde{\mu}_{1, m}+\widetilde{\mu}_{-1,m}}{2}\right)>0\mid  Y=-1, \mathcal{D}_n\right)
    \end{align*}
   Let $N=\Sigma_{m}^{-\frac{1}{2}}(X_{\mathrm{obs}(m)}-\mu_{-1, m})$. By \Cref{lemma:obsGaus},  $N|Y=-1\sim\mathcal{N}(0, Id_{d-\left\|m\right\|_0})$. Since $(X_{\mathrm{obs}(m)},Y)$ and  $\mathcal{D}_n$ are independent
   \begin{align}
    N|Y=-1, \mathcal{D}_n \sim\mathcal{N}(0, Id_{d-\left\|m\right\|_0}).
   \end{align}
   Letting $\widetilde{\gamma}=\Sigma_{m}^{-\frac{1}{2}}(\widetilde{\mu}_{1,m}-\widetilde{\mu}_{-1,m})$, we have
   \begin{align*}
       & \P\left(h_m^{\star}(X_{\mathrm{obs}(m)})=1\mid Y=-1, \mathcal{D}_n\right)\\
       &=\P\left(\widetilde{\gamma}^{\top}N+\widetilde{\gamma}^\top\Sigma_{m}^{-\frac{1}{2}}\left(\mu_{-1,m}-\frac{\widetilde{\mu}_{1, m}+\widetilde{\mu}_{-1,m}}{2}\right)>0\mid Y=-1, \mathcal{D}_n\right)\\
       &=\P\left(\frac{\widetilde{\gamma}^{\top}N}{\left\|\widetilde{\gamma}\right\|}>-\frac{\widetilde{\gamma}^\top}{\left\|\widetilde{\gamma}\right\|}\Sigma_{m}^{-\frac{1}{2}}\left(\mu_{-1,m}-\frac{\widetilde{\mu}_{1, m}+\widetilde{\mu}_{-1,m}}{2}\right)\mid Y=-1, \mathcal{D}_n\right)\\
       &=\Phi\left(\frac{\widetilde{\gamma}^\top}{\left\|\widetilde{\gamma}\right\|}\Sigma_{m}^{-\frac{1}{2}}\left(\mu_{-1,m}-\frac{\widetilde{\mu}_{1, m}+\widetilde{\mu}_{-1,m}}{2}\right)\right).
   \end{align*} 
Regarding the fourth statement, the proof is similar. Indeed, 
   \begin{align*}
        & \P\left(\widetilde{h}_m(X_{\mathrm{obs}(m)})=-1\mid Y=1, \mathcal{D}_n\right) \\
         & = \P\left( \left(\widetilde{\mu}_{1, m}-\widetilde{\mu}_{-1, m}\right)^{\top}\Sigma_{m}^{-1}\left(X_{\mathrm{obs}(m)}-\frac{\widetilde{\mu}_{1, m}+\widetilde{\mu}_{-1,m}}{2}\right)<0\mid  Y=1\right).
    \end{align*}
   Let $N=\Sigma_{m}^{-\frac{1}{2}}(X_{\mathrm{obs}(m)}-\mu_{1, m})$. By \Cref{lemma:obsGaus}, and since $(X_{\mathrm{obs}(m)},Y)$ and  $\mathcal{D}_n$ are independent, 
   \begin{align}
   N|Y=1, \mathcal{D}_n \sim\mathcal{N}(0, Id_{d-\left\|m\right\|_0}).    
   \end{align}
   Letting $\widetilde{\gamma}=\Sigma_{m}^{-\frac{1}{2}}(\widetilde{\mu}_{1,m}-\widetilde{\mu}_{-1,m})$, we have
   \begin{align*}
       & \P\left(\widetilde{h}_m(X_{\mathrm{obs}(m)})=-1\mid Y=1, \mathcal{D}_n\right)\\
       &=\P\left(\widetilde{\gamma}^{\top}N+\widetilde{\gamma}^\top\Sigma_{m}^{-\frac{1}{2}}\left(\mu_{1,m}-\frac{\widetilde{\mu}_{1, m}+\widetilde{\mu}_{-1,m}}{2}\right)<0\mid Y=1, \mathcal{D}_n\right)\\
       &=\P\left(\frac{\widetilde{\gamma}^{\top}N}{\left\|\widetilde{\gamma}\right\|}<-\frac{\widetilde{\gamma}^\top}{\left\|\widetilde{\gamma}\right\|}\Sigma_{m}^{-\frac{1}{2}}\left(\mu_{1,m}-\frac{\widetilde{\mu}_{1, m}+\widetilde{\mu}_{-1,m}}{2}\right)\mid Y=1, \mathcal{D}_n\right)\\
       &=\Phi\left(-\frac{\widetilde{\gamma}^\top}{\left\|\widetilde{\gamma}\right\|}\Sigma_{m}^{-\frac{1}{2}}\left(\mu_{1,m}-\frac{\widetilde{\mu}_{1, m}+\widetilde{\mu}_{-1,m}}{2}\right)\right).
   \end{align*} 
\end{proof}

\begin{lemma}\label{lemma:boundMissProbMuEstimGPMM}
Grant Assumption \ref{ass:gpmmLDA}. Assume that we are given two estimates $\widetilde{\mu}_1$ and $\widetilde{\mu}_{-1}$. Then, for all $m \in \mathcal{M}$, the classifier $\widetilde{h}_m$ defined in Equation \eqref{eq:PbPLDAEstimMuAllPatterns_tildebis} satisfies 
    \begin{align}
    \notag
        \bigl| 
    \P&\left(\widetilde{h}_m(X_{\mathrm{obs}(m)})=1\mid  Y=-1,M=m, \mathcal{D}_n\right)-\P\left(h_m^\star(X_{\mathrm{obs}(m)})=1 \mid Y=-1, M=m\right) 
    \bigr|\\
    &\leq \frac{3}{2\sqrt{2\pi}}\left\|\Sigma_{m}^{-\frac{1}{2}}(\widetilde{\mu}_{m,-1} - \mu_{m,-1})\right\|+
    \frac{1}{2\sqrt{2\pi}}\left\|\Sigma_{m}^{-\frac{1}{2}}(\widetilde{\mu}_{m,1} - \mu_{m,1})\right\|\label{eq:boundMissProbMuEstim1GPMM}
    \end{align}
    and symmetrically, 
    \begin{align}\notag
        \bigl| 
    \P&\left(\widetilde{h}_m(X_{\mathrm{obs}(m)})=-1\mid  Y=1, M=m, \mathcal{D}_n\right)-\P\left(h_m^\star(X_{\mathrm{obs}(m)})=-1 \mid Y=1, M=m\right) 
    \bigr|\\
    &\leq\frac{3}{2\sqrt{2\pi}}\left\|\Sigma_{m}^{-\frac{1}{2}}(\widetilde{\mu}_{m,1}
    - \mu_{m,1})\right\|+\frac{1}{2\sqrt{2\pi}}\left\|\Sigma_{m}^{-\frac{1}{2}}(\mu_{m,-1} - \widetilde{\mu}_{m,-1})\right\|\label{eq:boundMissProbMuEstim2GPMM}
    \end{align}
\end{lemma}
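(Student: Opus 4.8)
The plan is to transcribe, mutatis mutandis, the proof of \Cref{lemma:boundMissProbMuEstim} from the MCAR setting, substituting the GPMM expressions furnished by \Cref{lemma:probMisMuEstimGPMM} for their MCAR analogues. I will only treat the first inequality \eqref{eq:boundMissProbMuEstim1GPMM}; the second follows by exchanging the roles of the two classes. The decisive structural point is that, under \Cref{ass:gpmmLDA} with balanced classes on each pattern ($\pi_{m,1}=\pi_{m,-1}$), the log-odds offset $\log(\pi_{m,-1}/\pi_{m,1})$ vanishes, so both the Bayes and the plug-in misclassification probabilities reduce to clean values of $\Phi$, exactly as recorded in \Cref{lemma:probMisMuEstimGPMM}. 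All probabilities are now taken conditionally on $\{M=m\}$, which is the only change in bookkeeping relative to the MCAR case.

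First I would invoke \Cref{lemma:probMisMuEstimGPMM}: the Bayes term equals $\Phi(-\tfrac12\left\|\Sigma_m^{-\frac12}(\mu_{m,1}-\mu_{m,-1})\right\|)$ and the plug-in term is given by \eqref{eq:probMissLDAEstimGPMM}. Since $\Phi$ is $(1/\sqrt{2\pi})$-Lipschitz, the quantity on the left of \eqref{eq:boundMissProbMuEstim1GPMM} is at most $1/\sqrt{2\pi}$ times the absolute difference of the two $\Phi$-arguments. Writing $\widetilde\gamma=\Sigma_m^{-\frac12}(\widetilde\mu_{m,1}-\widetilde\mu_{m,-1})$, I would inject $\pm\widetilde\mu_{m,-1}$ in the plug-in numerator to split it as $\widetilde\gamma^\top\Sigma_m^{-\frac12}(\mu_{m,-1}-\widetilde\mu_{m,-1})-\tfrac12\left\|\widetilde\gamma\right\|^2$. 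Dividing by $\left\|\widetilde\gamma\right\|$ and applying Cauchy--Schwarz to the first piece bounds it by $\left\|\Sigma_m^{-\frac12}(\mu_{m,-1}-\widetilde\mu_{m,-1})\right\|$, while the second piece contributes $-\tfrac12\left\|\widetilde\gamma\right\|$.

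It then remains to control $\tfrac12\bigl|\,\left\|\widetilde\gamma\right\|-\left\|\Sigma_m^{-\frac12}(\mu_{m,1}-\mu_{m,-1})\right\|\,\bigr|$, which by the reverse triangle inequality is at most $\tfrac12\left\|\Sigma_m^{-\frac12}\bigl((\widetilde\mu_{m,1}-\mu_{m,1})-(\widetilde\mu_{m,-1}-\mu_{m,-1})\bigr)\right\|$, and a final triangle inequality splits this into the two mean-estimation errors. Collecting the coefficient of the $(\widetilde\mu_{m,-1}-\mu_{m,-1})$ term ($1+\tfrac12=\tfrac32$) against that of the $(\widetilde\mu_{m,1}-\mu_{m,1})$ term ($\tfrac12$) yields precisely \eqref{eq:boundMissProbMuEstim1GPMM}. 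I do not anticipate any genuine obstacle here: the Lipschitz / Cauchy--Schwarz / reverse-triangle chain transfers verbatim, and the only subtlety is simply to keep the conditioning on $\{M=m\}$ throughout and to note that the balanced-pattern hypothesis is exactly what removes the threshold term so that \Cref{lemma:probMisMuEstimGPMM} applies directly.
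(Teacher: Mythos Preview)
Your proposal is correct and follows essentially the same approach as the paper: the paper's own proof simply invokes \Cref{lemma:probMisMuEstimGPMM} to express both probabilities as values of $\Phi$, then says to ``apply the same steps as in the proof of \Cref{lemma:boundMissProbMuEstim}'', which is precisely the Lipschitz/Cauchy--Schwarz/reverse-triangle chain you spell out. Your observation that the balanced-pattern hypothesis $\pi_{m,1}=\pi_{m,-1}$ is what removes the log-odds offset (so that \Cref{lemma:probMisMuEstimGPMM} gives clean $\Phi$-expressions) is exactly the right bookkeeping point.
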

\begin{proof}
To prove Inequality \eqref{eq:boundMissProbMuEstim1GPMM}, notice that, by \Cref{lemma:probMisMuEstimGPMM},  
\begin{align*}
    \bigl| 
    \P&\left(\widetilde{h}_m(X_{\mathrm{obs}(m)})=1\mid  Y=-1, M=m, \mathcal{D}_n\right)-\P\left(h_m^\star(X_{\mathrm{obs}(m)})=1 \mid Y=-1, M=m\right) 
    \bigr|\\
    &=\left|\Phi\left(\frac{\left(\Sigma_{m}^{-\frac{1}{2}}(\widetilde{\mu}_{m,1}-\widetilde{\mu}_{m,-1})\right)^\top\Sigma_{m}^{-\frac{1}{2}}\left(\mu_{m,-1}-\frac{\widetilde{\mu}_{m,1}+\widetilde{\mu}_{m,-1}}{2}\right)}{\left\|\Sigma_{m}^{-\frac{1}{2}}(\widetilde{\mu}_{m,1)}-\widetilde{\mu}_{m,-1})\right\|}\right)\right.\\&\qquad\qquad\left.-\Phi\left(-\frac{\left\|\Sigma_{m}^{-\frac{1}{2}}(\mu_{m,1}-\mu_{m,-1})\right\|}{2}\right)\right|.
     \end{align*} 
We can then apply the same steps as in the  proof of \Cref{lemma:boundMissProbMuEstim}, and the result follows. The proof of   Inequality \eqref{eq:boundMissProbMuEstim2GPMM} is similar.
     \end{proof}

\begin{lemma}\label{lemma:generalDecompLestimGPMM}
Grant Assumption \ref{ass:gpmmLDA}, with balanced classes. Assume that we are given two estimates $\widetilde{\mu}_1$ and $\widetilde{\mu}_{-1}$. Then, for all $m \in \mathcal{M}$, the classifier $\widetilde{h}_m$ defined in Equation \eqref{eq:PbPLDAEstimMuAllPatterns} satisfies 
    \begin{align*}
         \mathcal{R}_{\mathrm{mis}}(\widetilde{h}) &-\mathcal{R}_{\mathrm{mis}}(h^\star)\\
    &\leq\sum_{m\in\M}\frac{1}{\sqrt{2\pi}}\left(\e{\left\|\Sigma_{m}^{-\frac{1}{2}}(-\widetilde{\mu}_{m,1}
    +\mu_{m,1})\right\|+\left\|\Sigma_{m}^{-\frac{1}{2}}(\widetilde{\mu}_{m,-1}-\mu_{m,-1})\right\|}\right)p_m.
    \end{align*}
\end{lemma}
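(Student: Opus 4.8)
The plan is to replicate the structure of the proof of \Cref{lemma:generalDecompLestim}, with the crucial modification that, in the MNAR regime of \Cref{ass:gpmmLDA}, we can no longer invoke the MCAR assumption to strip the conditioning on $\{M=m\}$ from the class-conditional events. First I would write the excess risk as a difference of misclassification probabilities and decompose it pattern by pattern,
\begin{align*}
\mathcal{R}_{\mathrm{mis}}(\widetilde{h}) - \mathcal{R}_{\mathrm{mis}}(h^\star) = \sum_{m\in\M}\left(\P\left(\widetilde{h}_m(X_{\mathrm{obs}(m)})\neq Y \mid M=m\right) - \P\left(h_m^\star(X_{\mathrm{obs}(m)})\neq Y \mid M=m\right)\right)p_m,
\end{align*}
keeping every term conditioned on $\{M=m\}$ rather than reducing to an unconditional per-pattern risk as in the MCAR case.

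Next, within each pattern I would split the conditional misclassification probability across the two classes. Since the classes are balanced on each pattern ($\pi_{m,1}=\pi_{m,-1}$, hence $\P(Y=k\mid M=m)=1/2$), each per-pattern error decomposes as
\begin{align*}
\P\left(\widetilde{h}_m\neq Y \mid M=m\right) = \tfrac{1}{2}\,\P\left(\widetilde{h}_m=1\mid Y=-1, M=m\right) + \tfrac{1}{2}\,\P\left(\widetilde{h}_m=-1\mid Y=1, M=m\right),
\end{align*}
and identically for $h_m^\star$. Because $\widetilde{\mu}_{m,\pm1}$ depend on the training sample, I would then take expectation over $\mathcal{D}_n$ before bounding.

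Finally, I would control the two resulting differences of probabilities by \Cref{lemma:boundMissProbMuEstimGPMM}, which supplies the $3/(2\sqrt{2\pi})$ and $1/(2\sqrt{2\pi})$ coefficients on the norms $\|\Sigma_m^{-1/2}(\widetilde{\mu}_{m,\cdot}-\mu_{m,\cdot})\|$. Summing the class-$1$ and class-$(-1)$ contributions, each weighted by $1/2$, the coefficient in front of each norm term combines as $\tfrac{1}{2}\big(\tfrac{3}{2}+\tfrac{1}{2}\big)=1$ in units of $1/\sqrt{2\pi}$, producing exactly
\begin{align*}
\frac{1}{\sqrt{2\pi}}\left(\e{\left\|\Sigma_{m}^{-\frac{1}{2}}(-\widetilde{\mu}_{m,1}+\mu_{m,1})\right\| + \left\|\Sigma_{m}^{-\frac{1}{2}}(\widetilde{\mu}_{m,-1}-\mu_{m,-1})\right\|}\right)
\end{align*}
per pattern, whence the claim follows after multiplying by $p_m$ and summing over $m\in\M$.

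The main obstacle is conceptual rather than computational: one must ensure that all the Gaussian conditioning arguments underlying \Cref{lemma:probMisMuEstimGPMM} and \Cref{lemma:boundMissProbMuEstimGPMM} remain valid once we condition on $\{M=m\}$ instead of relying on MCAR. This is precisely where \Cref{ass:gpmmLDA} does the work: it directly postulates that $X_{\mathrm{obs}(m)}\mid (M=m, Y=k)\sim\mathcal{N}(\mu_{m,k},\Sigma_m)$, which replaces the combination of \Cref{lemma:obsGaus} and \Cref{ass:MCAR} used in \Cref{lemma:generalDecompLestim}. Once this substitution is granted, the remainder is the same bookkeeping as in the MCAR case, and no further difficulty arises.
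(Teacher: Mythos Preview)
Your proposal is correct and follows essentially the same approach as the paper: pattern-by-pattern decomposition of the excess risk keeping the conditioning on $\{M=m\}$, splitting by class, taking expectation over $\mathcal{D}_n$, applying \Cref{lemma:boundMissProbMuEstimGPMM}, and using the balanced-class assumption to collapse the $\tfrac{3}{2}$ and $\tfrac{1}{2}$ coefficients into a single $1/\sqrt{2\pi}$. The only cosmetic difference is that the paper writes the class weights as $\pi_{m,k}=\P(Y=k,M=m)$ (absorbing $p_m$) whereas you keep $p_m$ separate and use $\P(Y=k\mid M=m)=1/2$; these are equivalent.
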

\begin{proof}
    \begin{align*}
    &     \mathcal{R}_{\mathrm{mis}}(\widetilde{h}) -\mathcal{R}_{\mathrm{mis}}(h^\star)\\
    &=\P\left(\widetilde{h}(X_{\mathrm{obs}(M)},M)\neq Y\right)-\P\left(h^\star(X_{\mathrm{obs}(M)},M)\neq Y\right)\\ 
    &=\sum_{m\in\M}\left(\P\left(\widetilde{h}(X_{\mathrm{obs}(M)},M)\neq Y\mid  M=m\right)-\P\left(h^\star(X_{\mathrm{obs}(M)},M)\neq Y\mid M=m\right)\right)p_m\\
    &=\sum_{m\in\M}\left(\P\left(\widetilde{h}_m(X_{\mathrm{obs}(m)})\neq Y\mid  M=m\right)-\P\left(h_m^\star(X_{\mathrm{obs}(m)})\neq Y\mid M=m\right)\right)p_m\tag{using \eqref{eq:PbPLDAEstimMuAllPatterns}}\\ 
    &=\sum_{m\in\M}\pi_{m,-1}\left(\P\left(\widetilde{h}_m(X_{\mathrm{obs}(m)})=1\mid  Y=-1, M=m\right)-\P\left(h_m^\star(X_{\mathrm{obs}(m)})=1 \mid Y=-1, M=m\right)\right)\\ 
    &\quad+\sum_{m\in\M}\pi_{m,1}\left(\P\left(\widetilde{h}_m(X_{\mathrm{obs}(m)})=-1\mid  Y=1, M=m\right)-\P\left(h_m^\star(X_{\mathrm{obs}(m)})=-1 \mid Y=1, M=m\right)\right).
    \end{align*}
Note that 
\begin{align}
 & \P\left(\widetilde{h}_m(X_{\mathrm{obs}(m)})=1\mid  Y=-1, M=m\right)-\P\left(h_m^\star(X_{\mathrm{obs}(m)})=1 \mid Y=-1, M=m\right) \\
 &= \e{\P\left(\widetilde{h}_m(X_{\mathrm{obs}(m)})=1\mid  Y=-1, M=m, \mathcal{D}_n\right)-\P\left(h_m^\star(X_{\mathrm{obs}(m)})=1 \mid Y=-1, M=m\right)}\\
 & \leq \frac{1}{2 \sqrt{2\pi}}\e{3\left\|\Sigma_{m}^{-\frac{1}{2}}(\mu_{m,-1}-\widetilde{\mu}_{m,-1})\right\|+
    \left\|\Sigma_{m}^{-\frac{1}{2}}(\mu_{m,1}-\widetilde{\mu}_{m,1})\right\|},
\end{align}
according to \Cref{lemma:boundMissProbMuEstimGPMM}. Similarly, 
\begin{align}
    & \P\left(\widetilde{h}_m(X_{\mathrm{obs}(m)})=-1\mid  Y=1, M=m\right)-\P\left(h_m^\star(X_{\mathrm{obs}(m)})=-1 \mid Y=1, M=m\right)\\
    & = \e{\P\left(\widetilde{h}_m(X_{\mathrm{obs}(m)})=-1\mid  Y=1, M=m,\mathcal{D}_n\right)-\P\left(h_m^\star(X_{\mathrm{obs}(m)})=-1 \mid Y=1, M=m\right)}\\
    & \leq \frac{\pi_{m,-1}}{2\sqrt{2\pi}}\left(\e{3\left\|\Sigma_{m}^{-\frac{1}{2}}(-\widetilde{\mu}_{m,1}
    +\mu_{m,1})\right\| 
    +\left\|\Sigma_{m}^{-\frac{1}{2}}(\widetilde{\mu}_{m,-1}-\mu_{m,-1})\right\|}\right).
\end{align}
    Consequently, since for all $m \in \mathcal{M}$, $\pi_{1,m} = \pi_{-1,m}$, 
  \begin{align*}
    &     \mathcal{R}_{\mathrm{mis}}(\widetilde{h}) -\mathcal{R}_{\mathrm{mis}}(h^\star)\\
    & \leq \sum_{m\in\M}\frac{1}{\sqrt{2\pi}}\left(\e{\left\|\Sigma_{m}^{-\frac{1}{2}}(-\widetilde{\mu}_{m,1}
    +\mu_{m,1})\right\|+\left\|\Sigma_{m}^{-\frac{1}{2}}(\widetilde{\mu}_{m,-1}-\mu_{m,-1})\right\|}\right)p_m.
    \end{align*}
\end{proof}

\subsection{Lemmas for Theorem \ref{th:MNAREstim}}

\begin{lemma}\label{lemma:MNARestim1}
    Grant \Cref{ass:gpmmLDA}. Then, for all $k \in \{-1,1\}$, 
    \begin{align*}
        \e{(\widetilde{\mu}_{m,k}-\mu_{m,k})(\widetilde{\mu}_{m,k}-\mu_{m,k})^\top}= \e{\ind_{\frac{N_{m,k}}{n}>\tau}\frac{1}{N_{m,k}}}\Sigma_m+\P\left(\frac{N_{m,k}}{n}\leq\tau\right)\mu_{m,k}\mu_{m,k}^\top
    \end{align*}
    where $\widetilde{\mu}_{m,k}$ is the estimate defined at \eqref{eq:estimMuGPMM2}.
\end{lemma}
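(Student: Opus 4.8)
The plan is to split the analysis according to whether the empirical count exceeds the threshold. Since $\tau=\sqrt{d/n}>0$, the event $\{N_{m,k}/n>\tau\}$ forces $N_{m,k}\geq 1$, while on its complement the thresholded estimate vanishes, i.e.\ $\widetilde{\mu}_{m,k}=0$ by definition \eqref{eq:estimMuGPMM2}. First I would write the centered estimate as a sum over the two disjoint indicator events,
\begin{align*}
    \widetilde{\mu}_{m,k}-\mu_{m,k}
    = \ind_{N_{m,k}/n>\tau}\left(\widehat{\mu}_{m,k}-\mu_{m,k}\right)
    - \ind_{N_{m,k}/n\leq\tau}\,\mu_{m,k}.
\end{align*}
Because $\ind_{N_{m,k}/n>\tau}\ind_{N_{m,k}/n\leq\tau}=0$, the cross terms in the outer product drop out, and taking expectation yields
\begin{align*}
    \e{(\widetilde{\mu}_{m,k}-\mu_{m,k})(\widetilde{\mu}_{m,k}-\mu_{m,k})^\top}
    = \e{\ind_{N_{m,k}/n>\tau}(\widehat{\mu}_{m,k}-\mu_{m,k})(\widehat{\mu}_{m,k}-\mu_{m,k})^\top}
    + \P\left(\tfrac{N_{m,k}}{n}\leq\tau\right)\mu_{m,k}\mu_{m,k}^\top.
\end{align*}
The second term already matches the claimed one, so it remains to handle the first.

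For the first term I would condition on $N_{m,k}$ and invoke \Cref{ass:gpmmLDA}: given $N_{m,k}=q\geq 1$, the $q$ observations falling in class $k$ with pattern $m$ are i.i.d.\ copies of $X_{\mathrm{obs}(m)}\mid(M=m,Y=k)\sim\mathcal{N}(\mu_{m,k},\Sigma_m)$. Hence $\widehat{\mu}_{m,k}$ is, conditionally, the empirical mean of $q$ i.i.d.\ Gaussians, whose centered second-moment matrix is
\begin{align*}
    \e{(\widehat{\mu}_{m,k}-\mu_{m,k})(\widehat{\mu}_{m,k}-\mu_{m,k})^\top \mid N_{m,k}=q}
    = \frac{1}{q}\,\Sigma_m.
\end{align*}
Substituting this conditional identity and applying the tower property over the event $\{N_{m,k}/n>\tau\}$ produces $\e{\ind_{N_{m,k}/n>\tau}\tfrac{1}{N_{m,k}}}\Sigma_m$, which combined with the previous display gives exactly the stated formula.

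The step requiring the most care is the conditioning argument: justifying that, conditionally on the count $N_{m,k}=q$, the relevant samples are precisely $q$ independent draws from the conditional law $\mathcal{N}(\mu_{m,k},\Sigma_m)$. This follows from the i.i.d.\ structure of the triples $(X_i,M_i,Y_i)$ together with exchangeability, since conditioning on the number of observations falling into the category $(M=m,Y=k)$ leaves those observations i.i.d.\ from the corresponding conditional distribution. One must also confirm that $\ind_{N_{m,k}/n>\tau}$ guarantees $N_{m,k}\geq 1$ (true as $\tau>0$), so that $1/N_{m,k}$ and the conditional second-moment formula are well defined throughout; everything else is the routine variance computation for a Gaussian sample mean.
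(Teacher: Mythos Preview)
Your proof is correct and follows essentially the same approach as the paper: split according to the threshold indicator, note that cross terms vanish because the two indicators have disjoint support, and then compute the conditional covariance of the sample mean given $N_{m,k}$ via the Gaussian assumption. Your decomposition $\widetilde{\mu}_{m,k}-\mu_{m,k}=\ind_{N_{m,k}/n>\tau}(\widehat{\mu}_{m,k}-\mu_{m,k})-\ind_{N_{m,k}/n\leq\tau}\mu_{m,k}$ is in fact slightly cleaner than the paper's, which inserts $\pm\mu_{m,k}\ind_{N_{m,k}/n>\tau}$ and expands into four terms before discarding the cross terms, but the argument is the same in substance.
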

\begin{proof}
    We have
    \begin{align*}
        \mathbb{E}&\left[(\widetilde{\mu}_{m,k}-\mu_{m,k})(\widetilde{\mu}_{m,k}-\mu_{m,k})^\top\right] \\
        &=\mathbb{E}\left[(\widehat{\mu}_{m,k}\ind_{\frac{N_{m,k}}{n}>\tau}-\mu_{m,k}\ind_{\frac{N_{m,k}}{n}>\tau}+\mu_{m,k}\ind_{\frac{N_{m,k}}{n}>\tau}-\mu_{m,k})\right.\\&\qquad \left.(\widehat{\mu}_{m,k}\ind_{\frac{N_{m,k}}{n}>\tau}-\mu_{m,k}\ind_{\frac{N_{m,k}}{n}>\tau}+\mu_{m,k}\ind_{\frac{N_{m,k}}{n}>\tau}-\mu_{m,k})^\top\right]\\
        &=\mathbb{E}\left[\ind_{\frac{N_{m,k}}{n}>\tau}(\widehat{\mu}_{m,k}-\mu_{m,k})(\widehat{\mu}_{m,k}-\mu_{m,k})^\top +\ind_{\frac{N_{m,k}}{n}>\tau}\left(\ind_{\frac{N_{m,k}}{n}>\tau}-1\right)(\widehat{\mu}_{m,k}-\mu_{m,k})\mu_{m,k}^\top\right.\\&\qquad+\left.\ind_{\frac{N_{m,k}}{n}>\tau}\left(\ind_{\frac{N_{m,k}}{n}>\tau}-1\right)\mu_{m,k}(\widehat{\mu}_{m,k}-\mu_{m,k})^\top+\left(\ind_{\frac{N_{m,k}}{n}>\tau}-1\right)^2\mu_{m,k}\mu_{m,k}^\top\right].
    \end{align*}
   Since $\ind_{\frac{N_{m,k}}{n}>\tau}\left(\ind_{\frac{N_{m,k}}{n}>\tau}-1\right)=0,$ we obtain 
    \begin{align*}
         \mathbb{E}&\left[\ind_{\frac{N_{m,k}}{n}>\tau}(\widehat{\mu}_{m,k}-\mu_{m,k})(\widehat{\mu}_{m,k}-\mu_{m,k})^\top +\left(1-\ind_{\frac{N_{m,k}}{n}>\tau}\right)\mu_{m,k}\mu_{m,k}^\top\right]\\
         &=\mathbb{E}\left[\ind_{\frac{N_{m,k}}{n}>\tau}(\widehat{\mu}_{m,k}-\mu_{m,k})(\widehat{\mu}_{m,k}-\mu_{m,k})^\top \right]+\P\left(\frac{N_{m,k}}{n}\leq\tau\right)\mu_{m,k}\mu_{m,k}^\top.
    \end{align*}
    Finally, remark that $\widehat{\mu}_{m,k}-\mu_{m,k}|N_{m,k}\sim \mathcal{N}(0, \Sigma_m/N_{m,k})$. Thus, we conclude, noticing that
    \begin{align*}
        & \mathbb{E}\left[\ind_{\frac{N_{m,k}}{n}>\tau}(\widehat{\mu}_{m,k}-\mu_{m,k})(\widehat{\mu}_{m,k}-\mu_{m,k})^\top \right]\\
        &=\e{\mathbb{E}\left[\ind_{\frac{N_{m,k}}{n}>\tau}(\widehat{\mu}_{m,k}-\mu_{m,k})(\widehat{\mu}_{m,k}-\mu_{m,k})^\top \mid N_{m,k}\right]}\\
        &=\e{\ind_{\frac{N_{m,k}}{n} >\tau}\mathbb{E}\left[(\widehat{\mu}_{m,k}-\mu_{m,k})(\widehat{\mu}_{m,k}-\mu_{m,k})^\top \mid N_{m,k}\right]}\\
        &=\e{\frac{\ind_{\frac{N_{m,k}}{n}>\tau}}{N_{m,k}}}\Sigma_m.
    \end{align*}
\end{proof}
\begin{lemma}\label{lemma:MNARestim2}
    Grant \Cref{ass:gpmmLDA}. Then, for all $k \in \{-1,1\}$, 
    \begin{align*}
        \e{\left\|\Sigma^{-\frac{1}{2}}_m(\widetilde{\mu}_{m,k}-\mu_{m,k})\right\|}\leq\left(\e{\ind_{\frac{N_{m,k}}{n}>\tau}\frac{1}{N_{m,k}}}(d-\left\|m\right\|_0)+\P\left(\frac{N_{m,k}}{n}\leq\tau\right)\left\|\Sigma^{-\frac{1}{2}}_m\mu_{m,k}\right\|^2\right)^\frac{1}{2}, 
    \end{align*}
    where $\widetilde{\mu}_{m,k}$ is the estimate defined in \eqref{eq:estimMuGPMM2}.
\end{lemma}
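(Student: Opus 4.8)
The plan is to deduce this bound directly from \Cref{lemma:MNARestim1}, following the same bookkeeping as in the proof of \Cref{lemma:boundNormDifferMuSigmaGen}. The starting point is Jensen's inequality, which lets me control the first moment by the square root of the second moment:
\begin{align*}
\e{\norm{\Sigma^{-\frac{1}{2}}_m(\widetilde{\mu}_{m,k}-\mu_{m,k})}}\leq \e{\norm{\Sigma^{-\frac{1}{2}}_m(\widetilde{\mu}_{m,k}-\mu_{m,k})}^2}^{\frac{1}{2}}.
\end{align*}
Writing $v:=\Sigma^{-\frac{1}{2}}_m(\widetilde{\mu}_{m,k}-\mu_{m,k})$ and using $\norm{v}^2=\mathrm{tr}(vv^\top)$, I would then exchange the (finite-dimensional) trace with the expectation and pull out the deterministic factors $\Sigma^{-\frac{1}{2}}_m$, so that
\begin{align*}
\e{\norm{\Sigma^{-\frac{1}{2}}_m(\widetilde{\mu}_{m,k}-\mu_{m,k})}^2}=\mathrm{tr}\!\left(\Sigma^{-\frac{1}{2}}_m\,\e{(\widetilde{\mu}_{m,k}-\mu_{m,k})(\widetilde{\mu}_{m,k}-\mu_{m,k})^\top}\,\Sigma^{-\frac{1}{2}}_m\right).
\end{align*}

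Next I would substitute the closed-form second-moment matrix given by \Cref{lemma:MNARestim1}, namely $\e{(\widetilde{\mu}_{m,k}-\mu_{m,k})(\widetilde{\mu}_{m,k}-\mu_{m,k})^\top}=\e{\ind_{\frac{N_{m,k}}{n}>\tau}\tfrac{1}{N_{m,k}}}\Sigma_m+\P\left(\tfrac{N_{m,k}}{n}\leq\tau\right)\mu_{m,k}\mu_{m,k}^\top$, and split the trace by linearity. The first term collapses since $\mathrm{tr}(\Sigma^{-\frac{1}{2}}_m\Sigma_m\Sigma^{-\frac{1}{2}}_m)=\mathrm{tr}(I_{d-\norm{m}_0})=d-\norm{m}_0$, the dimension of the observed block; the second term gives $\mathrm{tr}(\Sigma^{-\frac{1}{2}}_m\mu_{m,k}\mu_{m,k}^\top\Sigma^{-\frac{1}{2}}_m)=\norm{\Sigma^{-\frac{1}{2}}_m\mu_{m,k}}^2$ by the cyclic property of the trace. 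Putting these together yields
\begin{align*}
\e{\norm{\Sigma^{-\frac{1}{2}}_m(\widetilde{\mu}_{m,k}-\mu_{m,k})}^2}=\e{\ind_{\frac{N_{m,k}}{n}>\tau}\frac{1}{N_{m,k}}}(d-\norm{m}_0)+\P\left(\frac{N_{m,k}}{n}\leq\tau\right)\norm{\Sigma^{-\frac{1}{2}}_m\mu_{m,k}}^2,
\end{align*}
and taking the square root of both sides gives exactly the stated inequality.

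There is no genuine obstacle here: all of the probabilistic content (the effect of the threshold indicator $\ind_{N_{m,k}/n>\tau}$ and the conditional Gaussian law $\widehat{\mu}_{m,k}-\mu_{m,k}\mid N_{m,k}\sim\mathcal{N}(0,\Sigma_m/N_{m,k})$) has already been absorbed into \Cref{lemma:MNARestim1}, so the remaining work is purely linear-algebraic. The only point to check carefully is the legitimacy of exchanging expectation and trace, which is immediate because the second moment is finite by \Cref{lemma:MNARestim1} and $\Sigma^{-\frac{1}{2}}_m$ is a fixed matrix. Thus the argument reduces to the two trace identities above and is entirely routine.
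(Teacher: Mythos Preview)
Your proposal is correct and follows essentially the same approach as the paper's proof: Jensen's inequality, the trace trick $\norm{v}^2=\mathrm{tr}(vv^\top)$, exchanging trace and expectation, substituting the second-moment formula from \Cref{lemma:MNARestim1}, and then simplifying the two resulting trace terms exactly as you describe.
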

\begin{proof}
By Jensen's inequality, 
    \begin{align*}
        \mathbb{E}&\left[\left\|\Sigma^{-\frac{1}{2}}_m(\widetilde{\mu}_{m,k}-\mu_{m,k})\right\|\right]\\
        &\leq \e{\left\|\Sigma^{-\frac{1}{2}}_m(\widetilde{\mu}_{m,k}-\mu_{m,k})\right\|^2}^\frac{1}{2}\\
        &= \e{\mathrm{tr}\left(\left\|\Sigma^{-\frac{1}{2}}_m(\widetilde{\mu}_{m,k}-\mu_{m,k})\right\|^2\right)}^\frac{1}{2}\\
         &= \e{\mathrm{tr}\left(\left(\Sigma^{-\frac{1}{2}}_m(\widetilde{\mu}_{m,k}-\mu_{m,k})\right)^\top\Sigma^{-\frac{1}{2}}_m(\widetilde{\mu}_{m,k}-\mu_{m,k})\right)}^\frac{1}{2}\\
         &= \e{\mathrm{tr}\left(\Sigma^{-\frac{1}{2}}_m(\widetilde{\mu}_{m,k}-\mu_{m,k})\left(\Sigma^{-\frac{1}{2}}_m(\widetilde{\mu}_{m,k}-\mu_{m,k})\right)^\top\right)}^\frac{1}{2}\\
         &= \e{\mathrm{tr}\left(\Sigma^{-\frac{1}{2}}_m(\widetilde{\mu}_{m,k}-\mu_{m,k})(\widetilde{\mu}_{m,k}-\mu_{m,k})^\top\Sigma^{-\frac{1}{2}}_m\right)}^\frac{1}{2}\\
         &= \mathrm{tr}\left(\Sigma^{-\frac{1}{2}}_m\e{(\widetilde{\mu}_{m,k}-\mu_{m,k})(\widetilde{\mu}_{m,k}-\mu_{m,k})^\top}\Sigma^{-\frac{1}{2}}_m\right)^\frac{1}{2}\\
         &=\mathrm{tr}\left(\Sigma^{-\frac{1}{2}}_m\left(\e{\ind_{\frac{N_{m,k}}{n}>\tau}\frac{1}{N_{m,k}}}\Sigma_m+\P\left(\frac{N_{m,k}}{n}\leq\tau\right)\mu_{m,k}\mu_{m,k}^\top\right)\Sigma^{-\frac{1}{2}}_m\right)^\frac{1}{2}\tag{using \cref{lemma:MNARestim1}}\\
         &=\left(\e{\ind_{\frac{N_{m,k}}{n}>\tau}\frac{1}{N_{m,k}}}(d-\left\|m\right\|_0)+\P\left(\frac{N_{m,k}}{n}\leq\tau\right)\mathrm{tr}\left(\Sigma^{-\frac{1}{2}}_m\mu_{m,k}\mu_{m,k}^\top\Sigma^{-\frac{1}{2}}_m\right)
         \right)^\frac{1}{2}\\
         &=\left(\e{\ind_{\frac{N_{m,k}}{n}>\tau}\frac{1}{N_{m,k}}}(d-\left\|m\right\|_0)+\P\left(\frac{N_{m,k}}{n}\leq\tau\right)\left\|\Sigma^{-\frac{1}{2}}_m\mu_{m,k}\right\|^2\right)^\frac{1}{2}.
    \end{align*}
    
\end{proof}

\subsection{Proof of Theorem \ref{th:MNAREstim}}\label{subsec:proofMNAREstim}
\begin{proof}
Let $A_{\tau}:=\{m\in\{0,1\}^d|p_m<\tau\}$ be the set of missing pattern with occurrence probability smaller than $\tau$. According to \Cref{lemma:generalDecompLestimGPMM} and \Cref{lemma:MNARestim2}, we have
    \begin{align*}
      &   \mathcal{R}_{\mathrm{mis}}(\widetilde{h}) -\mathcal{R}_{\mathrm{mis}}(h^\star)\\
    &\leq\sum_{m\in\M}\frac{1}{\sqrt{2\pi}}\left(\e{\left\|\Sigma_{m}^{-\frac{1}{2}}(-\widetilde{\mu}_{m,1}
    +\mu_{m,1})\right\|+\left\|\Sigma_{m}^{-\frac{1}{2}}(\widetilde{\mu}_{m,-1}-\mu_{m,-1})\right\|}\right)p_m 
    \\&\leq \sum_{m\in\M}\sum_{k=\pm 1}\frac{1}{\sqrt{2\pi}}\left(\e{\ind_{\frac{N_{m,k}}{n}>\tau}\frac{1}{N_{m,k}}}(d-\left\|m\right\|_0)+\P\left(\frac{N_{m,k}}{n}\leq\tau\right)\left\|\Sigma^{-\frac{1}{2}}_m\mu_{m,k}\right\|^2\right)^\frac{1}{2}p_m \\
    &= \sum_{m\in A_\tau}\sum_{k=\pm 1}\frac{1}{\sqrt{2\pi}}\left(\e{\ind_{\frac{N_{m,k}}{n}>\tau}\frac{1}{N_{m,k}}}(d-\left\|m\right\|_0)+\P\left(\frac{N_{m,k}}{n}\leq\tau\right)\left\|\Sigma^{-\frac{1}{2}}_m\mu_{m,k}\right\|^2\right)^\frac{1}{2}p_m\ind_{p_m<\tau}\\
    &\quad+\sum_{m\notin A_{\tau}}\sum_{k=\pm 1}\frac{1}{\sqrt{2\pi}}\left(\e{\ind_{\frac{N_{m,k}}{n}>\tau}\frac{1}{N_{m,k}}}(d-\left\|m\right\|_0)+\P\left(\frac{N_{m,k}}{n}\leq\tau\right)\left\|\Sigma^{-\frac{1}{2}}_m\mu_{m,k}\right\|^2\right)^\frac{1}{2}p_m\ind_{p_m\geq\tau}. 
    \end{align*}
    Now, for all $m\in A_\tau$, recalling that $\tau \geq \sqrt{d/n}$,   
        \begin{align}
            \sum_{k=\pm 1}&\frac{1}{\sqrt{2\pi}}\left(\e{\ind_{\frac{N_{m,k}}{n}>\tau}\frac{1}{N_{m,k}}}(d-\left\|m\right\|_0)+\P\left(\frac{N_{m,k}}{n}\leq\tau\right)\left\|\Sigma^{-\frac{1}{2}}_m\mu_{m,k}\right\|^2\right)^\frac{1}{2}p_m\ind_{p_m<\tau}\\
            &\leq \sum_{k=\pm 1}\frac{1}{\sqrt{2\pi}}\left(\e{\ind_{\frac{N_{m,k}}{n}>\tau}\frac{1}{n\tau}}(d-\left\|m\right\|_0)+\left\|\Sigma^{-\frac{1}{2}}_m\mu_{m,k}\right\|^2\right)^\frac{1}{2}p_m\ind_{p_m<\tau}\\
            &\leq \sum_{k=\pm 1}\frac{1}{\sqrt{2\pi}}\left(\e{\ind_{\frac{N_{m,k}}{n}>\tau}}\tau+\left\|\Sigma^{-\frac{1}{2}}_m\mu_{m,k}\right\|^2\right)^\frac{1}{2}p_m\ind_{p_m<\tau}\\
            &\leq \frac{2}{\sqrt{2\pi}}\left(1+\frac{\left\|\mu_{m}\right\|^2}{\lambda_{\min}(\Sigma_m)}\right)^\frac{1}{2}p_m\ind_{p_m<\tau}. \label{eq_proof36}
        \end{align}
    On the other hand, for all $m\notin A_\tau$,
         \begin{align*}
        \P\left(\frac{N_{m,k}}{n}\leq\tau\right)&=\P\left(N_{m,k}\leq n\tau \right)\\
        &=\P\left(\frac{\ind_{N_{m,k}>0}}{N_{m,k}^2}\geq \frac{1}{n^2\tau^2}\right) + \P\left(  N_{m,k} = 0 \right)\\
        &\leq  \frac{32n^2\tau^2}{ p_m^2 (n+1)(n+2)} + \left( 1 - p_m \right)^n  \\
        &\leq \frac{32 \tau^2}{p_m^2} + \left( 1 - p_m \right)^n,
    \end{align*}
    using Markov Inequality and Inequality \eqref{eq:inv_bino_square2}.     Then, for all $m\notin A_\tau$, we have 
        \begin{align}
            & \sum_{k=\pm 1}\frac{1}{\sqrt{2\pi}}\left(\e{\ind_{\frac{N_{m,k}}{n}>\tau}\frac{1}{N_{m,k}}}(d-\left\|m\right\|_0)+\P\left(\frac{N_{m,k}}{n}\leq\tau\right)\left\|\Sigma^{-\frac{1}{2}}_m\mu_{m,k}\right\|^2\right)^\frac{1}{2}p_m\ind_{p_m\geq\tau}\\
            &\leq \frac{p_m\ind_{p_m\geq\tau}}{\sqrt{2\pi}} \sum_{k=\pm 1} \Bigg[ \left(\e{\frac{\ind_{\frac{N_{m,k}}{n}>\tau}}{N_{m,k}}}(d-\left\|m\right\|_0)\right)^\frac{1}{2} + \left(\P\left(\frac{N_{m,k}}{n}\leq\tau\right)\left\|\Sigma^{-\frac{1}{2}}_m\mu_{m,k}\right\|^2\right)^\frac{1}{2}\Bigg] \\
            &\leq \frac{1}{\sqrt{2\pi}} \sum_{k=\pm 1} \left[ \left(\frac{4(d-\left\|m\right\|_0)}{p_m(n+1)}\right)^\frac{1}{2}  + \left( \frac{32 \tau^2}{p_m^2} +  \left( 1 - p_m \right)^n  \right)^{1/2} \left\|\Sigma^{-\frac{1}{2}}_m\mu_{m,k}\right\|  \right]p_m\ind_{p_m\geq\tau} \tag{using Inequality \eqref{eq:inverse_binome2}}\\
            &\leq\frac{4 \tau\sqrt{p_m}\ind_{p_m\geq\tau}}{\sqrt{2\pi}}+  \left( \frac{4 \tau  }{\sqrt{\pi}} + \frac{1}{\sqrt{2 \pi}} p_m   (1 - p_m)^{n/2} \right) \ind_{p_m\geq\tau} \sum_{k=\pm 1} \left\|\Sigma^{-\frac{1}{2}}_m\mu_{m,k}\right\|  \\
            &\leq\frac{4 \tau\sqrt{p_m}\ind_{p_m\geq\tau}}{\sqrt{2\pi}} + \left( \frac{4 \tau  }{\sqrt{\pi}} + \frac{1}{\sqrt{2 \pi}} p_m   (1 - p_m)^{n/2} \right) \frac{2 \left\|\mu_{m}\right\|}{\sqrt{ \lambda_{\min}(\Sigma_m)}}\ind_{p_m\geq\tau}. \label{eq_proof37}
        \end{align}

    Combining \eqref{eq_proof36} and \eqref{eq_proof37}, we obtain
    \begin{align*}
         \mathcal{R}_{\mathrm{mis}}(\widetilde{h}) &-\mathcal{R}_{\mathrm{mis}}(h^\star)\\
         &\leq \sum_{m\in \{0,1\}^d}\frac{2}{\sqrt{2\pi}}\left(1+\frac{\left\|\mu_{m}\right\|^2}{\lambda_{\min}(\Sigma_m)}\right)^\frac{1}{2}p_m\ind_{p_m<\tau}
         +\left(\frac{4}{\sqrt{2\pi}}+\frac{8}{\sqrt{\pi}}\frac{\left\|\mu_{m}\right\|}{\sqrt{\lambda_{\min}(\Sigma_m)}}\right)\tau\ind_{p_m\geq\tau} \\
         & \quad +     \frac{\sqrt{2}  \left\|\mu_{m}\right\|}{\sqrt{ \pi \lambda_{\min}(\Sigma_m)}} p_m   (1 - p_m)^{n/2}  \ind_{p_m\geq\tau}.
    \end{align*}
    Since 
    \begin{align*}
        \frac{2}{\sqrt{2\pi}}\left(1+\frac{\left\|\mu_{m}\right\|^2}{\lambda_{\min}(\Sigma_m)}\right)^\frac{1}{2}\leq \frac{2}{\sqrt{2\pi}}+\frac{2}{\sqrt{2\pi}}\frac{\left\|\mu_{m}\right\|}{\sqrt{\lambda_{\min}(\Sigma_m)}}< \frac{4}{\sqrt{2\pi}}+\frac{8}{\sqrt{\pi}}\frac{\left\|\mu_{m}\right\|}{\sqrt{\lambda_{\min}(\Sigma_m)}},
    \end{align*}
    we have
    \begin{align}
       \mathcal{R}_{\mathrm{mis}}(\widetilde{h}) -\mathcal{R}_{\mathrm{mis}}(h^\star)  & \leq \sum_{m\in \{0,1\}^d} \left(  \frac{4}{\sqrt{2\pi}}+\frac{8}{\sqrt{\pi}}\frac{\left\|\mu_{m}\right\|}{\sqrt{\lambda_{\min}(\Sigma_m)}} \right) \tau\wedge p_m \nonumber \\
       & \qquad + \sum_{m\in \{0,1\}^d}  \frac{\sqrt{2}  \left\|\mu_{m}\right\|}{\sqrt{ \pi \lambda_{\min}(\Sigma_m)}} p_m   (1 - p_m)^{n/2}  \ind_{p_m\geq\tau}.
    \end{align}
         
\end{proof}

\section{Technical results}


 \begin{lemma}[Hoeffding's inequality]\label{lemma:Hoeffding}
Consider a sequence $(X_k)_{1\leq k\leq n}$ of independent real-valued random variables satisfying, for two sequences $(a_k)_{1\leq k\leq n}$, $(b_k)_{1\leq k\leq n}$ of real numbers such that $a_k<b_k$ for all $k$,
$$
\forall k,\qquad {\mathbb {P}}(a_{k}\leq X_{k}\leq b_{k})=1.
$$
Let 
$$
S_{n}=\sum_{i=1}^nX_i-\e{X_i}.
$$
Then, for all $\lambda\in\R$,
\begin{align*}
    \e{\exp(\lambda S_n)}\leq \exp\left(\frac{\lambda^2}{8}\sum_{i=1}^n(b_i-a_i)^2\right).
\end{align*}
\end{lemma}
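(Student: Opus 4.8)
The plan is to reduce the multivariate moment-generating-function bound to a one-dimensional estimate using independence, and then to prove that one-dimensional estimate (the classical Hoeffding lemma) through convexity and a second-order Taylor argument.

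First I would use independence: since the $X_i$ are independent, so are the centered variables $X_i - \mathbb{E}[X_i]$, and hence
\[
\mathbb{E}\left[\exp(\lambda S_n)\right] = \prod_{i=1}^n \mathbb{E}\left[\exp\bigl(\lambda(X_i - \mathbb{E}[X_i])\bigr)\right].
\]
It therefore suffices to show, for each $i$, the single-variable bound $\mathbb{E}[\exp(\lambda(X_i - \mathbb{E}[X_i]))] \leq \exp(\lambda^2 (b_i - a_i)^2/8)$; multiplying these $n$ inequalities reproduces exactly the claimed right-hand side. Next, fixing $i$ and writing $Y = X_i - \mathbb{E}[X_i]$, I would note that $\mathbb{E}[Y] = 0$ and $a \leq Y \leq b$ with $a := a_i - \mathbb{E}[X_i] \leq 0 \leq b := b_i - \mathbb{E}[X_i]$ and $b - a = b_i - a_i$.

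By convexity of $t \mapsto e^{\lambda t}$, every $t \in [a,b]$ satisfies $e^{\lambda t} \leq \frac{b-t}{b-a}e^{\lambda a} + \frac{t-a}{b-a}e^{\lambda b}$; taking expectations and using $\mathbb{E}[Y]=0$ yields
\[
\mathbb{E}\left[e^{\lambda Y}\right] \leq (1-p)e^{\lambda a} + p\,e^{\lambda b}, \qquad p := \frac{-a}{b-a} \in [0,1].
\]
Setting $h := \lambda(b-a)$ and factoring out $e^{\lambda a} = e^{-ph}$, the right-hand side equals $e^{L(h)}$ with $L(h) := -ph + \log(1 - p + p e^{h})$. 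A direct computation gives $L(0) = 0$, $L'(0) = 0$, and $L''(h) = u(1-u)$ where $u := pe^{h}/(1-p+pe^{h}) \in [0,1]$. The Taylor expansion $L(h) = L(0) + hL'(0) + \tfrac{h^2}{2}L''(\xi)$ together with the elementary bound $u(1-u) \leq \tfrac14$ then gives $L(h) \leq h^2/8 = \lambda^2(b-a)^2/8$, which is the desired single-variable estimate.

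The only genuinely nontrivial step is the uniform bound $L''(h) \leq \tfrac14$; this is exactly where the constant $1/8$ in the exponent comes from, via the maximum value of $u \mapsto u(1-u)$. Everything else—the factorization over independent coordinates, the convexity inequality, and the centering of each $X_i$—is routine bookkeeping that I would not expect to present in detail.
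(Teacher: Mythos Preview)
Your proof is correct and is exactly the standard proof of Hoeffding's inequality via the Hoeffding lemma. Note, however, that the paper does not actually prove this lemma: it is merely stated as a classical technical result in the appendix, with no proof supplied, so there is no ``paper's own proof'' to compare against.
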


 \begin{lemma}\citep[Lemma A2 p 587]{devroye2013probabilistic}
 \label{lem:inverse_bernoulli}
 Let $B\sim\mathcal{B}(p,n)$, we have 
 \begin{equation}
 \frac{1}{1+np}\leq\esp\left[\frac{1}{1+B}\right]\leq\frac{1}{p(n+1)}\label{eq:inverse_binome1}
 \end{equation}
 and 
 \begin{equation}
 \esp\left[\frac{\ind\{B>0\}}{B}\right]\leq\frac{2}{p(n+1)}.\label{eq:inverse_binome2}
 \end{equation}
 \end{lemma}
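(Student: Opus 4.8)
The plan is to prove the three inequalities separately, each by an elementary and self-contained argument; no probabilistic machinery beyond the binomial law is needed. Throughout I write $\esp[B]=np$ for $B\sim\mathcal{B}(p,n)$.

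For the lower bound in \eqref{eq:inverse_binome1}, I would simply invoke Jensen's inequality. The map $x\mapsto 1/(1+x)$ is convex on $[0,\infty)$, so $\esp\left[1/(1+B)\right]\geq 1/(1+\esp[B])=1/(1+np)$. This step is immediate and requires no computation.

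For the upper bound in \eqref{eq:inverse_binome1}, the key is the combinatorial identity $\frac{1}{k+1}\binom{n}{k}=\frac{1}{n+1}\binom{n+1}{k+1}$, which I would check directly from the factorial definitions. Writing $\esp\left[1/(1+B)\right]=\sum_{k=0}^n\frac{1}{k+1}\binom{n}{k}p^k(1-p)^{n-k}$ and substituting this identity recasts the sum as $\frac{1}{(n+1)p}\sum_{k=0}^n\binom{n+1}{k+1}p^{k+1}(1-p)^{(n+1)-(k+1)}$. Shifting the index to $j=k+1$ and completing the binomial expansion over $j=0,\dots,n+1$ (that is, adding and subtracting the missing $j=0$ term $(1-p)^{n+1}$) gives the exact identity $\esp\left[1/(1+B)\right]=\frac{1-(1-p)^{n+1}}{(n+1)p}$. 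The claimed bound $\leq \frac{1}{(n+1)p}$ then follows by discarding the nonnegative term $(1-p)^{n+1}$.

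For \eqref{eq:inverse_binome2}, I would reduce to the upper bound just established via the pointwise comparison $\frac{\ind\{B>0\}}{B}\leq\frac{2}{1+B}$: on the event $\{B\geq 1\}$ this is equivalent to $1+B\leq 2B$, i.e. $1\leq B$, while on $\{B=0\}$ the left side vanishes and the right side is positive. Taking expectations and applying the upper bound from \eqref{eq:inverse_binome1} yields $\esp\left[\ind\{B>0\}/B\right]\leq 2\,\esp\left[1/(1+B)\right]\leq\frac{2}{(n+1)p}$. The only place demanding any care is the index bookkeeping in the second step, where one must correctly track the boundary term of the binomial sum after the shift; since everything there is an exact equality prior to the final trivial majorization, there is no genuine obstacle and nothing to optimize.
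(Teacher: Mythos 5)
Your proof is correct and follows essentially the same route as the paper: Jensen's inequality for the lower bound, the index-shift identity $\frac{1}{k+1}\binom{n}{k}=\frac{1}{n+1}\binom{n+1}{k+1}$ followed by the binomial theorem for the upper bound, and the pointwise comparison $\ind\{B>0\}/B\leq 2/(1+B)$ for the final inequality. The only (harmless) difference is that you track the boundary term explicitly to obtain the exact identity $\esp[1/(1+B)]=\bigl(1-(1-p)^{n+1}\bigr)/\bigl((n+1)p\bigr)$ before majorizing, whereas the paper simply bounds the incomplete binomial sum by $1$.
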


 \begin{proof}
 
 \begin{itemize}
 \item To prove the lower bound in \eqref{eq:inverse_binome1}, we use Jensen's inequality as follows:
 \[
 \frac{1}{1+np}=\frac{1}{1+\esp B}\leq\esp\left[\frac{1}{1+B}\right].
 \]
 \item To prove the upper bound in \eqref{eq:inverse_binome1}, note that 
 \begin{align*}
 \esp\left[\frac{1}{1+B}\right] & =\sum_{i=0}^{n}\binom{n}{i}\frac{1}{1+i}p^{i}(1-p)^{n-i}\\
  & =\sum_{i=0}^{n}\frac{n!}{i!(n-i)!(1+i)}p^{i}(1-p)^{n-i}\\
  & =\frac{1}{\left(n+1\right)p}\sum_{i=0}^{n}\frac{(n+1)!}{(i+1)!(n+1-i-1)!}p^{i+1}(1-p)^{n-i}\\
  & =\frac{1}{\left(n+1\right)p}\sum_{i=0}^{n}\binom{n+1}{i+1}p^{i+1}(1-p)^{n+1-i-1}\\
  & \leq\frac{1}{\left(n+1\right)p},
 \end{align*}
 using binomial formula. 
 \item For \eqref{eq:inverse_binome2}, we use $1/x\leq2/(x+1)$ for all $x\geq1$ together with the previous result. 
 \end{itemize}
 \end{proof}

 Following the same idea, we can establish an upper bound on the square in the following lemma.
 \begin{lemma}\label{lemma:BinBound}
     Given an $B\sim\mathcal{B}(n,p)$, we have that
     \begin{align}\label{eq:inv_bino_square1}
         \e{\frac{1}{(1+B)^2}}\leq \frac{2}{(n+1)(n+2)p^2}
     \end{align}
     and 
     \begin{align}\label{eq:inv_bino_square2}
         \e{\frac{\ind_{B>0}}{B^2}}\leq \frac{8}{(n+1)(n+2)p^2}
     \end{align}
 \end{lemma}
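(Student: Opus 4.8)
The plan is to reproduce the binomial-coefficient manipulation from the proof of \Cref{lem:inverse_bernoulli}, but with a two-step index shift rather than the one-step shift used there.

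I would first treat \eqref{eq:inv_bino_square1}. The quantity $1/(1+B)^2$ does not combine cleanly with the binomial coefficient, so the key idea is to replace the square by a product of consecutive integers. Since $2+i\le 2(1+i)$ for every integer $i\ge 0$, we have
\[
\frac{1}{(1+i)^2}\le \frac{2}{(1+i)(2+i)}.
\]
The advantage of the right-hand side is that it telescopes into a shifted binomial coefficient, exactly as $1/(1+i)$ does in \Cref{lem:inverse_bernoulli}: indeed
\[
\binom{n}{i}\frac{1}{(1+i)(2+i)}=\frac{n!}{(i+2)!\,(n-i)!}=\frac{1}{(n+1)(n+2)}\binom{n+2}{i+2}.
\]
Substituting this into $\e{1/(1+B)^2}=\sum_{i=0}^n\binom{n}{i}(1+i)^{-2}p^i(1-p)^{n-i}$, factoring out $p^{-2}$ and $\big((n+1)(n+2)\big)^{-1}$, and re-indexing by $j=i+2$ turns the sum into $\sum_{j=2}^{n+2}\binom{n+2}{j}p^j(1-p)^{n+2-j}$, which is bounded above by the full binomial sum $\sum_{j=0}^{n+2}\binom{n+2}{j}p^j(1-p)^{n+2-j}=1$. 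This yields the claimed bound $2/\big((n+1)(n+2)p^2\big)$.

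For \eqref{eq:inv_bino_square2} I would reduce to the first inequality via the elementary estimate $1/x^2\le 4/(x+1)^2$, valid for all $x\ge 1$ (equivalently $x+1\le 2x$). On the event $\{B>0\}$ the variable $B$ is an integer with $B\ge 1$, so $\ind_{B>0}/B^2\le 4\,\ind_{B>0}/(1+B)^2\le 4/(1+B)^2$; taking expectations and applying \eqref{eq:inv_bino_square1} produces the extra factor $4$, giving the bound $8/\big((n+1)(n+2)p^2\big)$.

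The only genuinely nontrivial step is the algebraic surrogate in the first part: $1/(1+i)^2$ does not itself collapse under the binomial identity, and recognizing that $1/\big((1+i)(2+i)\big)$ is the right quantity—the natural two-step analogue of the $1/(1+i)$ trick—is where all the content lies. The remaining manipulations (index shift, dominating by the full binomial sum, and the deterministic inequality for the second bound) are routine bookkeeping.
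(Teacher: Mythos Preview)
Your proof is correct and follows essentially the same approach as the paper: both bound $1/(1+i)^2$ by (a constant times) $1/\big((1+i)(2+i)\big)$ and then absorb the consecutive factors into a shifted binomial coefficient, and both deduce \eqref{eq:inv_bino_square2} from \eqref{eq:inv_bino_square1} via $1/x\le 2/(x+1)$ on $\{B\ge 1\}$. The only cosmetic difference is that the paper performs two successive one-step index shifts (first to $\binom{n+1}{i+1}$, then bounds $(j+1)/j\le 2$, then shifts again to $\binom{n+2}{j+1}$), whereas you apply the bound up front and do a single two-step shift; the content is identical.
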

 \begin{proof}
 \begin{itemize}
     \item In order to prove \eqref{eq:inv_bino_square1}, note that    
     \begin{align*}
         \e{\frac{1}{(1+B)^2}}&=\sum_{i=0}^n\binom{n}{i}\frac{1}{(1+i)^2}p^i(1-p)^{n-i}\\
         &=\frac{1}{p(n+1)}\sum_{i=0}^n\frac{(n+1)!}{i!(n-i)!}\frac{1}{(1+i)^2}p^{i+1}(1-p)^{n-i}\\
         &=\frac{1}{p(n+1)}\sum_{i=0}^n\frac{(n+1)!}{(i+1)!(n+1-(i+1))!}\frac{1}{(1+i)}p^{i+1}(1-p)^{n+1-(i+1)}\\
         &=\frac{1}{p(n+1)}\sum_{j=1}^{n+1}\frac{(n+1)!}{j!(n+1-j)!}\frac{1}{j}p^{j}(1-p)^{n+1-j}\\
         &=\frac{1}{p(n+1)}\sum_{j=1}^{n+1}\frac{(n+1)!}{j!(n+1-j)!}\frac{1}{j+1}\frac{j+1}{j}p^{j}(1-p)^{n+1-j}
         \\
         &\leq\frac{2}{p(n+1)}\sum_{j=1}^{n+1}\frac{(n+1)!}{j!(n+1-j)!}\frac{1}{j+1}p^{j}(1-p)^{n+1-j}\\
         &=\frac{2}{p^2(n+1)(n+2)}\sum_{j=1}^{n+1}\frac{(n+2)!}{(j+1)!(n+2-(j+1))!}p^{j+1}(1-p)^{n+2-(j+1)}\\
         &=\frac{2}{p^2(n+1)(n+2)}\sum_{k=2}^{n+2}\frac{(n+2)!}{k!(n+2-k)!}p^{k}(1-p)^{n+2-k}\\
         &\leq\frac{2}{p^2(n+1)(n+2)}.
     \end{align*}
    \item Inequality \eqref{eq:inv_bino_square2} can be deduced using the fact that, for all $x\geq1$,   $1/x\leq2/(x+1)$.
     \end{itemize}
 \end{proof}

\begin{lemma}[Diagonal trace inequality]\label{lemma:traceIneq}
    Given a symmetric matrix $A\in \mathcal{M}_{n,n}(\R)$ and a diagonal matrix $B=(b_i)_{i,i}\in \mathcal{M}_{n,n}(\R)$ where all the terms are bounded by a constant $C\in \R$, we have that $$\mathrm{tr}(ABA)\leq C\mathrm{tr}(A^2).$$ 
\end{lemma}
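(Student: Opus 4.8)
The plan is to reduce the claim to a termwise comparison over the diagonal entries of $A^2$, exploiting that the symmetry of $A$ forces these entries to be nonnegative. The only hypotheses I will actually use are the symmetry of $A$, the diagonality of $B$, and the upper bound $b_i \leq C$ on its diagonal entries.

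First I would invoke the cyclic invariance of the trace to rewrite
\begin{align*}
\mathrm{tr}(ABA) = \mathrm{tr}(A^2 B).
\end{align*}
Then, since $B = \mathrm{diag}(b_1, \ldots, b_n)$ is diagonal, only the diagonal of $A^2$ survives the product, so that
\begin{align*}
\mathrm{tr}(A^2 B) = \sum_{i=1}^n (A^2)_{ii}\, b_i.
\end{align*}

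The key step is to observe that, because $A = A^\top$, one has $(A^2)_{ii} = \sum_{j=1}^n A_{ij} A_{ji} = \sum_{j=1}^n A_{ij}^2 \geq 0$ for every $i$. This is the single place where the symmetry assumption on $A$ is used, and it is exactly what allows the bound $b_i \leq C$ to be applied termwise without worrying about signs.

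Finally, combining the nonnegativity of each $(A^2)_{ii}$ with $b_i \leq C$ gives
\begin{align*}
\sum_{i=1}^n (A^2)_{ii}\, b_i \leq C \sum_{i=1}^n (A^2)_{ii} = C\, \mathrm{tr}(A^2),
\end{align*}
which is the desired inequality. I do not expect a genuine obstacle here; the only point that requires attention is verifying the nonnegativity of the diagonal of $A^2$ before bounding $b_i$ entrywise, since without symmetry of $A$ the diagonal entries of $A^2$ could be negative and the inequality would fail.
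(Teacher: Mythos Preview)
Your proof is correct and essentially coincides with the paper's: both reduce the trace to $\sum_{i} b_i \|A_i\|^2$ (you via the cyclic identity $\mathrm{tr}(ABA)=\mathrm{tr}(A^2B)$ and the observation $(A^2)_{ii}=\sum_j A_{ij}^2$, the paper via an explicit column-block expansion $ABA=\sum_i b_i A_iA_i^\top$), and then bound $b_i\leq C$ using the nonnegativity of each summand. The symmetry of $A$ enters in exactly the same place in both arguments.
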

\begin{proof}
Rewrite the product of the matrices block-by-block, where $A_i\in \mathcal{M}_{n,1}(\R)$ are the columns of $A$:
\begin{align*}
\mathrm{tr}&\left(
    \begin{bmatrix}
A_1 & A_2 & A_3 & \cdots & A_n 
\end{bmatrix}
\begin{bmatrix}
b_1 & 0 & 0 & \cdots & 0 \\
0 & b_2 & 0 & \cdots & 0 \\
0 & 0 & b_3 & \cdots & 0 \\
\vdots & \vdots & \vdots & \ddots & \vdots \\
0 & 0 & 0 & \cdots & b_n \\
\end{bmatrix}
\begin{bmatrix}
A_1^\top \\
A_2^\top \\
A_3^\top \\
\vdots \\
A_n^\top \\
\end{bmatrix}
\right)\\&=
\mathrm{tr}\left(
    \begin{bmatrix}
b_1A_1 & b_2A_2 & b_3A_3 & \cdots & b_nA_n 
\end{bmatrix}
\begin{bmatrix}
A_1^\top \\
A_2^\top \\
A_3^\top \\
\vdots \\
A_n^\top \\
\end{bmatrix}
\right)\\&=
\mathrm{tr}\left(\sum_{i=1}^nb_iA_iA_i^\top\right)\\
&=\sum_{i=1}^nb_i\mathrm{tr}\left(A_iA_i^\top\right)\\
&\leq C\sum_{i=1}^n\mathrm{tr}\left(A_iA_i^\top\right)\\
&=C\mathrm{tr}(A^2)
\end{align*}
\end{proof}

The subsequent lemma, which provides a bound on the maximum of sub-Gaussian random variables, has been derived from Section 8.2 of \cite{arlot:hal-01485506}.

\begin{lemma}[Maximum of sub-Gaussian variables]\label{lemma:maxSubGauss}
    Given $Z_1, ..., Z_k$ sub-Gaussian random variables with variance factor $v$, i.e.
    \begin{align*}
        \forall k\in [K], \qquad \e{Z_k}=0\qquad\text{and}\qquad\forall \lambda\in \R,\qquad \log\left(\e{\exp{\lambda Z_k}}\right)\leq \frac{v\lambda^2}{2},
    \end{align*}
    then 
    \begin{align*}
        \e{\max_{i\in[K]}Z_k}\leq \sqrt{2v\log(K)}.
    \end{align*}
    
\end{lemma}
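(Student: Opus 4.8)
The plan is to use the standard exponential-moment (Chernoff-type) argument, which turns the expectation of a maximum into a sum of moment generating functions by means of Jensen's inequality, and then to optimize over a free parameter $\lambda > 0$.

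First I would fix an arbitrary $\lambda > 0$ and apply Jensen's inequality to the convex map $x \mapsto \exp(\lambda x)$, together with the elementary fact that a maximum of nonnegative terms is bounded by their sum, to obtain
\begin{align*}
\exp\left(\lambda \, \e{\max_{k\in[K]} Z_k}\right) \leq \e{\exp\left(\lambda \max_{k\in[K]} Z_k\right)} = \e{\max_{k\in[K]} \exp(\lambda Z_k)} \leq \sum_{k=1}^K \e{\exp(\lambda Z_k)}.
\end{align*}
Next, invoking the sub-Gaussian hypothesis $\log \e{\exp(\lambda Z_k)} \leq v\lambda^2/2$ for each $k \in [K]$, I would bound the right-hand side by $K \exp(v\lambda^2/2)$. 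Taking logarithms and dividing by $\lambda > 0$ then yields, for every $\lambda > 0$,
\begin{align*}
\e{\max_{k\in[K]} Z_k} \leq \frac{\log K}{\lambda} + \frac{v\lambda}{2}.
\end{align*}

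The final step is to minimize the right-hand side over $\lambda > 0$. Differentiating shows that the optimum is attained at $\lambda = \sqrt{2\log(K)/v}$; substituting this value makes the two terms equal, each contributing $\sqrt{v\log(K)/2}$, which sums to the announced bound $\sqrt{2 v \log(K)}$.

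There is no genuine obstacle here: the argument is entirely standard and each inequality used (Jensen, the bound of the maximum by the sum, and the sub-Gaussian moment control) is elementary. The only point requiring a little care is the optimization over $\lambda$, where one must check that the balancing choice of $\lambda$ is indeed admissible (positive whenever $K \geq 2$, the only nontrivial case, since for $K=1$ the bound $v\lambda/2$ tends to $0$ as $\lambda \to 0$) and that balancing the two terms produces the sharp constant $\sqrt{2}$ rather than a looser one.
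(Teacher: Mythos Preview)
Your argument is correct and is precisely the standard Chernoff/Jensen approach; the paper does not give its own proof but simply refers to Section~8.2 of \cite{arlot:hal-01485506}, where exactly this computation is carried out. There is nothing to add.
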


\begin{lemma}[Projection of a Gaussian vector] 
\label{lemma:obsGaus}
    Given a missing pattern $m\in \{0,1\}^d$ and a Gaussian vector $X\sim\mathcal{N}(\mu,\Sigma)$, then the vector with missing values $X_{\mathrm{obs}(m)}$ is still a Gaussian vector and $X_{\mathrm{obs}(m)}\sim\mathcal{N}(\mu_{\mathrm{obs}(m)},\Sigma_{\mathrm{obs}(m)\times\mathrm{obs}(m)})$.
\end{lemma}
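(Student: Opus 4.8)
The statement is the standard fact that any sub-collection of coordinates of a multivariate Gaussian is again Gaussian, with the obviously induced mean and covariance. The plan is to exhibit $X_{\mathrm{obs}(m)}$ as a fixed linear image of $X$ and then invoke (or re-derive via the characteristic function) the stability of the Gaussian family under affine maps. Concretely, let $s := d - \left\|m\right\|_0 = |\mathrm{obs}(m)|$ and introduce the selection matrix $P \in \R^{s \times d}$ whose rows are the canonical basis vectors $e_j^\top$ for $j \in \mathrm{obs}(m)$ (listed in increasing order). By construction $X_{\mathrm{obs}(m)} = P X$, so it suffices to identify the law of $PX$.

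First I would compute the characteristic function of $PX$ directly, which avoids any integrability subtlety. For $t \in \R^s$ one writes $\esp[\exp(i\, t^\top P X)] = \esp[\exp(i\,(P^\top t)^\top X)]$, and then applies the Gaussian characteristic function $\esp[\exp(i\, u^\top X)] = \exp(i\, u^\top \mu - \tfrac{1}{2} u^\top \Sigma u)$ with $u = P^\top t$. This yields $\esp[\exp(i\, t^\top P X)] = \exp(i\, t^\top P\mu - \tfrac{1}{2} t^\top P\Sigma P^\top t)$, which is precisely the characteristic function of a Gaussian vector with mean $P\mu$ and covariance $P\Sigma P^\top$. Since the characteristic function determines the law, $PX$ is Gaussian.

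It then remains to check that $P\mu$ and $P\Sigma P^\top$ are exactly $\mu_{\mathrm{obs}(m)}$ and $\Sigma_{\mathrm{obs}(m)\times\mathrm{obs}(m)}$. This is immediate from the definition of $P$: left-multiplication by $P$ extracts the entries indexed by $\mathrm{obs}(m)$, so $(P\mu)_a = \mu_{j_a}$ where $j_a$ is the $a$-th observed index, giving $P\mu = \mu_{\mathrm{obs}(m)}$; and $(P\Sigma P^\top)_{a,b} = e_{j_a}^\top \Sigma e_{j_b} = \Sigma_{j_a, j_b}$, which is the $(a,b)$ entry of the submatrix $\Sigma_{\mathrm{obs}(m)\times\mathrm{obs}(m)}$. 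Combining the three steps concludes the proof. There is no genuine obstacle here; the only point requiring a little care is bookkeeping the index correspondence between the rows of $P$ and the elements of $\mathrm{obs}(m)$ so that $P\Sigma P^\top$ is unambiguously the claimed principal submatrix.
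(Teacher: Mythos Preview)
Your proof is correct and follows essentially the same approach as the paper: both compute the characteristic function of $X_{\mathrm{obs}(m)}$ by embedding the test vector back into $\R^d$ (you via the selection matrix $P$ and $u=P^\top t$, the paper via an explicit zero-padded vector $u'$) and then read off the Gaussian mean and covariance. The only cosmetic difference is that you package the coordinate bookkeeping with $P$, whereas the paper first notes separately that linear combinations of Gaussian coordinates are Gaussian before doing the same characteristic-function computation.
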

\begin{proof}
    Since $X$ is a Gaussian vector, every linear combination of its coordinates is a Gaussian variable. In particular,  every linear combination of the subset $\mathrm{obs}(m)$ of coordinates is a Gaussian variable, then $X_{\mathrm{obs}(m)}$ is a Gaussian vector. 

    To prove the second statement, for a given $u\in \R^{d-\left\|m\right\|_0}$, we will denote $u'\in\R^d$ the imputed-by-0 vector, i.e. $u'_j=0$ if $m_j=1$ and $u'_j=u_i$ with $i=j-\sum_{k=1}^jm_k$ otherwise. Then,
    \begin{align*}
        \forall u\in \R^{d-\left\|m\right\|_0},\qquad \Psi_{X_{\mathrm{obs}(m)}}(u)&=\e{\exp(iu^{\top}X_{\mathrm{obs}(m)})}
        \\&=\e{\exp(i(u')^{\top}X)}\\
        &=\exp(i(u')^{\top}\mu-\frac{1}{2}(u')^{\top}\Sigma(u'))\qquad \qquad \qquad (X\sim \mathcal{N}(\mu, \Sigma))
        \\&=\exp(iu^{\top}\mu_{\mathrm{obs}(m)}-\frac{1}{2}u^{\top}\Sigma_{\mathrm{obs}(m)\times\mathrm{obs}(m)} u)
    \end{align*}
\end{proof}

\section{Additional experiments}
\label{app:add_experiments}

\begin{center}
    \begin{figure}
        \centering
        \includegraphics[width=0.4\linewidth]{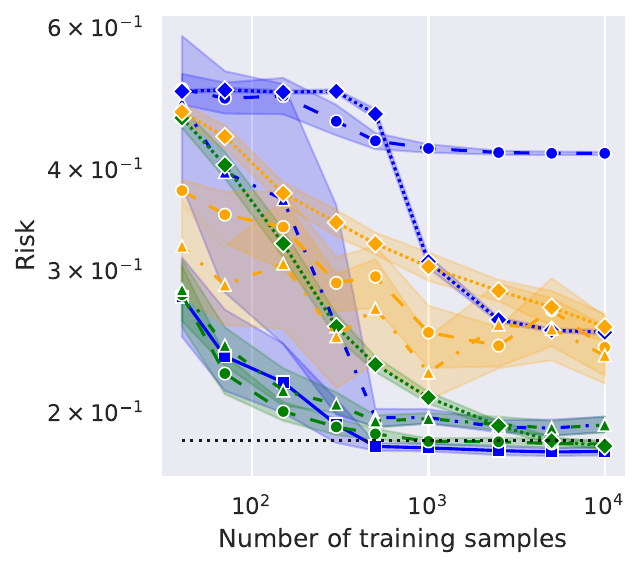}
        \caption{LDA generated data with MCAR missingness and $\Sigma$ the Toeplitz matrix defined above.}
        \label{fig:n-MCAR-cor2-LDA}
    \end{figure}
\end{center}

\begin{center}
    \begin{figure}
        \centering
        \includegraphics[width=0.4\linewidth]{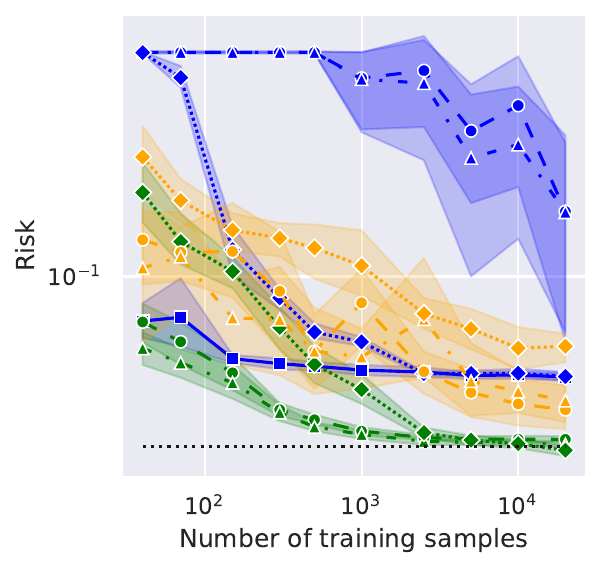}
        \caption{LDA generated data with MNAR missingness and $\Sigma$ the Toeplitz matrix defined above.}
        \label{fig:n-MNAR-cor2-LDA}
    \end{figure}
\end{center}

\begin{center}
    \begin{figure}
        \centering
        \includegraphics[width=0.4\linewidth]{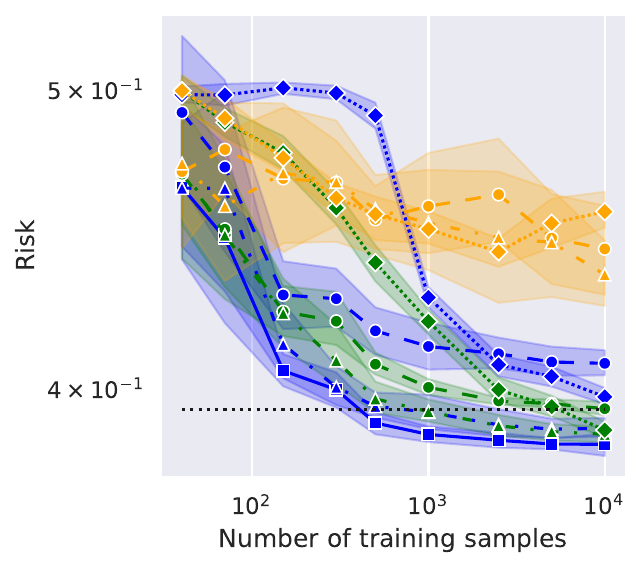}
        \caption{Logistic data with MCAR missingness and $\Sigma$ the Toeplitz matrix defined above.}
        \label{fig:n-MCAR-cor2-Logist}
    \end{figure}
\end{center}

\begin{center}
    \begin{figure}
        \centering
        \includegraphics[width=0.4\linewidth]{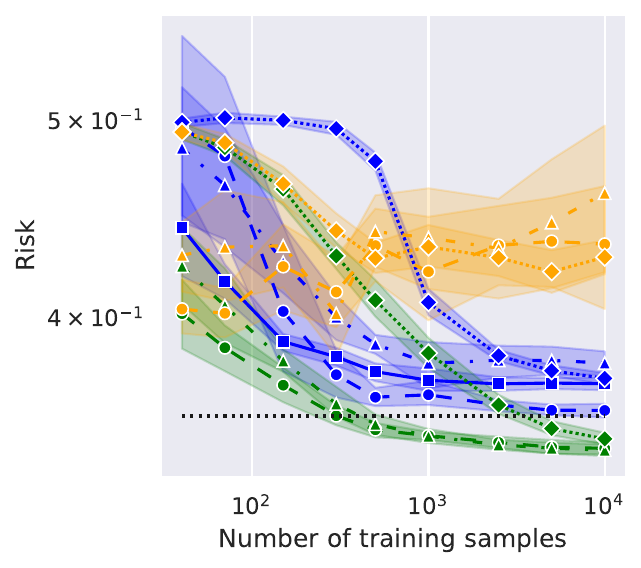}
        \caption{Logistic data with MNAR missingness and $\Sigma$ the Toeplitz matrix defined above.}
        \label{n-MNAR-cor2-Logist.pdf}
    \end{figure}
\end{center}

\end{document}